\numberwithin{equation}{section}
\newtheorem{theo}{Theorem}[subsection]
\newtheorem{prop}{Proposition}[subsection]
\newtheorem{cor}{Corollary}[subsection]
\newtheorem{lem}{Lemma}[subsection]
\theoremstyle{definition}
\newtheorem{df}{Definition}[subsection]
\newtheorem{ex}{Example}[subsection]
\newtheorem{exs}{Example}[section]
\theoremstyle{remark}
\newtheorem{rem}{Remark}[subsection]
\newtheorem*{notation}{Notation}
\newcommand{\bdm}{\begin{displaymath}}
\newcommand{\edm}{\end{displaymath}}
\newcommand{\be}{\begin{equation}}
\newcommand{\ee}{\end{equation}}
\newcommand{\ra}{\rightarrow}
\newcommand{\cyclic}[1]{\stackrel{\scriptsize #1}{\mathfrak{S}}}
\newcommand{\R}{\mathbb{R}}
\newcommand{\C}{\mathbb{C}}
\renewcommand{\H}{\mathcal{H}}
\newcommand{\V}{\mathcal{V}}
\newcommand{\M}{\ensuremath{\mathcal{M}}}
\newcommand{\Scal}{\mathrm{Scal}}
\newcommand{\Ric}{\mathrm{Ric}}
\newcommand{\Id}{\mathrm{Id}}
\newcommand{\End}{\ensuremath{\mathrm{End}}}
\newcommand{\so}{\ensuremath{\mathfrak{so}}}
\newcommand{\Spin}{\ensuremath{\mathrm{Spin}}}
\begin{document}

\title{Generalizations of
3-Sasakian manifolds and skew torsion}

\author{Ilka Agricola and Giulia Dileo}
\date{}

\maketitle

\begin{abstract}
In the first part, we define and investigate new classes of
almost $3$-contact metric
manifolds, with two guiding ideas in mind: first, what geometric objects
are best suited for capturing the key properties of almost $3$-contact metric manifolds,
and second, the newly defined classes should admit `good' metric connections with skew torsion.
In particular, we introduce the \emph{Reeb commutator function} and the
\emph{Reeb Killing function}, we define the new classes of
\emph{canonical almost $3$-contact metric
manifolds} and of \emph{$3$-$(\alpha,\delta)$-Sasaki manifolds} (including as special cases
 $3$-Sasaki manifolds, quaternionic Heisenberg groups, and many others) and prove
that the latter are hypernormal, thus generalizing a seminal result by Kashiwada.
We study their behaviour under a new class of deformations, called $\H$-homothetic
deformations, and prove that they admit an underlying quaternionic contact structure,
from which we deduce the Ricci curvature.
For example, a $3$-$(\alpha,\delta)$-Sasaki
manifold is Einstein either if $\alpha=\delta$ (the 3-$\alpha$-Sasaki case) or
if $\delta=(2n+3)\alpha$, where $\dim M=4n+3$.

The second part is actually devoted to finding these  adapted connections.
We start with a very general notion of \emph{$\varphi$-compatible
connections}, where $\varphi$ denotes any element of the associated sphere of almost
contact structures, and make them unique by a certain extra condition, thus yielding
the notion of \emph{canonical connection} (they exist exactly on canonical manifolds, hence
the name).
For $3$-$(\alpha,\delta)$-Sasaki manifolds, we compute the torsion of this connection
explicitly and we prove that  it is parallel, we describe the holonomy, the $\nabla$-Ricci
curvature, and we show that the metric cone is a HKT-manifold. In dimension $7$, we
construct a cocalibrated $G_2$-structure inducing the canonical connection and
we prove the existence of four generalized Killing spinors.
\end{abstract}

\medskip
\noindent
{\small
{\em MSC (2010)}: primary 53B05, 53C15, 53C25, 53D10; secondary 53C27, 32V05, 22E25.

\noindent
{\em Keywords and phrases}: Almost $3$-contact metric manifold; $3$-Sasakian manifold;
$3$-$(\alpha,\delta)$-Sasaki manifold; canonical almost $3$-contact metric
manifold; metric connection with skew torsion; Kashiwada's theorem; 
canonical connection;   metric cone; cocalibrated $G_2$-manifold; generalized Killing spinor.}


\tableofcontents

\section{Introduction and basic notions}
\subsection{Introduction and summary}
%
Since their first definition by Kuo in 1970,
almost $3$-contact metric manifolds have been a steady, but difficult topic of research.
They are a very natural objects to consider --- they have
three almost contact metric structures with
orthonormal Reeb vector fields and compatibility relations modelled on
the multiplication rules of the quaternions.
Unfortunately, they turn out to be rather difficult to handle.
Computations become quickly lengthy and complicated. Compared to other geometries (like
almost hermitian manifolds or symplectic manifolds), their definition is not equivalent
to the reduction of the frame bundle to a certain subgroup $G\subset \mathrm{O}(n)$,
hence they do not admit a `good' classification scheme into different classes. A deeper
reason for most of the encountered problems seems to be that, together with almost
contact metric structures, they do not possess an integrable Riemannian counterpart, in
the sense that contact geometry does not appear in Berger's theorem on irreducible
Riemannian holonomies. As a consequence, the Levi-Civita connection is not well-adapted
to their geometric structure, and  the quest for other connections (like
hermitian connections for almost hermitian manifolds) turns out to be a challenging task,
with many open questions.

The current paper has the goal to address two of the sketched problems.
In the first part, we define and investigate new classes of almost $3$-contact metric
manifolds, with two guiding ideas in mind: first, what geometric objects
are best suited for capturing the key properties of almost $3$-contact metric manifolds,
and second, the newly defined classes should admit good invariant connections.
The second part is actually devoted to finding these
 adapted connections, with attention restricted to connections that are metric and
with skew torsion. Such connections are by now a widely established tool for the successful
investigation of most non-integrable geometries. As a side condition, the classes defined in
the first part should include $3$-Sasaki manifolds and quaternionic
Heisenberg groups, and the results proved in the second part should reproduce some known
partial results on these.

\subsubsection*{Part one (Sections 1--2)}
Consider an almost $3$-contact metric manifold $(M,\varphi_i,\xi_i,\eta_i, g)$, $i=1,2,3$.
If there exists a function $\delta\in C^\infty(M)$ such that
$\eta_k([\xi_i,\xi_j])=2\delta\epsilon_{ijk}$  for any $i,j,k=1,2,3$, we call
it the \emph{Reeb commutator function} of $M$. Of course, not any $M$ will admit
a Reeb commutator function; but if it does, this function $\delta$ encodes
in a very succinct way the relative `positions' of the Reeb vector fields $\xi_i$.
If the Reeb vector fields are Killing (like for $3$-$\alpha$-Sasakian structures
and many other almost $3$-contact structures), we prove in Corollary \ref{cor.Reebcf-Killing}
that the existence of the Reeb commutator function $\delta$ follows.
 This function will  play a special role in our study. As a first class of manifolds admitting
a Reeb commutator function, we introduce \emph{$3$-$\delta$-cosymplectic manifolds}: they 
will be defined by the conditions
$d\eta_i=-2\delta\eta_j\wedge\eta_k$ and $d\Phi_i=0$,
for every even permutation $(i, j, k)$ of $(1, 2, 3)$, where $\delta$ is a real 
constant, and $\Phi_i$ denotes the
fundamental 2-form given by $\Phi_i(X,Y) = g(X,\varphi_iY)$. For $\delta= 0$ we get in 
fact a $3$-cosymplectic manifold.
 
Any $3$-Sasakian structure has constant Reeb commutator function $\delta=1$, while for a 
quaternionic Heisenberg group
the Reeb vector fields commute, hence $\delta=0$. Actually, we place both
$3$-$\alpha$-Sasakian manifolds and the quaternionic Heisenberg groups in the more
general class of almost
$3$-contact metric manifolds $(M,\varphi_i,\xi_i,\eta_i,g)$ satisfying the following
condition:
\begin{equation}\label{intro_3alpha-delta}
d\eta_i=2\alpha\Phi_i+2(\alpha-\delta)\eta_j\wedge\eta_k
\end{equation}
for every even permutation $(i,j,k)$ of $(1,2,3)$, where $\alpha$ and $\delta$ are
real constants, and $\alpha\ne 0$. We call these manifolds
\emph{$3$-$(\alpha,\delta)$-Sasaki manifolds}; they include $3$-$\alpha$-Sasaki manifolds
as a special case ($\alpha=\delta$). As we shall see many geometric features
are captured by equation \eqref{intro_3alpha-delta}. First we show that
$3$-$(\alpha,\delta)$-Sasaki manifolds are hypernormal, that is the Nijenhuis tensor fields $N_{\varphi_i}:=
[\varphi_i, \varphi_i] + d\eta_i\otimes\xi_i$ are all vanishing
 (Theorem \ref{thm.general-Kashiwada}):
this is a generalization of a seminal result of Kashiwada stating
that every $3$-contact metric manifold (corresponding to $\alpha=\delta=1$) is
$3$-Sasakian \cite{kashiwada}.
Furthermore, every $3$-$(\alpha,\delta)$-Sasaki manifold has Killing Reeb vector fields,
with constant Reeb commutator function $\delta$ (Corollary \ref{corollary_nabla_xi}).
We also study the
behavior of these structures
under a new type of deformations, called \emph{$\mathcal H$-homothetic deformations}
(Section \ref{subsec.3-AD-props-and-exa}).
We show that these deformations preserve the class of $3$-$(\alpha,\delta)$-Sasaki
structures with $\delta=0$, called \emph{degenerate}. In the non-degenerate case, the
sign of the product $\alpha\delta$ is  preserved. In particular all
$3$-$(\alpha,\delta)$-Sasaki structures with $\alpha\delta>0$ can be deformed  into a
$3$-Sasakian structure. Examples of $3$-$(\alpha,\delta)$-Sasaki structures with
$\alpha\delta<0$  do exist as well: they can be
defined on the canonical principal $SO(3)$-bundle of a quaternionic K\"ahler (not
hyperK\"ahler) manifold with negative scalar curvature \cite{konishi,tanno96}. It is
also worth observing that $3$-$(\alpha,\delta)$-Sasaki manifolds admit an underlying
quaternionic contact structure which is quaternionic contact Einstein in the sense of
the definition given in \cite{IMV14}; this allows us to determine the Ricci tensor
of the Riemannian metric $g$ (Proposition \ref{prop.3-AD-Sasaki-is-qc-Einstein}).
In particular, a $3$-$(\alpha,\delta)$-Sasaki manifold is Einstein either if
$\alpha=\delta$ (the well-known 3-$\alpha$-Sasaki case) or if $\delta=(2n+3)\alpha$, where
$\dim M=4n+3$.

A second reason why we are interested in $3$-$(\alpha,\delta)$-Sasaki
manifolds is that they provide a large class of \emph{canonical} almost $3$-contact
metric manifolds. The defining conditions of what we call a canonical structure will
be justified by Theorem \ref{theo_canonical}, where we prove that these are exactly
the manifolds admitting a unique  canonical connection. To define them,
we need to introduce the auxiliary  tensor fields $A_{ij}$ ($i,j=1,2,3$)
\begin{equation*}
A_{ij}(X,Y)\ :=\
g(({\mathcal L}_{\xi_j}\varphi_i)X,Y)+d\eta_j(X,\varphi_i Y)+d\eta_j(\varphi_i X,Y)\quad
\forall X,Y\in \Gamma(\H), \ \H \, :=\, \bigcap_{i=1}^3 \ker \eta_i.
\end{equation*}
Here ${\mathcal L}_{\xi_j}$ denotes the
Lie derivative with respect to $\xi_j$. We also put $A_i:=A_{ii}$. We say that an
almost $3$-contact metric manifold $(M,\varphi_i,\xi_i,\eta_i, g)$  admits
a \emph{Reeb Killing function} if there exists a smooth function $\beta\in C^\infty(M)$
such that for every $X,Y\in\Gamma(\H)$ and every even permutation $(i,j,k)$ of $(1,2,3)$,
\[
A_{i}(X,Y)=0,\qquad A_{ij}(X,Y)=-A_{ji}(X,Y)=\beta\Phi_k(X,Y).
\]
The intrinsic meaning of $\beta$ is subtler than that of $\delta$; one key property is
that the Reeb Killing function controls the derivatives of the structure
tensors $\varphi_i,\xi_i$, and $\eta_i$ with respect to the canonical connection
(see Remark \ref{derivatives}). We call $(M,\varphi_i,\xi_i,\eta_i,g)$ a
\emph{canonical almost $3$-contact metric manifold} if it admits a Reeb Killing
function $\beta$, all $\xi_i$ are Killing vector fields, the Nijenhuis tensors
$N_{\varphi_i}$ are skew-symmetric on $\H$, and
$N_{\varphi_i}- \varphi_i\circ d\Phi_i=N_{\varphi_j}-\varphi_j\circ d\Phi_j$ on $\Gamma(\H)$
for all $i,j=1,2,3$.
As a special case, a canonical  almost $3$-contact metric manifold will be called
\emph{parallel} if its Reeb Killing function $\beta$ vanishes; why this case is of interest
will again be  explained in the second part. We prove that
 $3$-$(\alpha,\delta)$-Sasaki manifolds are always canonical with Reeb Killing
function $\beta=2(\delta-2\alpha)$ in Corollary \ref{cor.3-AD-Sasaki-implies-canonical},
and that $3$-$\delta$-cosymplectic manifolds are always hypernormal,
 canonical and parallel
(but not $3$-$(\alpha,\delta)$-Sasakian by definition) in Corollary \ref{cor.3-d-sympl-is-pc}.

\medskip
We end this summary with a figure reviewing the different almost $3$-contact metric
structures that we shall discuss, trying to catch their most relevant features
(Figure \ref{fig.classes}).
The very interesting isolated point $S^7$ shall be treated in Example \ref{ex.S7}.

\begin{figure}[h!]
\begin{center}
\psfrag{parallel1}{{\small parallel $(\delta=2\alpha)$}}
\psfrag{parallel2}{ parallel $(\beta=0)$}
\psfrag{a}{$\alpha$}\psfrag{d}{$\delta$}
\psfrag{3S}{{\small 3-Sasaki manifolds}}\psfrag{3aS}{{\small3-$\alpha$-Sasaki m.}}
\psfrag{3adS}{3-$(\alpha,\delta)$-Sasaki m.}
\psfrag{hyper}{hypernormal canonical almost $3$-contact metric m.}
\psfrag{can}{canonical almost $3$-contact metric manifolds}
\psfrag{S7}{$S^7$}\psfrag{3dcosymplectic}{3-$\delta$-cosymplectic m.}
\psfrag{degenerate}{{\small degenerate}}
\psfrag{HKT}{HKT $\times G$}
\psfrag{exa1}{{\small exa.\,on nilpotent}}
\psfrag{exa2}{{\small Lie groups}}
\includegraphics[width=10cm]{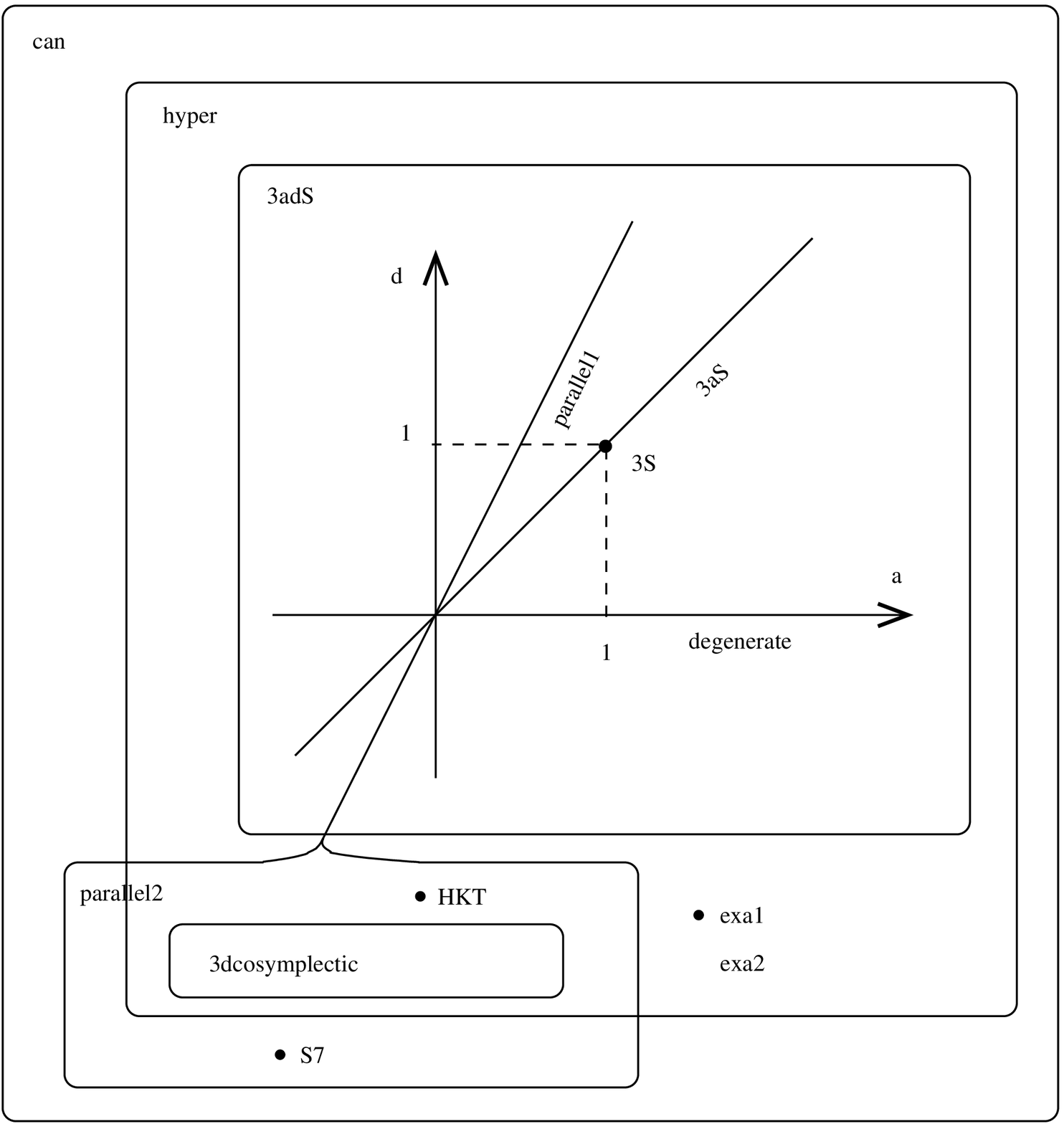}
\end{center}
\caption{The different classes of almost $3$-contact metric manifolds}\label{fig.classes}
\end{figure}

\subsubsection*{Part two (Sections 3--5)}
%
Given a $G$-structure on a Riemannian manifold $(M,g)$,
a \emph{characteristic connection} denotes a metric connection with totally skew-symmetric
torsion (briefly, with \emph{skew torsion}) preserving the $G$-structure. For instance, an
almost hermitian manifold $(M,J,g)$ admits a (unique) hermitian connection with skew
torsion if and only if the Nijenhuis tensor is totally skew-symmetric. Similar results hold
for other geometries, for example  hyperK\"ahler manifolds with torsion, also known as
HKT-manifolds \cite{GP00}.

In the context of almost contact geometry Friedrich and Ivanov \cite{FrIv} provided
necessary and sufficient conditions for an almost contact metric manifold
$(M,\varphi,\xi,\eta,g)$ to admit a characteristic connection (which then turns out
to be unique): the Reeb vector field $\xi$ has to  be Killing and the tensor field
$N_\varphi$ has to be skew-symmetric (see Theorem
\ref{theo-contact} for details).  For example the characteristic connection
of a Sasakian manifold has torsion $T=\eta\wedge d\eta$.

It is well-known, however, that a unique characteristic connection cannot be defined in
any naive way for almost $3$-contact metric manifolds. This is easiest seen
 by looking at the $3$-Sasaki situation: In this case each of the three Sasakian
structures $(\varphi_i,\xi_i,\eta_i,g)$, $i=1,2,3$, admits a unique characteristic connection with torsion $T_i=\eta_i\wedge d\eta_i$.
However, these three connections do not coincide, hence there does not exist a
metric connection with skew torsion preserving all three Sasaki structures. To overcome this
difficulty,  a notion of canonical connection was proposed for a $7$-dimensional
$3$-Sasakian manifold in \cite{Ag-Fr} by making a detour to the canonical (cocalibrated)
$G_2$-structure associated to the $3$-Sasakian structure. This canonical connection has
torsion $T=\sum_{i=1}^3\eta_i\wedge d\eta_i$ and preserves the \emph{vertical} and
\emph{horizontal} distributions, denoted by $\mathcal V$ and $\mathcal H$ respectively,
where $\mathcal V$ is the distribution spanned by the Reeb vector fields
$\xi_1,\xi_2,\xi_3$. Furthermore, this  connection has parallel torsion and admits a
parallel spinor field that induces the three Riemannian Killing spinor 
fields of the $3$-Sasaki manifold.

A second remarkable example of almost $3$-contact metric manifolds admitting a canonical
connection is given by quaternionic Heisenberg groups. In \cite{Ag-F-S} the authors study
the geometry of these nilpotent Lie groups, describing natural left invariant almost
$3$-contact metric structures $(\varphi_i,\xi_i,\eta_i,g_\lambda)$, $\lambda>0$, which can
be defined in all dimensions $4n+3$. It is shown that the metric connection with skew
torsion $T=\sum_{i=1}^3\eta_i\wedge d\eta_i-4\lambda\eta_1\wedge\eta_2\wedge\eta_3$ preserves
the horizontal and vertical distributions and equips the group with a naturally reductive
homogeneous structure, thus highlighting again its importance. In the $7$-dimensional
case this connection can also be obtained by means of a cocalibrated $G_2$-structure.

The main objective of our study is to find good connections on almost $3$-contact metric
manifolds that generalize the canonical connections of the described examples.
We begin with the observation that the choice of the Reeb vector fields
$\xi_i$ in the vertical distribution is somewhat arbitrary. Rather, any
almost $3$-contact metric manifold $(M,\varphi_i,\xi_i,\eta_i,g)$ carries a sphere
$\Sigma_M$ of almost contact metric structures $(\varphi_a,\xi_a,\eta_a,g)$, with
$\varphi_a=a_1\varphi_1+a_2\varphi_2+a_3\varphi_3$ for every $a=(a_1,a_2,a_3)\in S^2$.
The horizontal and the vertical distributions are $\varphi$-invariant for every
$\varphi\in\Sigma_M$. Now, if $(\varphi,\xi,\eta,g)$ is a structure in $\Sigma_M$, a
metric connection $\nabla$ with skew torsion on $M$ will be called a
\emph{$\varphi$-compatible connection} if it preserves the splitting
$TM=\H\oplus \V$ of the tangent bundle and $(\nabla_X\varphi)Y=0$ for all horizontal
vector fields $X, Y$. In Theorem \ref{theo_compatible} we provide necessary and
sufficient conditions for the existence of $\varphi$-compatible connections, one of which
being the total skew-symmetry of the tensor field $N_\varphi$ on $\mathcal H$; the other
ones involve the Lie derivatives of the Riemannian metric $g,$ and they are satisfied
in the special case where the three Reeb vector fields are Killing. We also show that
if  $M$ admits $\varphi_i$-compatible connections for every $i=1,2,3$, then $M$ admits
$\varphi$-compatible connections for \emph{every}  structure $\varphi\in\Sigma_M$.

Despite the
good behavior of $\varphi$-compatibility with respect to the associated sphere $\Sigma_M$, 
this notion is
still too weak, since $\varphi$-compatible connections are not uniquely determined.
They are parametrized by  smooth functions $T(\xi_1,\xi_2,\xi_3)=:\gamma \in C^\infty(M)$,
where $T$ is the torsion of the connection. We call $\gamma$ the \emph{parameter function}
of the connection.

A suggestion for requiring some further conditions on the connection comes from the case
when the Reeb vector fields are Killing. In this case, given a $\varphi$-compatible
connection $\nabla$ with parameter function $\gamma$, the $\nabla$-derivative of each
$\xi_i$ is completely determined by $\gamma$ and the Reeb commutator function $\delta$
through
\[\nabla_X\xi_i=\textstyle\frac{2\delta+\gamma}{2}(\eta_k(X)\xi_j-\eta_j(X)\xi_k)\]
for every vector field $X$, and $(i,j,k)$ even permutation of $(1,2,3)$
(Proposition \ref{prop_Killing}). This suggests the idea that one can require the
covariant derivatives of the structure tensors $\varphi_i$ to behave in a similar way.
Therefore, we look for metric connections $\nabla$ with skew torsion such that
\begin{equation}\label{intro_derivatives}
\nabla_X\varphi_i=\beta(\eta_k(X)\varphi_j-\eta_j(X)\varphi_k)
\end{equation}
for some smooth function $\beta$ and for every even permutation $(i,j,k)$ of $(1,2,3)$.
In fact, in Theorem \ref{theo_canonical} we prove that an almost $3$-contact metric
manifold $(M,\varphi_i,\xi_i,\eta_i,g)$ admits a metric connection $\nabla$ with skew
torsion satisfying \eqref{intro_derivatives} if and only if it is canonical with
Reeb Killing function $\beta$.
If such a connection $\nabla$ exists, it is unique and it is $\varphi$-compatible for every
   structure $\varphi$ in the associated sphere $\Sigma_M$. The parameter function
of $\nabla$ is $\gamma=2(\beta-\delta)$, $\delta$ being the Reeb commutator function.
We call $\nabla$ the \emph{canonical connection} of $M$, and show that  the covariant
derivatives of the other structure tensors are given by
\[\nabla_X\xi_i=\beta(\eta_k(X)\xi_j-\eta_j(X)\xi_k),\qquad \nabla_X\eta_i
=\beta(\eta_k(X)\eta_j-\eta_j(X)\eta_k).\]
One can notice the analogy of \eqref{intro_derivatives} with the equation satisfied by the Levi-Civita connection of a quaternion-K\"ahler manifold (see Remark \ref{rem.qK-geometry}).
There are various remarkable properties of canonical almost $3$-contact metric
manifolds and their canonical connection deserving special attention. First, for a
canonical manifold $(M,\varphi_i,\xi_i,\eta,g)$, each structure $(\varphi,\xi,\eta,g)$
in the sphere $\Sigma_M$ admits a characteristic connection (Theorem
\ref{theo_canonical-implies-char}). In particular, if
$\nabla$ is the canonical connection and $\nabla^i$ the characteristic connection of
the structure $(\varphi_i,\xi_i,\eta_i,g)$, their torsions $T$ and $T_i$ are related by
(Theorem \ref{theo_canonical--char-torsion})
\[T-T_i=-\beta(\eta_j\wedge\Phi_j+\eta_k\wedge\Phi_k)\]
where $(i,j,k)$ is an even permutation of $(1,2,3)$. A surprising situation occurs for
parallel canonical manifolds ($\beta=0$): for them, the three characteristic connections
$\nabla^i$ are identical, and they coincide with the canonical connection.
Hence, all structure tensors $\varphi_i, \xi_i,\eta_i$ are $\nabla$-parallel. It is
surprising that this fact was not discovered before.

Focusing on the canonical connection of a $3$-$(\alpha,\delta)$-Sasaki manifold,
for $3$-Sasakian manifolds our canonical connection coincides, as desired, with the
connection defined in the $7$-dimensional case in \cite{Ag-Fr}. Similarly,
on quaternionic  Heisenberg groups our canonical connection coincides  with the
connection defined in \cite{Ag-F-S}. We also show that the canonical connection
$\nabla$ of a $3$-$(\alpha,\delta)$-Sasaki manifold has parallel torsion
(Theorem \ref{thm.canonical-3AD-Sasaki}),
we determine its Ricci tensor, and discuss the  $\nabla$-Einstein
condition (Theorem \ref{theo.nabla-Ricci}).

A `good' connection on $M$ should induce a good connection on the cone. Recall
that the metric cone of a $3$-Sasaki manifold is hyper-K\"ahler.
If the Reeb Killing function is constant and strictly negative, the
canonical connection $\nabla$ allows us to define a metric connection with skew torsion
$\bar\nabla$ on the cone $(\bar M,\bar g)=(M\times\mathbb{R}^+,a^2r^2g+dr^2)$, $a=-\beta/2>0$,
such that $\bar\nabla J_1=\bar\nabla J_2=\bar\nabla J_3=0$, where $J_1$, $J_2$, $J_3$
are almost hermitian structures naturally defined on $(\bar M, \bar g)$, and such
that $J_1J_2=J_3=-J_2J_1$: hence, we obtain a hyperhermitian structure.
 If furthermore, $(M,\varphi_i,\xi_i,\eta_i,g)$ is a
$3$-$(\alpha,\delta)$-Sasaki manifold, the cone is an HKT-manifold, a class of manifolds
that is much larger than the class of hyper-K\"ahler manifolds (see
Section \ref{sec.cone}).

Finally, we consider 7-dimensional $3$-$(\alpha,\delta)$-Sasaki manifolds (Section
\ref{sec.G2-and-3AD-S}) in order to investigate their relationship to $G_2$-geometry,
which only exists in this  dimension.
We prove that the canonical connection coincides with the characteristic connection
of a cocalibrated $G_2$-structure; as such, it admits a parallel spinor field $\psi_0$.
We show that $\psi_0$ and the three Clifford products $\psi_i:=\xi_i\cdot\psi_0$
are generalized Killing spinor fields, and we compute their generalized Killing numbers
(for $\alpha=\delta$, they coincide with the three Riemannian Killing spinors of a
$3$-$\alpha$-Sasaki manifold).

The appendix is devoted to the discussion of further examples. We  describe left
invariant almost $3$-contact metric structures on various nilpotent Lie groups, providing
examples of canonical structures which are not $3$-$(\alpha,\delta)$-Sasaki, and
non-canonical structures admitting $\varphi_i$-compatible connections.

\begin{notation}
Throughout this text, $\delta$ denotes the Reeb commutator function (Definition
\ref{df.Reebcf}), which can be a real constant, and
sometimes, it appears as factor in front of a differential form. It shouldn't be confused with
a codifferential (we don't need any in this paper). 
For further ease of notation, we will often set $\eta_{ij}:=\eta_i\wedge\eta_j$ etc.,
in particular when formulas tend to become heavy.
\end{notation}
\medskip
\noindent\textbf{Acknowledgements.} G. Dileo acknowledges the financial support of DAAD for a 
research stay at Philipps-Universit\"at Marburg in the period April-June 2016, under the 
Programme \emph{Research Stays for University Academics and Scientists}. She thanks 
Philipps-Universit\"at for its kind hospitality.
\subsection{Review of almost contact and $3$-contact metric manifolds}
\label{sec.review}
%
We review some basic definitions and properties on almost contact metric manifolds.
This serves mainly as a  reference, but it is assorted by comments relevant to our work
 as we move on.
\begin{df}
An \emph{almost contact manifold} is a $(2n+1)$-dimensional smooth manifold
$M$ endowed with a structure $(\varphi,\xi,\eta)$, where $\varphi$ is a $(1,1)$-tensor
field, $\xi$ a vector field, and $\eta$ a $1$-form, such that
\begin{equation*} 
\varphi^2=-I+\eta\otimes \xi,\ \  \eta(\xi)=1,
\end{equation*}
implying that $\varphi \xi =0$, $\eta \circ \varphi =0$, and $\varphi$ has rank $2n$.
The tangent bundle of $M$ splits as $TM=\H\oplus\langle\xi\rangle$, where $\H$ is the
$2n$-dimensional distribution defined by $\H=\mathrm{Im}(\varphi)=\ker\eta$.
\end{df}
The vector field $\xi$ is called the \emph{characteristic} or \emph{Reeb vector field}.
The almost contact structure is said to be
\emph{normal} if $ N_\varphi:=[\varphi,\varphi]+d\eta\otimes\xi$ vanishes,
where $[\varphi,\varphi]$ is the Nijenhuis torsion of $\varphi$ \cite{BLAIR}.
More precisely, for any vector fields $X$ and $Y$, $N_\varphi$ is given by
\bdm
N_\varphi(X,Y)=
[\varphi X,\varphi Y]+\varphi^2[X,Y]-\varphi[\varphi X,Y]-\varphi[X,\varphi Y]+d\eta(X,Y)\xi.
\edm
It is known that any almost contact manifold admits a compatible metric, that is a
Riemannian metric  $g$ such that, for every $X,Y\in{\frak X}(M)$,
$g(\varphi X,\varphi Y)=g(X,Y)-\eta (X) \eta(Y)$.
Then $\eta=g(\cdot,\xi)$ and ${\mathcal H}=\langle \xi\rangle^\perp$. The manifold
$(M,\varphi,\xi,\eta,g)$ is called an \emph{almost contact metric manifold}.

An \emph{$\alpha$-contact metric manifold} is defined as an almost contact metric manifold
such that
\be\label{eq.alpha-contact}
d\eta\, =\, 2\alpha\Phi, \quad \alpha\in\R^*,
\ee
where $\Phi$ is the fundamental $2$-form defined by
$\Phi(X,Y)=g(X,\varphi Y)$; a $1$-contact metric manifold is
just called a \emph{contact metric manifold} for short\footnote{The alternative name
\emph{almost $\alpha$-Sasakian manifold} can be found in the literature for what we call an
$\alpha$-contact metric manifold, see for example  \cite{janssens}; however, we find this notion less suggestive.}; the $1$-form $\eta$ turns then
out to be a
\emph{contact form}, in the sense that $\eta\wedge (d\eta)^n\ne 0$ everywhere on $M$.
An \emph{$\alpha$-Sasakian manifold} is defined as a normal $\alpha$-contact metric
manifold, and again such a manifold with $\alpha=1$ is called a
 \emph{Sasakian manifold}.
A more general class of $\alpha$-Sasakian manifolds is given by
\emph{quasi-Sasakian manifolds}, defined as normal almost contact metric manifolds with
closed $2$-form $\Phi$. We recall that the Reeb vector field of a
(quasi)-Sasakian or $\alpha$-Sasakian manifold is always Killing.
As a  comprehensive introduction to Sasakian geometry, we recommend the monography
\cite{Boyer&Galicki}. For some recent results, we refer to \cite{CNY15}.

We recall now some basic facts about connections with totally skew-symmetric torsion---we
refer to \cite{Ag} for further details. If  $(M,g)$ is a Riemannian manifold,  a metric
connection $\nabla$ with torsion $T$ is said to have \emph{totally skew-symmetric torsion},
or \emph{skew torsion} for short, if the $(0,3)$-tensor field $T$ defined by
\[T(X,Y,Z)=g(T(X,Y),Z)\]
is a $3$-form. The relation between $\nabla$ and the Levi-Civita connection
$\nabla^g$ is then given by
\begin{equation}\label{nabla}
\nabla_XY=\nabla^g_XY+\frac{1}{2}T(X,Y).
\end{equation}
In \cite{FrIv} T.\,Friedrich ad S.\,Ivanov proved the following theorem concerning
\emph{characteristic connections} on almost contact metric manifolds, i.\,e.~metric
connections with skew torsion parallelizing all structure tensors.
\begin{theo}\label{theo-contact}
Let $(M,\varphi,\xi,\eta,g)$ be an almost contact metric manifold. It admits a metric
connection $\nabla$ with skew  torsion and $\nabla\eta=\nabla\varphi=0 $ if and only if
$N_\varphi$ is totally skew-symmetric and if  $\xi$ is a Killing vector field. The
connection $\nabla$ is then uniquely determined and its torsion is given by
\bdm
T=\eta\wedge d\eta+N_{\varphi}+d^{\varphi}\Phi-\eta\wedge (\xi\lrcorner N_{\varphi}),
\edm
where $d^{\varphi}\Phi$ is defined as
$d^{\varphi}\Phi(X,Y,Z)\, :=\, -d\Phi(\varphi X,\varphi Y,\varphi Z)$.
\end{theo}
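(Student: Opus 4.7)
The strategy is to translate the parallelism conditions $\nabla\varphi=0$ and $\nabla\eta=0$ into algebraic constraints on the torsion via the relation $\nabla = \nabla^g + \tfrac12 T$ from \eqref{nabla}, with $T$ regarded as a $3$-form, and then to solve for $T$.

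\emph{Necessity of the Killing condition and the vertical part of $T$.} Since $\nabla g=0$ automatically, $\nabla\eta=0$ is equivalent to $\nabla\xi=0$, which by \eqref{nabla} reads $g(T(X,\xi),Y) = -2\,g(\nabla^g_X\xi,Y)$. The total skew-symmetry of $T$ on the left forces the right-hand side to be skew in $X,Y$, i.e.\ $(\mathcal L_\xi g)(X,Y)=0$, so $\xi$ is Killing. The Killing identity then gives $2g(\nabla^g_X\xi,Y)=d\eta(X,Y)$, whence the vertical part of the torsion is pinned down as
\[
\xi\lrcorner T \;=\; d\eta,
\]
which accounts for the first summand in the advertised formula and for the last summand $-\eta\wedge(\xi\lrcorner N_\varphi)$, since a contraction with $\xi$ must kill the $N_\varphi$ contribution.

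\emph{The horizontal part and skew-symmetry of $N_\varphi$.} Unfolding $\nabla\varphi=0$ with \eqref{nabla} yields
\[
2g((\nabla^g_X\varphi)Y,Z) \;=\; -\,g(T(X,\varphi Y),Z) - g(T(X,Y),\varphi Z).
\]
I would combine this with the standard decomposition of $g((\nabla^g_X\varphi)Y,Z)$ into contributions from $d\Phi$, $N_\varphi$, and $d\eta$ (valid on any almost contact metric manifold with Killing $\xi$), and then alternate in $(X,Y,Z)$ using the skew-symmetry of $T$. Because $T$ is a $3$-form by hypothesis, the symmetric components of $N_\varphi$ cannot survive the alternation, so $N_\varphi$ must itself be totally skew; solving the resulting linear relation for $T$ produces
\[
T \;=\; \eta\wedge d\eta + N_\varphi + d^\varphi\Phi - \eta\wedge(\xi\lrcorner N_\varphi).
\]
Uniqueness is automatic, since $T$ has been forced upon us.

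\emph{Sufficiency and main obstacle.} Conversely, assuming $\xi$ is Killing and $N_\varphi$ is a $3$-form, one \emph{defines} $T$ by the displayed formula, verifies that $T$ is totally skew-symmetric (the delicate check being on the horizontal summand $N_\varphi + d^\varphi\Phi$, which uses the hypothesis on $N_\varphi$ together with the very definition of $d^\varphi\Phi$), and then obtains $\nabla\eta=0$ and $\nabla\varphi=0$ by running the arguments of the necessity part in reverse. The principal technical hurdle throughout is the algebraic bookkeeping that links $\nabla^g\varphi$ to the quartet $(d\Phi, N_\varphi, d\eta, \xi)$: one must carefully separate horizontal and vertical components at every stage, because the Reeb contributions entering through $\xi\lrcorner N_\varphi$ and the mixed terms in $d\eta$ are precisely what make the final formula --- and in particular its last, non-obvious correction term --- come out right.
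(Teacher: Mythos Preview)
The paper does not prove this theorem. Theorem~\ref{theo-contact} is quoted as a known result from \cite{FrIv} (Friedrich--Ivanov); it is introduced in Section~\ref{sec.review} with the sentence ``In \cite{FrIv} T.\,Friedrich and S.\,Ivanov proved the following theorem\ldots'' and is used throughout the paper as a black box. So there is no proof in the paper to compare your attempt against.

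That said, your sketch follows the same line as the original argument in \cite{FrIv}: derive $\xi\lrcorner T=d\eta$ from $\nabla\xi=0$ (forcing $\xi$ to be Killing), translate $\nabla\varphi=0$ into the relation $2g((\nabla^g_X\varphi)Y,Z)=-T(X,\varphi Y,Z)-T(X,Y,\varphi Z)$, feed in the standard formula expressing $\nabla^g\varphi$ in terms of $d\Phi$, $N_\varphi$, and $d\eta$ (see e.g.\ \cite[Lemma~6.1]{BLAIR} or the formula used in \cite{FrIv}), and solve for $T$. Your outline is correct in spirit, but two steps are genuinely hand-waved: the assertion that ``the symmetric components of $N_\varphi$ cannot survive the alternation'' requires actually writing out which combinations of $(X,Y,Z)\mapsto(\varphi X,\varphi Y,Z)$ etc.\ isolate $N_\varphi(X,Y,Z)+N_\varphi(X,Z,Y)$, and ``solving the resulting linear relation for $T$'' is the bulk of the computation. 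If you want a self-contained proof, those are the two places where the work lies; the rest is as you describe.
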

For example, quasi-Sasakian manifolds admit a unique
characteristic connection whose torsion is given by $T=\eta\wedge d\eta$.
For later, let us observe that $\nabla\eta=\nabla\varphi=0 $ implies that
the characteristic connection preserves the
distributions $\H$ and $\V$ -- a property we shall like to have later on for almost
$3$-contact manifolds as well. Further results on the characteristic connection of
almost contact metric manifolds may be found in \cite{puhle12, puhle13}; in particular,
one finds there a detailed investigation for special classes of manifolds.
The special situation of normal almost contact metric manifolds with Killing Reeb vector field
was investigated in \cite{CM14, HTY13}, leading to a notion of Sasaki manifolds with torsion.
\begin{df}
An  \emph{almost $3$-contact manifold}  is a differentiable  manifold $M$ of dimension
$4n+3$ endowed with three almost contact structures $(\varphi_i,\xi_i,\eta_i)$, $i=1,2,3$,
satisfying the following relations,
\begin{equation}
\begin{split}\label{3-sasaki}
\varphi_k=\varphi_i\varphi_j-\eta_j\otimes\xi_i=-\varphi_j\varphi_i+\eta_i\otimes\xi_j,\quad\\
\xi_k=\varphi_i\xi_j=-\varphi_j\xi_i, \quad
\eta_k=\eta_i\circ\varphi_j=-\eta_j\circ\varphi_i,
\end{split}
\end{equation}
for any even permutation $(i,j,k)$ of $(1,2,3)$ \cite{BLAIR}.
The tangent bundle of $M$ splits as $TM=\H\oplus\V$, where
\[
\H \, :=\, \bigcap_{i=1}^{3}\ker\eta_i,\qquad
\V\, :=\, \langle\xi_1,\xi_2,\xi_3\rangle.
\]
In particular $\H$ has rank $4n$. We call any vector belonging to the distribution
$\H$ \emph{horizontal} and any vector belonging to the distribution $\mathcal V$
\emph{vertical}. The manifold is said to  be
\emph{hypernormal} if each  almost contact structure
$(\phi_i,\xi_i,\eta_i)$ is normal.
In \cite{yano1} it was proved that if two of the almost contact structures are normal,
then so is the third.
\end{df}
Any almost $3$-contact manifold admits a Riemannian metric $g$ which is compatible with
each of the three structures. Then $M$ is said to be an \emph{almost $3$-contact metric
manifold} with structure $(\varphi_i,\xi_i,\eta_i,g)$, $i=1,2,3$.
For ease of notation, we will just say that $(M,\varphi_i,\xi_i,\eta_i, g)$ is
an almost $3$-contact
metric manifold, and it is self-understood that the index is running from $1$ to $3$.
The subbundles $\H$
and $\mathcal V$ are orthogonal with respect to $g$ and the three Reeb vector
fields $\xi_1,\xi_2,\xi_3$ are orthonormal---the structure group of the tangent
bundle is in fact reducible to $\mathrm{Sp}(n)\times \{1\}$ \cite{Kuo70}; this implies in
particular that each almost $3$-contact manifold is spin.
 The following remarkable subclasses of almost $3$-contact metric
manifolds will be of particular importance to our work:
\begin{enumerate}[1)]
\item
A \emph{$3$-(quasi)-Sasakian} resp.\,\emph{$3$-$\alpha$-Sasakian manifold} is an almost
$3$-contact metric manifold for which each of the three structures is
(quasi)-Sasakian resp.\,{$\alpha$-Sasakian} \cite{CM-DN-D1,CM-DN-D}. A remarkable
result due to T. Kashiwada states that if the three structures are contact
metric structures, then the manifold is
$3$-Sasakian \cite{kashiwada}. Many results on the topology of $3$-Sasakian manifolds are 
available, see for example \cite{Galicki&S96}.
\item
A \emph{$3$-cosymplectic manifold} is an almost
$3$-contact metric manifold satisfying $d\eta_i=0$, $d\Phi_i=0$. It is known
that such a structure is hypernormal (see \cite[Theorem 4.13]{FIP}), so that each of the three structures is cosymplectic. It is also known that any $3$-cosymplectic manifold
is locally isometric to the Riemannian product of a hyper-K\"ahler manifold and
a $3$-dimensional flat abelian Lie group \cite{CM-DN}.
\end{enumerate}

Later on, we shall see that the new classes of  $3$-$(\alpha,\delta)$-Sasaki manifolds
(Definition \ref{df.3ad-Sasaki}) and $3$-$\delta$-cosymplectic manifolds (Definition \ref{df.3d-cosymplectic})
generalize  $3$-$\alpha$-Sasaki and $3$-cosymplectic manifolds, respectively, by using a vertical extra term.
%
%
\subsection{The sphere of associated almost contact structures}
%
Given an almost $3$-contact metric manifold, one can define a  sphere of almost contact
structures containing  $\pm \varphi_i$ ($i=1,2,3$) as antipodal points \cite{CM-DN-Y}.
This sphere is a canonical object to consider, since the choice of the elements
$\varphi_i$  is somewhat arbitrary.
\begin{df}
For any almost $3$-contact metric manifold $(M,\varphi_i,\xi_i,\eta_i, g)$
we define its associated sphere $\Sigma_M$ of almost contact structures
\bdm
\Sigma_M\, :=\, \{\varphi_a:= a_1\varphi_1+a_2\varphi_2+ a_3\varphi_3 \ |
\ a=(a_1,a_2,a_3)\in S^2 \}
\edm
as well as the associated bundle of endomorphisms
\bdm
\Upsilon_M\, :=\, \{\varphi_a:= a_1\varphi_1+a_2\varphi_2+ a_3\varphi_3 \ |
\ a=(a_1,a_2,a_3)\in \R^3 \}.
\edm
For any $\varphi_a\in \Sigma_M$, its Reeb vector field and dual $1$-form are defined respectively
as
\[
\xi_a \, :=\, a_1\xi_1+a_2\xi_2+a_3\xi_3,\quad \eta_a \, :=\, a_1\eta_1+a_2\eta_2+a_3\eta_3.
\]
The Riemannian metric $g$ is compatible with all the structures $(\varphi_a,\xi_a,\eta_a)$ 
(see \cite{CM-DN-Y} for more details).
When it has no importance, the index $a$ will be omitted.
\end{df}
Observe that for any $\varphi\in \Sigma_M$, the distributions $\H$ and  $\V$
are $\varphi$-invariant. Thus, $\varphi$ encodes more geometric information than just the
choice of an almost contact structure on $M$.
%

As seen in Theorem \ref{theo-contact}, a crucial property is whether the Nijenhuis tensor is
skew-symmetric. Although it is not, for an almost contact metric structure $\varphi$ in the
associated sphere $\Sigma_M$, just the sum of the Nijenhuis tensors of the $\varphi_i$'s,
we shall prove next that it is skew-symmetric if this property holds for each $N_{\varphi_i}$.
So, consider $\varphi\in\Sigma_M$, and define  tensor fields $N_{i,j}$
\[
N_{i,j}:=[\varphi_i,\varphi_j]+d\eta_i\otimes\xi_j+d\eta_j\otimes\xi_i,
\]
where
\begin{align*}
[\varphi_i,\varphi_j](X,Y)&:=[\varphi_i X,\varphi_jY]
-\varphi_i[\varphi_jX,Y]-\varphi_j[X,\varphi_iY]+[\varphi_jX,\varphi_iY]\\
&\quad-\varphi_j[\varphi_iX,Y]-\varphi_i[X,\varphi_jY]+(\varphi_i\varphi_j
+\varphi_j\varphi_i)[X,Y].
\end{align*}
In particular, $N_{i,i}=2N_{\varphi_i}$.
Notice that,  using \eqref{3-sasaki}, one has $\forall X,Y\in\Gamma(\H) $
\begin{equation}\label{Nij}
\begin{split}
N_{i,j}(X,Y)&=[\varphi_i X,\varphi_jY]-\varphi_i[\varphi_jX,Y]-\varphi_j[X,\varphi_iY]\\
&\quad +[\varphi_jX,\varphi_iY]-\varphi_j[\varphi_iX,Y]-\varphi_i[X,\varphi_jY].
\end{split}
\end{equation}
The following crucial equation was proved in
\cite{CM-DN-Y}:
\be\label{ident-CM-DN-Y}
N_\varphi=N_{\varphi_1}+N_{\varphi_2}+N_{\varphi_3}+a_1a_2N_{1,2}+a_1a_3N_{1,3}+a_2a_3N_{2,3}.
\ee
If the almost $3$-contact metric structure is hypernormal, the tensor fields $N_{i,j}$
are all vanishing \cite{CM-DN-Y}, and thus $N_\varphi=0$. We say that $N_\varphi$ is
\emph{skew-symmetric on $\H$} if the $(0,3)$-tensor field defined by
\bdm
N_\varphi(X,Y,Z)\ =\ g(N_\varphi(X,Y),Z)\quad \forall X,Y,Z\in \Gamma(\H)
\edm
is a $3$-form on $\H$. We collect a few equivalent conditions for  $N_\varphi$ to be
skew-symmetric on $\H$ which we will prove to be useful.
\begin{lem}\label{skew}
Let $(M,\varphi_i,\xi_i,\eta_i, g)$   be an almost $3$-contact metric manifold,
$\varphi\in\Sigma_M$. The following conditions are equivalent:
\begin{enumerate}[\normalfont 1)]
\item $N_\varphi$ is skew-symmetric on $\H$;
\item For any $X,Y\in\Gamma(\H)$: \  $g((\nabla^g_X\varphi)X,Y)
=g((\nabla^g_{\varphi X}\varphi)\varphi X,Y)$;
\item For any $X,Y,Z\in\Gamma (\H)$: $$g((\nabla^g_X\varphi )Y
+(\nabla^g_Y\varphi )X,Z)=g((\nabla^g_{\varphi X}\varphi )\varphi Y
+(\nabla^g_{\varphi Y}\varphi )\varphi X,Z);$$
\item For any $Y,Z\in\Gamma(\H)$: \  $g((\nabla^g_{\varphi Z}\varphi)Z,Y)
+g((\nabla^g_{Z}\varphi)\varphi Z,Y)=0$.
\end{enumerate}
\end{lem}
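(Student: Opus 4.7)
My plan is to reduce everything to the Levi-Civita derivative of $\varphi$ and to use two elementary facts. Since $\nabla^g$ is torsion-free, the $(1,1)$-tensor $\varphi$ satisfies
\[
[\varphi,\varphi](X,Y)=(\nabla^g_{\varphi X}\varphi)Y-(\nabla^g_{\varphi Y}\varphi)X-\varphi(\nabla^g_X\varphi)Y+\varphi(\nabla^g_Y\varphi)X,
\]
and for $X,Y,Z\in\Gamma(\H)$ the $d\eta\otimes\xi$-part of $N_\varphi$ is orthogonal to $Z$, so $g(N_\varphi(X,Y),Z)=g([\varphi,\varphi](X,Y),Z)$. The two facts I will use throughout are: (a) each $(\nabla^g_W\varphi)$ is $g$-skew (differentiate $g(\varphi A,B)+g(A,\varphi B)=0$), and in particular $g((\nabla^g_W\varphi)A,A)=0$; (b) differentiating $\varphi^2=-\Id+\eta\otimes\xi$ yields $(\nabla^g_W\varphi)\varphi A+\varphi(\nabla^g_W\varphi)A=(\nabla^g_W\eta)(A)\xi$, a vertical vector that drops out when paired against horizontal vectors.

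The heart of the argument is the equivalence $(1)\Leftrightarrow(4)$. Since $N_\varphi$ is already skew in its first two slots, $(1)$ is equivalent to $g(N_\varphi(X,Y),Y)=0$ for all horizontal $X,Y$. Expanding this via the Nijenhuis formula: the first summand vanishes by (a); the second, after skew-symmetry of $(\nabla^g_{\varphi Y}\varphi)$, becomes $g((\nabla^g_{\varphi Y}\varphi)Y,X)$; the third equals $g((\nabla^g_X\varphi)Y,\varphi Y)$ and is seen to vanish on $\Gamma(\H)$ by the short direct computation $g((\nabla^g_X\varphi)Y,\varphi Y)=-\eta(Y)(\nabla^g_X\eta)(Y)$; the fourth collapses via skew-symmetry of $(\nabla^g_Y\varphi)$ into $g((\nabla^g_Y\varphi)\varphi Y,X)$. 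The upshot is the key identity
\[
g(N_\varphi(X,Y),Y)=g((\nabla^g_{\varphi Y}\varphi)Y,X)+g((\nabla^g_Y\varphi)\varphi Y,X),
\]
which (after renaming $Y\to Z$) is exactly condition $(4)$.

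The remaining equivalences are purely algebraic. $(2)\Leftrightarrow(3)$ follows by polarizing $(2)$ through $X\to X+Y$ and using $(2)$ for $X,Y$ separately; the converse is $X=Y$ in $(3)$. For $(4)\Rightarrow(2)$ I polarize $(4)$ via $Z\to Z+W$ and then specialize $W=\varphi Z$: since $\varphi^2 Z=-Z$ on $\H$, the four cross-terms collapse to $2\,g((\nabla^g_{\varphi Z}\varphi)\varphi Z,Y)-2\,g((\nabla^g_Z\varphi)Z,Y)=0$, which is $(2)$. For $(3)\Rightarrow(4)$ I substitute $X=\varphi Z$, $Y=Z$ in $(3)$: by $\varphi^2 Z=-Z$ the right-hand side equals the negative of the left, so their common value is zero, which is $(4)$. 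This closes the chain $(1)\Leftrightarrow(4)\Rightarrow(2)\Leftrightarrow(3)\Rightarrow(4)$.

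The main obstacle lies in the careful bookkeeping for the expansion of $g(N_\varphi(X,Y),Y)$: each individual step is elementary, but combining the four Nijenhuis summands with repeated use of (a), (b), and $g(\varphi\cdot,\cdot)=-g(\cdot,\varphi\cdot)$ requires attentive sign tracking, and the auxiliary vanishing $g((\nabla^g_X\varphi)Y,\varphi Y)=0$ on $\Gamma(\H)$ has to be verified explicitly. Once that core identity is established, the polarization and substitution arguments for the remaining implications are short and routine.
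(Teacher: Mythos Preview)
Your proof is correct and, in fact, more self-contained than the paper's. The paper does not prove $(1)\Leftrightarrow(2)\Leftrightarrow(3)$ at all but simply cites an external reference (Proposition 3.1 of Dileo--Lotta), and then derives $(2)\Leftrightarrow(4)$ by the substitutions $X=Z+\varphi Z$ and $Z=X+\varphi X$ respectively---essentially the same polarization trick you use. Your route differs in that you make the equivalence $(1)\Leftrightarrow(4)$ the centerpiece, via the explicit identity
\[
g(N_\varphi(X,Y),Y)=g((\nabla^g_{\varphi Y}\varphi)Y,X)+g((\nabla^g_Y\varphi)\varphi Y,X),
\]
and then close the loop through $(2)$ and $(3)$ by polarization and substitution. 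This buys you a proof that does not depend on an outside source and makes transparent exactly where the skew-symmetry of $N_\varphi$ on $\H$ comes from; the paper's version is shorter on the page only because it outsources the substantive part. Your bookkeeping is clean: the skewness of $\nabla^g_W\varphi$, the vanishing of $g((\nabla^g_X\varphi)Y,\varphi Y)$ on $\H$, and the use of $\varphi^2=-\Id$ on $\H$ in the substitution steps are all handled correctly.
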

\begin{proof}
For the equivalence of 1), 2), and 3), see \cite[Proposition 3.1]{DL}. If 2) holds,
we get 4) by applying it to $X=Z+\varphi Z$. Conversely, applying 4) for $Z=X+\varphi X$,
we obtain 2).
\end{proof}
We shall now prove that if each $N_{\varphi_i}$ is
skew-symmetric on $\H$, then the tensor fields $N_{i,j}$ are skew-symmetric on $\H$ as well.
We proceed in two steps.
\begin{lem}\label{lemma_skew}
Let $(M,\varphi_i,\xi_i,\eta_i, g)$   be an almost $3$-contact metric manifold.
Let $i,j=1,2,3$, with $i\ne j$. The tensor field $N_{i,j}$ is skew-symmetric on $\H$ if and
only if
\bdm
g((\nabla^g_{\varphi_jY}\varphi_i)Y+(\nabla^g_{Y}\varphi_i)\varphi_jY,X)
+g((\nabla^g_{\varphi_iY}\varphi_j)Y+(\nabla^g_{Y}\varphi_j)\varphi_iY,X)=0
\edm
for every $X,Y\in\Gamma(\H)$.
\end{lem}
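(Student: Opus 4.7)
The plan is to mirror the proof of the equivalence 1)~$\Leftrightarrow$~4) in Lemma~\ref{skew}, adapted to the mixed setting involving two distinct structures $\varphi_i$ and $\varphi_j$. Since $N_{i,j}$ is already skew-symmetric in its first two entries, its total skew-symmetry on $\H$ as a $(0,3)$-tensor is equivalent to the single polarised condition
\[
g(N_{i,j}(X,Y),Y)=0 \quad \text{for all } X,Y \in \Gamma(\H),
\]
so the task reduces to computing this quantity in terms of $\nabla^g$ and matching it with the expression in the statement.

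Using the torsion-freeness of $\nabla^g$, I would expand each of the six Lie brackets in formula \eqref{Nij} as $[A,B]=\nabla^g_A B-\nabla^g_B A$ and collect terms. The pure $\nabla^g_V W$-terms cancel pairwise; the remaining $\varphi_i\varphi_j\nabla^g W + \varphi_j\varphi_i\nabla^g W$ contributions combine to $(\varphi_i\varphi_j+\varphi_j\varphi_i)[X,Y]$, which on horizontal $X,Y$ is purely vertical and, together with the $d\eta_i\otimes\xi_j+d\eta_j\otimes\xi_i$ correction implicit in $N_{i,j}$ versus \eqref{Nij}, is killed by pairing with the horizontal vector $Y$. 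What survives is a sum of eight $(\nabla^g\varphi_k)$-terms that must be shown to reduce to the claimed expression.

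Two facts drive the simplification: (i) the endomorphism $(\nabla^g_V\varphi_k)$ is $g$-skew, since $\varphi_k$ is, hence $g((\nabla^g_V\varphi_k)Y,Y)=0$, eliminating the two terms of type $g((\nabla^g_{\varphi_k X}\varphi_l)Y,Y)$; and (ii) differentiating the quaternionic identity $\varphi_i\varphi_j+\varphi_j\varphi_i=\eta_i\otimes\xi_j+\eta_j\otimes\xi_i$, derived from \eqref{3-sasaki}, along $X$, applying it to $Y\in\Gamma(\H)$, and pairing with $Y$ kills the $\xi$-valued terms and yields the crucial identity
\[
g((\nabla^g_X\varphi_i)\varphi_j Y,Y)+g((\nabla^g_X\varphi_j)\varphi_i Y,Y)=0.
\]
This precisely accounts for the two remaining ``obstructing'' terms $-g(\varphi_i(\nabla^g_X\varphi_j)Y,Y)$ and $-g(\varphi_j(\nabla^g_X\varphi_i)Y,Y)$, which therefore cancel each other. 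The four surviving terms, after applying $g(\varphi_k A,B)=-g(A,\varphi_k B)$ and the skewness of $(\nabla^g_V\varphi_k)$ to move indices into the right slots, reassemble into exactly the expression appearing in the lemma.

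The main obstacle is step (ii): one must correctly derive the differentiated quaternionic identity and manage the proliferation of mixed $\varphi_i/\varphi_j$ terms without sign errors, since the absence of a single ``master'' $\varphi$ (as in Lemma~\ref{skew}) prevents the clean reduction $g((\nabla^g_X\varphi)Y,\varphi Y)=0$ and forces one to rely on the interaction of the two structures. Once this cancellation is secured, the rest is essentially linear-algebraic bookkeeping, entirely parallel to the homogeneous case.
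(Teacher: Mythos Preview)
Your proposal is correct and follows essentially the same approach as the paper: reduce skew-symmetry of $N_{i,j}$ on $\H$ to the polarised condition $g(N_{i,j}(X,Y),Y)=0$, expand \eqref{Nij} via the Levi-Civita connection, kill the two $g((\nabla^g_{\varphi_lX}\varphi_m)Y,Y)$-terms by skewness of $\nabla^g\varphi_m$, and eliminate the remaining obstructing pair using the quaternionic identity $(\varphi_i\varphi_j+\varphi_j\varphi_i)Y=0$ on $\H$ (the paper phrases this last step as $g(\varphi_iY,\varphi_jY)=0$ and its $X$-derivative, which is exactly your differentiated identity).
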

\begin{proof}
Since $N_{i,j}=N_{j,i}$ we can fix an even permutation $(i,j,k)$ of $(1,2,3)$.
By \eqref{Nij}, one can check that
\begin{eqnarray*}
N_{i,j}(X,Y)  & = & (\nabla^g_{\varphi_i X}\varphi_j)Y-(\nabla^g_{\varphi_i Y}\varphi_j)X
+(\nabla^g_{\varphi_jX}\varphi_i)Y-(\nabla^g_{\varphi_jY}\varphi_i)X\\
&&{}+\varphi_i(\nabla^g_Y(\varphi_jX))-\varphi_i(\nabla^g_X(\varphi_jY))
+\varphi_j(\nabla^g_Y(\varphi_iX))-\varphi_j(\nabla^g_X(\varphi_iY))
\end{eqnarray*}
for every $X,Y\in\Gamma(\H)$, and thus
\begin{eqnarray*}
g(N_{i,j}(X,Y),Y)&= &
{}-g((\nabla^g_{\varphi_i Y}\varphi_j)X,Y)-g((\nabla^g_{\varphi_jY}\varphi_i)X,Y)
+g(\varphi_i\nabla^g_Y(\varphi_jX),Y)\\
&&{}+ g(\nabla^g_X(\varphi_jY),\varphi_iY)
+g(\varphi_j\nabla^g_Y(\varphi_iX),Y)+g(\nabla^g_X(\varphi_iY),\varphi_jY).
\end{eqnarray*}
Now, $g(\varphi_iY,\varphi_jY)=-g(Y,\varphi_i\varphi_jY)=-g(Y,\varphi_k Y)=0$. Then,
\[g(\nabla^g_X(\varphi_jY),\varphi_iY)+g(\nabla^g_X(\varphi_iY),\varphi_jY)
=X(g(\varphi_iY,\varphi_jY))=0.\]
Furthermore,
\begin{align*}
&g(\varphi_i\nabla^g_Y(\varphi_jX),Y)+g(\varphi_j\nabla^g_Y(\varphi_iX),Y)\\
&=g(\varphi_i(\nabla^g_Y\varphi_j)X+\varphi_i\varphi_j(\nabla^g_YX),Y)
+g(\varphi_j(\nabla^g_Y\varphi_i)X+\varphi_j\varphi_i(\nabla^g_YX),Y)\\
&=-g((\nabla^g_Y\varphi_j)X,\varphi_iY)-g((\nabla^g_Y\varphi_i)X,\varphi_jY),
\end{align*}
where we took into account that $(\varphi_i\varphi_j+\varphi_j\varphi_i)Y=0$.
We deduce that
\begin{eqnarray*}
\lefteqn{g(N_{i,j}(X,Y),Y) =}\\
&=&  g((\nabla^g_{\varphi_iY}\varphi_j)Y,X)+g((\nabla^g_{\varphi_jY}\varphi_i)Y,X)
+g((\nabla^g_{Y}\varphi_j)\varphi_iY,X)+g((\nabla^g_{Y}\varphi_i)\varphi_jY,X),
\end{eqnarray*}
which gives the result.
\end{proof}
\begin{prop}\label{prop.N-skew-if-Ni-skew}
Let $(M,\varphi_i,\xi_i,\eta_i, g)$  be an almost $3$-contact metric manifold
such that $N_{\varphi_1}$, $N_{\varphi_2}$, and $N_{\varphi_3}$ are skew-symmetric on $\H$. Then
 $N_{i,j}$ is skew-symmetric on $\H$ for every $i,j=1,2,3$. In particular,
$N_{\varphi}$ is skew-symmetric on $\H$ for any $\varphi$ in the associated sphere $\Sigma_M$.
\end{prop}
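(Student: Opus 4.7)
The plan is a two-part argument. The easy part: by the bilinear identity \eqref{ident-CM-DN-Y} (regarded as saying $N_{\varphi_a}$ is a quadratic polynomial in $a$ with coefficients the tensors $N_{\varphi_i}$ and $N_{i,j}$), once each $N_{\varphi_i}$ and each $N_{i,j}$ is skew on $\H$, the same property transfers to $N_\varphi$ for every $\varphi\in\Sigma_M$. So the substantive work is to show $N_{i,j}$ skew on $\H$ for each $i\ne j$.

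Fix an even permutation $(i,j,k)$ of $(1,2,3)$; by Lemma \ref{lemma_skew}, the claim reduces to the identity $A+B+C+D=0$ for all $X,Y\in\Gamma(\H)$, where
\begin{align*}
A &= g((\nabla^g_{\varphi_j Y}\varphi_i)Y,X), & B &= g((\nabla^g_Y\varphi_i)\varphi_j Y,X),\\
C &= g((\nabla^g_{\varphi_i Y}\varphi_j)Y,X), & D &= g((\nabla^g_Y\varphi_j)\varphi_i Y,X).
\end{align*}
The key observation is that differentiating the two identities $\varphi_k=\varphi_i\varphi_j-\eta_j\otimes\xi_i$ and $\varphi_k=-\varphi_j\varphi_i+\eta_i\otimes\xi_j$ from \eqref{3-sasaki} produces two expressions for $(\nabla^g_W\varphi_k)Z$ ($W\in\mathfrak{X}(M)$, $Z\in\Gamma(\H)$):
\begin{align*}
(\nabla^g_W\varphi_k)Z &= (\nabla^g_W\varphi_i)\varphi_j Z + \varphi_i(\nabla^g_W\varphi_j)Z - g(\nabla^g_W\xi_j,Z)\xi_i,\\
(\nabla^g_W\varphi_k)Z &= -(\nabla^g_W\varphi_j)\varphi_i Z - \varphi_j(\nabla^g_W\varphi_i)Z + g(\nabla^g_W\xi_i,Z)\xi_j.
\end{align*}
The strategy is to evaluate suitable covariant derivatives of $\varphi_k$ via both formulae and equate.

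Concretely: evaluating $g((\nabla^g_Y\varphi_k)Y,X)$ via both formulae (the $\xi_i,\xi_j$ corrections drop since $X\in\Gamma(\H)$) and equating yields
$$B+D \;=\; g((\nabla^g_Y\varphi_j)Y,\varphi_i X) + g((\nabla^g_Y\varphi_i)Y,\varphi_j X).$$
Next, apply the first formula to $g((\nabla^g_{\varphi_j Y}\varphi_k)\varphi_j Y,X)$ (where $\varphi_j\cdot\varphi_j Y=-Y$) and the second to $g((\nabla^g_{\varphi_i Y}\varphi_k)\varphi_i Y,X)$ (where $\varphi_i\cdot\varphi_i Y=-Y$); the resulting self-interaction terms $g((\nabla^g_{\varphi_\ell Y}\varphi_\ell)\varphi_\ell Y,\cdot)$ for $\ell=i,j$ are rewritten as $g((\nabla^g_Y\varphi_\ell)Y,\cdot)$ via condition 2) of Lemma \ref{skew} for $\varphi_\ell$. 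Since $\varphi_k\varphi_i Y=\varphi_j Y$ on $\H$, condition 2) of Lemma \ref{skew} for $\varphi_k$ applied to the horizontal vector $\varphi_i Y$ forces
$$g((\nabla^g_{\varphi_i Y}\varphi_k)\varphi_i Y, X) \;=\; g((\nabla^g_{\varphi_j Y}\varphi_k)\varphi_j Y, X),$$
and equating the two expressions above yields
$$A+C \;=\; -g((\nabla^g_Y\varphi_j)Y,\varphi_i X) - g((\nabla^g_Y\varphi_i)Y,\varphi_j X).$$
Adding the two identities gives $A+B+C+D=0$, as required.

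The main obstacle will be the combinatorial bookkeeping: one must carefully track $\varphi_i\varphi_j=\varphi_k$ versus $\varphi_j\varphi_i=-\varphi_k$ on $\H$ when specializing the two formulae, and check that every vertical correction vanishes upon pairing with $X\in\Gamma(\H)$. It is worth noting that each of the three hypotheses (skew-symmetry of $N_{\varphi_i}$, $N_{\varphi_j}$, $N_{\varphi_k}$ on $\H$) enters exactly once, through the corresponding invocation of condition 2) of Lemma \ref{skew}.
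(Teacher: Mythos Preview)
Your argument is correct and uses the same core ingredients as the paper's proof: reduction via Lemma \ref{lemma_skew}, the product-rule formulae for $\nabla^g\varphi_k$ coming from \eqref{3-sasaki}, and condition 2) (equivalently 4)) of Lemma \ref{skew} applied to each of $\varphi_i,\varphi_j,\varphi_k$. The paper carries out the same computation via a longer chain of auxiliary identities, whereas your ``compute the same quantity two ways and equate'' packaging is more transparent; but the content is the same.
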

\begin{proof}
We first prove some auxiliary formulas. In the following we always consider vector fields
$X,Y,Z\in\Gamma(\H)$. Let $(i,j,k)$ be an even permutation of $(1,2,3)$. Since
$\varphi_i X=\varphi_j\varphi_kX$ and $\varphi_i^2X=-X$,  one easily checks that
\begin{equation}\label{help1}
g((\nabla^g_X\varphi_i)Y,Z)=g((\nabla^g_X\varphi_j)\varphi_kY
+\varphi_j(\nabla^g_X\varphi_k)Y,Z),
\end{equation}
\begin{equation}\label{help2}
g((\nabla^g_X\varphi_i)\varphi_iY+\varphi_i(\nabla^g_X\varphi_i)Y,Z)=0.
\end{equation}
$N_{\varphi_i}$ being skew-symmetric on $\H$, we have by Lemma \ref{skew} 4),
\begin{equation}\label{help3}
g((\nabla^g_{\varphi_iY}\varphi_i)Y,X)+g((\nabla^g_{Y}\varphi_i)\varphi_iY,X)=0.
\end{equation}
Applying the above formula for $Y=\varphi_jZ$ we obtain
\begin{equation}\label{help4}
g((\nabla^g_{\varphi_kZ}\varphi_i)\varphi_jZ,X)+g((\nabla^g_{\varphi_jZ}\varphi_i)\varphi_kZ,X)=0.
\end{equation}
From \eqref{help3} and \eqref{help1} it follows that
\bdm
0\, =\, g((\nabla^g_{\varphi_iY}\varphi_j)\varphi_kY+\varphi_j(\nabla^g_{\varphi_iY}\varphi_k)Y,X)
+g((\nabla^g_{Y}\varphi_j)\varphi_k\varphi_iY
+\varphi_j(\nabla^g_{Y}\varphi_k)\varphi_iY,X),
\edm
and thus
\begin{equation}\label{help5}
0=g((\nabla^g_{\varphi_iY}\varphi_j)\varphi_kY,X)+g((\nabla^g_{Y}\varphi_j)\varphi_jY,X)
-g((\nabla^g_{\varphi_iY}\varphi_k)Y,\varphi_jX)
-g((\nabla^g_{Y}\varphi_k)\varphi_iY,\varphi_jX).
\end{equation}
At this point, using formulas \eqref{help1}, \eqref{help2}, \eqref{help3},
\eqref{help4}, we have
\begin{eqnarray*}
\lefteqn{g ((\nabla^g_{\varphi_iY}\varphi_j)\varphi_kY,X) \ = \ -g((\nabla^g_{\varphi_kY}\varphi_j)\varphi_iY,X)\ = \ }\\
& =& -\, g((\nabla^g_{\varphi_kY}\varphi_k)\varphi_i^2Y,X)
-g(\varphi_k(\nabla^g_{\varphi_kY}\varphi_i)\varphi_iY,X)\\
&=& + \, g((\nabla^g_{\varphi_kY}\varphi_k)Y,X)+g((\nabla^g_{\varphi_kY}\varphi_i)\varphi_iY,\varphi_kX)\\
&=&-\, g((\nabla^g_{Y}\varphi_k)\varphi_kY,X)
-g(\varphi_i(\nabla^g_{\varphi_kY}\varphi_i)Y,\varphi_kX)\\
&=&-\, g(\varphi_j(\nabla^g_{Y}\varphi_k)\varphi_kY,\varphi_jX)
-g((\nabla^g_{\varphi_kY}\varphi_i)Y,\varphi_jX)\\
&=&-g((\nabla^g_{Y}\varphi_i)\varphi_kY,\varphi_jX)
+\, g((\nabla^g_{Y}\varphi_j)\varphi_k^2Y,\varphi_jX)
 - g((\nabla^g_{\varphi_kY}\varphi_i)Y,\varphi_jX)\\
&=&-\, g((\nabla^g_{Y}\varphi_i)\varphi_kY,\varphi_jX)
-g((\nabla^g_{\varphi_kY}\varphi_i)Y,\varphi_jX) + g(\varphi_j(\nabla^g_{Y}\varphi_j)Y,X).
\end{eqnarray*}
Substituting the obtained expression for $g((\nabla^g_{\varphi_iY}\varphi_j)\varphi_kY,X)$
in \eqref{help5}, we have
\bdm
0=g((\nabla^g_{Y}\varphi_i)\varphi_kY,\varphi_jX)
+g((\nabla^g_{\varphi_kY}\varphi_i)Y,\varphi_jX)
+g((\nabla^g_{\varphi_iY}\varphi_k)Y,\varphi_jX)
+g((\nabla^g_{Y}\varphi_k)\varphi_iY,\varphi_jX),
\edm
and thus $N_{k,i}$ is skew-symmetric on $\H$, owing to Lemma \ref{lemma_skew}.
The last claim about the skew-symmetry of $N_\varphi$ now follows  from
identity \eqref{ident-CM-DN-Y}, proved in \cite{CM-DN-Y}.
\end{proof}
We end this section with a lemma that will  subsequently be used several times.
Although the formula does not look very neat, its qualitative claim is important: it states
that the tensor fields $N_{\varphi_i}$ of an almost $3$-contact metric structure can
be expressed in terms of the $1$-forms $\eta_i$ and the fundamental $2$-forms $\Phi_i$.
\begin{lem}\label{lemma_N}
Let $(M,\varphi_i,\xi_i,\eta_i,g)$ be an almost $3$-contact metric manifold. Then
the following formula holds $\forall X,Y,Z \in \mathfrak{X}(M) $:
\begin{align}\label{N_almost3c}
\lefteqn{N_{\varphi_i}(X,Y,Z)\ =}\\
&=  -d\Phi_j(X,Y,\varphi_jZ)+d\Phi_j(\varphi_iX,\varphi_iY,\varphi_jZ)
+d\Phi_k(X,\varphi_iY,\varphi_jZ)+d\Phi_k(\varphi_iX,Y,\varphi_jZ)\nonumber\\
&  \quad- \eta_i(X) [d\eta_j(\varphi_i Y,\varphi_j Z)+d\eta_k(Y,\varphi_jZ) ]
+\eta_i(Y) [d\eta_j(\varphi_i X,\varphi_j Z)+d\eta_k(X,\varphi_jZ) ] \nonumber\\
&\quad+ \eta_j(Z) [ d\eta_j(X,Y)-d\eta_j(\varphi_i X,\varphi_iY) ]
-\eta_j(Z) [ d\eta_k(X,\varphi_iY)+d\eta_k(\varphi_iX,Y) ] \nonumber
\end{align}
where $(i,j,k)$ is an even permutation of $(1,2,3)$.
\end{lem}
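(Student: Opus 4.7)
The striking feature of \eqref{N_almost3c} is that only the forms indexed by $j$ and $k$ appear on the right-hand side—no $d\Phi_i$ or $d\eta_i$ occurs. This structural observation dictates the approach: one should use the $3$-Sasakian relation \eqref{3-sasaki}, specifically $\varphi_i = \varphi_j\varphi_k - \eta_k\otimes\xi_j$, to rewrite every occurrence of $\varphi_i$ through $\varphi_j$ and $\varphi_k$ before doing anything differential.

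Starting from the definition
\[
N_{\varphi_i}(X,Y)=[\varphi_iX,\varphi_iY]-\varphi_i[\varphi_iX,Y]-\varphi_i[X,\varphi_iY]+\varphi_i^2[X,Y]+d\eta_i(X,Y)\xi_i,
\]
I would substitute $\varphi_iU=\varphi_j\varphi_kU-\eta_k(U)\xi_j$ together with $\varphi_i^2=-I+\eta_i\otimes\xi_i$ into each bracket term. Pairing with $Z$, the residual brackets of the form $g([\varphi_jU,\varphi_kV],Z)$ and $g(\varphi_j\varphi_k[U,V],Z)$ are then converted into exterior-derivative expressions via the Cartan formulas
\[
d\Phi_m(A,B,C)=\sum_{\mathrm{cyc}}\bigl(A\,\Phi_m(B,C)-\Phi_m([A,B],C)\bigr),\qquad d\eta_m(A,B)=A\eta_m(B)-B\eta_m(A)-\eta_m([A,B]),
\]
applied for $m=j,k$ only. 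Repeated use of $\Phi_m(U,V)=g(U,\varphi_mV)$, of the cross-relations $\varphi_m\xi_\ell\in\{0,\pm\xi_n\}$ from \eqref{3-sasaki}, and of the antisymmetry of $N_{\varphi_i}$ in $(X,Y)$ should collapse the uncontrolled $A\,\Phi_m(B,C)$ directional-derivative pieces into matching pairs that cancel, leaving only the prescribed $d\Phi_j,d\Phi_k$ and $d\eta_j,d\eta_k$ combinations on the right.

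The main obstacle is purely organisational: a naive expansion produces a large number of terms and one must track carefully that (i) every contribution involving $d\Phi_i$ or $d\eta_i$ drops out, (ii) the $\eta_i(X)$, $\eta_i(Y)$, $\eta_j(Z)$ factors emerge with exactly the coefficients listed, and (iii) no surviving torsion pieces along $\xi_i, \xi_k$ remain after using $\varphi_j\xi_k=\xi_i$ etc. Since both sides of \eqref{N_almost3c} are $C^\infty(M)$-tensorial in $X,Y,Z$, it is legitimate—and in practice advisable—to verify the identity by splitting into cases according to whether each argument lies in $\H$ or $\V$: the purely horizontal case is the substantial one, while the mixed cases collapse quickly using $N_{\varphi_i}(\xi_i,\cdot)=0$ and the quaternionic compatibility relations.
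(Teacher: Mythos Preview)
Your direct-substitution strategy is in principle sound---both sides are tensorial, and the Cartan formulas will convert bracket pieces into $d\Phi_m$, $d\eta_m$ terms---but it is a genuinely different route from the paper's, and one of your shortcuts is wrong. You claim the mixed cases ``collapse quickly using $N_{\varphi_i}(\xi_i,\cdot)=0$''; this is false for a general almost $3$-contact metric manifold. From the definition one has, for instance, $N_{\varphi_i}(X,\xi_i)=-[X,\xi_i]-\varphi_i[\varphi_iX,\xi_i]$ and $N_{\varphi_i}(\xi_i,\xi_j)=-[\xi_i,\xi_j]-\varphi_i[\xi_i,\xi_k]$, neither of which vanishes without a normality hypothesis. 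So the vertical and mixed cases are not free; you would have to grind them out as well, and the full expansion is considerably heavier than you indicate.

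The paper bypasses all of this term-tracking by passing to the product $M\times\mathbb R$ with its standard almost-hyperhermitian structure $(J_1,J_2,J_3,G)$, where $J_m$ extends $\varphi_m$ in the usual way and the K\"ahler forms satisfy $\Omega_m=\Phi_m-\eta_m\wedge dt$. For such a structure there is a clean identity (Kashiwada's note on Hitchin's lemma) expressing $G([J_i,J_i](X,Y),Z)$ purely through $d\Omega_j$ and $d\Omega_k$. Restricting to vector fields tangent to $M$, the left side is exactly $N_{\varphi_i}(X,Y,Z)$; substituting $d\Omega_m=d\Phi_m-d\eta_m\wedge dt$ on the right and reading off the $dt$-free part yields \eqref{N_almost3c} in a few lines. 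This explains structurally why only the indices $j,k$ appear---it is inherited from the hyperhermitian identity---whereas in your approach that feature has to emerge from cancellations you would have to verify by hand.
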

\begin{proof}
As it is known,  one can define three almost hermitian structures $(J_i,G)$ on the
product manifold $M\times \mathbb{R}$ as
\[J_i\Big(X,f\frac{d}{dt}\Big)=\Big(\varphi_i X-f\xi_i,\eta_i(X)
\frac{d}{dt}\Big),\qquad G=g+dt^2,\]
where $X\in\frak{X}(M)$ and $f$ is a differentiable function on $M\times \mathbb{R}$.
These almost hermitian structures satisfy $J_1J_2=J_3=-J_2J_1$. Denoting by $\Omega_i$
the associated K\"ahler forms, one has
\begin{eqnarray*}
\Omega_i\Big(\Big(X,f\frac{d}{dt}\Big),\Big(Y,f'\frac{d}{dt}\Big)\Big)
&=&G\Big(\Big(X,f\frac{d}{dt}\Big),J_i\Big(Y,f'\frac{d}{dt}\Big)\Big)\\
&=& g(X,\varphi_iY-f'\xi_i)+f\eta_i(Y)\\
&=& \Phi_i(X,Y)-\eta_i(X)f'+f\eta_i(Y).
\end{eqnarray*}
Using the same notations $\Phi_i$ and $\eta_i$ for differential forms on
$M\times\mathbb{R}$ such that $\frac{d}{dt}\lrcorner\,\Phi_i=0$ and
$\eta_i(\frac{d}{dt})=0$, we have $\Omega_i=\Phi_i-\eta_i\wedge dt$, and thus
\begin{equation}\label{dOmega}
d\Omega_i=d\Phi_i-d\eta_i\wedge dt.
\end{equation}
Now, the Nijenhuis tensors of the tensor fields $J_i$ satisfy
\begin{equation}\label{N_hyperK}
\begin{split}
G([J_i,J_i](X,Y),Z)={}-d\Omega_j(X,Y,J_jZ)
+d\Omega_j(J_iX,J_iY,J_jZ)\\
+d\Omega_k(X,J_iY,J_jZ)+d\Omega_k(J_iX,Y,J_jZ)
\end{split}
\end{equation}
for all vector fields $X,Y,Z$ on $M\times \mathbb{R}$, where $(i,j,k)$ is an even
permutation of $(1,2,3)$ (see \cite[Lemma 3.2]{kashiwada0}).
Taking $X,Y,Z\in\frak{X}(M)$, the left-hand side in \eqref{N_hyperK} coincides
with $g(N_{\varphi_i}(X,Y),Z)$. Applying \eqref{dOmega} and being
$\frac{d}{dt}\lrcorner\, d\Phi_i=0$ and $\frac{d}{dt}\lrcorner\, d\eta_i=0$, we have
\begin{align*}
N_{\varphi_i}(X,Y,Z)=&{}-d\Phi_j(X,Y,\varphi_jZ)+\eta_j(Z)d\eta_j(X,Y)\\
&{}+d\Phi_j(\varphi_iX,\varphi_iY,\varphi_jZ)-\eta_i(X)d\eta_j(\varphi_iY,\varphi_jZ)\\
&{}-\eta_i(Y)d\eta_j(\varphi_jZ,\varphi_iX)-\eta_j(Z)d\eta_j(\varphi_iX,\varphi_iY)\\
&{}+d\Phi_k(X,\varphi_iY,\varphi_jZ)-\eta_i(Y)d\eta_k(\varphi_jZ,X)-\eta_j(Z)
d\eta_k(X,\varphi_iY)\\
&{}+d\Phi_k(\varphi_iX,Y,\varphi_jZ)-\eta_i(X)d\eta_k(Y,\varphi_jZ)-\eta_j(Z)
d\eta_k(\varphi_iX,Y)
\end{align*}
thus proving \eqref{N_almost3c}.
\end{proof}
\begin{rem}\label{rem.dPhi-N=0}
As a consequence of the above lemma, we observe:
if $(M,\varphi_i,\xi_i,\eta_i,g)$ is an almost
$3$-contact metric manifold such that $d\Phi_i(X,Y,Z)=0$ for all $i=1,2,3$ and
for all horizontal vector fields $X,Y,Z$, then $N_{\varphi_i}(X,Y,Z)=0$ for all
$X,Y,Z\in\Gamma(\H)$.
Hence, in this case conditions 1) and 3) in Definition \ref{df.can-3-contact} of
canonical structures below are satisfied.
\end{rem}
%
%
\section{New classes of almost $3$-contact metric manifolds}%
%
%
\subsection{Remarkable functions  and
canonical almost $3$-contact metric manifolds}
%
%
\begin{df}\label{df.Reebcf}
We say that an almost $3$-contact metric manifold $(M,\varphi_i,\xi_i,\eta_i, g)$ admits
a \emph{Reeb commutator function} if there exists a function $\delta\in C^\infty(M)$
satisfying
\bdm
\eta_k([\xi_i,\xi_j]) \ =\ 2\delta\epsilon_{ijk} \text{ for every } \ i,j,k=1,2,3,
\edm
where $\epsilon_{ijk}$ is the totally skew-symmetric symbol.
We shall call the function $\delta$ the \emph{Reeb commutator function}.
\end{df}
Clearly, the existence of a constant Reeb commutator $\delta$
expresses that the three Reeb vector fields form a Lie algebra under the restriction
of the commutator to  $\mathcal{V}$, which is abelian in the case $\delta=0$, or
isomorphic to $\mathfrak{so}(3)$ if $\delta\ne 0$.
\begin{lem}[Existence of a Reeb commutator function]\label{lemma_xi_i}
%
Let $(M,\varphi_i,\xi_i,\eta_i, g)$ be an almost $3$-contact metric manifold. Then the
following conditions are equivalent:
\begin{enumerate}[\normalfont 1)]
\item $({\mathcal L}_{\xi_i}g)(\xi_j,\xi_k)=0$ for every $i,j,k=1,2,3$;
\item  $\eta_k([\xi_i,\xi_j])=2\delta\epsilon_{ijk}$ for some function $\delta\in C^\infty(M)$
and for every $i,j,k=1,2,3$;
\item $\eta_k(\nabla^g_{\xi_i}\xi_j)=\delta\epsilon_{ijk}$ for
some function $\delta\in C^\infty(M)$ and for every $i,j,k=1,2,3$.
\end{enumerate}
\end{lem}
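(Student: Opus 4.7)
The plan is to treat the three conditions as statements about the symmetries of the three-index quantity
\[
c_{ij}^{k}\ :=\ \eta_k([\xi_i,\xi_j])\ =\ g([\xi_i,\xi_j],\xi_k),
\]
since condition 2) just asserts that $c_{ij}^k$ is totally skew-symmetric and proportional to $\epsilon_{ijk}$. The Reeb vector fields are orthonormal by hypothesis, so $\xi_i\bigl(g(\xi_j,\xi_k)\bigr)=0$ and from the standard identity $(\mathcal L_X g)(Y,Z)=X g(Y,Z)-g([X,Y],Z)-g(Y,[X,Z])$ one immediately gets
\[
(\mathcal L_{\xi_i}g)(\xi_j,\xi_k)\ =\ -c_{ij}^{k}-c_{ik}^{j}.
\]
Thus condition 1) is equivalent to $c_{ij}^k=-c_{ik}^j$.

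To prove $1)\Leftrightarrow 2)$, I would just combine this with the obvious antisymmetry $c_{ij}^k=-c_{ji}^k$ coming from the bracket. These two transpositions generate the full symmetric group $S_3$ acting on $\{i,j,k\}$, so $c_{ij}^k$ is totally antisymmetric in its three indices and hence equals $f\,\epsilon_{ijk}$ for some smooth function $f$; setting $\delta:=f/2=\tfrac12\eta_3([\xi_1,\xi_2])$ yields 2). The reverse implication is a direct check: if $c_{ij}^k=2\delta\,\epsilon_{ijk}$ then $c_{ij}^k+c_{ik}^j=2\delta(\epsilon_{ijk}+\epsilon_{ikj})=0$.

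For $2)\Leftrightarrow 3)$, I would apply the Koszul formula to the orthonormal triple $\xi_i,\xi_j,\xi_k$; the three terms $\xi_\ast g(\xi_\ast,\xi_\ast)$ vanish and one is left with
\[
2\,\eta_k(\nabla^g_{\xi_i}\xi_j)\ =\ c_{ij}^{k}+c_{ki}^{j}-c_{jk}^{i}.
\]
Assuming 2), the right-hand side becomes $2\delta(\epsilon_{ijk}+\epsilon_{kij}-\epsilon_{jki})=2\delta\,\epsilon_{ijk}$, giving 3). Conversely, 3) together with the torsion-free identity $[\xi_i,\xi_j]=\nabla^g_{\xi_i}\xi_j-\nabla^g_{\xi_j}\xi_i$ yields $\eta_k([\xi_i,\xi_j])=\delta\,\epsilon_{ijk}-\delta\,\epsilon_{jik}=2\delta\,\epsilon_{ijk}$, which is 2).

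There is no genuine obstacle here; the whole lemma is essentially a bookkeeping exercise about the symmetry type of the three-index tensor $c_{ij}^k$. The only mildly delicate point is making sure that condition 1) really promotes the $jk$-antisymmetry to total antisymmetry when combined with the $ij$-antisymmetry of the bracket, which is just the standard fact that adjacent transpositions generate $S_3$.
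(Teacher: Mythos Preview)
Your proof is correct and follows essentially the same approach as the paper: both reduce everything to the symmetry type of $c_{ij}^k=\eta_k([\xi_i,\xi_j])$ via the Lie-derivative identity and orthonormality of the Reeb fields, and both pass from 2) to 3) through the torsion-free relation between bracket and Levi-Civita connection. Your presentation is slightly more streamlined---you invoke total antisymmetry via the $S_3$-generation argument and the Koszul formula directly, whereas the paper lists the relevant special cases of $(\mathcal L_{\xi_i}g)(\xi_j,\xi_k)$ and first derives the antisymmetry $g(\nabla^g_{\xi_j}\xi_i,\xi_k)=-g(\nabla^g_{\xi_i}\xi_j,\xi_k)$---but the substance is identical.
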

\begin{proof}
The equivalence of 1) and 2) is consequence of the following equations, which hold
for every $i,j,k=1,2,3$:
\begin{align*}
({\mathcal L}_{\xi_i}g)(\xi_j,\xi_k)&=-\eta_k([\xi_i,\xi_j])+\eta_j([\xi_k,\xi_i]),\\
({\mathcal L}_{\xi_i}g)(\xi_i,\xi_k)&=\eta_i([\xi_k,\xi_i]),\quad
({\mathcal L}_{\xi_i}g)(\xi_k,\xi_k)=2\,\eta_k([\xi_k,\xi_i]).
\end{align*}
Now, let us assume that 2) holds. Since
$\eta_i([\xi_j,\xi_k])=\eta_j([\xi_k,\xi_i])=2\delta\epsilon_{ijk}$, we have
\[
g(\nabla^g_{\xi_j}\xi_k,\xi_i)-g(\nabla^g_{\xi_k}\xi_j,\xi_i)
=g(\nabla^g_{\xi_k}\xi_i,\xi_j)-g(\nabla^g_{\xi_i}\xi_k,\xi_j).
\]
It follows that $g(\nabla^g_{\xi_j}\xi_i,\xi_k)=-g(\nabla^g_{\xi_i}\xi_j,\xi_k)$, and thus
\[2\delta\epsilon_{ijk}=\eta_k([\xi_i,\xi_j])=2\,\eta_k(\nabla^g_{\xi_i}\xi_j),\]
which implies 3). Conversely, 2) immediately follows from 3).
\end{proof}
\begin{cor}\label{cor.Reebcf-Killing}
Any almost $3$-contact metric manifold $(M,\varphi_i,\xi_i,\eta_i, g)$ for which all
$\xi_i$ are Killing vector fields admits a Reeb commutator function $\delta$.
\end{cor}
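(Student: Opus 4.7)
The corollary is an immediate consequence of Lemma \ref{lemma_xi_i}, so the plan is essentially to invoke that lemma by verifying its first condition.

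My plan is to observe that if each $\xi_i$ is a Killing vector field, then by definition $\mathcal{L}_{\xi_i} g = 0$ as a $(0,2)$-tensor on $M$. In particular, evaluating this vanishing tensor on any pair of vector fields gives zero; applying it to the pair $(\xi_j, \xi_k)$ yields
\[
(\mathcal{L}_{\xi_i} g)(\xi_j,\xi_k) \ = \ 0 \quad \text{for all } i,j,k = 1,2,3.
\]
This is exactly condition 1) of Lemma \ref{lemma_xi_i}.

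By the equivalence 1)$\Leftrightarrow$2) established in that lemma, there exists a function $\delta \in C^\infty(M)$ such that $\eta_k([\xi_i,\xi_j]) = 2\delta\,\epsilon_{ijk}$ for every $i,j,k$, which is precisely the definition of a Reeb commutator function. Hence $M$ admits the Reeb commutator function $\delta$.

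There is no real obstacle here: the corollary is a one-line deduction from Lemma \ref{lemma_xi_i}, since the Killing condition is strictly stronger than the vanishing of the Lie derivative on the specific triple of vertical vector fields required by the lemma. The only thing worth noting is that the function $\delta$ produced by the lemma is automatically well-defined and smooth, because the equivalence in Lemma \ref{lemma_xi_i} already ensures that $\eta_k([\xi_i,\xi_j])$ is totally skew in $(i,j,k)$ under the given hypothesis, so a single scalar function $\delta$ captures all nine components at once.
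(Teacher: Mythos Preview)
Your proof is correct and takes exactly the same approach as the paper: the corollary is stated immediately after Lemma~\ref{lemma_xi_i} without proof, precisely because the Killing condition $\mathcal{L}_{\xi_i}g=0$ trivially implies condition~1) of that lemma, and the equivalence 1)$\Leftrightarrow$2) then yields the Reeb commutator function.
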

A second remarkable function catches subtle properties of the Lie derivatives.
Given  an almost $3$-contact metric manifold $(M,\varphi_i,\xi_i,\eta_i,g)$, we introduce
the tensor fields $A_{ij}$, $i,j=1,2,3$, defined on the subbundle $\H$ of $TM$ by
\begin{equation}\label{Aij}
A_{ij}(X,Y)\ :=\
g(({\mathcal L}_{\xi_j}\varphi_i)X,Y)+d\eta_j(X,\varphi_i Y)+d\eta_j(\varphi_i X,Y).
\end{equation}
We shall denote by $A_i$ the tensor field $A_{ii}$. In Proposition \ref{prop_Killing},
we will prove that an expression of this type appears as the covariant derivative of
$\varphi\in\Sigma_M$ for any $\varphi$-compatible connection, thus partially
explaining its relevance. In Section \ref{subsec.3-AD-props-and-exa}, we will encounter
many manifolds for which
$A_i=0$ and the tensor fields $A_{ij}\ (i\ne j)$ are skew-symmetric in $i,j$ and
proportional to $\Phi_k$, where $k$ is the only remaining index in
$\{1,2,3\}$ different from $i$ and $j$. The following definition captures these properties.
\begin{df}\label{df.ReebKf}
An  almost $3$-contact metric manifold $(M,\varphi_i,\xi_i,\eta_i, g)$ is said to admit
a \emph{Reeb Killing function} if there exists a smooth function $\beta\in C^\infty(M)$
such that
for every $X,Y\in\Gamma(\H)$ and every even permutation $(i,j,k)$ of $(1,2,3)$,
\be\label{eq.ReebKf}
A_{i}(X,Y)=0,\qquad A_{ij}(X,Y)=-A_{ji}(X,Y)=\beta\Phi_k(X,Y).
\ee
As a special case,
$M$ will be called a \emph{parallel} almost $3$-contact metric manifold if it has vanishing
Reeb Killing function, $\beta=0$ or, equivalently, $A_{ij}=0 \ \forall i,j=1,2,3$.
\end{df}
\begin{rem}
The intrinsic meaning of the function $\beta$ is not as obvious as for the Reeb
commutator function, but it will become clearer as we proceed. Most importantly, we shall
see later that it controls
the derivatives of the structure tensors $\varphi_i,\xi_i$, and $\eta_i$ with respect
to the canonical connection (see Remark \ref{derivatives}), and this is in fact the
justification why manifolds with vanishing $\beta$ are called parallel.
\end{rem}
The following definition turns out to be tailor-made for our purposes:
\begin{df}\label{df.can-3-contact}
Let $(M,\varphi_i,\xi_i,\eta_i, g)$  be an almost $3$-contact metric manifold.
We call it a \emph{canonical almost $3$-contact metric manifold}
if the following conditions are satisfied:
\begin{enumerate}[\normalfont 1)]
\item each $N_{\varphi_i}$ is skew-symmetric on $\H$,
\item each $\xi_i$ is a Killing vector field,
\item for any $X,Y,Z\in\Gamma(\H)$ and any $i,j=1,2,3$,
\[N_{\varphi_i}(X,Y,Z)-d\Phi_i(\varphi_i X,\varphi_i Y,\varphi_i Z)=N_{\varphi_j}(X,Y,Z)-d\Phi_j(\varphi_j X,\varphi_j Y,\varphi_j Z),
\]
\item $M$ admits a Reeb Killing function $\beta\in C^\infty(M)$.
\end{enumerate}
As before, a \emph{parallel} canonical almost $3$-contact metric manifold is
one with vanishing Reeb Killing function, $\beta=0$.
\end{df}
By Corollary \ref{cor.Reebcf-Killing}, a canonical almost $3$-contact metric manifold
admits also a Reeb commutator function $\delta$.
We shall see in Theorem \ref{theo_canonical} that canonical almost $3$-contact metric
manifolds are exactly those admitting a canonical connection, thus explaining the name.
In a first step, we prove that each of the three almost contact metric structures of
a canonical almost $3$-contact metric manifold admits a characteristic connection in the
sense of Friedrich and Ivanov (Theorem \ref{theo-contact}):
\begin{theo}[Characteristic connections of canonical manifolds]%
\label{theo_canonical-implies-char}
Let $(M,\varphi_i,\xi_i,\eta_i, g)$ be a canonical almost $3$-contact metric manifold.
Then the following hold:
\begin{enumerate}[\normalfont 1)]
\item
The three Nijenhuis tensors $N_{\varphi_i}$ $(i=1,2,3)$ are skew-symmetric on $TM$, and hence
each almost contact metric structure $(\varphi_i,\xi_i,\eta_i,g)$  admits a
characteristic connection $\nabla^i$.
\item
Each almost contact metric structure $(\varphi,\xi,\eta,g)$ in the associated
sphere $\Sigma_M$ admits a characteristic connection.
\end{enumerate}
\end{theo}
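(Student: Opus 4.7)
The plan is to reduce both statements to the Friedrich--Ivanov criterion (Theorem \ref{theo-contact}), which requires that the Nijenhuis tensor be totally skew-symmetric on $TM$ and that the Reeb vector field be Killing. Both prerequisites are almost present in the definition of a canonical manifold: condition 2 already ensures each $\xi_i$ is Killing, and condition 1 gives skew-symmetry of each $N_{\varphi_i}$ on $\H$. The only real work, therefore, is to upgrade ``skew on $\H$'' to ``skew on $TM$'' for each $N_{\varphi_i}$ (part 1) and for each $N_\varphi$ with $\varphi \in \Sigma_M$ (part 2), after which part (2) is completed by noting that $\xi = a_1\xi_1 + a_2\xi_2 + a_3\xi_3$ is a constant linear combination of Killing fields, hence itself Killing.

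For part (1), I would expand $N_{\varphi_i}(X,Y) = [\varphi_i X,\varphi_i Y] - \varphi_i[\varphi_i X,Y] - \varphi_i[X,\varphi_i Y] + \varphi_i^2[X,Y] + d\eta_i(X,Y)\xi_i$ in the mixed cases where one or more arguments lie in $\V = \langle\xi_1,\xi_2,\xi_3\rangle$. Using $\varphi_i\xi_i = 0$ and $\varphi_i\xi_j = \pm\xi_k$, an evaluation of $N_{\varphi_i}(\xi_l, X)$ with $X \in \Gamma(\H)$ produces, after cancellation, an expression involving the Lie derivatives $\mathcal{L}_{\xi_l}\varphi_i$. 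These Lie derivatives are, up to a $d\eta$-correction, precisely what the tensor fields $A_{li}$ of \eqref{Aij} package; so the Reeb Killing function hypothesis (condition 4) identifies them with multiples of $\Phi_k$, while the Killing property of the $\xi_i$ (condition 2) supplies the matching symmetries of the $d\eta_j$. Analogous bookkeeping disposes of $N_{\varphi_i}(\xi_l,\xi_m,X)$ and $N_{\varphi_i}(\xi_l,\xi_m,\xi_n)$. Once $N_{\varphi_i}$ is seen to be a $3$-form on $TM$, Theorem \ref{theo-contact} delivers the characteristic connection $\nabla^i$.

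For part (2), Proposition \ref{prop.N-skew-if-Ni-skew} together with the identity \eqref{ident-CM-DN-Y} immediately gives that $N_\varphi$ is skew on $\H$ for every $\varphi \in \Sigma_M$. To extend this to $TM$ I would repeat the vertical case analysis of part (1), now applied to the structure $(\varphi,\xi,\eta,g)$: the needed ingredients (Killing Reeb fields, invariance of $\V$, explicit form of the $A_{lm}$) are intrinsic to the underlying $3$-contact structure and thus common to all elements of $\Sigma_M$, so the same identities apply. A final invocation of Theorem \ref{theo-contact} yields the characteristic connection of $(\varphi,\xi,\eta,g)$. The main obstacle throughout is exactly the vertical bookkeeping: the mixed components $N_{\varphi_i}(X,Y,\xi_l)$ and $N_{\varphi_i}(\xi_l,\xi_m,Z)$ must be carefully dissected using conditions 2 and 4, but I expect this to be tedious rather than delicate, precisely because the tensor fields $A_{ij}$ were designed to absorb these Lie derivatives in a single transparent symbol.
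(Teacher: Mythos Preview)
Your proposal is correct and, for part (1), matches the paper's proof exactly: a case-by-case analysis of $N_{\varphi_i}$ on mixed horizontal/vertical arguments, using the Killing property of the $\xi_i$ (which also yields the Reeb commutator function $\delta$) and the Reeb Killing function $\beta$ to control the $A_{ij}$-type terms, followed by an appeal to Theorem~\ref{theo-contact}.

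For part (2) there is a small organizational difference. You propose to repeat the vertical case analysis directly for a general $\varphi\in\Sigma_M$. The paper instead exploits the decomposition \eqref{ident-CM-DN-Y} on all of $TM$: since each $N_{\varphi_i}$ is already skew on $TM$ by part (1), it remains only to show that each mixed tensor $N_{i,j}$ ($i\neq j$) is skew on $TM$, which is done by a parallel case-by-case computation (the table \eqref{tableNij} plays the role of \eqref{tableN}). Your direct approach would also go through---the relevant ``$A$-tensors'' for a general $\varphi$ are linear combinations of the $A_{ij}$, so condition 4) still controls them---but the paper's decomposition keeps all computations in terms of the three fixed structures and avoids dragging the coefficients $a_1,a_2,a_3$ through every mixed case. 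Neither route uses condition 3) of Definition~\ref{df.can-3-contact}; that condition enters only later, in the construction of the canonical connection.
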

\begin{proof}
1) By assumption, the three Reeb vector fields $\xi_i$ are Killing vector fields and
the Nijenhuis tensors $N_{\varphi_i}$ are skew-symmetric on $\H$.
We prove that each $N_{\varphi_i}$ is skew-symmetric on $TM$. First of
all, the definition of the Nijenhuis tensor implies  that for every $X,Y\in\Gamma(\H)$
and for every even
permutation $(i,j,k)$ of $(1,2,3)$, the following equations hold:
\begin{align}\label{tableN}
N_{\varphi_i}(X,Y)&=[\varphi_i X,\varphi_i Y]-[X,Y]-\varphi_i[\varphi_i X,Y]
-\varphi_i[X,\varphi_i Y],\nonumber\\
N_{\varphi_i}(X,\xi_i)&=-[X,\xi_i]-\varphi_i[\varphi_iX,\xi_i],\nonumber\\
N_{\varphi_i}(X,\xi_j)&=[\varphi_i X,\xi_k]-[X,\xi_j]-\varphi_i[\varphi_i X,\xi_j]
-\varphi_i[X,\xi_k],\nonumber\\
N_{\varphi_i}(X,\xi_k)&=-[\varphi_i X,\xi_j]-[X,\xi_k]-\varphi_i[\varphi_i X,\xi_k]
+\varphi_i[X,\xi_j],\\
N_{\varphi_i}(\xi_i,\xi_j)&= -[\xi_i,\xi_j]-\varphi_i[\xi_i,\xi_k],\nonumber\\
N_{\varphi_i}(\xi_i,\xi_k)&= -[\xi_i,\xi_k]+\varphi_i[\xi_i,\xi_j],\nonumber\\
N_{\varphi_i}(\xi_j,\xi_k)&=0.\nonumber
\end{align}
Let $\delta$ be the Reeb commutator function,
i.\,e.~$\eta_t([\xi_r,\xi_s])=2\delta\epsilon_{rst}$, with
$r,s,t=1,2,3$. Using the defining relations \eqref{3-sasaki},
one easily checks that
\begin{align*}
&N_{\varphi_i}(\xi_i,\xi_j,\xi_i)=N_{\varphi_i}(\xi_i,\xi_j,\xi_j)
=N_{\varphi_i}(\xi_i,\xi_k,\xi_i)=N_{\varphi_i}(\xi_i,\xi_k,\xi_k)=0,\\
&N_{\varphi_i}(\xi_i,\xi_j,\xi_k)=N_{\varphi_i}(\xi_i,\xi_k,\xi_j)=0.
\end{align*}
For $X\in\Gamma(\H)$ and $r,s=1,2,3$, one checks that
\bdm
(\mathcal{L}_X g)(\xi_r,\xi_s) \, = \,
- (\mathcal{L}_{\xi_r} g)(X,\xi_s) - (\mathcal{L}_{\xi_s} g)(X,\xi_r).
\edm
Since the Reeb vector fields are Killing, this quantity vanishes, which is
equivalent to $\eta_r([X,\xi_s])+\eta_s([X,\xi_r])=0$. Therefore,
\begin{align*}
N_{\varphi_i}(X,\xi_i,\xi_i)&=-\eta_i([X,\xi_i])=0,\\
N_{\varphi_i}(X,\xi_j,\xi_j)&=\eta_j([\varphi_i X,\xi_k])-\eta_j([X,\xi_j])
+\eta_k([\varphi_i X,\xi_j])+\eta_k([X,\xi_k])=0.
\end{align*}
Similarly one shows the following identities:
\begin{align*}
N_{\varphi_i}(X,\xi_k,\xi_k)&=0,\quad &N_{\varphi_i}(X,\xi_i,\xi_j)+ N_{\varphi_i}(X,\xi_j,\xi_i)=0,\\
N_{\varphi_i}(X,\xi_j,\xi_k)&=N_{\varphi_i}(X,\xi_k,\xi_j)=0,\quad &
N_{\varphi_i}(X,\xi_i,\xi_k)+ N_{\varphi_i}(X,\xi_k,\xi_i)=0.
\end{align*}
Since $\xi_i$ is a Killing vector field, we have
\begin{align*}
&N_{\varphi_i}(\xi_i,\xi_j,X)+N_{\varphi_i}(\xi_i,X,\xi_j)\\
&= -g([\xi_i,\xi_j],X)+g([\xi_i,\xi_k],\varphi_i X)-g([\xi_i,X],\xi_j)
+g([\xi_i,\varphi_iX],\xi_k)\\
&=({\mathcal L}_{\xi_i}g)(\xi_j,X)-({\mathcal L}_{\xi_i}g)(\xi_k,\varphi_iX)=0
\end{align*}
and analogously, $N_{\varphi_i}(\xi_i,\xi_k,X)+N_{\varphi_i}(\xi_i,X,\xi_k)=0$.
Now the existence of a Reeb Killing function $\beta$  (see eq.\,\eqref{eq.ReebKf})
yields for $X,Y\in\Gamma(\H)$
\begin{align*}
&\quad N_{\varphi_i}(X,Y,\xi_i)+N_{\varphi_i}(X,\xi_i,Y)\\
&\quad= \eta_i([\varphi_iX,\varphi_iY])-\eta_i([X,Y])+g([\xi_i,X],Y)
-g([\xi_i,\varphi_iX],\varphi_i Y)\\
&\quad =-d\eta_i(\varphi_iX,\varphi_iY)+d\eta_i(X,Y)
-g(({\mathcal L}_{\xi_i}\varphi_i)\varphi_i X,Y)\\
&\quad =-A_i(\varphi_iX,Y)=0.
\end{align*}
Furthermore, one can compute
\be\label{NXYxi}
N_{\varphi_i}(X,Y,\xi_j)
=-d\eta_j(\varphi_iX,\varphi_iY)+d\eta_j(X,Y)
-d\eta_k(\varphi_iX,Y)-d\eta_k(X,\varphi_iY),
\ee
as well as
\begin{align*}
N_{\varphi_i}(X,\xi_j,Y)\nonumber&=  g([\varphi_i X,\xi_k],Y)-g([X,\xi_j],Y)
+g([\varphi_i X,\xi_j],\varphi_iY)+g([X,\xi_k],\varphi_iY)\nonumber\\
&=  -g(({\mathcal L}_{\xi_j}\varphi_i)\varphi_i X,Y)-g(({\mathcal L}_{\xi_k}\varphi_i) X,Y).
\end{align*}
Therefore, using again the existence of a Reeb Killing function, we conclude
\begin{eqnarray*}
N_{\varphi_i}(X,Y,\xi_j)+N_{\varphi_i}(X,\xi_j,Y)
&= & -A_{ij}(\varphi_iX,Y)-A_{ik}(X,Y)\\
& =& -\beta\Phi_k(\varphi_iX,Y)+\beta\Phi_j(X,Y)\\
&= & -\beta g(\varphi_iX,\varphi_kY)+\beta g(X,\varphi_jY)=0.
\end{eqnarray*}
Analogously one shows that $N_{\varphi_i}(X,Y,\xi_k)+N_{\varphi_i}(X,\xi_k,Y)=0$. Finally,
\bdm
N_{\varphi_i}(\xi_i,X,X) =-g([\xi_i,X],X)-g(\varphi_i[\xi_i,\varphi_iX],X)
=g(({\mathcal L}_{\xi_i}\varphi_i)\varphi_iX,X) =A_i(\varphi_iX,X)=0,
\edm
and furthermore
\begin{eqnarray*}
N_{\varphi_i}(\xi_j,X,X)&= & g([\xi_k,\varphi_i X],X)-g([\xi_j,X],X)
 -g(\varphi_i[\xi_k,X], X)-g(\varphi_i[\xi_j,\varphi_iX],X)\\
&= & g(({\mathcal L}_{\xi_k}\varphi_i)X,X)+g(({\mathcal L}_{\xi_j}\varphi_i)\varphi_iX,X)
\ =\  A_{ik}(X,X)+A_{ij}(\varphi_iX,X)\\
&=& -\beta\Phi_j(X,X)+\beta\Phi_k(\varphi_iX,X)=0.
\end{eqnarray*}
Analogously, $N_{\varphi_i}(\xi_k,X,X)=0$, completing the proof that each
$N_{\varphi_i}$ is skew-symmetric on $TM$.
Since all $\xi_i$ are Killing vector fields,
the existence of a characteristic connection $\nabla^i$ for each $\varphi_i$ now follows
from Theorem \ref{theo-contact}.

\medskip\noindent
2) Let $(\varphi,\xi,\eta,g)$ be in the associated sphere $\Sigma_M$. Its Reeb vector field
$\xi$ is obviously Killing, and thus the main point is to prove that its Nijenhuis
tensor $N_\varphi$ is skew-symmetric on $TM$. By 1),  each tensor $N_{\varphi_i}$ is
skew-symmetric on $TM$, and consequently Proposition \ref{prop.N-skew-if-Ni-skew} implies that each $N_{i,j}$, $i\ne j$, is skew-symmetric on $\mathcal H$. In view of
\eqref{ident-CM-DN-Y}, we only need to show that each $N_{i,j}$ is skew-symmetric on $TM$.
In the following we fix an even permutation $(i,j,k)$ of $(1,2,3)$ and denote by
$X,Y,Z$ horizontal vector fields. We proceed case by case as 1), hence we shall be brief.
From the definition of $N_{i,j}$, taking into account that
$(\varphi_i\varphi_j+\varphi_j\varphi_i)X=0$, we obtain after a short calculation
\begin{align}\label{tableNij}
N_{i,j}(X,Y)&=[\varphi_i X,\varphi_jY]-[\varphi_jX,\varphi_iY]-\varphi_i[\varphi_j X,Y]
-\varphi_i[X,\varphi_j Y]\nonumber\\&\quad-\varphi_j[\varphi_i X,Y]
-\varphi_j[X,\varphi_i Y],\nonumber\\
N_{i,j}(X,\xi_i)&=-[\varphi_iX,\xi_k]-\varphi_i[\varphi_jX,\xi_i]+\varphi_i[X,\xi_k]-\varphi_j[\varphi_i X,\xi_i]\nonumber\\
&=({\mathcal L}_{\xi_k}\varphi_i)X-({\mathcal L}_{\xi_i}\varphi_i)\varphi_jX-({\mathcal L}_{\xi_i}\varphi_j)\varphi_iX,\nonumber\\
N_{i,j}(X,\xi_j)&=[\varphi_j X,\xi_k]-\varphi_i[\varphi_jX,\xi_j]-\varphi_j[\varphi_i X,\xi_j]-\varphi_j[X,\xi_k]\nonumber\\
&=-({\mathcal L}_{\xi_k}\varphi_j)X-({\mathcal L}_{\xi_j}\varphi_i)\varphi_jX-({\mathcal L}_{\xi_j}\varphi_j)\varphi_iX,\\
N_{i,j}(X,\xi_k)&=[\varphi_i X,\xi_i]-[\varphi_jX,\xi_j]-\varphi_i[\varphi_j X,\xi_k]
-\varphi_i[X,\xi_i]-\varphi_j[\varphi_iX,\xi_k]+\varphi_j[X,\xi_j] \nonumber\\
&=-({\mathcal L}_{\xi_i}\varphi_i)X+({\mathcal L}_{\xi_j}\varphi_j)X-({\mathcal L}_{\xi_k}\varphi_i)\varphi_jX-({\mathcal L}_{\xi_k}\varphi_j)\varphi_iX, \nonumber\\
N_{i,j}(\xi_i,\xi_j)&= \varphi_i[\xi_k,\xi_j]-\varphi_j[\xi_i,\xi_k],\nonumber\\
N_{i,j}(\xi_i,\xi_k)&= [\xi_k,\xi_j]+\varphi_j[\xi_i,\xi_j],\nonumber\\
N_{i,j}(\xi_j,\xi_k)&= [\xi_k,\xi_i]-\varphi_i[\xi_j,\xi_i].\nonumber
\end{align}
Now, since $M$ admits a Reeb commutator function, one easily checks that
$N_{i,j}(\xi_r,\xi_s,\xi_t)=0$ for every $r,s,t=1,2,3$. Furthermore, since
$\eta_r([X,\xi_s])+\eta_s([X,\xi_r])=0$, we deduce
$N_{i,j}(X,\xi_r,\xi_s)+N_{i,j}(X,\xi_s,\xi_r)=0$. Next we compute
\begin{align*}
&N_{i,j}(\xi_i,\xi_j,X)+N_{i,j}(\xi_i,X,\xi_j)\\
&= -g([\xi_k,\xi_j],\varphi_iX)+g([\xi_i,\xi_k],\varphi_j X)-g([\xi_k,\varphi_iX],\xi_j)
+g([\xi_i,\varphi_jX],\xi_k)\\
&=({\mathcal L}_{\xi_k}g)(\varphi_iX,\xi_j)-({\mathcal L}_{\xi_i}g)(\xi_k,\varphi_jX)=0.
\end{align*}
Analogously, using the equations in \eqref{tableNij}, one shows that
\[N_{i,j}(\xi_i,\xi_k,X)+N_{i,j}(\xi_i,X,\xi_k)=0,\qquad N_{i,j}(\xi_j,\xi_k,X)+N_{i,j}(\xi_j,X,\xi_k)=0.\]
Since $M$ admits a Reeb Killing function $\beta$, we have
\begin{align*}
&\quad N_{i,j}(X,Y,\xi_i)+N_{i,j}(X,\xi_i,Y)\\
&\quad= -d\eta_i(\varphi_iX,\varphi_jY)-d\eta_i(\varphi_jX,\varphi_iY)+d\eta_k(\varphi_iX,Y)+d\eta_k(X,\varphi_iY)\\
&\quad\quad+g(({\mathcal L}_{\xi_k}\varphi_i)X,Y)
-g(({\mathcal L}_{\xi_i}\varphi_i)\varphi_jX,Y)-g(({\mathcal L}_{\xi_i}\varphi_j)\varphi_iX,Y)\\
&\quad =A_{ik}(X,Y)-A_{i}(\varphi_jX,Y)+d\eta_i(\varphi_i\varphi_jX,Y)-A_{ji}(\varphi_iX,Y)+d\eta_i(\varphi_j\varphi_iX,Y)\\
&\quad =-\beta\Phi_j(X,Y)+\beta\Phi_k(\varphi_iX,Y)=-\beta g(X,\varphi_jY)+\beta g(\varphi_iX,\varphi_kY)=0.
\end{align*}
In the same way one shows that
\[N_{i,j}(X,Y,\xi_j)+N_{i,j}(X,\xi_j,Y)=0,\qquad N_{i,j}(X,Y,\xi_k)+N_{i,j}(X,\xi_k,Y)=0.\]
Finally,
\begin{eqnarray*}
N_{i,j}(\xi_i,X,X)&=& -g(({\mathcal L}_{\xi_k}\varphi_i)X,X)
+g(({\mathcal L}_{\xi_i}\varphi_i)\varphi_jX,X)-g(({\mathcal L}_{\xi_i}\varphi_j)\varphi_iX,X)\\
&=& -A_{ik}(X,X)+A_{i}(\varphi_jX,X)+d\eta_i(\varphi_jX,\varphi_iX)+d\eta_i(\varphi_i\varphi_jX,X)\\
&& -A_{ji}(\varphi_iX,X)+d\eta_i(\varphi_iX,\varphi_jX)+d\eta_i(\varphi_j\varphi_iX,X)\\
&=&\beta\Phi_j(X,X)+\beta\Phi_k(\varphi_iX,X)=0,
\end{eqnarray*}
and analogously $N_{i,j}(\xi_j,X,X)=N_{i,j}(\xi_k,X,X)=0$, thus completing the proof.
\end{proof}

We introduce now a slight generalization of $3$-cosymplectic manifolds.
\begin{df}\label{df.3d-cosymplectic}
A \emph{$3$-$\delta$-cosymplectic manifold} is an almost
$3$-contact metric manifold satisfying
\begin{equation}\label{3-delta}
d\eta_i=-2\delta\eta_j\wedge\eta_k,\qquad d\Phi_i=0,
\end{equation}
for some $\delta\in\R$ and for every even permutation $(i,j,k)$ of $(1,2,3)$.
\end{df}
When $\delta=0$, we get the notion of $3$-cosymplectic manifolds.
We shall describe the class of $3$-$\delta$-cosymplectic manifolds
with $\delta\ne0$, and show that all  $3$-$\delta$-cosymplectic manifolds are parallel and
canonical.
\begin{prop}\label{prop.3-d-cosymplectic-is-hn}
Let $(M,\varphi_i,\xi_i,\eta_i,g)$ be a $3$-$\delta$-cosymplectic manifold
with $\delta\ne0$. Then the structure is hypernormal, and the Levi-Civita
connection  satisfies
\begin{equation}\label{LC-3cosymplectic}
(\nabla^g_X\varphi_i)Y=\delta\{\eta_j(X)\eta_j(Y)+\eta_k(X)\eta_k(Y)\}\xi_i-\delta\,\eta_i(Y)\{\eta_j(X)\xi_j+\eta_k(X)\xi_k\},
\end{equation}
\begin{equation}\label{LCxi-3cosymplectic}
\nabla^g_X\xi_i=\delta\{\eta_k(X)\xi_j-\eta_j(X)\xi_k\}
\end{equation}
for every $X,Y\in\frak{X}(M)$ and for every even permutation $(i,j,k)$ of $(1,2,3)$.
Furthermore, each $\xi_i$ is a Killing vector field and $M$ is locally isometric
to the Riemannian product of a hyper-K\"ahler manifold and the $3$-dimensional
sphere of constant curvature $\delta^2$.
\end{prop}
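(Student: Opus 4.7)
The approach is to first establish hypernormality via Lemma~\ref{lemma_N}, which will reduce each $(\varphi_i,\xi_i,\eta_i,g)$ to a quasi-Sasakian structure. The Killing property of $\xi_i$ will then yield \eqref{LCxi-3cosymplectic} immediately, and the local de Rham decomposition will simultaneously give the geometric description and the formula \eqref{LC-3cosymplectic}.

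For hypernormality, formula \eqref{N_almost3c} simplifies dramatically since $d\Phi_i=0$ kills all four $d\Phi$-terms. For the six remaining $d\eta$-terms, I would substitute $d\eta_j=-2\delta\,\eta_k\wedge\eta_i$ and $d\eta_k=-2\delta\,\eta_i\wedge\eta_j$ and apply the identities $\eta_j\circ\varphi_i=-\eta_k$, $\eta_k\circ\varphi_i=\eta_j$, $\eta_i\circ\varphi_j=\eta_k$ (for even $(i,j,k)$) together with $\eta_i\circ\varphi_i=0$; a short direct computation shows that these six contributions to $N_{\varphi_i}(X,Y,Z)$ cancel pairwise, so $N_{\varphi_i}=0$ and $M$ is hypernormal. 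Each $(\varphi_i,\xi_i,\eta_i,g)$ is now both normal and satisfies $d\Phi_i=0$, i.e.\ is quasi-Sasakian; by the classical result recalled in Section~\ref{sec.review}, each Reeb vector field $\xi_i$ is therefore Killing. The standard identity $g(\nabla^g_X\xi_i,Y)=\tfrac12 d\eta_i(X,Y)+\tfrac12(\mathcal L_{\xi_i}g)(X,Y)$ collapses to $g(\nabla^g_X\xi_i,Y)=\tfrac12 d\eta_i(X,Y)=\delta[\eta_k(X)\eta_j(Y)-\eta_j(X)\eta_k(Y)]$, which is exactly \eqref{LCxi-3cosymplectic}.

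In particular $\nabla^g_X\xi_i$ is vertical for every $X$, so $g(\nabla^g_X Y,\xi_i)=-g(Y,\nabla^g_X\xi_i)=0$ for $Y\in\Gamma(\H)$. Both $\H$ and $\V$ are therefore $\nabla^g$-parallel and integrable, and de Rham's local decomposition theorem provides an isometric splitting $M=M_\H\times M_\V$. From \eqref{LCxi-3cosymplectic} restricted to $\V$ one finds $[\xi_j,\xi_k]=2\delta\xi_i$ and, by a direct calculation, $R(\xi_j,\xi_k)\xi_k=\delta^2\xi_j$, so $M_\V$ is a three-dimensional Lie group of constant sectional curvature $\delta^2$, hence locally the round $3$-sphere of curvature $\delta^2$. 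On $M_\H$ each $\varphi_i|_\H$ is an almost Hermitian structure whose K\"ahler form $\Phi_i|_\H$ is closed and whose Nijenhuis tensor vanishes (by hypernormality), hence K\"ahler; combined with the quaternionic relations on $\H$, this makes $M_\H$ hyper-K\"ahler. Finally, \eqref{LC-3cosymplectic} follows from the product decomposition by computing $(\nabla^g_X\varphi_i)Y$ on the basis of Reeb and horizontal vectors, noting that cross-terms vanish by parallelism and reducing to the values $(\nabla^g_X\varphi_i)\xi_i=-\varphi_i\nabla^g_X\xi_i$, $(\nabla^g_X\varphi_i)\xi_j=\delta\eta_j(X)\xi_i$, $(\nabla^g_X\varphi_i)\xi_k=\delta\eta_k(X)\xi_i$, which assemble into the claimed expression. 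The most delicate point is the careful index bookkeeping in the cancellation of Step~1; everything else reduces to standard contact-geometric manipulations plus the de Rham theorem.
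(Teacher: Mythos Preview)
Your strategy differs from the paper's in the order of deductions. The paper proceeds: hypernormality (via Lemma~\ref{lemma_N}) $\Rightarrow$ formula~\eqref{LC-3cosymplectic} directly from Blair's identity~\eqref{leci-civita} for normal structures $\Rightarrow$ formula~\eqref{LCxi-3cosymplectic} by evaluating at $Y=\xi_i$ $\Rightarrow$ Killing property $\Rightarrow$ local product. You instead go: hypernormality $\Rightarrow$ quasi-Sasakian $\Rightarrow$ Killing $\Rightarrow$ \eqref{LCxi-3cosymplectic} $\Rightarrow$ de Rham splitting $\Rightarrow$ hyper-K\"ahler factor and \eqref{LC-3cosymplectic}. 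Your route through the quasi-Sasakian Killing property is a nice shortcut to~\eqref{LCxi-3cosymplectic}, and the vertical curvature computation is correct.

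There is, however, a genuine gap in your last step. The de Rham decomposition tells you only that $\H$ and $\V$ are $\nabla^g$-parallel; it does \emph{not} tell you that the endomorphism $\varphi_i|_\H$ is constant along the leaves of $\V$. Concretely, for $X=\xi_l\in\V$ and $Y\in\Gamma(\H)$ you need $(\nabla^g_{\xi_l}\varphi_i)Y=0$, and ``parallelism'' of the distributions does not supply this: even in a Riemannian product $M_\H\times M_\V$, a section $Y$ of $T M_\H$ can depend on the $M_\V$-variable, and so can $\varphi_i Y$ unless you know $\mathcal L_{\xi_l}(\varphi_i|_\H)=0$. The same issue affects your claim that $M_\H$ is hyper-K\"ahler: to say $\varphi_i|_\H$ descends to a complex structure on $M_\H$ you again need it to be basic with respect to the vertical foliation. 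The cleanest fix is exactly what the paper does: once hypernormality is in hand, plug $d\Phi_i=0$ and $d\eta_i=-2\delta\,\eta_j\wedge\eta_k$ into Blair's formula~\eqref{leci-civita} to obtain~\eqref{LC-3cosymplectic} directly; this both closes the gap and makes the subsequent product description immediate. Alternatively, you could verify $(\mathcal L_{\xi_l}\varphi_i)|_\H=0$ by hand (e.g.\ from $N_{\varphi_i}(\xi_l,\cdot)=0$ together with $d\eta_i(\xi_l,\cdot)|_\H=0$), but Blair's formula is shorter.
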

\begin{proof}
The fact that the structure is hypernormal is a consequence of Lemma \ref{lemma_N}, which
expressed $N_{\varphi_i}$ in terms of $\eta_i$ and $\Phi_i$. More precisely, the defining
relation \eqref{3-delta} of $3$-$\delta$-cosymplectic manifolds, when plugged into
the identity  \eqref{N_almost3c}, yields after a short calculation that $N_{\varphi_i}=0$.

By \cite[Lemma 6.1]{BLAIR}, the Levi-Civita
connection of any hypernormal structure satisfies
\begin{equation}
\begin{split}\label{leci-civita}
2g((\nabla^g_X\varphi_i)Y,Z)&
=d\Phi_i(X,\varphi_iY,\varphi_iZ)-d\Phi_i(X,Y,Z)\\
&\quad+d\eta_i(\varphi_iY,X)\eta_i(Z)-d\eta_i(\varphi_iZ,X)\eta_i(Y)
\end{split}
\end{equation}
for every $X,Y,Z\in\frak{X}(M)$. Together with equation \eqref{3-delta}, this yields
\begin{align*}
2g((\nabla^g_X\varphi_i)Y,Z)&=-2\delta(\eta_j\wedge\eta_k)(\varphi_iY,X)\eta_i(Z)+2\delta(\eta_j\wedge\eta_k)(\varphi_iZ,X)\eta_i(Y)\\
&=2\delta\{\eta_k(Y)\eta_k(X)+\eta_j(X)\eta_j(Y)\}\eta_i(Z)\\
&\quad +2\delta\{-\eta_k(Z)\eta_k(X)-\eta_j(Z)\eta_j(X)\}\eta_i(Y),
\end{align*}
and hence we proved \eqref{LC-3cosymplectic}.
Applying \eqref{LC-3cosymplectic} for $Y=\xi_i$, we have
\[\nabla^g_X\xi_i=-\varphi_i^2(\nabla^g_X\xi_i)=\varphi_i((\nabla^g_X\varphi_i)\xi_i)
=-\delta\{\eta_j(X)\xi_k-\eta_k(X)\xi_j\},\]
which proves \eqref{LCxi-3cosymplectic}. It follows that
\[g(\nabla^g_X\xi_i,Y)=\delta (\eta_k\wedge\eta_j)(X,Y)\]
for every vector fields $X$, $Y$, thus showing that $\xi_i$ is Killing. We can also
deduce that
\[\nabla^g_{\xi_i}\xi_i=0,\quad \nabla^g_{\xi_i}\xi_j=-\nabla^g_{\xi_j}\xi_i=\delta\xi_k, \quad [\xi_i,\xi_j]=2\delta\xi_k.\]
Then, $\mathcal V$ is an integrable distribution with totally geodesic leaves, globally
spanned by Killing vector fields. Since  $\mathcal H$ is integrable as well, the manifold
is locally isometric to the Riemannian product of a manifold $M'$ tangent to $\mathcal H$
and a $3$-dimensional Lie group tangent to $\mathcal V$,  which is isomorphic to $SO(3)$.
Owing to \eqref{LC-3cosymplectic}, the almost $3$-contact metric structure induces on
$M'$ a hyper-K\"ahler structure. Furthermore, the leaves of $\mathcal V$ have constant
sectional curvature $\delta^2$.
\end{proof}
Together with what was known before on $3$-$\delta$-cosymplectic manifolds with $\delta=0$
(Section \ref{sec.review}), we obtain:
\begin{cor}\label{cor.3-d-sympl-is-pc}
Any $3$-$\delta$-cosymplectic manifold $(M,\varphi_i,\xi_i,\eta_i,g)$ is a
parallel canonical almost $3$-contact metric manifold.
\end{cor}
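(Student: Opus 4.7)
My plan is to verify the four defining conditions of a canonical almost $3$-contact metric manifold (Definition \ref{df.can-3-contact}), together with the vanishing of the Reeb Killing function, by splitting into the two regimes $\delta=0$ and $\delta\ne 0$ and pulling as much as possible from the preceding results.

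First, I would dispose of the structural conditions 1)--3). For $\delta\ne 0$, Proposition \ref{prop.3-d-cosymplectic-is-hn} already shows that the structure is hypernormal and that each $\xi_i$ is Killing; for $\delta=0$ the same holds by the classical 3-cosymplectic theory recalled in Section \ref{sec.review} (the manifold is locally a Riemannian product of a hyper-K\"ahler manifold and $\R^3$). Hypernormality means $N_{\varphi_i}=0$, so conditions 1) and 3) of Definition \ref{df.can-3-contact} are automatic, and condition 2) is exactly the Killing property.

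The heart of the proof is condition 4): show that $M$ admits a Reeb Killing function and that it is identically zero, i.e.\ $A_{ij}(X,Y)=0$ for all $X,Y\in\Gamma(\H)$ and all $i,j$. Since $d\eta_j=-2\delta\,\eta_k\wedge\eta_i$ (for the appropriate even permutation) and both $\eta_k,\eta_i$ annihilate horizontal vectors, the two $d\eta_j$-contributions in \eqref{Aij} vanish identically on $\H$. So the task reduces to showing $(\mathcal L_{\xi_j}\varphi_i)X=0$ for every horizontal $X$. I would rewrite the Lie derivative using the Levi-Civita connection,
\[
(\mathcal L_{\xi_j}\varphi_i)X\ =\ (\nabla^g_{\xi_j}\varphi_i)X-\nabla^g_{\varphi_iX}\xi_j+\varphi_i\nabla^g_X\xi_j,
\]
and then plug in the explicit formulas \eqref{LC-3cosymplectic} and \eqref{LCxi-3cosymplectic} (and their trivial analogues for $\delta=0$). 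Inspection of \eqref{LCxi-3cosymplectic} shows that $\nabla^g_X\xi_j$ vanishes whenever $X$ is horizontal, killing the last two terms since $\H$ is $\varphi_i$-invariant. Inspection of \eqref{LC-3cosymplectic} with $X=\xi_j$ and $Y\in\Gamma(\H)$ shows that the first term vanishes as well, since all $\eta_r(Y)$ with $r=i,j,k$ are zero. Hence $A_{ij}\equiv 0$ on $\H$, which furnishes a Reeb Killing function $\beta=0$ and identifies the manifold as parallel canonical.

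I do not expect any step of the argument to be genuinely hard; the main bookkeeping obstacle is simply keeping the even permutations $(i,j,k)$ straight while evaluating \eqref{LC-3cosymplectic} and \eqref{LCxi-3cosymplectic} against the right indices, and making sure that the $\delta=0$ sub-case is handled by the product-structure description rather than by the non-degenerate formulas, which are stated under the hypothesis $\delta\ne 0$.
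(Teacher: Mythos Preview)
Your proposal is correct and follows the same overall strategy as the paper: verify conditions 1)--3) via hypernormality, closedness of $\Phi_i$, and the Killing property, then show $A_{ij}\equiv 0$ on $\H$ by noting that the $d\eta_j$-terms in \eqref{Aij} vanish on horizontal vectors and that $(\mathcal L_{\xi_j}\varphi_i)X=0$ for $X\in\Gamma(\H)$.

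The only difference lies in how the vanishing of the Lie derivative is justified. The paper simply invokes the local Riemannian product decomposition (hyper-K\"ahler factor $\times$ vertical Lie group) established in Proposition \ref{prop.3-d-cosymplectic-is-hn} and the cited 3-cosymplectic literature; from that, $(\mathcal L_{\xi_j}\varphi_i)|_\H=0$ is immediate. You instead expand the Lie derivative through $\nabla^g$ and feed in the explicit formulas \eqref{LC-3cosymplectic}, \eqref{LCxi-3cosymplectic}. Your route is slightly more computational but also more self-contained, since it does not require the reader to unpack what the local product structure implies for the Lie derivatives; the paper's route is shorter but leans on a geometric picture. Both are perfectly adequate here, and your index bookkeeping is correct.
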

\begin{proof}
Conditions 1)-3) of Definition \ref{df.can-3-contact} are satisfied since the structure
is hypernormal, the fundamental $2$-forms are closed, and the Reeb vector fields are
Killing. Since $M$ is locally isometric to the Riemannian product of a horizontal
hyper-K\"ahler manifold and a vertical Lie group, we have
$({\mathcal L}_{\xi_i}\varphi_j)X=0$ for every horizontal vector field $X$ and for every
$i,j=1,2,3$. It follows that $A_{ij}(X,Y)=0$ for every $X,Y\in\mathcal H$. In particular,
this shows that the Reeb Killing functions $\beta$ vanishes, i.\,e.~it is a parallel
canonical almost $3$-contact metric manifold.
\end{proof}
For $3$-$\delta$-cosymplectic structures on Lie groups, see Example \ref{example_3delta}.
Inspired by the previous result, we sketch a slightly more general construction of parallel
canonical almost $3$-contact metric manifolds:
\begin{ex}[Examples arising on HKT manifolds]\label{ex.from-HKT}
A hyper-K\"ahler with torsion manifold, briefly HKT-manifold, is defined as a hyperhermitian
manifold $(M,J_i,h)$ endowed with a metric connection $\nabla^c$ with skew-symmetric
torsion such that $\nabla^c J_i=0$ for all $i=1,2,3$. This is equivalent to requiring that
\begin{equation}\label{hyperK}
J_1d\Omega_1=J_2d\Omega_2=J_3 d\Omega_3,
\end{equation}
where $\Omega_i$ is the K\"ahler form of $J_i$. The unique metric connection with skew
torsion parallelizing the complex structures has torsion $T_0=-J_id\Omega_i$.
Let us consider a HKT-manifold $(M,J_i,h)$ and a $3$-dimensional Lie group $G$ with
Lie algebra $\frak g$ spanned by vector fields $\xi_1$, $\xi_2$, $\xi_3$ such that
$[\xi_i,\xi_j]=2\delta\xi_k,$
for some $\delta\in\R$ and for every even permutation $(i,j,k)$ of $(1,2,3)$. In
particular, for $\delta=0$ we have an abelian Lie group, while for $\delta\ne 0$, $G$
is isomorphic to $SO(3)$. On the product manifold $M\times G$ one can define in a
natural way an almost $3$-contact metric structure $(\varphi_i,\xi_i,\eta_i,g)$,
by
\[\varphi_i|_{TM}=J_i,\quad \varphi_i\xi_i=0,\quad \varphi_i\xi_j=\xi_k,\quad
\varphi_i\xi_k=-\xi_j,\]
\[\eta_i|_{TM}=0,\quad \eta_i(\xi_i)=1,\qquad \eta_i(\xi_j)=\eta_i(\xi_k)=0,\]
and $g$ the product metric of $h$ and the left invariant Riemannian metric on $G$ with
respect to which $\xi_1$, $\xi_2$, $\xi_3$ are an orthonormal basis of $\frak g$. We
show that this structure is canonical with vanishing Reeb Killing function.

Since each structure $J_i$ is integrable, one can easily verify that the almost
$3$-contact structure is hypernormal. Each fundamental $2$-form $\Phi_i$ satisfies
$\Phi_i(X,Y)=-\Omega_i(X,Y)$, so that \eqref{hyperK} implies
\[d\Phi_i(\varphi_iX,\varphi_iY,\varphi_iZ)=d\Phi_j(\varphi_jX,\varphi_jY,\varphi_jZ)\]
for every $i,j=1,2,3$ and $X,Y,Z\in\mathcal H$. Moreover, each $\xi_i$ is a Killing
vector field and the tensor fields $A_{ij}$ are all vanishing.
\end{ex}
By the previous examples, one could be tempted to believe that parallel canonical
almost $3$-contact metric manifolds are always locally isometric to products, and hence of
limited interest. In Example \ref{ex.S7}, it is shown that $S^7$ carries in a natural way
a parallel canonical almost $3$-contact metric structure as well.
%
%
\subsection{A generalization of Kashiwada's theorem and
$3$-$(\alpha,\delta)$-Sasaki manifolds}
%
%
In \cite{kashiwada} T. Kashiwada proved that a $3$-contact metric manifold is necessarily
$3$-Sasakian. We shall show that hypernormality is in fact a key property  of a
larger class of almost $3$-contact metric manifolds, the so-called
$3$-$(\alpha,\delta)$-Sasaki manifolds.
\begin{df}\label{df.3ad-Sasaki}
An almost $3$-contact metric manifold $(M,\varphi_i,\xi_i,\eta_i,g)$ will be called a
\emph{$3$-$(\alpha,\delta)$-Sasaki manifold} if it satisfies
\begin{equation}\label{differential_eta}
d\eta_i=2\alpha\Phi_i+2(\alpha-\delta)\eta_j\wedge\eta_k
\end{equation}
for every even permutation $(i,j,k)$ of $(1,2,3)$, where $\alpha\ne0$ and
$\delta$ are real constants.
A $3$-$(\alpha,\delta)$-Sasaki manifold will be called \emph{degenerate} if
$\delta=0$ and \emph{nondegenerate} otherwise---quaternionic Heisenberg groups are
examples of degenerate  $3$-$(\alpha,\delta)$-Sasaki manifolds (Example \ref{ex.qHgroup}).
When $\alpha=\delta=1$,  we have a $3$-contact metric
manifold, and hence a $3$-Sasaki manifold by  Kashiwada's theorem \cite{kashiwada}.
For $\alpha=\delta$, one easily verifies
that the manifold is  $3$-$\alpha$-Sasakian.
\end{df}
\begin{rem}
This definition captures two different aspects. First, the manifold is what one could call
a `horizontal $3$-$\alpha$-contact metric manifold' in the sense that it satisfies
the $\alpha$-contact condition \eqref{eq.alpha-contact} for horizontal vector fields,
\bdm
d\eta_i(X,Y)=2\alpha\Phi_i(X,Y)\quad\forall X,Y\in\Gamma(\H).
\edm
The second term proportional to $\eta_j\wedge\eta_k$ is reminiscent of the definition
of $3$-$\delta$-cosymplectic manifolds, see equation \eqref{3-delta};
however, it is not a generalization of this notion, since $\alpha$ is not allowed
to vanish and the fundamental $2$-forms $\Phi_i$ need not be closed.
\end{rem}
The following consequences are immediate. In particular, the second property interprets
the constant $\delta$ as the Reeb commutator  function, and thus yields a first hint why
the distinction between degenerate and nondegenerate $3$-$(\alpha,\delta)$-Sasaki manifolds
is reasonable; the $\H$-homothetic deformations to be studied in the next section will
give further justification for this distinction.
\begin{lem}
Any $3$-$(\alpha,\delta)$-Sasaki manifold $(M,\varphi_i,\xi_i,\eta_i,g)$ satisfies:
\begin{enumerate}[\normalfont 1)]
\item Each $\xi_i$ is an infinitesimal automorphism of the
distribution $\H$, i.\,e.
%
\bdm
d\eta_r(X,\xi_s)=0\qquad X\in\Gamma(\H),\; r,s=1,2,3;
\edm
%
\item The constant $\delta$ is the  Reeb commutator  function,
\[d\eta_r(\xi_s,\xi_t)=-2\delta\epsilon_{rst},\qquad r,s,t=1,2,3;\]
\item The differentials $d\Phi_i$ are given by
\begin{equation}\label{differential_Phi}
d\Phi_i=2(\delta-\alpha)(\eta_k\wedge\Phi_j-\eta_j\wedge\Phi_k).
\end{equation}
\end{enumerate}
\end{lem}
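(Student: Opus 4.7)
The plan is to derive all three assertions directly from the defining identity $d\eta_i=2\alpha\Phi_i+2(\alpha-\delta)\eta_j\wedge\eta_k$; no auxiliary results are needed beyond the basic compatibility relations \eqref{3-sasaki} and the fact that $d^2=0$.

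For assertion 1), I would fix $r,s\in\{1,2,3\}$ and a horizontal vector field $X$, and plug $(X,\xi_s)$ into the defining formula for $d\eta_r$. The term $2\alpha\Phi_r(X,\xi_s)=2\alpha g(X,\varphi_r\xi_s)$ vanishes because $\varphi_r\xi_s$ is vertical (equal to $0$ or $\pm\xi_t$ by \eqref{3-sasaki}) while $X\in\Gamma(\H)$; the wedge term $(\eta_a\wedge\eta_b)(X,\xi_s)$ vanishes since $\eta_a(X)=\eta_b(X)=0$. Both summands being zero gives the claim.

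For assertion 2), I would plug purely vertical arguments $(\xi_s,\xi_t)$ into the defining formula. With $(i,j,k)$ an even permutation, the case $(s,t)=(j,k)$ uses $\Phi_i(\xi_j,\xi_k)=g(\xi_j,\varphi_i\xi_k)=g(\xi_j,-\xi_j)=-1$, which follows from $\varphi_i\xi_k=\varphi_i(\varphi_i\xi_j)=-\xi_j$, together with $(\eta_j\wedge\eta_k)(\xi_j,\xi_k)=1$, yielding $d\eta_i(\xi_j,\xi_k)=-2\alpha+2(\alpha-\delta)=-2\delta$, matching $-2\delta\epsilon_{ijk}$. All other orderings of $(\xi_s,\xi_t)$ either produce zero on both $\Phi_i$ and $\eta_j\wedge\eta_k$ or follow from the skew-symmetry of $d\eta_i$.

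For assertion 3), I would apply the exterior derivative to the defining identity. Since $d^2\eta_i=0$,
\[
0\ =\ 2\alpha\,d\Phi_i+2(\alpha-\delta)\bigl(d\eta_j\wedge\eta_k-\eta_j\wedge d\eta_k\bigr).
\]
Substituting the defining expressions for $d\eta_j$ and $d\eta_k$, every contribution of the form $(\alpha-\delta)^2\,\eta_s\wedge\eta_t\wedge\eta_s$ drops out, leaving only $d\eta_j\wedge\eta_k=2\alpha\,\Phi_j\wedge\eta_k$ and $\eta_j\wedge d\eta_k=2\alpha\,\eta_j\wedge\Phi_k$. Dividing by the non-zero constant $2\alpha$ and using $\Phi_j\wedge\eta_k=\eta_k\wedge\Phi_j$ (since $\Phi_j$ has even degree) produces \eqref{differential_Phi}. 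The whole argument is essentially a bookkeeping exercise; the only point that requires care is keeping track of signs in the $d^2\eta_i=0$ expansion, and the only structural hypothesis actively used is $\alpha\neq 0$ to perform the final division.
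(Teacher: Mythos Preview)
Your proof is correct and follows essentially the same approach as the paper: parts 1) and 2) are immediate consequences of evaluating the defining identity on the appropriate vectors (the paper treats them as obvious and omits the computation you spell out), and part 3) is obtained exactly as you do, by differentiating the defining identity, substituting it back into itself, and dividing by $2\alpha$.
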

\begin{proof}
%
Only the last claim requires a proof. By differentiating \eqref{differential_eta},
one obtains
\[2\alpha\, d\Phi_i+2(\alpha-\delta)(d\eta_j\wedge\eta_k-\eta_j\wedge d\eta_k)=0. \]
Applying again \eqref{differential_eta}, the result follows since  $\alpha\ne0$.
\end{proof}
The proof that any $3$-$(\alpha,\delta)$-Sasaki manifold has Killing Reeb vector fields and admits a constant Reeb
Killing function requires more work, see Corollary \ref{corollary_nabla_xi} and Corollary \ref{cor.3-AD-Sasaki-implies-canonical}.
As a first crucial result, we prove the announced generalization of Kashiwada's theorem.
\begin{theo}[Generalized Kashiwada Theorem] \label{thm.general-Kashiwada}
Any $3$-$(\alpha,\delta)$-Sasaki manifold 
is hypernormal.
\end{theo}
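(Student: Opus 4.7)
The strategy is to reduce everything to the universal formula of Lemma \ref{lemma_N}, which writes $N_{\varphi_i}(X,Y,Z)$ as a linear combination of evaluations of $d\eta_r$ and $d\Phi_r$ on triples built from $X,Y,Z$ and $\varphi_s$-images, with coefficients involving $\eta_s$. For a $3$-$(\alpha,\delta)$-Sasaki manifold we already have fully explicit expressions for both families: $d\eta_i = 2\alpha\Phi_i + 2(\alpha-\delta)\eta_j\wedge\eta_k$ by definition, and $d\Phi_i = 2(\delta-\alpha)(\eta_k\wedge\Phi_j - \eta_j\wedge\Phi_k)$ by \eqref{differential_Phi}. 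So the plan is to substitute these two identities (and the cyclic permutations) into \eqref{N_almost3c} and verify that the resulting expression collapses identically to zero, for every choice of $X,Y,Z$.

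It is natural to split by how many of the arguments are vertical. When $X,Y,Z$ are all horizontal, every term of \eqref{N_almost3c} trivially vanishes: the three coefficients $\eta_i(X),\eta_i(Y),\eta_j(Z)$ are zero, and each $d\Phi_r$ is a sum of wedge products containing an $\eta_s$, so $d\Phi_r$ vanishes on three horizontal arguments --- this is precisely Remark \ref{rem.dPhi-N=0}. The case where all three arguments lie in $\V$ is immediate from table \eqref{tableN}, since $\varphi_i\xi_j = \pm\xi_k$ and one reads off that $N_{\varphi_i}(\xi_r,\xi_s,\xi_t)$ can be written purely in terms of $[\xi_\bullet,\xi_\bullet]$, which all lie in $\V$, and then the standard identity $\varphi_i[\xi_j,\xi_k] = -[\xi_j,\xi_i]$-style checks force the value to vanish (using constancy of $\delta$ and $\alpha$). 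The mixed cases --- two horizontal and one vertical, or one horizontal and two vertical --- are the substantive part.

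The hard case is two horizontal vectors $X,Y\in\Gamma(\H)$ together with one vertical $Z = \xi_\ell$, $\ell\in\{i,j,k\}$. Here $\eta_i(X)=\eta_i(Y)=0$, so only four $d\Phi$-terms and two $d\eta$-terms survive, weighted by $\eta_j(\xi_\ell)$. The argument $\varphi_j\xi_\ell$ is zero, $-\xi_i$, or $\xi_k$ depending on $\ell$, collapsing the $d\Phi$-terms to expressions involving $\Phi_r(X,Y)$, $\Phi_r(\varphi_iX,\varphi_iY)$, $\Phi_r(X,\varphi_iY)$, $\Phi_r(\varphi_iX,Y)$. The key algebraic fact, drawn purely from the defining quaternionic relations \eqref{3-sasaki} applied on $\H$, is
\[
\Phi_j(\varphi_iX,\varphi_iY) = -\Phi_j(X,Y),\qquad \Phi_k(X,\varphi_iY) = \Phi_k(\varphi_iX,Y) = \Phi_j(X,Y),
\]
with analogous cyclic identities. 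Inserting these, together with the horizontal form $d\eta_r|_\H = 2\alpha\Phi_r$ and $d\Phi_r|_{\H\wedge\H\wedge\xi} \propto (\delta-\alpha)\Phi_s$, one sees the $\alpha$-contributions cancel against the corresponding $(\delta-\alpha)$-contributions. The remaining case with one horizontal and two vertical arguments is handled by the same substitution, now additionally using the vertical component $d\eta_r(\xi_s,\xi_t) = -2\delta\epsilon_{rst}$; the constants $\alpha$ and $\delta$ reorganize into exactly the combinations that appear in $d\Phi_r$, producing total cancellation.

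The only real obstacle is bookkeeping: the formula in Lemma \ref{lemma_N} contains ten summands, and the identity to verify must be checked for three choices of the vertical slot (which $\xi_\ell$ is chosen) in each mixed subcase. No further geometric input is needed beyond \eqref{3-sasaki}, \eqref{differential_eta}, \eqref{differential_Phi}, and the horizontal identities above; hence the theorem follows from a direct, if lengthy, expansion.
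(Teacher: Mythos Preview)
Your proposal is correct and follows essentially the same route as the paper: both reduce to Lemma~\ref{lemma_N}, substitute the explicit expressions \eqref{differential_eta} and \eqref{differential_Phi} for $d\eta_i$ and $d\Phi_i$, and then verify vanishing case by case according to how many arguments lie in $\V$. One small point of care: formula \eqref{N_almost3c} is not symmetric in its three slots, so in the ``two horizontal, one vertical'' case you must separately treat the vertical vector in the third slot and in one of the first two slots (the paper in fact handles $N_{\varphi_i}(X,\xi_\ell,Z)$ via \eqref{N_almost3c} and $N_{\varphi_i}(X,Y,\xi_\ell)$ directly from the definition); this is exactly the bookkeeping you allude to, and no new idea is needed.
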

\begin{proof}
We shall compute the tensor fields $N_{\varphi_i}$, distinguishing case by case between
horizontal and vertical vector fields. Again, the crucial ingredient is our
Lemma \ref{lemma_N}, which  expressed $N_{\varphi_i}$ in terms of $\eta_i$ and $\Phi_i$.
We always denote by $X,Y,Z$
horizontal vector fields and by $(i,j,k)$ an even permutation of $(1,2,3)$.

First, equation \eqref{differential_Phi} implies $d\Phi_i(X,Y,Z)=0$  and thus
$N_{\varphi_i}(X,Y,Z)=0 \ \forall i=1,2,3 $ by Remark \ref{rem.dPhi-N=0}.
Notice that, since
\[
\xi_i\lrcorner\,\Phi_i=0,\qquad \xi_j\lrcorner\,\Phi_i
=-\eta_k,\qquad \xi_k\lrcorner\,\Phi_i=\eta_j,
\]
equation \eqref{differential_Phi} implies that
\begin{equation}\label{differential_Phi_partial}
\xi_i\lrcorner\, d\Phi_i=0,\quad \xi_j\lrcorner\,
d\Phi_i=-2(\delta-\alpha)(\Phi_k+\eta_{ij}),\quad
\xi_k\lrcorner\,d\Phi_i=2(\delta-\alpha)(\Phi_j+\eta_{ki}).
\end{equation}
Therefore,  Lemma \ref{lemma_N} implies after applying \eqref{differential_eta}
and \eqref{differential_Phi_partial}:
\begin{align*}
N_{\varphi_i}(X,\xi_i,Z)&=-d\Phi_j(X,\xi_i,\varphi_j Z)
+d\Phi_k(\varphi_iX,\xi_i,\varphi_jZ)+d\eta_j(\varphi_iX,\varphi_jZ)+d\eta_k(X,\varphi_jZ)\\
&=-2(\delta-\alpha)\Phi_k(\varphi_jZ,X)-2(\delta-\alpha)\Phi_j(\varphi_jZ,\varphi_iX)\\
&\quad{}+2\alpha\Phi_j(\varphi_iX,\varphi_jZ)+2\alpha\Phi_k(X,\varphi_jZ)\\
&=2\delta\Phi_j(\varphi_iX,\varphi_jZ)+2\delta\Phi_k(X,\varphi_jZ)
=-2\delta g(\varphi_iX,Z)-2\delta g(X,\varphi_i Z)=0,\\
N_{\varphi_i}(X,\xi_j,Z)&=d\Phi_j(\varphi_iX,\xi_k,\varphi_j Z)
+d\Phi_k(\varphi_iX,\xi_j,\varphi_jZ)\\
&=-2(\delta-\alpha)\Phi_i(\varphi_jZ,\varphi_iX)
+2(\delta-\alpha)\Phi_i(\varphi_jZ,\varphi_iX)=0,\\
N_{\varphi_i}(X,\xi_k,Z)&=-d\Phi_j(X,\xi_k,\varphi_j Z)-d\Phi_k(X,\xi_j,\varphi_jZ)\\
&=2(\delta-\alpha)\Phi_i(\varphi_jZ,X)-2(\delta-\alpha)\Phi_i(\varphi_jZ,X)=0.
\end{align*}
From the definition of $N_{\varphi_i}$ (see equation \eqref{tableN}), we have
\begin{align*}
N_{\varphi_i}(X,Y,\xi_i)&=-d\eta_i(\varphi_iX,\varphi_iY)+d\eta_i(X,Y),\\
N_{\varphi_i}(X,Y,\xi_j)&=-d\eta_j(\varphi_iX,\varphi_iY)+d\eta_j(X,Y)
-d\eta_k(\varphi_iX,Y)-d\eta_k(X,\varphi_iY),\\
N_{\varphi_i}(X,Y,\xi_k)&=-d\eta_k(\varphi_iX,\varphi_iY)
+d\eta_k(X,Y)+d\eta_j(\varphi_iX,Y)+d\eta_j(X,\varphi_iY).
\end{align*}
Using the fact that the structure is horizontal $3$-$\alpha$-contact, the above
terms are all vanishing. On the other hand, one can easily verify that this is
coherent with Lemma \ref{lemma_N}.
Notice that \eqref{differential_Phi_partial} implies  $d\Phi_r(X,\xi_s,\xi_t)=0$
for every $r,s,t=1,2,3$ and $X\in\Gamma(\H)$. Taking also into account that
$d\eta_r(X,\xi_s)=0$, we deduce from \eqref{N_almost3c} that
\[
N_{\varphi_r}(X,\xi_s,\xi_t)=N_{\varphi_r}(\xi_s,\xi_t,X)=0.
\]
Finally, \eqref{N_almost3c} implies
together with $d\eta_r(\xi_s,\xi_t)=-2\delta\epsilon_{rst}$ that
\[
N_{\varphi_i}(\xi_i,\xi_j,\xi_k)=N_{\varphi_i}(\xi_i,\xi_k,\xi_j)=N_{\varphi_i}(\xi_j,\xi_k,\xi_i)=0,
\]
completing the proof that $M$ is hypernormal.
\end{proof}
%
%
\subsection{Properties and examples of $3$-$(\alpha,\delta)$-Sasaki manifolds}%
\label{subsec.3-AD-props-and-exa}
%
%
We shall describe the behaviour of $3$-$(\alpha,\delta)$-Sasaki structures under a
special type of deformations, inspired by the classical $\mathcal{D}$-homothetic
deformations of almost contact metric structures.
Given an almost $3$-contact metric structure  $(\varphi_i,\xi_i,\eta_i,g)$, one can
consider the deformed structure
\begin{equation}\label{deformation}
\eta_i'=c\eta_i,\qquad \xi_i'=\frac{1}{c}\xi_i,\qquad \varphi_i'=\varphi_i,\qquad g'=ag+b\sum_{i=1}^3\eta_i\otimes\eta_i,
\end{equation}
where $a,b,c$ are real numbers such that $a>0$, $a+b>0$, $c\ne0$. One can show
that $(\varphi_i',\xi_i',\eta_i',g')$ is an almost $3$-contact metric structure if and
only if $c^2=a+b$. Indeed, each  $(\varphi_i',\xi_i',\eta_i')$ is an almost contact
structure and equations \eqref{3-sasaki} are readily verified. As for the Riemannian metric
$g'$, if $(i,j,k)$ is an even permutation of $(1,2,3)$, we have
\begin{align*}
g'(\varphi_i'X,\varphi_i'Y)&= ag(\varphi_iX,\varphi_iY)+b\{\eta_j(\varphi_iX)\eta_j(\varphi_iY)+\eta_k(\varphi_iX)\eta_k(\varphi_iY)\}\\
&=ag(X,Y)-a\eta_i(X)\eta_i(Y)+b\{\eta_j(X)\eta_j(Y)+\eta_k(X)\eta_k(Y)\}\\
&=g'(X,Y)-(a+b)\eta_i(X)\eta_i(Y)\\
&=g'(X,Y)-\frac{a+b}{c^2}\eta_i'(X)\eta_i'(Y).
\end{align*}
Therefore, $g'$ is compatible with the structure $(\varphi_i',\xi_i',\eta_i')$ if and
only if $c^2=a+b$. In particular, for $c=a$, we can choose $b=a(a-1)$.
\begin{df}
The deformation \eqref{deformation} of an almost $3$-contact metric manifold with
real parameters $a,b,c$ satisfying $a>0$, $a+b>0$, $c\ne0$, $c^2=a+b$ will be called a
\emph{$\H$-homothetic deformation} of the original manifold, and an almost $3$-contact
metric manifold which can be obtained through a $\H$-homothetic deformation will be called
\emph{$\H$-homothetic} to the original manifold.
\end{df}
\begin{prop}\label{deformation-coefficients}
The $\H$-homothetic deformation of a $3$-$(\alpha,\delta)$-Sasaki manifold is again
a $3$-$(\alpha',\delta')$-Sasaki manifold with
\[
\alpha'=\alpha \frac{c}{a},\qquad \delta'=\frac{\delta}{c}.
\]
In particular, $\alpha'\delta'$ has the same sign as $\alpha\delta$, and
 the $\H$-homothetic deformation is degenerate if and only if the undeformed
$3$-$(\alpha,\delta)$-Sasaki manifold is degenerate.
\end{prop}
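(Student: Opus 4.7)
The plan is to verify the deformed structure satisfies the defining equation
\eqref{differential_eta} with the claimed parameters, by computing how both sides
transform under the $\H$-homothetic deformation. The calculations are routine
but the key identity to set up first concerns the fundamental two-form.

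First I would record the trivial transformation of the differentials and the
vertical two-forms: since $\eta_i' = c\eta_i$ one has $d\eta_i' = c\, d\eta_i$
and $\eta_j'\wedge\eta_k' = c^2 \eta_j\wedge\eta_k$. The nontrivial step is
computing $\Phi_i'(X,Y) = g'(X,\varphi_i Y)$. Splitting vectors into horizontal and
vertical parts and using that $\varphi_i$ preserves this splitting, together with
the fact that the horizontal and vertical subspaces are orthogonal for both $g$ and
each $\eta_s \otimes \eta_s$, one obtains
\[
\Phi_i'(X,Y) = a\,\Phi_i(X^\H, Y^\H) + c^2\,\Phi_i(X^\V, Y^\V).
\]
The crucial observation is that $\Phi_i$ vanishes on mixed horizontal--vertical
pairs and satisfies $\Phi_i|_{\V\times\V} = -\eta_j\wedge\eta_k$ (a direct check using
$\varphi_i\xi_j=\xi_k$, $\varphi_i\xi_k=-\xi_j$, $\varphi_i\xi_i=0$). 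Writing
$\Phi_i^\H := \Phi_i + \eta_j\wedge\eta_k$ for the purely horizontal piece and
substituting yields the clean formula
\[
\Phi_i' \,=\, a\,\Phi_i \,-\, b\,\eta_j\wedge\eta_k,
\]
where I used $c^2 = a+b$.

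Next I would equate the two sides of the defining identity for the deformed
structure. On the left, $d\eta_i' = 2c\alpha\,\Phi_i + 2c(\alpha-\delta)\,\eta_j\wedge\eta_k$.
On the right,
\[
2\alpha'\Phi_i' + 2(\alpha'-\delta')\eta_j'\wedge\eta_k'
= 2a\alpha'\,\Phi_i + \bigl(2(c^2-b)\alpha' - 2c^2\delta'\bigr)\eta_j\wedge\eta_k
= 2a\alpha'\,\Phi_i + (2a\alpha' - 2c^2\delta')\,\eta_j\wedge\eta_k.
\]
Matching the coefficient of $\Phi_i$ gives $\alpha' = c\alpha/a$, and matching the
coefficient of $\eta_j\wedge\eta_k$ then forces $\delta' = \delta/c$. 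This
establishes that the deformed structure is a $3$-$(\alpha',\delta')$-Sasaki
manifold with the claimed parameters.

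The final two assertions are immediate consequences: $\alpha'\delta' = \alpha\delta/a$
with $a>0$, so the sign of $\alpha\delta$ is preserved, and since $c\neq 0$
the condition $\delta'=0$ is equivalent to $\delta=0$. There is no real
obstacle here; the only care required is in the bookkeeping for $\Phi_i'$, and
once the vertical identity $\Phi_i|_{\V\times\V}=-\eta_j\wedge\eta_k$ is in hand,
the rest is linear algebra in $a,b,c$ subject to the constraint $c^2=a+b$.
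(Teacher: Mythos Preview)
Your proof is correct and follows essentially the same approach as the paper: both derive the key identity $\Phi_i' = a\,\Phi_i - b\,\eta_j\wedge\eta_k$ (the paper computes $g'(X,\varphi_i Y)$ directly using $\eta_j\circ\varphi_i=-\eta_k$ rather than your horizontal/vertical split, but the result is the same), and then perform equivalent linear algebra in $a,b,c$ with the constraint $c^2=a+b$. The only cosmetic difference is that the paper substitutes $\Phi_i = \tfrac{1}{a}(\Phi_i' + b\,\eta_{jk})$ into $d\eta_i'$ and reads off $\alpha',\delta'$ directly, whereas you set up the target equation with unknowns $\alpha',\delta'$ and match coefficients of $\Phi_i$ and $\eta_j\wedge\eta_k$; both are valid and yield the same result.
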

\begin{proof}
The fundamental $2$-form of the deformed structure is given by
\begin{align*}
\Phi_i'(X,Y)&=g'(X,\varphi_iY)=ag(X,\varphi_iY)+b\{\eta_j(X)\eta_j(\varphi_iY)+\eta_k(X)\eta_k(\varphi_iY)\}\\
&=a\Phi_i(X,Y)+b\{-\eta_j(X)\eta_k(Y)+\eta_k(X)\eta_j(Y)\},
\end{align*}
where $(i,j,k)$ is an even permutation of $(1,2,3)$, and thus
$\Phi_i'\, =\, a\Phi_i-b\,\eta_{jk}$.
Then, if $(\varphi_i,\xi_i,\eta_i,g)$ is $3$-$(\alpha,\delta)$-Sasaki, we have
\begin{align*}
d\eta_i'&=2\alpha c\,\Phi_i+2(\alpha-\delta)c\,\eta_{jk}
= 2\alpha\frac{c}{a}(\Phi_i'+b\,\eta_{jk})+2(\alpha-\delta)c\,\eta_{jk}\\
&= 2\alpha\frac{c}{a}\Phi_i'+2\Big(\alpha c\frac{ b+ a}{a}-\delta c\Big)\eta_{jk}
= 2\alpha\frac{c}{a}\Phi_i'+2\Big(\alpha \frac{c}{a}-\frac{\delta}{c} \Big)\eta_j'\wedge\eta_k',
\end{align*}
where we applied $c^2=a+b$.
\end{proof}
\begin{ex}[$\H$-Deformations of $3$-Sasaki manifolds I]
As a consequence of the above proposition, if $(\varphi_i,\xi_i,\eta_i,g)$ is a
$3$-Sasakian structure ($\alpha=\delta=1$), the $\H$-deformed
structure is a $3$-$(\alpha',\delta')$-Sasaki structure with $\alpha'\ne\delta'$ whenever
$b\ne0$. Indeed
$$\alpha'-\delta'=\frac{c}{a}-\frac{1}{c}=\frac{c^2-a}{ac}=\frac{b}{ac}\ne0.$$
Another interesting special case of $\H$-Deformations of $3$-Sasaki manifolds with
interesting curvature properties will be discussed in Example \ref{ex.3-S-homo-II}.
\end{ex}

Conversely, let us describe those $3$-$(\alpha,\delta)$-Sasaki manifolds which admit a
$3$-$\tilde\alpha$-Sasakian $\H$-homothetic deformation (the tilde only serves to
distinguish which parameters correspond to which manifold):
\begin{lem}
A $3$-$(\alpha,\delta)$-Sasaki manifold is $\H$-homothetic to a $3$-$\tilde\alpha$-Sasaki
manifold if and only if it is nondegenerate with $\alpha\delta>0$.
\end{lem}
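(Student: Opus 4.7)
The plan is to use the transformation law from Proposition \ref{deformation-coefficients} directly: under an $\H$-homothetic deformation with parameters $a,b,c$ satisfying $a>0$, $a+b>0$, $c\ne 0$, $c^2=a+b$, the new parameters are
\[
\alpha' \, =\, \alpha\,\frac{c}{a},\qquad \delta'\, =\,\frac{\delta}{c}.
\]
Since a $3$-$\tilde\alpha$-Sasaki structure is precisely a $3$-$(\alpha',\delta')$-Sasaki structure with $\alpha'=\delta'\ne 0$, the whole statement reduces to analysing when one can solve $\alpha'=\delta'$ with admissible parameters.

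For the \emph{if} direction, assume $\alpha\delta>0$ (so in particular $\delta\ne 0$). The equation $\alpha'=\delta'$ becomes $\alpha c^2 = a\delta$, i.e.
\[
c^2\, =\, \frac{a\delta}{\alpha}.
\]
Since $\alpha\delta>0$, the ratio $\delta/\alpha$ is positive, so for any fixed $a>0$ there is a real $c\ne 0$ solving this equation. Setting $b:=c^2-a = a(\delta-\alpha)/\alpha$, one checks the admissibility conditions: $a>0$ by choice, $c\ne 0$ since $c^2>0$, and $a+b=c^2>0$. Hence the deformation \eqref{deformation} with these parameters is a valid $\H$-homothetic deformation, and the resulting structure satisfies $\alpha'=\delta'=\alpha c/a\ne 0$, i.e.\ it is $3$-$\tilde\alpha$-Sasakian with $\tilde\alpha=\alpha c/a$.

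For the \emph{only if} direction, suppose an $\H$-homothetic deformation produces a $3$-$\tilde\alpha$-Sasaki manifold, so $\alpha'=\delta'=\tilde\alpha\ne 0$. In particular $\delta'\ne 0$, so $\delta=c\delta'\ne 0$, which means the original structure is nondegenerate. Moreover, the last observation of Proposition \ref{deformation-coefficients} (or an immediate computation $\alpha'\delta'=(\alpha c/a)(\delta/c)=\alpha\delta/a$) shows that $\alpha'\delta'$ has the same sign as $\alpha\delta$. Since $\alpha'\delta'=\tilde\alpha^2>0$, we conclude $\alpha\delta>0$, completing the proof. No real obstacle is expected here; the argument is purely algebraic and uses no geometric input beyond Proposition \ref{deformation-coefficients}.
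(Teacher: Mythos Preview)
Your proof is correct and follows essentially the same approach as the paper: both reduce the question to solving $\alpha'=\delta'$ for admissible deformation parameters via Proposition \ref{deformation-coefficients}. Your argument is in fact slightly cleaner, since you solve $c^2=a\delta/\alpha$ directly whenever $\alpha\delta>0$, whereas the paper first reduces to the case $\alpha,\delta>0$ by the sign-changing deformation $a=1,\ c=-1,\ b=0$.
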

\begin{proof}
Since $3$-$\tilde\alpha$-Sasaki manifolds are nondegenerate, this condition is obvious by
Proposition \ref{deformation-coefficients}, so we can assume $\delta\neq 0$. In order
to be $3$-$\alpha$-Sasakian, we must find admissible parameters $a,b,c$ such that
$\alpha'=\delta'$, which is equivalent to $\frac{\alpha}{\delta}=\frac{a}{c^2}$.
Since $a>0$, the necessary condition $\alpha\delta>0$ follows. Furthermore, we can assume that
$\alpha, \delta >0$, since the $\H$-homothetic deformation with parameters
$a=1, c=-1, b=0$ changes the signs of $\alpha$ and $\delta$.  For $\alpha, \delta >0$,
one checks that
\begin{equation}\label{hyperbola}
a=1,\  b=\frac{\delta}{\alpha}-1,\ c=\sqrt{\frac{\delta}{\alpha}}
\end{equation}
is the desired deformation.
\end{proof}
\begin{rem}
Observe that under $\H$-homothetic deformations given by \eqref{hyperbola}, $\alpha\delta=$ const., so all $3$-$(\alpha,\delta)$-Sasaki
manifolds which are $\H$-homothetic  under the given deformation  to a fixed
$3$-$\alpha$-Sasaki manifold lie on a
`hyperbola' in the $(\alpha,\delta)$-plane. For $\alpha=\delta$, the $\H$-homothetic
deformation coincides with the identity.
\end{rem}
Thus, the questions arises whether $3$-$(\alpha,\delta)$-Sasaki manifolds with $\alpha\delta<0$
exist at all. In order to answer this question positively, we need
the notion of \emph{negative $3$-Sasakian manifolds}. They are defined as normal almost
$3$-contact manifolds $(M,\varphi_i,\xi_i,\eta_i)$ endowed with a compatible semi-Riemannian
metric $\tilde g$, i. e. $\tilde g(\varphi_i X,\varphi_i Y)=\tilde g(X,Y)-\eta_i(X)\eta_i(Y)$
such that $\tilde g$ has signature $(3,4n)$, where $4n+3$ is the dimension of $M$, and
$d\eta_i(X,Y)=2\tilde g(X,\varphi_iY)$. It is known that quaternionic K\"ahler (not
hyperK\"ahler) manifolds with negative scalar curvature admit a canonically associated
principal $\mathrm{SO}(3)$-bundle $P(M)$ which is endowed with a negative $3$-Sasakian
structure \cite{konishi,tanno96}.

If $(M,\varphi_i,\xi_i,\eta_i,\tilde g)$ is a negative $3$-Sasakian manifold, as in
\cite{tanno96} we consider the Riemannian metric
\[g=-\tilde g+2\sum_{i=1}^3\eta_i\otimes\eta_i\]
which is compatible with the structure $(\varphi_i,\xi_i,\eta_i)$. A simple computation shows
that
\[d\eta_i(X,Y)=-2g(X,\varphi_iY)-4(\eta_j\wedge\eta_k)(X,Y)\]
for every vector fields $X,Y$. Therefore $(M,\varphi_i,\xi_i,\eta_i, g)$ is a
$3$-$(\alpha,\delta)$-Sasaki manifold with $\alpha=-1$ and $\delta=1$. Applying the
$\H$-homothetic deformation \eqref{deformation}, one obtains a $3$-$(\alpha',\delta')$-Sasaki
structure with $\alpha'\delta'<0$, and  $\alpha'\ne-\delta'$ whenever
$b\ne0$. Indeed
$$
\alpha'+\delta'=-\frac{c}{a}+\frac{1}{c}=\frac{a-c^2}{ac}=-\frac{b}{ac}\ne0.
$$
Conversely, we have the following
\begin{lem}
A $3$-$(\alpha,\delta)$-Sasaki manifold is $\H$-homothetic to a
$3$-$(\tilde\alpha,\tilde\delta)$-Sasaki
manifold with $\tilde\alpha=-\tilde\delta<0$ if and only if it is nondegenerate
with $\alpha\delta<0$.
\end{lem}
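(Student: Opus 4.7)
The plan is to mimic the proof of the preceding lemma, using Proposition \ref{deformation-coefficients} as the main tool, so that both directions reduce to elementary sign considerations and the choice of admissible parameters $a,b,c$.

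For the necessity, suppose $(\varphi_i,\xi_i,\eta_i,g)$ is $\H$-homothetic to a $3$-$(\tilde\alpha,\tilde\delta)$-Sasaki manifold with $\tilde\alpha=-\tilde\delta<0$. Then $\tilde\alpha\tilde\delta=-\tilde\delta^{\,2}<0$ and $\tilde\delta\ne 0$. By Proposition \ref{deformation-coefficients}, the sign of $\alpha\delta$ and the (non)degeneracy are preserved under $\H$-homothetic deformation, so $\alpha\delta<0$ and the original manifold is nondegenerate.

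For the sufficiency, assume $\alpha\delta<0$, so that $-\delta/\alpha>0$. We seek admissible parameters $a>0$, $a+b>0$, $c\ne 0$ with $c^{2}=a+b$ such that the deformed parameters given by Proposition \ref{deformation-coefficients} satisfy $\alpha'=-\delta'$, i.e.\ $\alpha c/a=-\delta/c$, equivalently
\[
c^{2}\ =\ -\,\frac{\delta\,a}{\alpha}.
\]
Take $a=1$ and choose the sign of $c$ so that $\alpha c<0$: concretely, set $c=-\sqrt{-\delta/\alpha}$ when $\alpha>0,\delta<0$, and $c=\sqrt{-\delta/\alpha}$ when $\alpha<0,\delta>0$; in either case $c^{2}=-\delta/\alpha>0$. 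Finally put $b=c^{2}-a=-\delta/\alpha-1$, which gives $a+b=c^{2}>0$, so the triple $(a,b,c)$ is admissible. The resulting deformed $3$-$(\alpha',\delta')$-Sasaki structure satisfies $\alpha'=\alpha c<0$ and $\alpha'=-\delta'$ by construction.

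There is no serious obstacle here: once Proposition \ref{deformation-coefficients} is in hand, the only thing to check is that the algebraic condition $c^{2}=-\delta a/\alpha$ is solvable within the range $a>0$, $c^{2}=a+b>0$, which is precisely equivalent to $\alpha\delta<0$, and that the sign of $c$ can be chosen to yield $\tilde\alpha<0$. The (mild) bookkeeping point worth highlighting in the write-up is that degeneracy must be excluded in the hypothesis (it already follows from $\alpha\delta<0$ with $\alpha\ne 0$, but is stated explicitly to parallel the previous lemma).
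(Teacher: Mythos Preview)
Your proof is correct and follows essentially the same approach as the paper: both directions rely on Proposition~\ref{deformation-coefficients}, and sufficiency amounts to solving $c^{2}=-\delta a/\alpha$ with $a=1$. The only cosmetic difference is that the paper first applies the preliminary deformation $a=1,\ b=0,\ c=-1$ to reduce to the case $\alpha<0,\ \delta>0$ and then takes $c=\sqrt{-\delta/\alpha}>0$, whereas you handle both sign configurations directly by choosing the sign of $c$; the two arguments are equivalent.
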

\begin{proof}
By Proposition \ref{deformation-coefficients} an $\mathcal H$-homothetic deformation
of a $3$-$(\tilde\alpha,\tilde\delta)$-Sasaki
manifold with $\tilde\alpha=-\tilde\delta$, is  $3$-$(\alpha,\delta)$-Sasaki  with
$\alpha\delta<0$. Conversely, given a $3$-$(\alpha,\delta)$-Sasaki manifold with
$\alpha\delta<0$, first we can assume that
$\alpha<0$ and $\delta >0$ since the $\H$-homothetic deformation with parameters
$a=1, c=-1, b=0$ changes the signs of $\alpha$ and $\delta$. Then, we need admissible
parameters $a,b,c$ such that
$\alpha'=-\delta'$, which is equivalent to $\frac{\alpha}{\delta}=-\frac{a}{c^2}$.
The desired deformation is
\begin{equation}\label{hyperbola2}
a=1,\  b=-\frac{\delta}{\alpha}-1,\ c=\sqrt{-\frac{\delta}{\alpha}}.
\end{equation}
\end{proof}
\begin{rem}
Under $\H$-homothetic deformations given by \eqref{hyperbola2}, $\alpha\delta=$ const. Then,
all $3$-$(\alpha,\delta)$-Sasaki
manifolds which are $\H$-homothetic  under the given deformation  to a fixed
$3$-$(\tilde\alpha,\tilde\delta)$-Sasaki manifold, with $\tilde\alpha=-\tilde\delta$, lie on a
`hyperbola' in the $(\alpha,\delta)$-plane.
\end{rem}
We will now further investigate the geometry of $3$-$(\alpha,\delta)$-Sasaki manifolds.
First we prove some preliminary formulas.
\begin{lem}
Let $(M,\varphi_i,\xi_i,\eta_i,g)$ be an almost $3$-contact metric manifold. Then
the  associated fundamental $2$-forms satisfy for all $X,Y\in\frak{X}(M)$ and any
cyclic permutation $(i,j,k)$  of $(1,2,3)$ the following relations:
\begin{align}
\Phi_j(\varphi_iX,\varphi_iY)&=-\Phi_j(X,Y)+(\eta_i\wedge\eta_k)(X,Y),\label{2-form1}\\
\Phi_k(\varphi_iX,\varphi_iY)&=-\Phi_k(X,Y)-(\eta_i\wedge\eta_j)(X,Y),\label{2-form2}\\
\Phi_j(\varphi_iX,Y)&=-\Phi_k(X,Y)-\eta_i(X)\eta_j(Y),\label{2-form3}\\
\Phi_k(\varphi_iX,Y)&=\Phi_j(X,Y)-\eta_i(X)\eta_k(Y).\label{2-form4}
\end{align}
\end{lem}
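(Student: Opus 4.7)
The plan is to prove the four identities in the order $(\ref{2-form3})\to(\ref{2-form4})\to(\ref{2-form1})\to(\ref{2-form2})$, since the last two will follow by substituting $Y\mapsto\varphi_iY$ into the first two. The only tools needed are the definition $\Phi_i(X,Y)=g(X,\varphi_iY)$, the skew-symmetry relation $g(\varphi_iX,Y)=-g(X,\varphi_iY)$ (an elementary consequence of $\varphi_i^2=-I+\eta_i\otimes\xi_i$ and compatibility of $g$ with $\varphi_i$), and the quaternionic multiplication rules \eqref{3-sasaki}, namely
\[
\varphi_i\varphi_j=\varphi_k+\eta_j\otimes\xi_i,\qquad \varphi_i\varphi_k=-\varphi_j+\eta_k\otimes\xi_i,
\]
together with $\eta_j\circ\varphi_i=-\eta_k$ and $\eta_k\circ\varphi_i=\eta_j$, all valid for any even permutation $(i,j,k)$ of $(1,2,3)$.

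For \eqref{2-form3} I would write $\Phi_j(\varphi_iX,Y)=g(\varphi_iX,\varphi_jY)=-g(X,\varphi_i\varphi_jY)$ and substitute $\varphi_i\varphi_j=\varphi_k+\eta_j\otimes\xi_i$; this gives $-g(X,\varphi_kY)-\eta_j(Y)g(X,\xi_i)=-\Phi_k(X,Y)-\eta_i(X)\eta_j(Y)$. The same argument with $\varphi_i\varphi_k=-\varphi_j+\eta_k\otimes\xi_i$ yields \eqref{2-form4}.

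Next, to obtain \eqref{2-form1}, I would replace $Y$ by $\varphi_iY$ in \eqref{2-form3}, getting $\Phi_j(\varphi_iX,\varphi_iY)=-\Phi_k(X,\varphi_iY)-\eta_i(X)\eta_j(\varphi_iY)=-\Phi_k(X,\varphi_iY)+\eta_i(X)\eta_k(Y)$. Then I would compute $-\Phi_k(X,\varphi_iY)=-g(X,\varphi_k\varphi_iY)$ using $\varphi_k\varphi_i=\varphi_j+\eta_i\otimes\xi_k$, which produces $-\Phi_j(X,Y)-\eta_i(Y)\eta_k(X)$. Combining these two contributions reconstructs $(\eta_i\wedge\eta_k)(X,Y)=\eta_i(X)\eta_k(Y)-\eta_i(Y)\eta_k(X)$, giving \eqref{2-form1}. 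Identity \eqref{2-form2} is obtained by the symmetric procedure starting from \eqref{2-form4}: replace $Y$ by $\varphi_iY$, use $\eta_k(\varphi_iY)=\eta_j(Y)$, and expand $\Phi_j(X,\varphi_iY)=g(X,\varphi_j\varphi_iY)$ via $\varphi_j\varphi_i=-\varphi_k+\eta_i\otimes\xi_j$.

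The only potential pitfall is bookkeeping of signs in the cyclic permutation: one must consistently use that if $(i,j,k)$ is an even permutation then so are $(j,k,i)$ and $(k,i,j)$, in order to apply \eqref{3-sasaki} to the products $\varphi_j\varphi_k$, $\varphi_k\varphi_i$, etc. Apart from this, the proof is a short, purely algebraic verification with no analytic content.
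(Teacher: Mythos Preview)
Your proof is correct and follows essentially the same route as the paper: both are short algebraic verifications using the quaternionic relations \eqref{3-sasaki} and the skew-symmetry of $\varphi_i$ with respect to $g$. The paper proves \eqref{2-form1} and \eqref{2-form3} directly and leaves \eqref{2-form2}, \eqref{2-form4} as analogous, whereas you prove \eqref{2-form3}, \eqref{2-form4} first and then obtain \eqref{2-form1}, \eqref{2-form2} by the substitution $Y\mapsto\varphi_iY$; this is only a minor reorganization of the same computation.
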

\begin{proof}
Applying \eqref{3-sasaki}, we have
\begin{align*}
\Phi_j(\varphi_iX,\varphi_iY)&=g(\varphi_iX,\varphi_j\varphi_iY)=g(\varphi_iX,-\varphi_i\varphi_jY+\eta_i(Y)\xi_j+\eta_j(Y)\xi_i)\\
&=-g(X,\varphi_jY)+\eta_i(X)\eta_i(\varphi_jY)-\eta_i(Y)g(X,\varphi_i\xi_j)\\
&=-\Phi_j(X,Y)+\eta_i(X)\eta_k(Y)-\eta_i(Y)\eta_k(X)
\end{align*}
which proves \eqref{2-form1}. As regards \eqref{2-form3}, we have
\[\Phi_j(\varphi_iX,Y)=-g(X,\varphi_i\varphi_jY)=-g(X,\varphi_kY+\eta_j(Y)\xi_i)=-\Phi_k(X,Y)-\eta_i(X)\eta_j(Y).\]
Analogously, one shows \eqref{2-form2} and \eqref{2-form4}.
\end{proof}
The following two propositions contain the necessary preparations for showing that
$3$-$(\alpha,\delta)$-Sasaki manifolds are canonical, as they yield remarkable identities
for $\nabla^g\varphi_i$ and $\nabla^g\xi_i$.
\begin{prop}
Let $(M,\varphi_i,\xi_i,\eta_i,g)$ be a $3$-$(\alpha,\delta)$-Sasaki manifold. Then
the Levi-Civita connection satisfies for all $X,Y\in\frak{X}(M)$ and any cyclic
permutation $(i,j,k)$  of $(1,2,3)$:
\begin{equation}
\begin{split}\label{levi-civita-Sasaki}
(\nabla^g_X\varphi_i)Y&=\alpha\, \left[g(X,Y)\xi_i-\eta_i(Y)X\right]
-2(\alpha-\delta)\,\left[\eta_k(X)\varphi_jY-\eta_j(X)\varphi_kY\right] \\
&\quad+(\alpha-\delta)\,\left[\eta_j(X)\eta_j(Y)+\eta_k(X)\eta_k(Y)\right]\xi_i\\
&\quad-(\alpha-\delta)\,\eta_i(Y)\,\left[\eta_j(X)\xi_j+\eta_k(X)\xi_k\right].
\end{split}
\end{equation}
\end{prop}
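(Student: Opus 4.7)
The plan is to exploit hypernormality, which was just established in Theorem \ref{thm.general-Kashiwada}, in order to apply the classical Blair formula \eqref{leci-civita} already used in the proof of Proposition \ref{prop.3-d-cosymplectic-is-hn}:
\[
2g((\nabla^g_X\varphi_i)Y,Z)\, =\, d\Phi_i(X,\varphi_iY,\varphi_iZ)-d\Phi_i(X,Y,Z)+d\eta_i(\varphi_iY,X)\eta_i(Z)-d\eta_i(\varphi_iZ,X)\eta_i(Y).
\]
The strategy is then purely algebraic: substitute \eqref{differential_eta} and \eqref{differential_Phi} into the right-hand side and reduce everything to a linear combination of $g(\cdot,Z)$-terms using the pointwise identities \eqref{2-form1}--\eqref{2-form4}.

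First I would treat the two $d\eta_i$-terms. Using \eqref{differential_eta}, the identity $\Phi_i(\varphi_iY,X)=g(X,Y)-\eta_i(X)\eta_i(Y)$, and the formulas $\eta_j(\varphi_iY)=-\eta_k(Y)$, $\eta_k(\varphi_iY)=\eta_j(Y)$, these two terms produce the desired $2\alpha[g(X,Y)\eta_i(Z)-g(X,Z)\eta_i(Y)]$ piece (giving the $\alpha$-part of the claim once one writes $g(X,Z)\eta_i(Y) = \eta_i(Y)g(X,Z) = g(\eta_i(Y)X, Z)$), together with a collection of $(\alpha-\delta)$-terms proportional to products $\eta_r(X)\eta_s(Y)\eta_t(Z)$ with $r,s,t\in\{j,k\}$; these will be the origin of the last two lines of \eqref{levi-civita-Sasaki}.

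Next I would expand the two $d\Phi_i$-terms via \eqref{differential_Phi}; each is a signed sum of wedge products $\eta_r\wedge\Phi_s$ with $r,s\in\{j,k\}$. Evaluated on $(X,\varphi_iY,\varphi_iZ)$ versus $(X,Y,Z)$, the differences that appear are exactly
\[
\Phi_j(\varphi_iY,\varphi_iZ)-\Phi_j(Y,Z),\quad \Phi_k(\varphi_iY,\varphi_iZ)-\Phi_k(Y,Z),\quad \Phi_{j,k}(\varphi_iY,X), \;\text{etc.,}
\]
all of which collapse to combinations of $\eta_r(\cdot)\eta_s(\cdot)$ by \eqref{2-form1}--\eqref{2-form2}, while mixed terms like $\Phi_j(X,\varphi_iZ)$ simplify by \eqref{2-form3}--\eqref{2-form4} to a multiple of $\Phi_k(X,Z)=g(X,\varphi_kZ)$. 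As a sanity check, when $\delta=\alpha$ the $d\Phi_i$-terms vanish identically and only the $\alpha$-piece survives, reproducing the well-known $3$-$\alpha$-Sasaki identity $(\nabla^g_X\varphi_i)Y=\alpha(g(X,Y)\xi_i-\eta_i(Y)X)$.

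The last step is to collect all surviving contributions and recognise the right-hand side as $2g(V,Z)$ with $V$ equal to the claimed expression; the terms proportional to $\varphi_jY$ and $\varphi_kY$ come from combining the mixed $\Phi_j(X,\varphi_iZ)$, $\Phi_k(X,\varphi_iZ)$ pieces with the factors $\eta_j(X)$, $\eta_k(X)$ produced by \eqref{differential_Phi}, while the $\xi_i$, $\xi_j$, $\xi_k$ terms come from the $\eta_r(Z)$-contributions of both the $d\eta_i$ and $d\Phi_i$ parts. The main obstacle is not conceptual but combinatorial: one has to carefully bookkeep the signs and cyclic roles of the indices $(i,j,k)$, and verify that the many $\eta_r(X)\eta_s(Y)\eta_i(Z)$ and $\eta_r(X)\Phi_s(Y,Z)$ contributions reorganise into precisely the symmetric pattern stated in \eqref{levi-civita-Sasaki}.
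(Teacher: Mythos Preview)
Your proposal is correct and follows essentially the same route as the paper: invoke hypernormality to apply Blair's identity \eqref{leci-civita}, then substitute \eqref{differential_eta} and \eqref{differential_Phi} and simplify via \eqref{2-form1}--\eqref{2-form4}. The paper treats the $d\Phi_i$-terms first and the $d\eta_i$-terms second (the reverse of your order), but the computation and the ingredients are otherwise identical.
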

\begin{proof}
Since the structure is hypernormal,  the Levi-Civita
connection satisfies by \cite[Lemma 6.1]{BLAIR} the identity \eqref{leci-civita} used
before. Let us  evaluate its terms. To start with,
the defining relation \eqref{differential_Phi} of a
$3$-$(\alpha,\delta)$-Sasaki manifold and the preceding equations
\eqref{2-form1}-\eqref{2-form4} yield
\begin{align*}
\lefteqn{d\Phi_i(X,\varphi_iY,\varphi_iZ) = }\\
&=  2(\delta-\alpha) [ \eta_k(X)\Phi_j(\varphi_iY,\varphi_iZ)
-\eta_j(X)\Phi_k(\varphi_iY,\varphi_iZ)
+  \eta_k(\varphi_iY)\Phi_j(\varphi_iZ,X)\\
&\quad- \eta_j(\varphi_iY)\Phi_k(\varphi_iZ,X)
+  \eta_k(\varphi_iZ)\Phi_j(X,\varphi_iY)-\eta_j(\varphi_iZ)\Phi_k(X,\varphi_iY) ]\\
&= 2(\delta-\alpha)[-\eta_k(X)\Phi_j(Y,Z)-\eta_k(X)(\eta_k\wedge\eta_i)(Y,Z)\\
&\quad+\eta_j(X)\Phi_k(Y,Z)+\eta_j(X)(\eta_i\wedge\eta_j)(Y,Z)\\
&\quad-\eta_j(Y)\Phi_k(Z,X)-\eta_j(Y)\eta_i(Z)\eta_j(X)+\eta_k(Y)\Phi_j(Z,X)
-\eta_k(Y)\eta_i(Z)\eta_k(X)\\
 &\quad- \eta_j(Z)\Phi_k(X,Y)+\eta_j(Z)\eta_i(Y)\eta_j(X)+\eta_k(Z)\Phi_j(X,Y)
+\eta_k(Z)\eta_i(Y)\eta_k(X)]\\
&= d\Phi_i(X,Y,Z)+4(\delta-\alpha)[-\eta_k(X)\Phi_j(Y,Z)+\eta_j(X)\Phi_k(Y,Z)]\\
&\quad+ 4(\delta-\alpha)\eta_j(X)[\eta_i(Y)\eta_j(Z)-\eta_j(Y)\eta_i(Z)]\\
 &\quad+ 4(\delta-\alpha)\eta_k(X)[\eta_i(Y)\eta_k(Z)-\eta_k(Y)\eta_i(Z)].
\end{align*}
On the other hand, again using the defining relation \eqref{differential_eta}, we obtain
\begin{align*}
\lefteqn{ d\eta_i(\varphi_iY,X)\eta_i(Z)-d\eta_i(\varphi_iZ,X)\eta_i(Y) = }\\
&=  \eta_i(Z)[2\alpha\Phi_i(\varphi_iY,X)
+2(\alpha-\delta)(\eta_j\wedge\eta_k)(\varphi_iY,X)]\\
&\quad-   \eta_i(Y)[2\alpha\Phi_i(\varphi_iZ,X)
+2(\alpha-\delta)(\eta_j\wedge\eta_k)(\varphi_iZ,X)]\\
&= 2\alpha[g(X,Y)\eta_i(Z)-g(X,Z)\eta_i(Y)]\\
&\quad +  2(\alpha-\delta)\eta_i(Z)[-\eta_k(Y)\eta_k(X)-\eta_j(X)\eta_j(Y)]\\
& \quad- 2(\alpha-\delta)\eta_i(Y)[-\eta_k(Z)\eta_k(X)-\eta_j(X)\eta_j(Z)]
\end{align*}
Inserting the above computations in \eqref{leci-civita}, we conclude that
\begin{align*}
g((\nabla^g_X\varphi_i)Y,Z)&=\alpha[g(X,Y)\eta_i(Z)-g(X,Z)\eta_i(Y)]\\
&\quad+2(\delta-\alpha)[\eta_k(X)g(\varphi_jY,Z)-\eta_j(X)g(\varphi_kY,Z)]\\
&\quad-(\alpha-\delta)\eta_i(Z)[-\eta_k(Y)\eta_k(X)-\eta_j(X)\eta_j(Y)]\\
&\quad+(\alpha-\delta)\eta_i(Y)[-\eta_k(Z)\eta_k(X)-\eta_j(X)\eta_j(Z)]
\end{align*}
which implies \eqref{levi-civita-Sasaki}.
\end{proof}
\begin{cor}\label{corollary_nabla_xi}
Let $(M,\varphi_i,\xi_i,\eta_i,g)$ be a $3$-$(\alpha,\delta)$-Sasaki manifold.
Then, for every $X\in\frak{X}(M)$ and for every  even permutation $(i,j,k)$
of  $(1,2,3)$,
\begin{equation}\label{levi-civita-xi}
\nabla^g_X\xi_i=-\alpha\varphi_iX-(\alpha-\delta)\,[\eta_k(X)\xi_j-\eta_j(X)\xi_k],
\end{equation}
\begin{equation}\label{levi-civita-xi-xi}
\nabla^g_{\xi_i}\xi_i=0,\quad \nabla^g_{\xi_i}\xi_j=-\nabla^g_{\xi_j}\xi_i=\delta\xi_k.
\end{equation}
Consequently, each $\xi_i$ is a Killing vector field. Furthermore, the
distribution $\mathcal V$ is integrable with totally geodesic leaves.
\end{cor}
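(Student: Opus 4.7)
The plan is to deduce everything from equation \eqref{levi-civita-Sasaki} by specializing it to vertical arguments, which is possible because the three Reeb vector fields generate $\V$.

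First I would obtain \eqref{levi-civita-xi}. Starting from $\varphi_i\xi_i=0$ and differentiating gives $\varphi_i(\nabla^g_X\xi_i)=-(\nabla^g_X\varphi_i)\xi_i$; applying $\varphi_i$ and using $\varphi_i^2=-\mathrm{Id}+\eta_i\otimes\xi_i$ together with the identity $\eta_i(\nabla^g_X\xi_i)=\tfrac{1}{2}X(g(\xi_i,\xi_i))=0$ yields the key reduction
\[
\nabla^g_X\xi_i \;=\; \varphi_i\bigl((\nabla^g_X\varphi_i)\xi_i\bigr).
\]
Then I would substitute $Y=\xi_i$ into \eqref{levi-civita-Sasaki}, simplify using $\eta_i(\xi_i)=1$, $\eta_j(\xi_i)=\eta_k(\xi_i)=0$, and collect the result: the second and fourth terms combine to $(\alpha-\delta)[\eta_j(X)\xi_j+\eta_k(X)\xi_k]$, while the first contributes $\alpha\eta_i(X)\xi_i-\alpha X$. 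Applying $\varphi_i$ and using the sphere relations $\varphi_i\xi_i=0$, $\varphi_i\xi_j=\xi_k$, $\varphi_i\xi_k=-\xi_j$ from \eqref{3-sasaki} then produces \eqref{levi-civita-xi}.

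Next, the identities \eqref{levi-civita-xi-xi} follow by specializing \eqref{levi-civita-xi} to $X=\xi_i$ and $X=\xi_j$: for $X=\xi_i$ all terms vanish, while for $X=\xi_j$ the horizontal term gives $-\alpha\varphi_j\xi_i=\alpha\xi_k$ (by the sign convention in \eqref{3-sasaki} with cyclic permutation $(j,k,i)$) and the vertical correction gives $-(\alpha-\delta)\xi_k$, producing $\delta\xi_k$. The Killing property of $\xi_i$ is immediate: inserting \eqref{levi-civita-xi} into $g(\nabla^g_X\xi_i,Y)+g(X,\nabla^g_Y\xi_i)$, the vertical terms cancel antisymmetrically in $X\leftrightarrow Y$, and the horizontal terms give $-\alpha[\Phi_i(X,Y)+\Phi_i(Y,X)^{(\text{swap})}]$, vanishing by skew-symmetry of $\Phi_i$. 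Finally, \eqref{levi-civita-xi-xi} shows that $\nabla^g_{\xi_r}\xi_s\in\Gamma(\V)$ for all $r,s$, so $\V$ is totally geodesic; integrability then follows from $[\xi_i,\xi_j]=\nabla^g_{\xi_i}\xi_j-\nabla^g_{\xi_j}\xi_i=2\delta\xi_k\in\Gamma(\V)$ and the Frobenius theorem.

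The only mild obstacle is bookkeeping of the permutation signs in the sphere relations \eqref{3-sasaki} to avoid sign errors when applying $\varphi_i$ to $\xi_j,\xi_k$; everything else is direct substitution into \eqref{levi-civita-Sasaki}.
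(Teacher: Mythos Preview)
Your approach is essentially identical to the paper's: both obtain \eqref{levi-civita-xi} by setting $Y=\xi_i$ in \eqref{levi-civita-Sasaki}, using $(\nabla^g_X\varphi_i)\xi_i=-\varphi_i(\nabla^g_X\xi_i)$ and applying $\varphi_i$; both then specialize to vertical $X$ for \eqref{levi-civita-xi-xi} and read off the Killing property from the skew-symmetry of $g(\nabla^g_X\xi_i,Y)=\alpha\Phi_i(X,Y)+(\alpha-\delta)(\eta_j\wedge\eta_k)(X,Y)$. One bookkeeping slip to fix: when you set $X=\xi_j$ in \eqref{levi-civita-xi} the first term is $-\alpha\varphi_i\xi_j=-\alpha\xi_k$ (not $-\alpha\varphi_j\xi_i$), and the vertical term is $+(\alpha-\delta)\xi_k$, so you are computing $\nabla^g_{\xi_j}\xi_i=-\delta\xi_k$, consistent with $\nabla^g_{\xi_i}\xi_j=\delta\xi_k$.
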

\begin{proof}
Applying \eqref{levi-civita-Sasaki} for $Y=\xi_i$, we get
\[
(\nabla^g_X\varphi_i)\xi_i=\alpha[\eta_i(X)\xi_i-X]+(\alpha-\delta)[\eta_j(X)\xi_j
+\eta_k(X)\xi_k].\]
Applying $\varphi_i$ on both hand-sides, since
$(\nabla^g_X\varphi_i)\xi_i=-\varphi_i(\nabla^g_X\xi_i)$, we obtain \eqref{levi-civita-xi}.
Equations \eqref{levi-civita-xi-xi} are immediate consequences of \eqref{levi-civita-xi}.
It follows that the distribution $\mathcal V$ is integrable with totally geodesic leaves.
In particular $[\xi_i,\xi_j]=2\delta\xi_k$. Finally, from \eqref{levi-civita-xi} we have
\[g(\nabla^g_X\xi_i,Y)=\alpha\Phi_i(X,Y)+(\alpha-\delta)(\eta_j\wedge\eta_k)(X,Y)\]
for every $X,Y\in\frak{X}(M)$. Since $\nabla^g\xi_i$ is skew-symmetric, $\xi_i$ is
Killing.
\end{proof}
\begin{cor}
Let $(M,\varphi_i,\xi_i,\eta_i,g)$ be a $3$-$(\alpha,\delta)$-Sasaki manifold.
Then for every  even permutation $(i,j,k)$ of  $(1,2,3)$ we have
\begin{equation}\label{lie_derivative-sasaki}
{\mathcal L}_{\xi_i}\varphi_i=0,\qquad {\mathcal L}_{\xi_i}\varphi_j
=-{\mathcal L}_{\xi_j}\varphi_i=2\delta\varphi_k.
\end{equation}
\end{cor}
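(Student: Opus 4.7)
The plan is to compute the Lie derivatives directly from the formulas in \eqref{levi-civita-Sasaki} and \eqref{levi-civita-xi} using the torsion-free property of the Levi-Civita connection. The starting point is the identity
\[
(\mathcal{L}_{\xi_j}\varphi_i)X \,=\, (\nabla^g_{\xi_j}\varphi_i)X \,-\, \nabla^g_{\varphi_iX}\xi_j \,+\, \varphi_i \nabla^g_X\xi_j,
\]
valid for any vector field $X$ because $[\xi_j,Y]=\nabla^g_{\xi_j}Y-\nabla^g_Y\xi_j$. All three terms on the right-hand side are known explicitly from the preceding results.

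First I would handle the case $j=i$. Evaluating \eqref{levi-civita-Sasaki} at $Z=\xi_i$ collapses the horizontal $\alpha$-part (since $g(\xi_i,X)\xi_i-\eta_i(X)\xi_i=0$) and the vertical tails cancel against the $(\alpha-\delta)$-terms, giving $(\nabla^g_{\xi_i}\varphi_i)X=0$. Then a direct computation of $-\nabla^g_{\varphi_iX}\xi_i+\varphi_i\nabla^g_X\xi_i$ via \eqref{levi-civita-xi}, using $\varphi_i^2X=-X+\eta_i(X)\xi_i$ together with $\varphi_i\xi_j=\xi_k$, $\varphi_i\xi_k=-\xi_j$ and $\eta_j(\varphi_iX)=-\eta_k(X)$, $\eta_k(\varphi_iX)=\eta_j(X)$, shows that the two remaining pieces cancel each other exactly. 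Hence $\mathcal{L}_{\xi_i}\varphi_i=0$.

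For $j\ne i$, with $(i,j,k)$ a cyclic permutation, I would plug $Z=\xi_j$ into \eqref{levi-civita-Sasaki} (keeping $(i,j,k)$ in the role it plays there): the $\alpha$-part contributes $\alpha[\eta_j(X)\xi_i-\eta_i(X)\xi_j]$, the term $-2(\alpha-\delta)[\eta_k(Z)\varphi_jY-\eta_j(Z)\varphi_kY]$ becomes $+2(\alpha-\delta)\varphi_kX$, and the vertical remainder contributes $(\alpha-\delta)[\eta_j(X)\xi_i-\eta_i(X)\xi_j]$. Then from \eqref{levi-civita-xi} I would expand $\nabla^g_{\varphi_iX}\xi_j$ and $\varphi_i\nabla^g_X\xi_j$ using the same table of identities for $\varphi_r\xi_s$ and $\eta_r(\varphi_sX)$. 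Summing the three contributions, the coefficients of $\eta_i(X)\xi_j$ and of $\eta_j(X)\xi_i$ cancel identically (each adds up to $0$), while the coefficient of $\varphi_kX$ simplifies to $2(\alpha-\delta)-\alpha-\alpha=-2\delta$ or $+2\delta$ depending on the order of the indices. This yields $\mathcal{L}_{\xi_i}\varphi_j=2\delta\,\varphi_k$ and $\mathcal{L}_{\xi_j}\varphi_i=-2\delta\,\varphi_k$, which is exactly \eqref{lie_derivative-sasaki}.

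The only real obstacle is bookkeeping: every term must be rewritten using the algebraic relations \eqref{3-sasaki}, and one has to be careful that the cyclic role of the triple $(i,j,k)$ in \eqref{levi-civita-Sasaki} and in \eqref{levi-civita-xi} is applied consistently when differentiating $\varphi_j$ or $\xi_j$ (i.e.\ the permutation used is $(j,k,i)$). Once these conventions are fixed, all non-$\varphi_k$ terms cancel automatically and the stated formulas drop out.
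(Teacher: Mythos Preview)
Your proposal is correct and follows essentially the same approach as the paper: both use the identity $(\mathcal{L}_{\xi_j}\varphi_i)X=(\nabla^g_{\xi_j}\varphi_i)X-\nabla^g_{\varphi_iX}\xi_j+\varphi_i\nabla^g_X\xi_j$ and evaluate each term from \eqref{levi-civita-Sasaki} and \eqref{levi-civita-xi}. The only cosmetic difference is that the paper works out $\mathcal{L}_{\xi_i}\varphi_j$ explicitly and leaves $\mathcal{L}_{\xi_j}\varphi_i$ as ``analogous'', whereas you do the opposite; also, be careful with your variable names (what you call ``$Z$'' is the direction variable denoted $X$ in \eqref{levi-civita-Sasaki}).
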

\begin{proof}
For the first Lie derivative, notice that by \eqref{levi-civita-Sasaki} we have
$\nabla^g_{\xi_i}\varphi_i=0$. Then, applying also \eqref{levi-civita-xi}, for every
vector field $X$ we have
\begin{align*}
({\mathcal L}_{\xi_i}\varphi_i)X&=[\xi_i,\varphi_iX]-\varphi_i[\xi_i,X]\\
&=\nabla^g_{\xi_i}(\varphi_iX)-\nabla^g_{\varphi_iX}\xi_i-\varphi_i(\nabla^g_{\xi_i}X)
+\varphi_i(\nabla^g_X\xi_i)\\
&=(\nabla^g_{\xi_i}\varphi_i)X-\nabla^g_{\varphi_iX}\xi_i+\varphi_i(\nabla^g_X\xi_i)\\
&=\alpha\varphi_i^2X+(\alpha-\delta)[\eta_k(\varphi_iX)\xi_j-\eta_j(\varphi_iX)\xi_k]\\
&\quad-\alpha\varphi_i^2X-(\alpha-\delta)[\eta_k(X)\varphi_i\xi_j
-\eta_j(X)\varphi_i\xi_k]=0.
\end{align*}
Now, using \eqref{levi-civita-Sasaki} for the covariant derivative $\nabla^g\varphi_j$,
for every vector field $Y$, we have
\begin{align*}
(\nabla^g_{\xi_i}\varphi_j)Y&=\alpha[\eta_i(Y)\xi_j-\eta_j(Y)\xi_i]
-2(\alpha-\delta)\varphi_kY+(\alpha-\delta)[\eta_i(Y)\xi_j-\eta_j(Y)\xi_i]\\
&=-2(\alpha-\delta)\varphi_kY+(2\alpha-\delta)[\eta_i(Y)\xi_j-\eta_j(Y)\xi_i].
\end{align*}
Therefore, applying also \eqref{levi-civita-xi}, we get
\begin{align*}
({\mathcal L}_{\xi_i}\varphi_j)X&=(\nabla^g_{\xi_i}\varphi_j)X
-\nabla^g_{\varphi_jX}\xi_i+\varphi_j(\nabla^g_X\xi_i)\\
&=-2(\alpha-\delta)\varphi_kX+(2\alpha-\delta) [ \eta_i(X)\xi_j-\eta_j(X)\xi_i ]
+\alpha\varphi_i\varphi_jX\\
&\quad+(\alpha-\delta)\eta_k(\varphi_jX)\xi_j
-\alpha\varphi_j\varphi_iX+(\alpha-\delta)\eta_j(X)\varphi_j\xi_k\\
&=2\delta\varphi_k X+\alpha[\eta_j(X)\xi_i-\eta_i(X)\xi_j]
+(2\alpha-\delta+\delta-\alpha)[\eta_i(X)\xi_j-\eta_j(X)\xi_i]\\
&=2\delta\varphi_k X.
\end{align*}
Analogously, one shows that ${\mathcal L}_{\xi_j}\varphi_i=-2\delta\varphi_k$.
\end{proof}
\begin{cor}\label{cor.3-AD-Sasaki-implies-canonical}
Every $3$-$(\alpha,\delta)$-Sasaki manifold is a canonical almost $3$-contact metric manifold.
In particular, it admits a constant Reeb Killing function $\beta = 2 (\delta-2\alpha)$.
\end{cor}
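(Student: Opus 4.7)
The strategy is to verify conditions 1)--4) of Definition \ref{df.can-3-contact} one by one, drawing on results already established in this section.

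Conditions 1) and 2) come essentially for free: by the generalized Kashiwada Theorem \ref{thm.general-Kashiwada}, a $3$-$(\alpha,\delta)$-Sasaki manifold is hypernormal, so $N_{\varphi_i}=0$ identically (in particular, trivially skew-symmetric on $\H$); by Corollary \ref{corollary_nabla_xi}, each $\xi_i$ is Killing. Condition 3) is also immediate: since all $N_{\varphi_i}$ vanish, it reduces to the claim that $d\Phi_i(\varphi_iX,\varphi_iY,\varphi_iZ)=d\Phi_j(\varphi_jX,\varphi_jY,\varphi_jZ)$ for horizontal $X,Y,Z$. But formula \eqref{differential_Phi} expresses each $d\Phi_i$ as a combination of terms of the form $\eta_r\wedge\Phi_s$, which vanish on three horizontal arguments; and since $\H$ is $\varphi_r$-invariant, both sides equal zero.

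The real work is condition 4), i.e.\ the existence of a Reeb Killing function $\beta$. I would compute each $A_{ij}(X,Y)$ for horizontal $X,Y$ directly from the definition \eqref{Aij}, using two ingredients: first, the Lie derivative identities \eqref{lie_derivative-sasaki}, namely $\mathcal{L}_{\xi_i}\varphi_i=0$ and $\mathcal{L}_{\xi_i}\varphi_j=-\mathcal{L}_{\xi_j}\varphi_i=2\delta\varphi_k$ for $(i,j,k)$ even; second, the horizontal restriction of $d\eta_i$, which by \eqref{differential_eta} reduces to $d\eta_i|_{\H\times\H}=2\alpha\Phi_i$. For the diagonal term $A_i(X,Y)$, the Lie derivative contribution vanishes and the two $d\eta_i$-terms cancel (using $g(\varphi_iX,\varphi_iY)=g(X,Y)$ on $\H$). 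For $A_{ij}(X,Y)$ with $(i,j,k)$ even, the Lie derivative contributes $2\delta\Phi_k(X,Y)$ (using $g(\varphi X,Y)=-g(X,\varphi Y)$), while the two $d\eta_j$-terms each contribute $-2\alpha\Phi_k(X,Y)$ after applying the relations $\varphi_i\varphi_j=\varphi_k$ and $\varphi_j\varphi_i=-\varphi_k$ on $\H$; summing yields $A_{ij}(X,Y)=2(\delta-2\alpha)\Phi_k(X,Y)$. The computation of $A_{ji}$ is analogous and produces the opposite sign, confirming $A_{ji}=-A_{ij}=-\beta\Phi_k$ with $\beta=2(\delta-2\alpha)$.

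There is no real obstacle beyond bookkeeping: the only points requiring care are the sign conventions in $g(\varphi X,Y)=-g(X,\varphi Y)$ and the quaternionic product rule $\varphi_i\varphi_j=\varphi_k$ vs.\ $\varphi_j\varphi_i=-\varphi_k$ (valid on $\H$ after dropping the $\eta$-correction terms from \eqref{3-sasaki}). Once these are handled uniformly, the claim $\beta=2(\delta-2\alpha)$ falls out directly, and since $\alpha,\delta$ are constants, $\beta$ is a constant Reeb Killing function, as asserted.
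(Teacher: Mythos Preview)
Your proposal is correct and follows essentially the same approach as the paper's proof: you verify conditions 1)--3) via hypernormality (Theorem \ref{thm.general-Kashiwada}), the Killing property (Corollary \ref{corollary_nabla_xi}), and the vanishing of $d\Phi_i$ on horizontal triples from \eqref{differential_Phi}, and then compute $A_i$, $A_{ij}$, $A_{ji}$ directly from \eqref{lie_derivative-sasaki} and the horizontal restriction $d\eta_j|_{\H\times\H}=2\alpha\Phi_j$ of \eqref{differential_eta}. The paper's computation is line-by-line the same, including the identification of the Lie-derivative term as $-2\delta g(\varphi_kX,Y)=2\delta\Phi_k(X,Y)$ and the two $d\eta_j$-terms via $\varphi_j\varphi_i=-\varphi_k$ and $\varphi_i\varphi_j=\varphi_k$ on $\H$.
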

\begin{proof}
We proved that  each $\xi_i$ is a Killing vector field in Corollary \ref{corollary_nabla_xi}
and  that the structure is hypernormal in Theorem \ref{thm.general-Kashiwada}.
Furthermore, by \eqref{differential_Phi}, $d\Phi_i(X,Y,Z)=0$ for every $X,Y,Z\in\Gamma(\H)$.
Therefore, conditions 1)-3) of Definition \ref{df.can-3-contact} are satisfied. As regards
condition 4), we show that $M$ admits a (constant) Reeb Killing function
$\beta=2(\delta-2\alpha)$. Indeed, for every $X,Y\in\Gamma(\H)$
\begin{align*}
A_i(X,Y)&=g(({\mathcal L}_{\xi_i}\varphi_i)X,Y)+d\eta_i(X,\varphi_iY)+d\eta_i(\varphi_iX,Y)\\
&=2\alpha\Phi_i(X,\varphi_iY)+2\alpha\Phi_i(\varphi_iX,Y)=0,
\end{align*}
where we have applying \eqref{lie_derivative-sasaki} and \eqref{differential_eta}. Moreover,
for every even permutation $(i,j,k)$ of $(1,2,3)$ and for every $X,Y\in\Gamma (\H)$ we have
\begin{align*}
A_{ij}(X,Y)&=g(({\mathcal L}_{\xi_j}\varphi_i)X,Y)+d\eta_j(X,\varphi_iY)+d\eta_j(\varphi_iX,Y)\\
&=-2\delta g(\varphi_kX,Y)+2\alpha\Phi_j(X,\varphi_iY)+2\alpha\Phi_j(\varphi_i X,Y)\\
&= 2\delta g(X,\varphi_kY)+2\alpha g(X,\varphi_j\varphi_iY)-2\alpha g(X,\varphi_i\varphi_jY)\\
&=2(\delta-2\alpha)\Phi_k(X,Y).
\end{align*}
Analogously, one checks $A_{ji}(X,Y)=-2(\delta-2\alpha)\Phi_k(X,Y)$. Hence, $M$ is canonical.
\end{proof}
\begin{df}
Accordingly, we will call a $3$-$(\alpha,\delta)$-Sasaki manifold with $\delta=2\alpha$
a \emph{parallel} $3$-$(\alpha,\delta)$-Sasaki manifold, compare Definition \ref{df.ReebKf}.
\end{df}
We close the section with some examples of degenerate  $3$-$(\alpha,\delta)$-Sasaki manifolds
and the observation that any $3$-$(\alpha,\delta)$-Sasaki manifold admits an underlying
quaternionic contact Einstein structure; this will allow us to compute the Riemannian Ricci
curvature of a $3$-$(\alpha,\delta)$-Sasaki manifold.
\begin{ex}[Quaternionic Heisenberg groups]\label{ex.qHgroup}
The quaternionic Heisenberg group of dimension $4p+3$ is the connected,
simply connected
Lie group $N_p$ with Lie algebra ${\frak n}_p$ spanned by vector fields
$\xi_1,\xi_2,\xi_3, \tau_r,\tau_{p+r},\tau_{2p+r},\tau_{3p+r}$, $r=1,\ldots,p$, whose
non-vanishing commutators are:
\begin{align*}
&[\tau_r,\tau_{p+r}]=\lambda\xi_1& \quad&[\tau_r,\tau_{2p+r}]
=\lambda\xi_2&\quad &[\tau_r,\tau_{3p+r}]=\lambda\xi_3\\
&[\tau_{2p+r},\tau_{3p+r}]=\lambda\xi_1&\quad &[\tau_{3p+r},\tau_{p+r}]
=\lambda\xi_2&\quad &[\tau_{p+r},\tau_{2p+r}]=\lambda\xi_3,
\end{align*}
where $\lambda$ is a positive real number. As described in \cite{Ag-F-S}, the Lie group
$N_p$ admits a left invariant almost $3$-contact metric structure
$(\varphi_i,\xi_i,\eta_i,g_\lambda)$, $i=1,2,3$, where $g_\lambda$ is the Riemannian metric
with respect to which the above basis is orthonormal, $\eta_i$ is the dual $1$-form of
$\xi_i$, and $\varphi_i$ is given by
\[
\varphi_i=\eta_j\otimes\xi_k-\eta_k\otimes\xi_j+\sum_{r=1}^p[\theta_r\otimes\tau_{ip+r}
-\theta_{ip+r}\otimes\tau_{r}
+\theta_{jp+r}\otimes\tau_{kp+r}-\theta_{kp+r}\otimes\tau_{jp+r}]
\]
where $\theta_l$, $l=1,\dots,4p$, is the dual $1$-form of $\tau_l$, and $(i,j,k)$ is an
even permutation of $(1,2,3)$. The differential of each $1$-form $\eta_i$ is given by
\begin{equation*}\label{deta_H}
d\eta_i=-\lambda\sum_{r=1}^p[\theta_r\wedge\theta_{ip+r}+\theta_{jp+r}\wedge\theta_{kp+r}],
\end{equation*}
and the fundamental $2$-forms of the structure are
\begin{equation*}\label{Phi_H}
\Phi_i={}-\eta_j\wedge\eta_k-\sum_{r=1}^p[\theta_r\wedge\theta_{ip+r}
+\theta_{jp+r}\wedge\theta_{kp+r}].
\end{equation*}
Therefore,
\[d\eta_i=\lambda(\Phi_i+\eta_j\wedge\eta_k).\]
Then $(N_p,\varphi_i,\xi_i,\eta_i,g_\lambda)$ is a $3$-$(\alpha,\delta)$-Sasaki
manifold with $2\alpha=\lambda$ and $\delta=0$.
\end{ex}
The previous example suggests that there might be a deeper relation between
$3$-$(\alpha,\delta)$-Sasaki manifolds and quaternionic contact manifolds. We are
now going to explain this relation.

A \emph{quaternionic contact manifold} is a $(4n+3)$-dimensional manifold with a distribution
$\H$ of codimension $3$ and a $\mathrm{Sp}(n)\mathrm{Sp}(1)$ structure locally defined by
$1$-forms $\tilde\eta_1,\tilde\eta_2,\tilde\eta_3$. Then, $\H=\cap_{i=1}^3Ker(\tilde\eta_i)$
carries a positive definite symmetric tensor $g$, called the horizontal metric and a
compatible rank-three bundle $\mathbb{Q}$ consisting of endomorphisms of $\mathcal H$ locally
generated by three almost complex structures $I_1$, $I_2$, $I_3$, such that
\begin{enumerate}[\normalfont i)]
\item $I_1I_2=I_3$,
\item $ g(I_i\cdot,I_i\cdot)= g(\cdot,\cdot)$,
\item $2g(I_iX,Y)=d\tilde\eta_i(X,Y)$, $X,Y\in \Gamma(\H)$.
\end{enumerate}
In dimension at least eleven, such a manifold admits a unique distribution $\V$, called the
vertical distribution, supplementary to $\H$, and a unique linear connection, called the
Biquard connection, satisfying distinguished conditions \cite{Biquard00}.
 The vertical distribution is
locally generated by the Reeb vector fields $\tilde\xi_1,\tilde\xi_2,\tilde\xi_3$ such that
\[
\tilde\eta_i(\tilde\xi_j)=\delta_{ij},\qquad (\tilde\xi_i\lrcorner d\tilde\eta_j)|_{\H}=0,
\qquad (\tilde\xi_i\lrcorner d\tilde\eta_j)|_{\H}
=-(\tilde\xi_j\lrcorner d\tilde\eta_i)|_{\H}.
\]
In dimension $7$ the existence of the Biquard connection still holds provided that
one assumes the existence of the Reeb vector fields \cite{Duchemin06}.

In \cite{IMV14} the authors introduce \emph{quaternionic contact Einstein manifolds},
briefly \emph{qc-Einstein manifolds}, defined as quaternionic contact manifolds such that
\[
\Ric(X,Y)=\frac{\Scal}{4n} g(X,Y),\qquad X,Y\in \Gamma(\H),
\]
where $\Ric$ and $\Scal$ are respectively the qc Ricci tensor and the qc scalar curvature
associated to the Biquard connection. In \cite{IMV} a qc-Einstein manifold is characterized
as a quaternionic contact manifold whose structure satisfies
\begin{equation}\label{qcEinstein}
d\tilde\eta_i=2\omega_i+S\tilde\eta_j\wedge\tilde\eta_k,
\end{equation}
for every even permutation $(i,j,k)$ of $(1,2,3)$, where $S$ is a constant related to
the qc scalar curvature by $8n(n+2)S=\Scal$, and each $\omega_i$ is a $2$-form defined by
\[
\xi\lrcorner\omega_i=0,\qquad 2\omega_i(X,Y)=d\tilde\eta_i(X,Y)\qquad \xi\in \Gamma(\V),\;X,Y\in \Gamma(\H).
\]
Now, the horizontal metric $g$ can be extended to a metric $h$ on $M$ by requiring that
$\H$ and $\V$ are orthogonal and $h(\tilde\xi_i,\tilde\xi_j)=\delta_{ij}$. Furthermore,
one can consider a one-parameter family of (pseudo)Riemannian metrics
$h^\lambda$, $\lambda\ne0$, defined by
\begin{equation}\label{hlambda}
h^\lambda(X,Y)=h(X,Y),\;\; h^\lambda(X,\tilde\xi_i)=0,\;\; h^\lambda(\tilde\xi_i,\tilde\xi_j)=\lambda h(\tilde\xi_i,\tilde\xi_j)=\lambda\delta_{ij}, \;\; X,Y\in \Gamma(\H).
\end{equation}
%
%
In \cite{IMV} it is proved that  on a qc-Einstein manifold the (pseudo)Riemannian
Ricci and scalar curvatures of $h^\lambda$ are given by
\begin{equation}\label{Ricci_lambda}
\Ric^\lambda(A,B)=\Big(4n\lambda+\frac{S^2}{2\lambda}\Big)h^\lambda(A_\V,B_\V)
+\big(2S(n+2)-6\lambda\big) h^\lambda(A_\H,B_\H),
\end{equation}
\bdm
\Scal^\lambda=\frac{1}{\lambda}\Big(-12n\lambda^2+8n(n+2)S\lambda+\frac{3}{2}S^2\Big),
\edm
where, for a vector field $A$ on $M$, $A_\V$ and $A_\H$ denote the orthogonal
projections of $A$ on $\H$ and $\V$, respectively.
\begin{prop}\label{prop.3-AD-Sasaki-is-qc-Einstein}
Every $3$-$(\alpha,\delta)$-Sasaki manifold $(M,\varphi_i,\xi_i,\eta_i,g)$
admits an underlying quaternionic contact structure which is qc-Einstein with $S=2\alpha\delta$,
and its Riemannian Ricci curvature is given by
\bdm
\Ric^g\,=\, 2\alpha\big(2\delta(n+2)-3\alpha\big)g+2(\alpha-\delta)\big((2n+3)\alpha-\delta\big)\sum_{i=1}^3\eta_i\otimes\eta_i.
\edm
In particular, a $3$-$(\alpha,\delta)$-Sasaki manifold is Riemannian Einstein if and only if
$\delta=\alpha$ or $\delta=(2n+3)\alpha$.
\end{prop}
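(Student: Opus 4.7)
The plan is to exhibit an underlying qc-Einstein structure by rescaling the Reeb $1$-forms, then compute $\Ric^g$ by applying the Riemannian Ricci formula \eqref{Ricci_lambda} to the appropriate metric extension $h^\lambda$. Throughout fix an even permutation $(i,j,k)$ of $(1,2,3)$.

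First I would set $\tilde\eta_i:=-\eta_i/\alpha$ (admissible since $\alpha\ne 0$), $I_i:=\varphi_i|_{\H}$, and take the horizontal metric $\tilde g:=g|_{\H}$. The relations $I_1I_2=I_3$ and $\tilde g(I_iX,I_iY)=\tilde g(X,Y)$ are immediate from \eqref{3-sasaki} and the compatibility of $g$ with each $\varphi_i$; the crucial condition (iii) reduces to $2g(\varphi_iX,Y)=d\tilde\eta_i(X,Y)$ for $X,Y\in\Gamma(\H)$, which is exactly what \eqref{differential_eta} delivers after using $\Phi_i(X,Y)=g(X,\varphi_iY)=-g(\varphi_iX,Y)$. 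The associated Reeb vector fields are $\tilde\xi_i=-\alpha\,\xi_i$; in dimension $7$ the fact that they come a priori from the $3$-contact data bypasses the extra hypothesis needed in \cite{Duchemin06}, making the Biquard connection available in every dimension $4n+3$.

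To verify the qc-Einstein condition \eqref{qcEinstein}, the key observation is the decomposition $\Phi_i=\Phi_i^{\H}-\eta_j\wedge\eta_k$, forced by $\xi_i\lrcorner\Phi_i=0$, $\xi_j\lrcorner\Phi_i=-\eta_k$, $\xi_k\lrcorner\Phi_i=\eta_j$ together with the horizontality of $\Phi_i^{\H}$. Substituting into \eqref{differential_eta} yields
\[
d\eta_i=2\alpha\,\Phi_i^{\H}-2\delta\,\eta_j\wedge\eta_k,
\]
so, since $\eta_j\wedge\eta_k=\alpha^2\,\tilde\eta_j\wedge\tilde\eta_k$,
\[
d\tilde\eta_i=-2\Phi_i^{\H}+2\alpha\delta\,\tilde\eta_j\wedge\tilde\eta_k=2\omega_i+S\,\tilde\eta_j\wedge\tilde\eta_k,
\]
with horizontal $2$-form $\omega_i:=-\Phi_i^{\H}$ and constant $S:=2\alpha\delta$. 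This is exactly \eqref{qcEinstein}.

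For the Ricci tensor I would realize $g$ as a member of the family \eqref{hlambda}: since $\tilde\xi_i=-\alpha\xi_i$ and $g(\xi_i,\xi_j)=\delta_{ij}$, one gets $h^\lambda(\xi_i,\xi_j)=(\lambda/\alpha^2)\,\delta_{ij}$, so that $g=h^{\alpha^2}$. Inserting $\lambda=\alpha^2$ and $S=2\alpha\delta$ into \eqref{Ricci_lambda} gives
\[
\Ric^g(A,B)=(4n\alpha^2+2\delta^2)\,h^{\alpha^2}(A_{\V},B_{\V})+2\alpha\bigl(2\delta(n+2)-3\alpha\bigr)\,h^{\alpha^2}(A_{\H},B_{\H}).
\]
Writing $h^{\alpha^2}(A_{\V},B_{\V})=\sum_s\eta_s(A)\eta_s(B)$ and $h^{\alpha^2}(A_{\H},B_{\H})=g(A,B)-\sum_s\eta_s(A)\eta_s(B)$, the coefficient of $\sum_s\eta_s\otimes\eta_s$ collapses to $(4n+6)\alpha^2-4(n+2)\alpha\delta+2\delta^2=2(\alpha-\delta)\bigl((2n+3)\alpha-\delta\bigr)$, matching the claimed formula. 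The Einstein condition is equivalent to the vanishing of this vertical correction, forcing $\delta=\alpha$ or $\delta=(2n+3)\alpha$. The main subtlety is pinning down the scaling constant $c=-1/\alpha$ so that qc compatibility (iii) and the qc-Einstein equation come out with the correct signs simultaneously for $\tilde g=g|_{\H}$; once this is fixed, $\lambda=\alpha^2$ is forced by $g(\xi_i,\xi_j)=\delta_{ij}$, and the rest is a purely algebraic rearrangement.
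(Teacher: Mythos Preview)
Your proof is correct and follows essentially the same approach as the paper: both rescale to $\tilde\eta_i=-\eta_i/\alpha$, verify \eqref{qcEinstein} with $S=2\alpha\delta$, identify $g=h^{\alpha^2}$, and plug into \eqref{Ricci_lambda}. Your use of the horizontal decomposition $\Phi_i=\Phi_i^{\H}-\eta_j\wedge\eta_k$ to get $\omega_i=-\Phi_i^{\H}$ is just a notational variant of the paper's $\omega_i=-\Phi_i-\eta_j\wedge\eta_k$.
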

\begin{proof}
The underlying quaternionic contact structure is given by the horizontal distribution $\H$,
the Riemannian metric $g$, the almost complex structures $I_i$ and the $1$-forms
$\tilde\eta_i$ defined by
\[
I_i:=\varphi_i|_{\H},\qquad \tilde\eta_i:=-\frac{1}{\alpha}\eta_i.
\]
Indeed, for all horizontal vector fields $X,Y$
\[
d\tilde\eta_i(X,Y)=-\frac{1}{\alpha}d\eta_i(X,Y)=-2\Phi_i(X,Y)=2g(I_iX,Y).
\]
Let us show that this is in fact a qc-Einstein structure. For this,
let $\omega_i$ be the $2$-form defined by
\[
\xi_r\lrcorner\omega_i=0,\quad2\omega_i(X,Y)=d\tilde\eta_i(X,Y), \qquad X,Y\in\Gamma(\H),
\]
hence
\[
\omega_i=-\Phi_i-\eta_j\wedge\eta_k.
\]
Since $d\eta_i=2\alpha\Phi_i+2(\alpha-\delta)\eta_j\wedge\eta_k$, we have
\begin{eqnarray*}
d\tilde\eta_i&=&-2\Phi_i-2\frac{\alpha-\delta}{\alpha}\eta_j\wedge\eta_k
\ =\   2\omega_i+2\eta_j\wedge\eta_k-2\frac{\alpha-\delta}{\alpha}\eta_j\wedge\eta_k\\
&=&2\omega_i+2\frac{\delta}{\alpha}\eta_j\wedge\eta_k
\ =\ 2\omega_i+2\alpha\delta\,\tilde\eta_j\wedge\tilde\eta_k.
\end{eqnarray*}
Therefore, \eqref{qcEinstein} is satisfied with $S=2\alpha\delta$. Now, the Reeb vector
fields associated to the qc structure are $\tilde\xi_i:=-\alpha\xi_i$, and the Riemannian
metric $g$ coincides with the Riemannian metric $h^\lambda$ defined in \eqref{hlambda}
with $\lambda=\alpha^2$. Therefore, the Ricci tensor of $g$ is
\[
\Ric^g(A,B)=\left(4n\alpha^2+2\delta^2\right)g(A_\V,B_\V)
+\left(4\alpha\delta(n+2)-6\alpha^2\right)g(A_\H,B_\H).
\]
Since $g(A_\V,B_\V)=\sum_{i=1}^3\eta_i(A)\eta_i(B)$ and
$g(A_\H,B_\H)=g(A,B)-\sum_{i=1}^3\eta_i(A)\eta_i(B)$,
applying \eqref{Ricci_lambda} from \cite{IMV}, we have
\begin{align*}
\Ric^g(A,B)&=\left(4\alpha\delta(n+2)-6\alpha^2\right)g(A,B)
+2\left(\delta^2-2\alpha\delta(n+2)+(2n+3)\alpha^2\right)\sum_{i=1}^3\eta_i(A)\eta_i(B)\\
&=\left(4\alpha\delta(n+2)-6\alpha^2\right)g(A,B)+2(\alpha-\delta)\big((2n+3)\alpha-\delta\big)\sum_{i=1}^3\eta_i(A)\eta_i(B).\qedhere
\end{align*}
\end{proof}
In Remark \ref{rem.can-conn-and-qc-Einstein}, we will establish that the canonical connection of
a  $3$-$(\alpha,\delta)$-Sasaki manifold is indeed a qc connection for the
underlying quaternionic contact structure.
%
%
\section{$\varphi$-compatible connections of almost $3$-contact metric manifolds}
\subsection{General existence of $\varphi$-compatible connections}
%
\begin{df}
Let $(M,\varphi_i,\xi_i,\eta_i, g)$  be an almost $3$-contact metric manifold,
$\varphi$ an almost contact metric structure in the associated sphere $\Sigma_M$. A linear
connection $\nabla$ on $M$ will be called a \emph{$\varphi$-compatible connection} if it is a
metric connection with skew torsion preserving the splitting  $TM=\H\oplus \V$, and such that
\begin{equation}\label{nabla_phi}
(\nabla_X\varphi)Y=0\qquad\forall X,Y\in\Gamma(\H).
\end{equation}
\end{df}
Notice that, given a metric connection $\nabla$  on $M$, the requirement that $\nabla$ preserves the splitting of the tangent bundle is equivalent to any of the following conditions:
\begin{enumerate}[a)]
\item $\nabla_XY\in \Gamma(\H)$ for every $X\in{\frak X}(M)$ and $Y\in\Gamma(\H)$;
\item $\nabla_X\xi_i\in \Gamma(\V)$ for every $X\in{\frak X}(M)$ and $i=1,2,3$.
\end{enumerate}
Therefore, if the splitting is preserved, $\H$ being  $\varphi$-invariant, condition
\eqref{nabla_phi} is equivalent to
\begin{equation}\label{nabla_phi1}
g((\nabla_X\varphi)Y,Z)=0\qquad \forall X,Y,Z\in\Gamma(\H).
\end{equation}
Comparing to the characteristic connection of an almost contact metric manifold,
we see that condition \eqref{nabla_phi} is weaker than the requirement $\nabla \varphi=0$,
whereas the requirement to preserve the distributions $\H$ and $\V$ was, in the previous
situation, an automatic consequence.
\begin{rem}
Notice that for any structure $\varphi\in\Sigma_M$, setting $J:=\varphi|_{\H}:\H\to \H$, one has $J^2=-I$. Therefore, $(\H,J)$ is an almost $CR$ structure on $M$, compatible with the Riemannian metric $g$.
In \cite{DL} the authors study metric connections with skew torsion on a Riemannian
manifold $(M,g)$ endowed with a compatible almost $CR$ structure $(\H,J)$. The
characteristic connections defined in that case are required to parallelize the structure
$(\H,J)$. In our approach to almost $3$-contact metric manifolds, the condition of
$\varphi$-compatibility is weaker, since we do not require the parallelism of the tensor
$J$ along Reeb vector fields.
\end{rem}
We shall determine necessary and sufficient conditions for an almost $3$-contact metric
manifold $(M,\varphi_i,\xi_i,\eta_i,g)$ to admit $\varphi$-compatible connections.
\begin{theo}[Existence of $\varphi$-compatible connections]\label{theo_compatible}
Let $(M,\varphi_i,\xi_i,\eta_i, g)$  be an almost $3$-contact metric manifold,
$\varphi$ an almost contact metric structure in the associated sphere $\Sigma_M$.
Then $M$ admits a $\varphi$-compatible connection if and only if the following conditions are
satisfied:
\begin{enumerate}[\normalfont i)]
\item the tensor field $N_\varphi$ is skew-symmetric on $\H$;
\item $({\mathcal L}_{\xi_i}g)(X,Y)=0$ for every $X,Y\in\Gamma(\H)$ and $i=1,2,3$;
\item $({\mathcal L}_Xg)(\xi_i,\xi_j)=0$ for every $X\in\Gamma(\H)$ and $i,j=1,2,3$.
\end{enumerate}
The torsion of any $\varphi$-compatible connection satisfies
($X,Y,Z\in \Gamma (\H),\ i,j=1,2,3$):
\begin{align}
T(X,Y,Z)&= N_\varphi(X,Y,Z)-d\Phi(\varphi X,\varphi Y,\varphi Z)\label{TNS},\\
T(X,Y,\xi_i)&= d\eta_i(X,Y)\label{T1},\\
T(X,\xi_i,\xi_j)&=-g([\xi_i,\xi_j],X).\label{T2}
\end{align}
\end{theo}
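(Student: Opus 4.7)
The plan is to prove the two implications separately and extract the torsion formulas along the way. A key preliminary observation, valid for any metric connection with skew torsion, is that
\begin{equation*}
(\mathcal{L}_V g)(A,B) \;=\; g(\nabla_A V, B) + g(A, \nabla_B V) \qquad \forall V,A,B\in\mathfrak{X}(M),
\end{equation*}
because the two torsion corrections $\tfrac{1}{2}T(A,V,B) + \tfrac{1}{2}T(B,V,A)$ to the usual Levi-Civita version cancel by $T(A,V,B) = -T(B,V,A)$ (cyclic invariance of a $3$-form).

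For necessity, I would first derive (ii) by applying this identity with $V=\xi_i$ and $A,B\in\Gamma(\H)$: splitting-preservation forces $\nabla_A\xi_i,\nabla_B\xi_i\in\Gamma(\V)$, so the right-hand side vanishes. Analogously, applying it with $V\in\Gamma(\H)$ and $A=\xi_i$, $B=\xi_j$ gives (iii). Formula \eqref{T1} follows by expanding $T(X,Y)=\nabla_X Y-\nabla_Y X-[X,Y]$ and pairing with $\xi_i$: the two $\nabla$-terms are horizontal by splitting-preservation, and the bracket contributes $-\eta_i([X,Y])=d\eta_i(X,Y)$ because $\eta_i|_\H\equiv 0$. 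Formula \eqref{T2} follows from cyclic invariance of the $3$-form $T$, rewriting $T(X,\xi_i,\xi_j)=g(T(\xi_i,\xi_j),X)$; the two $\nabla$-terms are now vertical, hence orthogonal to $X\in\Gamma(\H)$, leaving $-g([\xi_i,\xi_j],X)$. Finally, substituting $\nabla=\nabla^g+\tfrac{1}{2}T$ into $(\nabla_X\varphi)Y=0$ for $X,Y\in\Gamma(\H)$ and pairing with $Z\in\Gamma(\H)$ produces the master identity
\begin{equation*}
T(X,\varphi Y,Z)+T(X,Y,\varphi Z) \;=\; 2(\nabla^g_X\Phi)(Y,Z), \qquad X,Y,Z\in\Gamma(\H),
\end{equation*}
from which a standard Friedrich--Ivanov-type cyclic-sum calculation solves for $T|_\H$, yielding \eqref{TNS}. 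Since the resulting expression is totally antisymmetric only when $N_\varphi$ is skew on $\H$, this simultaneously forces condition (i).

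For sufficiency, I would assume (i)-(iii) and define a $3$-form $T$ on $TM$ by prescribing \eqref{TNS} on $\Gamma(\H)^{\otimes 3}$ (totally antisymmetric thanks to (i)), \eqref{T1} on $\Gamma(\H)^{\otimes 2}\otimes\Gamma(\V)$, \eqref{T2} on $\Gamma(\H)\otimes\Gamma(\V)^{\otimes 2}$, and any smooth function $\gamma=T(\xi_1,\xi_2,\xi_3)\in C^\infty(M)$ on $\Gamma(\V)^{\otimes 3}$, extending by tri-linearity and total antisymmetry. Setting $\nabla:=\nabla^g+\tfrac{1}{2}T$ then gives a metric connection with skew torsion $T$. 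Splitting-preservation reduces to checking $g(\nabla_V\xi_i,X)=0$ for all $X\in\Gamma(\H)$; this follows from the Koszul identity $2\,g(\nabla^g_V\xi_i,X)=(\mathcal{L}_{\xi_i}g)(V,X)+d\eta_i(V,X)$ combined with the definition of $T$, using condition (ii) when $V$ is horizontal and condition (iii) when $V$ is vertical. The condition $(\nabla_X\varphi)Y=0$ on $\Gamma(\H)$ in turn reduces to the master identity above, which is verified by reading the same Friedrich--Ivanov cyclic-sum calculation backwards, applied to the explicit expression \eqref{TNS} for $T|_\H$.

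The main obstacle is the cyclic-sum manipulation relating $T|_\H$ to $N_\varphi$ and $d\Phi$, which has to be carried out carefully since only the horizontal hypotheses (i)-(iii) are available rather than the stronger global Friedrich--Ivanov hypotheses (skew-symmetry of $N_\varphi$ on all of $TM$ and Killing $\xi$); checking that the horizontal piece of the would-be torsion nonetheless satisfies \eqref{TNS}, and that no unwanted vertical contributions pollute the identity $(\nabla_X\varphi)Y=0$ on $\Gamma(\H)$, is the delicate point of the computation.
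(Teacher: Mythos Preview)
Your proposal is correct and follows essentially the same route as the paper. Both arguments hinge on the master identity $T(X,\varphi Y,Z)+T(X,Y,\varphi Z)=-2g((\nabla^g_X\varphi)Y,Z)$ on $\H$ (the paper's equation (gammaJT)), combine it with the Nijenhuis expression and a cyclic sum to obtain (TNS) and the skewness of $N_\varphi$, and verify splitting preservation from the Lie-derivative conditions; the only differences are cosmetic---e.g.\ you check $g(\nabla_V\xi_i,X)=0$ via the Koszul-type identity $2g(\nabla^g_V\xi_i,X)=(\mathcal L_{\xi_i}g)(V,X)+d\eta_i(V,X)$, whereas the paper computes $g(\nabla_X Y,\xi_i)$ directly, and the paper makes the backward verification of $(\nabla_X\varphi)Y=0$ explicit by invoking the equivalent characterizations of skewness of $N_\varphi$ on $\H$ (Lemma~\ref{skew}).
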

\begin{rem}\label{rem.torsion-compatible}
Thus, the $\varphi$-compatible connections are parametrized by  smooth functions
$T(\xi_1,\xi_2,\xi_3):=\gamma \in C^\infty(M)$. Once $\gamma$ is fixed, the torsion of the
corresponding $\varphi$-compatible connection is given by
\begin{align*}
&T(X,Y,Z)=N_\varphi(X_\H,Y_\H,Z_\H)-d\Phi(\varphi X_\H,\varphi Y_\H,\varphi Z_\H)\\
&\quad+\sum_i\{\eta_i(X)d\eta_i(Y_\H,Z_\H)+\eta_i(Y)d\eta_i(Z_\H,X_\H)+\eta_i(Z)d\eta_i(X_\H,Y_\H)\}\\
&\quad-\sum_{i,j} \{\eta_i(X)\eta_j(Y)g([\xi_i,\xi_j],Z_\H)
+\eta_i(Y)\eta_j(Z)g([\xi_i,\xi_j],X_\H)\\
&\qquad\qquad+\eta_i(Z)\eta_j(X)g([\xi_i,\xi_j],Y_\H)\}
+ \gamma\eta_{123}(X,Y,Z),
\end{align*}
where $X_\H$ denotes the horizontal part of a vector field $X$.
Sometimes, we will call the chosen function $\gamma$ the \emph{parameter function} of the
$\varphi$-compatible connection $\nabla$.
\end{rem}
\begin{proof}
First, one can easily check that for every $X,Y,Z\in\Gamma(\H)$,
\begin{equation}\label{Nphi}
N_\varphi(X,Y,Z)=g((\nabla^g_{\varphi X}\varphi)Y-(\nabla^g_{Y}\varphi)\varphi X
+(\nabla^g_{X}\varphi)\varphi Y-(\nabla^g_{\varphi Y}\varphi)X,Z).
\end{equation}
Let us suppose that $M$ admits a $\varphi$-compatible connection $\nabla$, with
torsion $T$. Then, from \eqref{nabla} and \eqref{nabla_phi1} we have
\begin{equation}
2 g((\nabla^g_X\varphi)Y,Z)+T(X,\varphi Y,Z)+T(X,Y,\varphi Z)=0\label{gammaJT}
\end{equation}
for every $X,Y,Z\in\Gamma(\H)$. Using \eqref{Nphi} and \eqref{gammaJT} we get
\begin{equation}\label{NT}
N_\varphi(X,Y,Z)=T(X,Y,Z)-T(\varphi X,\varphi Y,Z)-T(\varphi X,Y,\varphi Z)
-T(X,\varphi Y,\varphi Z),
\end{equation}
which implies that $N_\varphi$ is skew-symmetric on $\H$.
Since $\nabla g=0$ and the connection preserves the splitting of the tangent
bundle, we have
\begin{align*}
&({\mathcal L}_{\xi_i} g)(X,Y)=\\&= \xi_i(g(X,Y))-g(\nabla_{\xi_i} X-\nabla_X\xi_i-T(\xi_i,X),Y)
 -g(X,\nabla_{\xi_i} Y-\nabla_Y\xi_i-T(\xi_i,Y))\\
&=T(\xi_i,X,Y)+T(\xi_i,Y,X)=0,
\end{align*}
which proves ii). Analogously, we get iii).
Before proving the converse, we verify equations \eqref{TNS}, \eqref{T1} and \eqref{T2}.
Equations \eqref{T1} and \eqref{T2} are immediate consequence of the fact that $\nabla$
preserves the splitting $TM=\H\oplus \V$. As regards \eqref{TNS}, applying
\eqref{gammaJT}, for every $X,Y,Z\in\Gamma (\H)$, we have
\begin{equation}\label{SgammaT}
d\Phi(X,Y,Z)=-\cyclic{XYZ}\,g((\nabla^g_X\varphi )Y,Z)
=\cyclic{XYZ}\,T(X,Y,\varphi Z),
\end{equation}
where $\cyclic{XYZ}$ denotes the cyclic sum over $X,Y,Z$.
Applying \eqref{NT} and \eqref{SgammaT}, we obtain \eqref{TNS}.

As for the converse, let us suppose that i)-iii) hold. Let $T$ be the $3$-form on $M$
defined by \eqref{TNS}-\eqref{T2} and
\begin{align*}
T(\xi_i,X,Y)&=-T(X,\xi_i,Y)=T(X,Y,\xi_i),\\
T(\xi_i,\xi_j,X)&=-T(\xi_i,X,\xi_j)=T(X,\xi_i,\xi_j),\\
T(\xi_i,\xi_j,\xi_k)&=\epsilon_{ijk}\gamma,
\end{align*}
for every $X,Y\in\Gamma(\H)$ and $i,j,k=1,2,3$, where $\gamma$ is a smooth function on $M$. Let $\nabla$ be the metric
connection with totally skew-symmetric torsion $T$, which is given by \eqref{nabla}.
%
%
%
We prove that $\nabla$ preserves the splitting $TM=\H\oplus \V$. Indeed, for
every $X,Y\in\Gamma (\H)$ and $i=1,2,3$, we have
\bdm
g(\nabla_XY,\xi_i)\, =\, g(\nabla^g_XY,\xi_i)-\frac{1}{2}\,g([X,Y],\xi_i)
\, =\, \frac{1}{2}\,g(\nabla^g_XY+\nabla^g_YX,\xi_i)
\, =\, -\frac{1}{2}\,({\mathcal L}_{\xi_i}g)(X,Y)
\edm
which vanishes because of ii). Analogously, from iii), for every $X\in\Gamma(\H)$ and
$i,j=1,2,3$,
\[g(\nabla_{\xi_j} X,\xi_i)=\frac{1}{2}\,({\mathcal L}_Xg)(\xi_i,\xi_j)=0.\]
In order to prove \eqref{nabla_phi1}, a simple computation using \eqref{TNS}
and \eqref{Nphi} gives
\begin{equation}\label{Tgamma}
T(X,Y,Z)=g((\nabla^g_X\varphi)Y-(\nabla^g_Y\varphi)X,\varphi Z)
-g((\nabla^g_{\varphi Z}\varphi)X,Y)
\end{equation}
for every $X,Y,Z\in\Gamma (\H)$. Applying the above formula and Lemma \ref{skew} 3), we have
\begin{eqnarray*}
T(X,\varphi Y,Z)+T(X,Y,\varphi Z)&= & g((\nabla^g_X\varphi )\varphi Y
-(\nabla^g_{\varphi Y}\varphi )X,\varphi Z)-g((\nabla^g_{\varphi Z}\varphi )X,\varphi Y)\\
& & -g((\nabla^g_X\varphi )Y-(\nabla^g_Y\varphi )X,Z)+g((\nabla^g_{Z}\varphi )X,Y)\\
&=&  {}-2 g((\nabla^g_X\varphi )Y,Z).
\end{eqnarray*}
Therefore,
\[
g((\nabla_X\varphi)Y,Z)= g((\nabla^g_X\varphi )Y,Z)
+\frac{1}{2}\,(T(X,\varphi Y,Z)+T(X,Y,\varphi Z))=0. \qedhere
\]
\end{proof}
\begin{rem}
Condition iii) in Theorem \ref{theo_compatible} is equivalent to
%
\bdm
d\eta_i(X,\xi_j)+d\eta_j(X,\xi_i)=0,
\edm
%
or also
\begin{equation}\label{iii_2}
g(\nabla^g_{\xi_i}\xi_j,X)+g(\nabla^g_{\xi_j}\xi_i,X)=0
\end{equation}
for every $X\in\Gamma(\H)$ and $i,j=1,2,3$.
\end{rem}
A particularly simple situation occurs when  $N_\varphi$ vanishes on $\H$.
We give some simple-to-check criteria when this happens.
\begin{prop}
Let $(M,\varphi_i,\xi_i,\eta_i, g)$  be an almost $3$-contact metric manifold, and
$\varphi\in\Sigma_M$. Assume that $M$ admits a $\varphi$-compatible connection $\nabla$ with
torsion $T$. Then, the following conditions are equivalent:
\begin{enumerate}[\normalfont a)]
\item \ $g((\nabla^g_X\varphi)Y,Z)=0$ for every $X,Y,Z\in\Gamma(\H)$,
\item \ $T(X,Y,Z)=0$ for every $X,Y,Z\in\Gamma(\H)$,
\item \ $d\Phi(X,Y,Z)=0$ for every $X,Y,Z\in\Gamma(\H)$.
\end{enumerate}
If any of these conditions is satisfied, $N_\varphi(X,Y,Z)=0$ for any $X,Y,Z\in\Gamma(\H)$.
\end{prop}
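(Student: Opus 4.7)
The plan is to establish the cycle (a)$\Rightarrow$(b)$\Rightarrow$(c)$\Rightarrow$(a) using only the identities already derived in the proof of Theorem \ref{theo_compatible}, and then read off $N_\varphi=0$ on $\H$ from the equivalent conditions. Throughout, I exploit that $\varphi$ preserves $\H$, so for $X,Y,Z\in\Gamma(\H)$ the vectors $\varphi X,\varphi Y,\varphi Z$ are again horizontal.

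The two easy implications are essentially tautological. For (a)$\Rightarrow$(b), I use \eqref{Tgamma}: every $\nabla^g\varphi$-term on its right-hand side has all of its arguments in $\Gamma(\H)$, hence (a) makes them vanish and $T(X,Y,Z)=0$. For (b)$\Rightarrow$(c), I apply \eqref{SgammaT}, which expresses $d\Phi(X,Y,Z)$ as $\cyclic{XYZ}T(X,Y,\varphi Z)$ with all arguments horizontal; under (b) each cyclic term vanishes.

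The nontrivial step is (c)$\Rightarrow$(b). Under (c), identity \eqref{TNS} forces $T(X,Y,Z)=N_\varphi(X,Y,Z)$ on $\Gamma(\H)$, so it suffices to prove that $N_\varphi$ vanishes there. The key auxiliary fact, established by a direct computation from the defining formula of $N_\varphi$ using $\varphi^2=-\Id+\eta\otimes\xi$ and the compatibility $g(\varphi\cdot,\varphi\cdot)=g(\cdot,\cdot)-\eta\otimes\eta$ (so that the $\xi$-contributions drop out after pairing with horizontal vectors), is the ``almost complex'' Nijenhuis identity
\bdm
N_\varphi(\varphi X,Y,Z)\ =\ N_\varphi(X,Y,\varphi Z)\qquad \forall\,X,Y,Z\in\Gamma(\H).
\edm
Combined with the skew-symmetry of $N_\varphi$ on $\H$ (which is hypothesis i) of Theorem \ref{theo_compatible}, implicit in the existence of a $\varphi$-compatible connection), the cyclic invariance of a $3$-form then shows that all three terms in $\cyclic{XYZ}N_\varphi(X,Y,\varphi Z)$ coincide. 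Substituting $T=N_\varphi$ into \eqref{SgammaT} and invoking (c) yields
\bdm
0\ =\ d\Phi(X,Y,Z)\ =\ \cyclic{XYZ} N_\varphi(X,Y,\varphi Z)\ =\ 3\,N_\varphi(X,Y,\varphi Z),
\edm
and since $\varphi$ is a bijection of $\Gamma(\H)$, we conclude $N_\varphi=0$ on $\H$. Hence $T=0$ on $\H$, proving (b).

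The final assertion, that any of (a), (b), (c) forces $N_\varphi|_\H=0$, is then automatic: the argument above produces it under (c), and by the established equivalence the same holds under (a) or (b); alternatively, once (b) and (c) are known simultaneously, \eqref{TNS} directly gives $N_\varphi=T+d\Phi(\varphi\cdot,\varphi\cdot,\varphi\cdot)=0$ on $\H$. The only real obstacle is the Nijenhuis identity $N_\varphi(\varphi X,Y,Z)=N_\varphi(X,Y,\varphi Z)$ on $\H$; this is a purely algebraic consequence of $\varphi^2|_\H=-\Id$ and $g$-compatibility, but it requires careful bookkeeping of the $d\eta\otimes\xi$ summand in $N_\varphi$.
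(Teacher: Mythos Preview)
Your argument for the ``nontrivial step'' is correct and essentially identical to the paper's: both reduce to the identity $N_\varphi(\varphi X,Y,Z)=N_\varphi(X,Y,\varphi Z)$ on $\H$ (the paper states it as $N_\varphi(\varphi X,Y)=-\varphi N_\varphi(X,Y)$, which is equivalent after pairing with a horizontal $Z$), combine it with the skew-symmetry of $N_\varphi$ on $\H$ to see that the three terms in \eqref{SgammaT} coincide, and conclude $N_\varphi|_\H=0$.

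There is, however, a logical gap in your scheme. You announce the cycle (a)$\Rightarrow$(b)$\Rightarrow$(c)$\Rightarrow$(a), but what you actually prove is (a)$\Rightarrow$(b), (b)$\Rightarrow$(c), and (c)$\Rightarrow$(b). This only yields (a)$\Rightarrow$(b)$\Leftrightarrow$(c); you never return to (a). The missing implication (b)$\Rightarrow$(a) is immediate from \eqref{gammaJT}: if $T$ vanishes on $\H$ then $2g((\nabla^g_X\varphi)Y,Z)=-T(X,\varphi Y,Z)-T(X,Y,\varphi Z)=0$. The paper handles this by first establishing (a)$\Leftrightarrow$(b) from \eqref{Tgamma} and \eqref{gammaJT} together, then (a)$\Rightarrow$(c) trivially, then (c)$\Rightarrow$(b). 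You should either add the one-line (b)$\Rightarrow$(a) via \eqref{gammaJT}, or relabel your third step as (c)$\Rightarrow$(a) and note that once $N_\varphi|_\H=0$ and $d\Phi|_\H=0$, identity \eqref{Nphi} (or the formula $d\Phi=-\cyclic{XYZ}g((\nabla^g_X\varphi)Y,Z)$ together with \eqref{NT}) forces (a) directly.
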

\begin{proof}
The equivalence of a) and b) follows from \eqref{Tgamma} and \eqref{gammaJT}.
Condition a) obviously implies c). Conversely, supposing that c) holds, from \eqref{TNS}
it follows that $T$ and $N_\varphi$ coincide on $\H$. Hence, from \eqref{SgammaT}, we have
\[N_\varphi(X,Y,\varphi Z)+N_\varphi(Y,Z,\varphi X)+N_\varphi(Z,X,\varphi Y)=0\]
for every $X,Y,Z\in\Gamma(\H)$. Since $N_\varphi$ is skew-symmetric and
$N_\varphi(\varphi X,Y)=-\varphi N_\varphi(X,Y)$, we deduce that $N_\varphi(X,Y,\varphi Z)=0$.
Therefore b), or equivalently a), holds.
\end{proof}
Assume that  $M$ admits
$\varphi_i$-compatible connections, in the sense that conditions i)-iii) in Theorem
\ref{theo_compatible} are satisfied for each structure $(\varphi_i,\xi_i,\eta,g)$.
We would like to  conclude that $M$ admits $\varphi$-compatible connections for
\emph{any} $\varphi\in\Sigma_M$. Conditions ii) and iii) do not depend on the choice of
$\varphi$, hence there is nothing to check. To verify condition i) for $\varphi$, we
need  to know that $N_\varphi$
is skew-symmetric on $\H$ if this is true for each $N_{\varphi_i}$---but this is exactly the contents of our Proposition \ref{prop.N-skew-if-Ni-skew}.
Hence, we can state the following remarkable corollary from
Proposition \ref{prop.N-skew-if-Ni-skew} and Theorem \ref{theo_compatible}.
It underlines once more that the associated sphere is a very canonical object.
\begin{cor}
Let $(M,\varphi_i,\xi_i,\eta_i, g)$  be an almost $3$-contact metric manifold.  If $M$
admits $\varphi_i$-compatible connections for every $i=1,2,3$, then $M$ admits
$\varphi$-compatible connections for every almost contact metric structure
$\varphi$ in the associated sphere $\Sigma_M$.
\end{cor}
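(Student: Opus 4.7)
The plan is to apply Theorem \ref{theo_compatible} in both directions. The hypothesis tells us that for each $i=1,2,3$, the structure $\varphi_i$ admits a $\varphi_i$-compatible connection, which by the necessity part of Theorem \ref{theo_compatible} is equivalent to saying that each of the three conditions i)--iii) in that theorem holds when $\varphi$ is replaced by $\varphi_i$. So I just need to verify the same three conditions for an arbitrary $\varphi=a_1\varphi_1+a_2\varphi_2+a_3\varphi_3\in\Sigma_M$, after which the sufficiency part of Theorem \ref{theo_compatible} yields the desired $\varphi$-compatible connection.

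The crucial observation is that the horizontal distribution $\H=\bigcap_i\ker\eta_i$ and the vertical distribution $\V=\langle\xi_1,\xi_2,\xi_3\rangle$ attached to an almost $3$-contact metric manifold are intrinsic data of the full $3$-contact structure; they do not change as $\varphi$ varies over $\Sigma_M$. Similarly, the list of Reeb vector fields entering the Lie-derivative conditions is the \emph{same} triple $(\xi_1,\xi_2,\xi_3)$ for every $\varphi\in\Sigma_M$. Consequently, conditions ii) and iii) of Theorem \ref{theo_compatible} do not involve $\varphi$ at all, and hence they transfer from the assumption for $\varphi_i$ ($i=1,2,3$) to $\varphi$ with no work.

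The only substantive check is condition i), namely that $N_\varphi$ is skew-symmetric on $\H$. But this is precisely what Proposition \ref{prop.N-skew-if-Ni-skew} delivers: from the skew-symmetry of each $N_{\varphi_i}$ on $\H$ (which we have by assumption), the identity \eqref{ident-CM-DN-Y} combined with the skew-symmetry of the mixed tensors $N_{i,j}$ on $\H$ forces $N_\varphi$ to be skew-symmetric on $\H$ for every $\varphi\in\Sigma_M$. With all three conditions verified for $\varphi$, Theorem \ref{theo_compatible} produces a $\varphi$-compatible connection, completing the argument.

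Since every technical ingredient has already been assembled, there is no real obstacle: the proof is essentially a bookkeeping step, and the main conceptual content sits in Proposition \ref{prop.N-skew-if-Ni-skew}, which handled the non-trivial interaction of the three Nijenhuis tensors under convex combinations of the $\varphi_i$. If anything would require care, it would be confirming that Theorem \ref{theo_compatible}'s conditions ii)--iii) really are structure-independent as stated --- but a glance at their formulation, which mentions only $g$, $\H$, and the three Reeb fields, makes this immediate.
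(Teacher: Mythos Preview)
Your proof is correct and follows essentially the same approach as the paper: observe that conditions ii) and iii) of Theorem \ref{theo_compatible} are independent of the choice of $\varphi$, and invoke Proposition \ref{prop.N-skew-if-Ni-skew} to handle condition i). The paper presents exactly this reasoning in the paragraph preceding the corollary, so there is nothing to add.
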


\subsection{Effect  of  Killing Reeb vector fields}
%
%
Looking back at the conditions for the existence of $\varphi$-compatible connections from
 Theorem \ref{theo_compatible}, we see that conditions ii) and iii) on the Reeb vector fields
$\xi_i$ are rather weak---in particular, they don't need to be Killing. Nevertheless, it
is a familiar fact from examples that this is the case in many interesting classes of
almost $3$-contact metric manifolds. Thus, we investigate the situation of Killing Reeb
vector fields separately in this section.
\begin{prop}\label{H-parallel}
Let $(M,\varphi_i,\xi_i,\eta_i, g)$ be an almost $3$-contact metric manifold,
$\varphi$ an almost contact metric structure in the associated sphere $\Sigma_M$. Given a
$\varphi$-compatible connection $\nabla$, the Reeb vector fields are $\nabla$-parallel
along the distribution $\H$  if and only if $({\mathcal L}_{\xi_i}g)(X,\xi_j)=0$ for
any $X\in\Gamma(\H)$ and $i,j=1,2,3$.
\end{prop}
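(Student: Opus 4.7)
The plan is to reduce the parallelism condition to a comparison between the Levi-Civita covariant derivative of $\xi_i$ along $X$ and the Lie derivative $\mathcal L_{\xi_i}g$, using the explicit form of the torsion of $\nabla$ on vertical pairs given by Theorem \ref{theo_compatible}. Since $\nabla$ preserves the splitting $TM=\H\oplus\V$, one has $\nabla_X\xi_i\in\Gamma(\V)$ for every $X\in\Gamma(\H)$, so $\nabla_X\xi_i=0$ is equivalent to $g(\nabla_X\xi_i,\xi_j)=0$ for all $j=1,2,3$. Therefore the problem is to relate this single scalar to $(\mathcal L_{\xi_i}g)(X,\xi_j)$.

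First I would expand $g(\nabla_X\xi_i,\xi_j)$ via \eqref{nabla} as
\[
g(\nabla_X\xi_i,\xi_j)\;=\;g(\nabla^g_X\xi_i,\xi_j)+\tfrac12\,T(X,\xi_i,\xi_j),
\]
and then invoke the explicit formula \eqref{T2}, namely $T(X,\xi_i,\xi_j)=-g([\xi_i,\xi_j],X)=g(\nabla^g_{\xi_j}\xi_i,X)-g(\nabla^g_{\xi_i}\xi_j,X)$. The next step is to apply the hypothesis that $\nabla$ is $\varphi$-compatible, which by Theorem \ref{theo_compatible}~iii) implies \eqref{iii_2}: $g(\nabla^g_{\xi_i}\xi_j,X)+g(\nabla^g_{\xi_j}\xi_i,X)=0$. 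This symmetry turns the torsion term into $T(X,\xi_i,\xi_j)=2\,g(\nabla^g_{\xi_j}\xi_i,X)$.

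Substituting back yields
\[
g(\nabla_X\xi_i,\xi_j)\;=\;g(\nabla^g_X\xi_i,\xi_j)+g(\nabla^g_{\xi_j}\xi_i,X),
\]
whose right-hand side is precisely $(\mathcal L_{\xi_i}g)(X,\xi_j)$ by the standard formula $(\mathcal L_Y g)(U,V)=g(\nabla^g_U Y,V)+g(\nabla^g_V Y,U)$. From this identity the equivalence is immediate: $\nabla_X\xi_i=0$ for all $X\in\Gamma(\H)$ and all $i$ if and only if $(\mathcal L_{\xi_i}g)(X,\xi_j)=0$ for all $X\in\Gamma(\H)$ and all $i,j$.

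There is no real obstacle: the only subtle point is remembering that condition iii) of Theorem \ref{theo_compatible} is automatically available, since it is a necessary condition for the existence of a $\varphi$-compatible connection and hence holds once $\nabla$ is given. One should also verify that the statement is symmetric in $i,j$ (i.e.\ that one only needs $(\mathcal L_{\xi_i}g)(X,\xi_j)=0$ and not separately $(\mathcal L_{\xi_j}g)(X,\xi_i)=0$), which is clear from the symmetry of $g$ and the fact that both indices $i,j$ run independently over $\{1,2,3\}$ in the statement.
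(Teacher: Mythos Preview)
Your proof is correct and follows essentially the same approach as the paper's own proof: both compute $g(\nabla_X\xi_i,\xi_j)$ via \eqref{nabla}, insert the torsion value from \eqref{T2}, use condition \eqref{iii_2} to simplify, and identify the result with $(\mathcal L_{\xi_i}g)(X,\xi_j)$. The paper's argument is slightly more compressed but the logic is identical.
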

\begin{proof}
If $\nabla$ is a $\varphi$-compatible connection, we deduce for every
$X\in\Gamma(\H)$ and $i,j=1,2,3$ from \eqref{nabla}, \eqref{T2}
and \eqref{iii_2}
\begin{align*}
g(\nabla_X\xi_i,\xi_j)&=g(\nabla^g_X\xi_i,\xi_j)-\frac12 g([\xi_i,\xi_j],X)
=g(\nabla^g_X\xi_i,\xi_j)-\frac12 g(\nabla^g_{\xi_i}\xi_j-\nabla^g_{\xi_j}\xi_i,X)\\
&=g(\nabla^g_X\xi_i,\xi_j)+g(\nabla^g_{\xi_j}\xi_i,X) =({\mathcal L}_{\xi_i}g)(X,\xi_j).
\end{align*}
Since the distribution $\V$ is parallel with respect to the connection
$\nabla$, we have that $\nabla_X\xi_i=0$ for every $X\in\Gamma(\H)$ and $i=1,2,3$,
if and only if the Lie derivatives $({\mathcal L}_{\xi_i}g)(X,\xi_j)$ are all vanishing.
\end{proof}
The following proposition shows that when the Reeb vector fields are Killing, the existence
of $\varphi$-compatible connections and the existence of the Reeb commutator
function (Definition \ref{df.Reebcf}), garanteed by Corollary \ref{cor.Reebcf-Killing},
are intricately related. In return, the Reeb commutator function and the parameter
function of a $\varphi$-compatible connection $\nabla$ describe the $\nabla$-derivative of
one Reeb vector field through the other Reeb vector fields in a very symmetric expression.
\begin{prop}\label{prop_Killing}
Let $(M,\varphi_i, \xi_i,\eta_i,g)$
be an almost $3$-contact metric manifold, $\varphi$
an almost contact metric structure in the associated sphere. Assume that
\begin{enumerate}[\normalfont i)]
\item the tensor field $N_\varphi$ is skew-symmetric on $\H$;
\item each $\xi_i$  is a Killing vector field.
\end{enumerate}
Let $\delta$ be its Reeb commutator function. Then $M$ admits $\varphi$-compatible
connections. If $\nabla$ is any $\varphi$-compatible connection with torsion
$T$ and parameter function $\gamma$, the following equations hold:
\begin{enumerate}[\normalfont 1)]
\item For every $X\in{\frak X}(M)$, and
 for every even permutation $(i,j,k)$ of $(1,2,3)$,
\begin{equation}\label{nabla_xi}\textstyle
\nabla_X\xi_i=\frac{2\delta+\gamma}{2}(\eta_k(X)\xi_j-\eta_j(X)\xi_k).
\end{equation}
\item For every $i=1,2,3$, and for every $X,Y\in\Gamma(\H)$,
\begin{equation}\label{nabla_xi_phi}
g((\nabla_{\xi_i}\varphi)X,Y)= g(({\mathcal L}_{\xi_i}\varphi)X,Y)
+d\eta_i(\varphi X,Y)+d\eta_i(X,\varphi Y).
\end{equation}
\end{enumerate}
\end{prop}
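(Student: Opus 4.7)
\smallskip
\noindent\textbf{Proof plan.} The existence of $\varphi$-compatible connections is an immediate application of Theorem \ref{theo_compatible}: hypothesis (i) there coincides with our (i); hypothesis (ii) $(\mathcal{L}_{\xi_i}g)(X,Y)=0$ for $X,Y\in\Gamma(\H)$ is contained in the Killing condition; and hypothesis (iii) $(\mathcal{L}_X g)(\xi_i,\xi_j)=0$ for $X\in\Gamma(\H)$ follows by expanding the Lie derivative, since
\[
(\mathcal{L}_X g)(\xi_i,\xi_j)=-(\mathcal{L}_{\xi_i}g)(X,\xi_j)-(\mathcal{L}_{\xi_j}g)(X,\xi_i)=0.
\]
By Corollary \ref{cor.Reebcf-Killing}, a Reeb commutator function $\delta$ then exists.

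For formula \eqref{nabla_xi}, split into horizontal and vertical arguments. For $X\in\Gamma(\H)$, the Killing assumption gives $(\mathcal{L}_{\xi_i}g)(X,\xi_j)=0$, so Proposition \ref{H-parallel} yields $\nabla_X\xi_i=0$, which is the $X\in\Gamma(\H)$-case of \eqref{nabla_xi} since $\eta_j(X)=\eta_k(X)=0$. For $X=\xi_l$, use $\nabla=\nabla^g+\frac12 T$: Lemma \ref{lemma_xi_i} gives $\eta_k(\nabla^g_{\xi_i}\xi_j)=\delta\epsilon_{ijk}$, while the $3$-form $T$ satisfies $T(\xi_i,\xi_j,\xi_k)=\gamma\epsilon_{ijk}$ by definition of the parameter function. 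Combining these, for $(i,j,k)$ an even permutation,
\[
g(\nabla_{\xi_j}\xi_i,\xi_k)=g(\nabla^g_{\xi_j}\xi_i,\xi_k)+\tfrac12 T(\xi_j,\xi_i,\xi_k)=-\delta-\tfrac{\gamma}{2}=-\tfrac{2\delta+\gamma}{2},
\]
which, together with $\nabla_{\xi_i}\xi_i=0$ (from the analogous computation) and the preservation of $\V$ by $\nabla$, yields \eqref{nabla_xi} in all cases.

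For formula \eqref{nabla_xi_phi}, start from the identity valid for any linear connection with torsion $T$,
\[
(\mathcal{L}_{\xi_i}\varphi)Y = (\nabla_{\xi_i}\varphi)Y - \nabla_{\varphi Y}\xi_i + \varphi\,\nabla_Y\xi_i - T(\xi_i,\varphi Y) + \varphi\, T(\xi_i,Y),
\]
which is obtained by expanding $\mathcal{L}_{\xi_i}(\varphi Y)=[\xi_i,\varphi Y]$ and $[\xi_i,Y]$ in terms of $\nabla$ and $T$. For $Y\in\Gamma(\H)$, by part 1) we have $\nabla_{Y}\xi_i=\nabla_{\varphi Y}\xi_i=0$, so
\[
(\nabla_{\xi_i}\varphi)Y = (\mathcal{L}_{\xi_i}\varphi)Y + T(\xi_i,\varphi Y) - \varphi\, T(\xi_i,Y).
\]
Taking the $g$-pairing with $X\in\Gamma(\H)$ (after relabeling $X\leftrightarrow Y$ to match the statement) and using the total skew-symmetry of $T$ together with formula \eqref{T1} from Theorem \ref{theo_compatible}, i.e.\ $T(\xi_i,U,V)=T(U,V,\xi_i)=d\eta_i(U,V)$ for $U,V\in\Gamma(\H)$, gives \eqref{nabla_xi_phi}. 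The computation is essentially bookkeeping of signs; the only point to watch is that horizontality of $X,Y,\varphi X,\varphi Y$ is exactly what permits the application of \eqref{T1}, and verticality of $\nabla\xi_i$-terms on horizontal arguments is what kills the unwanted connection terms.
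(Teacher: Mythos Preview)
Your proof is correct. The existence argument and the derivation of \eqref{nabla_xi} follow the paper's proof essentially verbatim: you verify condition iii) of Theorem \ref{theo_compatible} by the same Lie-derivative identity, invoke Proposition \ref{H-parallel} for horizontal $X$, and compute the vertical components from $\nabla=\nabla^g+\tfrac12 T$ together with Lemma \ref{lemma_xi_i}.

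For \eqref{nabla_xi_phi} you take a genuinely different and somewhat cleaner route. The paper computes $g((\nabla^g_{\xi_i}\varphi)X,Y)$ via the Koszul formula, simplifies using the Killing condition to obtain an expression involving $\mathcal{L}_{\xi_i}\varphi$ and $d\eta_i$, and only then passes to $\nabla$ by adding the torsion correction $\tfrac12\big(T(\xi_i,\varphi X,Y)+T(\xi_i,X,\varphi Y)\big)$ and applying \eqref{T1}. You bypass the Levi-Civita step entirely: working directly with the torsion connection in the identity
\[
(\mathcal{L}_{\xi_i}\varphi)Y=(\nabla_{\xi_i}\varphi)Y-\nabla_{\varphi Y}\xi_i+\varphi\nabla_Y\xi_i-T(\xi_i,\varphi Y)+\varphi T(\xi_i,Y),
\]
the already-established part 1) kills the two middle terms, and \eqref{T1} evaluates the torsion terms at once. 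Your method is shorter and makes the dependence on part 1) explicit; the paper's method has the advantage of producing the intermediate Levi-Civita formula $2g((\nabla^g_{\xi_i}\varphi)X,Y)=2g((\mathcal{L}_{\xi_i}\varphi)X,Y)+d\eta_i(X,\varphi Y)+d\eta_i(\varphi X,Y)$, which may be of independent use.
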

\begin{proof}
First we prove that condition iii) in Theorem \ref{theo_compatible} is satisfied. Indeed,
one can easily check that for every $X\in\Gamma(\H)$ and $i,j=1,2,3$,
\[
({\mathcal L}_Xg)(\xi_i,\xi_j)=-({\mathcal L}_{\xi_i}g)(X,\xi_j)
-({\mathcal L}_{\xi_j}g)(X,\xi_i)=0.
\]
Hence, by Theorem \ref{theo_compatible}, $M$ admits $\varphi$-compatible connections. Given
a $\varphi$-compatible connection $\nabla$ with torsion $T$, let
$\gamma=T(\xi_1,\xi_2,\xi_3)$ be the parameter function. Recall that by Lemma
\ref{lemma_xi_i}, the Reeb commutator function $\delta$ satisfies
\begin{equation}\label{cyclic}
\eta_k([\xi_i,\xi_j])=2\,\eta_k(\nabla^g_{\xi_i}\xi_j)=2\delta\epsilon_{ijk}
\end{equation}
for every $i,j,k=1,2,3$.
In order to verify \eqref{nabla_xi}, by Proposition \ref{H-parallel}, we have
$\nabla_X\xi_i=0$ for every $X\in\Gamma(\H)$. Recall that the distribution
$\V$ is parallel with respect to the $\varphi$-compatible connection. Therefore,
since $\nabla_{\xi_i}\xi_i=\nabla^g_{\xi_i}\xi_i\in\Gamma(\V)$, formula
\eqref{cyclic} gives $\nabla_{\xi_i}\xi_i=0$. Now, if $(i,j,k)$ is an even permutation
of $(1,2,3)$, for the covariant derivative $\nabla_{\xi_j}\xi_i$, we have
\[
g(\nabla_{\xi_j}\xi_i,\xi_i)=0,\quad g(\nabla_{\xi_j}\xi_i,\xi_j)=-g(\xi_i,\nabla_{\xi_j}\xi_j)=0,
\]
and applying again \eqref{cyclic},
\[
g(\nabla_{\xi_j}\xi_i,\xi_k)= g(\nabla^g_{\xi_j}\xi_i,\xi_k)+\frac12 T(\xi_j,\xi_i,\xi_k)
=-\frac{2\delta+\gamma}{2}.
\]
Therefore, $\nabla_{\xi_j}\xi_i=-\frac{2\delta+\gamma}{2}\xi_k$. Analogously, one checks that
$\nabla_{\xi_k}\xi_i=\frac{2\delta+\gamma}{2}\xi_j$, completing the proof of \eqref{nabla_xi}.

We prove \eqref{nabla_xi_phi}. Applying the Koszul formula for the Levi-Civita connection,
for every $X,Y\in\Gamma(\H)$ we have
\begin{align}\label{nabla_g_phi1}
2g((\nabla^g_{\xi_i}\varphi)X,Y)&=2 g(\nabla^g_{\xi_i}(\varphi X),Y)
+ 2 g(\nabla^g_{\xi_i}X,\varphi Y)\nonumber\\
&=g([\xi_i,\varphi X],Y)-g([\varphi X,Y],\xi_i)+g([Y,\xi_i],\varphi X)\nonumber\\
&\quad +g([\xi_i,X],\varphi Y)-g([X,\varphi Y],\xi_i)+g([\varphi Y,\xi_i],X).
\end{align}
Since $\xi_i$ is a Killing vector field, we get
\begin{align}\label{nabla_g_phi2}
0&=({\mathcal L}_{\xi_i}g)(X,\varphi Y)+({\mathcal L}_{\xi_i}g)(\varphi X,Y)\nonumber\\
&= -g([\xi_i,X],\varphi Y)-g([\xi_i,\varphi Y],X) -g([\xi_i,\varphi X],Y)
-g([\xi_i,Y],\varphi X).
\end{align}
Therefore, from \eqref{nabla_g_phi1} and \eqref{nabla_g_phi2} it follows that
\begin{align*}
2\,g((\nabla^g_{\xi_i}\varphi)X,Y)&=2g([\xi_i,\varphi X],Y)+2\,g([\xi_i,X],\varphi Y)
-\eta_i([X,\varphi Y])-\eta_i([\varphi X,Y])\\
&=2\,g(({\mathcal L}_{\xi_i}\varphi)X,Y))+d\eta_i(X,\varphi Y)+d\eta_i(\varphi X,Y).
\end{align*}
Finally, applying the above formula and \eqref{T1},
\begin{align*}
2\,g((\nabla_{\xi_i}\varphi)X,Y)&= 2\,g((\nabla^g_{\xi_i}\varphi)X,Y)
+T(\xi_i,\varphi X,Y)+T(\xi_i,\varphi X,Y)\\
&= 2\,g(({\mathcal L}_{\xi_i}\varphi)X,Y))+2\,d\eta_i(X,\varphi Y)+2\,d\eta_i(\varphi X,Y),
\end{align*}
which proves \eqref{nabla_xi_phi}.
\end{proof}
\begin{rem}
We recognize that the right hand side of eq. \eqref{nabla_xi_phi} in the previous Proposition
is just, for $\varphi=\varphi_j$, the tensor field $A_{ji}$ introduced in equation \eqref{Aij}.
\end{rem}
%
\section{The canonical connection of an almost $3$-contact metric manifold}
%
\subsection{General existence of the canonical connection}
%
In the following we will provide a criterion allowing to define a unique metric connection
with skew torsion on an almost $3$-contact metric manifold $(M,\varphi_i,\xi_i,\eta_i,g)$,
called the \emph{canonical connection}. Its crucial property is captured by
equation \eqref{canonical}, from which all others will follow. Hence, we start by explaining
what singles out this particular condition.
\begin{rem}\label{rem.qK-geometry}
Recall that a hyper-K\"ahler manifold can be defined as a Riemannian manifold of dimension
$4n\geq 4$ admitting an anti-commuting pair $I_1,I_2$ of integrable complex structures, relative
to both of which the metric is K\"ahler. This implies that $I_1,I_2$, and $I_3:=I_1I_2$
are parallel for the Levi-Civita connection $\nabla^g$. In contrast, on a general
quaternion-K\"ahler  manifold it is not possible to find individual structures $I_1,I_2,I_3$
that are parallel, but only a bundle of endomorphisms 
(namely, the one spanned by $I_1,I_2,I_3$)\ preserved as a whole; more precisely, there should 
exist $1$-forms $\alpha_i$ such that
\bdm
\nabla^g_X I_i \, =\, - \alpha_k(X)I_j + \alpha_j(X)I_k \quad
\forall X\in{\frak X}(M)
\edm
for every even permutation $(i,j,k)$ of $(1,2,3)$. These equations were first considered by
\cite{Ishihara74}, see also \cite{Salamon99} for a very nice survey. Now, the analogy to
equation \eqref{canonical} becomes obvious: A canonical connection is one mimicking the
derivative equations of quaternion-K\"ahler geometry, with $I_i$ replaced by $\varphi_i$
and $\alpha_i$  replaced by $-\tilde\beta\eta_i$.
\end{rem}
\begin{theo}[Existence of the canonical connection]\label{theo_canonical}
Let $(M,\varphi_i,\xi_i,\eta_i, g)$  be an almost $3$-contact metric manifold. Then $M$
admits a metric connection $\nabla$ with skew torsion such that for some smooth
function $\tilde\beta$,
\begin{equation}\label{canonical}
\nabla_X\varphi_i\, =\, \tilde\beta(\eta_k(X)\varphi _j -\eta_j(X)\varphi _k) \quad
\forall X\in{\frak X}(M)
\end{equation}
for every even permutation $(i,j,k)$ of $(1,2,3)$, if and only if it is a
canonical almost $3$-contact metric manifold.

If such a connection $\nabla$ exists, it is unique and it is $\varphi$-compatible for every
almost contact metric structure $\varphi$ in the associated sphere $\Sigma_M$, and
$\tilde\beta$ coincides with the  Reeb Killing function $\beta$. The torsion of $\nabla$ is
given by \eqref{TNS}-\eqref{T2} and the parameter function is
\begin{equation}\label{T_canonical}
\gamma\ :=\  T(\xi_1,\xi_2,\xi_3)\ =\ 2(\beta -\delta),
\end{equation}
where $\delta$ is the Reeb commutator function.
\end{theo}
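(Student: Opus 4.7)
The plan is to prove the equivalence in both directions, extracting uniqueness, the identification $\tilde\beta=\beta$, and the parameter function formula along the way.

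First, for the direction from canonical to connection, assume $M$ is canonical. Theorem~\ref{theo_canonical-implies-char} gives that each $N_{\varphi_i}$ is skew-symmetric on all of $TM$, and Proposition~\ref{prop_Killing} then yields, for each $i$, $\varphi_i$-compatible connections parametrized by the scalar $\gamma=T(\xi_1,\xi_2,\xi_3)$. The crucial observation is that the torsion formulas \eqref{T1}, \eqref{T2} and the $\V\times\V\times\V$-part depend only on the ambient almost $3$-contact metric structure, while the horizontal part $N_{\varphi_i}(X,Y,Z)-d\Phi_i(\varphi_iX,\varphi_iY,\varphi_iZ)$ from \eqref{TNS} is independent of $i$ precisely by condition~3) of Definition~\ref{df.can-3-contact}. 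Hence for each $\gamma\in C^\infty(M)$ a single metric connection with skew torsion is obtained that is $\varphi_i$-compatible for every $i=1,2,3$; $\varphi$-compatibility for every $\varphi\in\Sigma_M$ then follows by linearity. I then select $\gamma:=2(\beta-\delta)$ and check \eqref{canonical} with $\tilde\beta=\beta$ case by case: for horizontal $X$ both sides vanish (the left by $\varphi$-compatibility, the right since $\eta_j(X)=\eta_k(X)=0$), while for $X=\xi_l$ the identity \eqref{nabla_xi_phi} rewrites $g((\nabla_{\xi_l}\varphi_i)Y,Z)=A_{il}(Y,Z)$ for horizontal $Y,Z$, and the Reeb Killing function relations \eqref{eq.ReebKf} exactly match the right-hand side of \eqref{canonical}; the extension to vertical $Y$ is forced by the quaternionic algebra \eqref{3-sasaki} together with the formula \eqref{nabla_xi} for $\nabla_X\xi_i$ evaluated at our chosen $\gamma$.

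For the converse, assume $\nabla$ is a metric connection with skew torsion satisfying \eqref{canonical}. The key move is to derive $\nabla_X\xi_i=\tilde\beta(\eta_k(X)\xi_j-\eta_j(X)\xi_k)$ without assuming $\nabla$-parallelism of $\V$ in advance. Differentiating $\varphi_i\xi_i=0$ via the Leibniz rule gives $\varphi_i(\nabla_X\xi_i)=-(\nabla_X\varphi_i)\xi_i$, which by \eqref{canonical} and the identities $\varphi_j\xi_i=-\xi_k$, $\varphi_k\xi_i=\xi_j$ equals $\tilde\beta(\eta_j(X)\xi_j+\eta_k(X)\xi_k)$; applying $\varphi_i$ once more and using $g(\nabla_X\xi_i,\xi_i)=0$ (from metricity) to eliminate the residual $\xi_i$-component yields the claimed formula. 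In particular $\nabla$ preserves the splitting $TM=\H\oplus\V$, so it is $\varphi_i$-compatible for every $i$. Theorem~\ref{theo_compatible} then yields the skew-symmetry of each $N_{\varphi_i}$ on $\H$ (condition~1), while the fact that \eqref{TNS} must hold with the same $T$ for every $i$ yields condition~3). The Killing property of $\xi_i$ (condition~2) follows from $g(\nabla_X\xi_i,Y)=\tilde\beta(\eta_k(X)\eta_j(Y)-\eta_j(X)\eta_k(Y))$ being skew in $(X,Y)$ together with the skew-symmetry of $T$, which cancels all torsion terms in $(\mathcal{L}_{\xi_i}g)(X,Y)$. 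Finally, evaluating \eqref{nabla_xi_phi} with $\varphi=\varphi_i$ and comparing with \eqref{canonical} identifies $\tilde\beta$ with the Reeb Killing function $\beta$, yielding condition~4) and the equality $\tilde\beta=\beta$.

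Uniqueness and the formula $\gamma=2(\beta-\delta)$ fall out of the same computation: comparing the derived $\nabla_X\xi_i=\tilde\beta(\eta_k(X)\xi_j-\eta_j(X)\xi_k)$ with the general expression \eqref{nabla_xi} from Proposition~\ref{prop_Killing} forces $\tilde\beta=(2\delta+\gamma)/2$, hence $\gamma=2(\beta-\delta)$; uniqueness of the $\varphi$-compatible connection at fixed $\gamma$ then gives uniqueness of $\nabla$. The main technical obstacle will be the careful bookkeeping of the quaternionic signs in \eqref{3-sasaki} when extending \eqref{canonical} to vertical arguments, together with the verification that condition~3) of Definition~\ref{df.can-3-contact} is exactly what is needed, and no more, to glue the three $\varphi_i$-compatible torsions into a single $3$-form.
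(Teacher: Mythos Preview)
Your proposal is correct and follows essentially the same strategy as the paper: derive $\nabla_X\xi_i=\tilde\beta(\eta_k(X)\xi_j-\eta_j(X)\xi_k)$ from \eqref{canonical} to obtain $\varphi_i$-compatibility and thereby conditions 1) and 3) via Theorem~\ref{theo_compatible}, then identify $\tilde\beta$ with the Reeb Killing function through \eqref{nabla_xi_phi}; conversely, use condition~3) to glue the three $\varphi_i$-compatible torsions at the common parameter $\gamma=2(\beta-\delta)$ and verify \eqref{canonical} case by case via \eqref{nabla_xi} and \eqref{nabla_xi_phi}. The only notable variation is your Killing argument, which is slightly more direct than the paper's case-by-case verification (the paper splits into $\H\times\H$, $\H\times\V$, $\V\times\V$ pieces using Proposition~\ref{H-parallel} and Lemma~\ref{lemma_xi_i}), but the underlying mechanism is the same.
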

\begin{proof}
%
Let us assume that $M$ admits a metric connection $\nabla$ with skew torsion
satisfying \eqref{canonical}. First we show that
\begin{equation}\label{canonical_nabla_xi}
\nabla_X\xi_i=\tilde\beta(\eta_k(X)\xi_j-\eta_j(X)\xi_k)
\end{equation}
for every $X\in{\frak X}(M)$ and for every even permutation $(i,j,k)$ of $(1,2,3)$.
Indeed, from \eqref{canonical} we have
\begin{equation}\label{canonical_nabla_xi1}
(\nabla_X\varphi_i)\xi_i=-\tilde\beta(\eta_k(X)\xi_k +\eta_j(X)\xi_j).
\end{equation}
Since $(\nabla_X\varphi_i)\xi_i=-\varphi_i(\nabla_X\xi_i)$ and $\eta_i(\nabla_X\xi_i)=0$,
applying $\varphi_i$ on both sides of \eqref{canonical_nabla_xi1}, we get
\eqref{canonical_nabla_xi}. Therefore, being $\nabla$  a metric connection preserving the
distribution $\mathcal V$, it preserves the splitting $TM=\H\oplus\V$. On the
other hand, since $\nabla_X\varphi_i=0$ for every $X\in\Gamma(\H)$, it turns out that
$\nabla$ is a $\varphi_i$-compatible connection for all $i=1,2,3$. Then, conditions i)-iii)
in  Theorem \ref{theo_compatible} are satisfied. In particular, each $N_{\varphi_i}$ is
skew-symmetric on $\H$ and, by equation \eqref{TNS}, the torsion $T$ of $\nabla$ satisfies
\bdm
T(X,Y,Z)=N_{\varphi_i}(X,Y,Z)-d\Phi_i(\varphi_i X,\varphi_i Y,\varphi_i Z)
\edm
for every $X,Y,Z\in\Gamma(\H)$ and $i=1,2,3$, thus proving condition 3) of Definition
\ref{df.can-3-contact} of a canonical almost $3$-contact metric manifold.

In order to prove that each $\xi_i$ is a Killing vector field, we already know, by
the $\varphi_i$-compatibility, that $({\mathcal L}_{\xi_i}g)(X,Y)=0$ for every
$X,Y\in\Gamma(\H)$ and $i,j=1,2,3$. Furthermore, since $\nabla_X\xi_i=0$ for all
$X\in\Gamma(\H)$, Proposition \ref{H-parallel} implies that
$({\mathcal L}_{\xi_i}g)(X,\xi_j)=0$.
%
%
%
%
Now, one can easily check that equation \eqref{canonical_nabla_xi} implies
\[
g(\nabla_{\xi_i}\xi_j,\xi_k)=\tilde\beta\epsilon_{ijk}
\]
for all indices $i,j,k=1,2,3$. Setting $\gamma:=T(\xi_1,\xi_2,\xi_3)$, we have
\[
g(\nabla^g_{\xi_i}\xi_j,\xi_k)=g(\nabla_{\xi_i}\xi_j,\xi_k)-\frac12\, T(\xi_i,\xi_j,\xi_k)
=\frac{2\tilde\beta-\gamma}{2}\epsilon_{ijk},
\]
so that condition 3) in Lemma \ref{lemma_xi_i} is satisfied for
$\delta=\frac{2\tilde\beta-\gamma}{2}$.
Therefore, for all indices $i,j,k=1,2,3$, we have $({\mathcal L}_{\xi_i}g)(\xi_j,\xi_k)=0$,
or equivalently $\eta_k([\xi_i,\xi_j])=2\delta\epsilon_{ijk}$. This completes the proof
that each $\xi_i$ is Killing. Furthermore, notice that the linear connection $\nabla$
is uniquely determined, the parameter function being given by $\gamma=2(\tilde\beta-\delta)$.

Finally, we prove  that $\tilde\beta$ is the Reeb Killing function. By Proposition
\ref{prop_Killing}, equation \eqref{nabla_xi_phi} holds. On the other hand, taking into
account the definition of the tensor fields $A_{ij}$ in \eqref{Aij},  and applying
\eqref{canonical}, we have
\begin{align*}
A_i(X,Y)&=g((\nabla_{\xi_i}\varphi_i)X,Y)=0,\\
A_{ij}(X,Y)&=g((\nabla_{\xi_j}\varphi_i)X,Y)=-\tilde\beta g(\varphi_kX,Y)
=\tilde\beta\Phi_k(X,Y),\\
A_{ji}(X,Y)&=g((\nabla_{\xi_i}\varphi_j)X,Y)=\tilde\beta g(\varphi_kX,Y)
=-\tilde\beta\Phi_k(X,Y),
\end{align*}
for every $X,Y\in\Gamma(\H)$ and for every even permutation $(i,j,k)$ of $(1,2,3)$.
Hence, $\tilde\beta$ is the Reeb Killing function on $M$.

In order to prove the converse, let us assume that $M$ is a canonical almost $3$-contact metric manifold. First we notice
that, since each $\xi_i$ is a Killing vector field,  Lemma \ref{lemma_xi_i} implies
the existence of a Reeb commutator function $\delta$, i.\,e.
$\eta_k([\xi_i,\xi_j])=2\delta\epsilon_{ijk}$. By Proposition \ref{prop_Killing},  $M$ admits
$\varphi_i$-compatible connections for all $i=1,2,3$. We denote by  $\nabla^i$ the
$\varphi_i$-compatible connection with torsion $T_i$ such that
\[
T_i(\xi_1,\xi_2,\xi_3)=2(\beta-\delta),
\]
where $\beta$ is the Reeb Killing function. Then, owing to condition 3) of Definition
\ref{df.can-3-contact} and
equations \eqref{TNS}, \eqref{T1}, \eqref{T2} for the torsion of a $\varphi$-compatible
connection, we have $T_1=T_2=T_3$, hence the three
connections coincide. We denote by $\nabla$ this unique connection and we prove that it
satisfies \eqref{canonical} with $\tilde\beta=\beta$. From Proposition \ref{prop_Killing}, applying
\eqref{nabla_xi} with $\gamma=2(\beta-\delta)$, we have
\[
\nabla_X\xi_i=\beta(\eta_k(X)\xi_j-\eta_j(X)\xi_k)
\]
for every $X\in{\frak X}(M)$ and for every even permutation $(i,j,k)$ of $(1,2,3)$.
Using the above equation, one can check that
\begin{equation}\label{partial1}
(\nabla_X\varphi_i)\xi_h=\beta(\eta_k(X)\varphi_j\xi_h-\eta_j(X)\varphi_k\xi_h)
\end{equation}
for every $h=1,2,3$. Indeed, for $h=i$, we obtain
\bdm
(\nabla_X\varphi_i)\xi_i\ =\ -\varphi_i(\nabla_X\xi_i)
\ =\ -\beta(\eta_k(X)\xi_k+\eta_j(X)\xi_j)
\ =\ \beta(\eta_k(X)\varphi_j\xi_i-\eta_j(X)\varphi_k\xi_i).
\edm
Analogously, one verifies \eqref{partial1} for $h=j,k$.
From equation \eqref{nabla_xi_phi} and the fact that $M$ admits the Reeb Killing function $\beta$, for every $Y,Z\in\Gamma(\H)$ we have
\begin{align*}
g((\nabla_{\xi_i}\varphi_i)Y,Z)&=A_i(Y,Z)=0,\\
g((\nabla_{\xi_j}\varphi_i)Y,Z)&=A_{ij}(Y,Z)=\beta \Phi_k(Y,Z)=-\beta g(\varphi_kY,Z),\\
g((\nabla_{\xi_k}\varphi_i)Y,Z)&=A_{ik}(Y,Z)=-\beta \Phi_j(Y,Z)=\beta g(\varphi_jY,Z),
\end{align*}
where $(i,j,k)$ is an even permutation of $(1,2,3)$. Then, since $\nabla$ preserves
the splitting of the tangent bundle, we have
\begin{equation}\label{partial2}
(\nabla_{\xi_i}\varphi_i)Y=0,\quad (\nabla_{\xi_j}\varphi_i)Y= -\beta \varphi_kY,
\quad (\nabla_{\xi_k}\varphi_i)Y= \beta \varphi_jY.
\end{equation}
On the other hand, since $\nabla$ is $\varphi_i$-compatible, we have
\begin{equation}\label{partial3}
(\nabla_X\varphi_i)Y=0
\end{equation}
for all $X,Y\in\Gamma(\H)$. Then, taking into account \eqref{partial1},
\eqref{partial2}, and \eqref{partial3}, equation \eqref{canonical} is verified.
\end{proof}
\begin{df}
If $(M,\varphi_i,\xi_i,\eta_i, g)$ is a canonical almost $3$-contact metric manifold,
the connection $\nabla$ satisfying \eqref{canonical} of Theorem \ref{theo_canonical}
will be called the \emph{canonical connection of $M$}.
\end{df}
\begin{rem}\label{derivatives}
If $(M,\varphi_i,\xi_i,\eta_i, g)$ is a canonical almost $3$-contact metric manifold
with canonical connection $\nabla$, the covariant derivatives of the structure
tensors are completely determined by the Reeb Killing function $\beta$,
\begin{align*}
\nabla_X\varphi_i&=\beta(\eta_k(X)\varphi _j -\eta_j(X)\varphi _k),\\
\nabla_X\xi_i&=\beta(\eta_k(X)\xi_j-\eta_j(X)\xi_k),\\
\nabla_X\eta_i&=\beta(\eta_k(X)\eta_j-\eta_j(X)\eta_k),
\end{align*}
where $(i,j,k)$ is an even permutation of $(1,2,3)$. The first identity holds by definition,
whereas the second and third identity follow from Proposition \ref{prop_Killing}.
In particular, we observe that each structure $(\varphi_i,\xi_i,\eta_i, g)$ is parallel
along $\H$ and its Reeb vector field $\xi_i$. In analogy to quaternion-K\"ahler geometry,
we may consider the \emph{fundamental $4$-form} $\Psi$
\bdm
\Psi\ :=\ \Phi_1\wedge \Phi_1 + \Phi_2\wedge \Phi_2 + \Phi_3\wedge \Phi_3 .
\edm
It is independent of choice of basis and $\Psi^n\neq 0$. The last formulas imply immediately:
\end{rem}
\begin{cor}
The canonical connection of a canonical almost $3$-contact metric manifold
$(M^{4n+3},\varphi_i,\xi_i,\eta_i, g)$ leaves the associated bundle of endomorphisms $\Upsilon_M$
invariant and it satisfies
\bdm
\nabla\Psi =0,\quad \nabla \eta_{123} =0.
\edm
In particular, its holonomy algebra $\mathfrak{hol}(\nabla)$ is contained in
$(\mathfrak{sp}(n)\oplus\mathfrak{sp}(1) )\oplus \so(3)\subset \so(4n)\oplus \so(3)$.
\end{cor}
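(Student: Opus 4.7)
The plan is to read off everything from the three derivative formulas collected in Remark \ref{derivatives}, namely
\[
\nabla_X \varphi_i = \beta(\eta_k(X)\varphi_j - \eta_j(X)\varphi_k), \quad \nabla_X \xi_i = \beta(\eta_k(X)\xi_j - \eta_j(X)\xi_k), \quad \nabla_X \eta_i = \beta(\eta_k(X)\eta_j - \eta_j(X)\eta_k),
\]
for every even permutation $(i,j,k)$ of $(1,2,3)$.

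For invariance of $\Upsilon_M$, I would take an arbitrary local section $\varphi_a = a_1\varphi_1 + a_2\varphi_2 + a_3\varphi_3$ with smooth $a_i$ and compute $\nabla_X\varphi_a = \sum_i X(a_i)\,\varphi_i + \sum_i a_i\,\nabla_X\varphi_i$; by the first formula above each $\nabla_X\varphi_i$ already lies in the $\R$-span of $\varphi_1,\varphi_2,\varphi_3$, hence so does $\nabla_X\varphi_a$. For $\nabla\Psi=0$, I would first note that since $\nabla g=0$ one has $\nabla_X\Phi_i = \beta(\eta_k(X)\Phi_j - \eta_j(X)\Phi_k)$. Then
\[
\nabla_X\Psi = 2\sum_{(i,j,k)} (\nabla_X\Phi_i)\wedge \Phi_i = 2\beta\sum_{(i,j,k)} \bigl(\eta_k(X)\,\Phi_j\wedge\Phi_i - \eta_j(X)\,\Phi_k\wedge\Phi_i\bigr),
\]
and collecting the three terms cyclically, the coefficient of each $\eta_l(X)$ reduces to an expression of the form $\Phi_a\wedge\Phi_b - \Phi_b\wedge\Phi_a$, which vanishes because $2$-forms commute under the wedge product. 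For $\nabla\eta_{123}=0$, note that each $\nabla_X\eta_i$ is a combination of $\eta_j$ and $\eta_k$ only, so wedging with the remaining $\eta_j\wedge\eta_k$ factor of $\eta_{123}$ automatically produces a repeated $1$-form and hence zero.

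For the holonomy statement, the canonical connection preserves the orthogonal splitting $TM=\H\oplus\V$ by $\varphi$-compatibility (Theorem \ref{theo_canonical}), so $\mathfrak{hol}(\nabla)\subset \so(\H)\oplus\so(\V) = \so(4n)\oplus\so(3)$. The $\V$-factor, being $3$-dimensional, gives at most $\so(3)$. On $\H$, the restrictions $\varphi_i|_\H$ satisfy the quaternion relations, and the rank-$3$ subbundle $\Upsilon_M|_\H\subset \End(\H)$ they span is $\nabla$-invariant by the first step; this is exactly the condition defining a quaternionic structure on $\H$ in the sense of quaternion-Kähler geometry (cf.\ Remark \ref{rem.qK-geometry}). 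The stabilizer in $\so(4n)$ of such a pointwise quaternionic triple is $\mathfrak{sp}(n)\oplus\mathfrak{sp}(1)$, yielding the claimed holonomy bound.

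The main obstacle is really only bookkeeping in the cyclic sum for $\nabla\Psi$ (one has to keep track of which pair $(j,k)$ is even for each $i$) and, conceptually, the identification of the stabilizer subalgebra of $\Upsilon_M|_\H$ with $\mathfrak{sp}(n)\oplus\mathfrak{sp}(1)$; the latter is standard quaternion-Kähler linear algebra once one recognises that the bundle $\Upsilon_M|_\H$ — rather than its individual sections — is what is parallel, so one gets $\mathrm{Sp}(n)\mathrm{Sp}(1)$ and not the smaller $\mathrm{Sp}(n)$.
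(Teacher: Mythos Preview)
Your proof is correct and follows exactly the approach the paper intends: the paper itself says only that the corollary follows immediately from the derivative formulas in Remark \ref{derivatives}, and you have simply written out those immediate consequences in detail. Your computations for $\nabla\Psi$, $\nabla\eta_{123}$, and the holonomy reduction via the $\nabla$-invariance of the splitting $TM=\H\oplus\V$ and of the quaternionic bundle $\Upsilon_M|_\H$ are all correct.
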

By Remark \ref{rem.torsion-compatible}, we know that $\eta_{123}$ is one
summand of the torsion of the canonical connection (and in fact of any $\varphi$-compatible
connection). In general, however, the torsion will not be parallel. In Theorem
\ref{thm.canonical-3AD-Sasaki}, we shall prove that it is parallel for
$3$-$(\alpha,\delta)$-Sasaki manifolds.

Finally, let us look at the special case of vanishing Reeb Killing function $\beta$.
The canonical connection satisfies then $\nabla \varphi_i=\nabla\xi_i=0 \ \forall \ i$,
i.\,e.~all structure tensors are parallel;
by uniqueness of the characteristic connection, we conclude:
\begin{cor}
The canonical connection $\nabla$ of a parallel canonical almost $3$-contact metric manifold
$(M,\varphi_i,\xi_i,\eta_i, g)$ coincides with the characteristic connection
$\nabla^i$ of each of its almost contact metric
structures $(\varphi_i,\xi_i,\eta_i, g), \ i=1,2,3.$ Furthermore, its holonomy algebra
satisfies  $\mathfrak{hol}(\nabla)\subset \mathfrak{sp}(n)$.
\end{cor}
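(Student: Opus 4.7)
The plan is to use the explicit formulas for the covariant derivatives of the structure tensors under the canonical connection (as given in Remark \ref{derivatives}) together with the uniqueness part of the Friedrich--Ivanov theorem (Theorem \ref{theo-contact}). First I would specialize Remark \ref{derivatives} to the case $\beta=0$: since the manifold is parallel canonical, the Reeb Killing function vanishes, hence
\[
\nabla\varphi_i=0,\qquad \nabla\xi_i=0,\qquad \nabla\eta_i=0,\qquad i=1,2,3.
\]
So the canonical connection $\nabla$ is simultaneously a metric connection with skew torsion that parallelizes \emph{each} of the three almost contact metric structures $(\varphi_i,\xi_i,\eta_i,g)$.

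Next I would check the hypotheses of Theorem \ref{theo-contact} for each $i$: by Definition \ref{df.can-3-contact} each $N_{\varphi_i}$ is skew-symmetric on $\H$, each $\xi_i$ is Killing, and by Theorem \ref{theo_canonical-implies-char} each $N_{\varphi_i}$ is in fact skew on all of $TM$. Hence each $(\varphi_i,\xi_i,\eta_i,g)$ admits its unique characteristic connection $\nabla^i$, which is the \emph{only} metric connection with skew torsion making all of $\varphi_i,\xi_i,\eta_i$ parallel. Since $\nabla$ has these very properties, uniqueness forces $\nabla=\nabla^i$ for each $i=1,2,3$, proving the first assertion.

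For the holonomy statement, I would recall that the structure group of an almost $3$-contact metric manifold reduces to $\mathrm{Sp}(n)\times\{1\}$ (cited in Section \ref{sec.review} after \cite{Kuo70}), where the trivial factor acts on $\V=\langle\xi_1,\xi_2,\xi_3\rangle$ and $\mathrm{Sp}(n)$ acts on $\H$ commuting with all three $\varphi_i|_\H$. A connection whose parallel transport preserves $g$ together with all three tensors $\varphi_i$ and all three vectors $\xi_i$ must take values in exactly this stabilizer. Since $\nabla g=0$ and $\nabla\varphi_i=\nabla\xi_i=0$ for $i=1,2,3$, this gives $\mathfrak{hol}(\nabla)\subset\mathfrak{sp}(n)$. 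No step here is a serious obstacle; the main point is to observe that the vanishing of $\beta$ collapses the derivative formulas to true parallelism, after which everything follows formally from the uniqueness of the characteristic connection and a standard holonomy-principle argument.
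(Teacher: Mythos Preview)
Your proof is correct and follows essentially the same approach as the paper: specialize Remark \ref{derivatives} to $\beta=0$ to obtain $\nabla\varphi_i=\nabla\xi_i=\nabla\eta_i=0$, then invoke the uniqueness in Theorem \ref{theo-contact} (with existence supplied by Theorem \ref{theo_canonical-implies-char}) to force $\nabla=\nabla^i$, and deduce the holonomy bound from the parallelism of all structure tensors. The paper's argument is slightly terser but identical in substance.
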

In general, it is not possible to derive a simple formula for the canonical torsion of a
canonical almost $3$-contact metric manifold. Here are some exceptional cases:
\begin{ex}
We already know from Corollary \ref{cor.3-d-sympl-is-pc} that any
$3$-$\delta$-cosymplectic manifold 
is a  parallel canonical almost $3$-contact metric manifold.
Therefore, the identities \eqref{TNS}-\eqref{T2} and \eqref{T_canonical} imply directly
that the torsion of the canonical connection is given by
\[
T=-2\delta\,\eta_{123}.
\]
Of course, this is not surprising if we recall that $M$ is locally isometric
to the product of a hyper-K\"ahler manifold with either
 a $3$-dimensional flat group ($\delta=0$, \cite{CM-DN})
or a $3$-dimensional sphere ($\delta\neq 0$, Proposition \ref{prop.3-d-cosymplectic-is-hn}).
\end{ex}
\begin{ex}
Similarly, one shows that for the product of a HKT-manifold $M$ with a certain Lie group $G$
considered in  Example \ref{ex.from-HKT}, the torsion $T$ of the canonical
connection $\nabla$ is given by
\[
T\, =\, T_0 -2\delta\, \eta_{123},
\]
where we extend the $3$-form $T_0$ on the product $M\times G$ in such a way that
$\xi_i\lrcorner\, T_0=0$.
\end{ex}
A well-celebrated theorem of Cartan and Schouten states that the only manifolds carrying
a flat metric connection with torsion are compact Lie groups and the $7$-sphere 
(\cite{Cartan&Sch26b}; see \cite{DAtri&N68,Wolf72a,Wolf72b} for proofs and \cite{Agricola&F10b} 
for a classification-free short proof).
The following example shows that this flat connection is in fact the canonical connection
of a natural almost $3$-contact metric structure on $S^7$. We follow the notations used
 in \cite{Agricola&F10b}, hence we shall be brief.
\begin{ex}[$S^7$ as a non-hypernormal parallel canonical almost $3$-contact
metric manifold]\label{ex.S7}
In dimension $7$, the complex $\Spin(7)$-representation $\Delta^\C_7$
is the complexification of a real $8$-dimensional representation
$\kappa: \, \Spin(7)\ra \End(\Delta_7)$,
since the real Clifford algebra $\mathcal{C}(7)$ is isomorphic to
$\M(8)\oplus \M(8)$. Thus, we may identify  $\R^8$ with the vector space
$\Delta_7$ and embed therein the sphere $S^7$ as the set of all (algebraic)
spinors of length one, equipped with the induced metric $g$.
Fix your favourite explicit realisation of the
spin representation by skew matrices,
$\kappa_i:=\kappa(e_i)\in\so(8)\subset \End(\R^8)$, $i=1,\ldots,7$.
Define  vector fields $e_1,\ldots,e_7$  on $S^7$ by
\bdm
e_i(x)\ =\ \kappa_i \cdot x \text{ for }x\in S^7\subset \Delta_7.
\edm
From the antisymmetry of $\kappa_1,\ldots,\kappa_7$, one deduces that they form an
orthonormal global frame of $TS^7$.  Hence, they constitute an explicit parallelisation
of $S^7$ by Killing vector fields.
We define an almost  $3$-contact metric structure by setting
$\xi_i:=e_i\ (i=1,2,3)$, $\V=\langle\xi_1,\xi_2,\xi_3\rangle,\ \H=\langle e_4,\ldots,e_7\rangle$,
and
\bdm
\Phi_1 = - (e_{23}+ e_{45}+e_{67}),\quad
\Phi_2 = e_{13}- e_{46}+e_{57},\quad
\Phi_3 = - (e_{12}+ e_{47}+e_{56}).
\edm
Furthermore, we define functions on $S^7$ by
$\alpha_{ijk}(x):=- g(\kappa_i\kappa_j\kappa_k x,x)$: they are quadratic functions in
the coordinates $x_i$ and hence never constant (for $i,j,k$ all different) and the
properties of Clifford multiplication imply that they are totally skew-symmetric in all
indices. The commutator of vector fields is inherited from the
ambient space, hence  $[e_i(x),e_j(x)]= 2\kappa_i\kappa_j x$
for $i\neq j$. This implies
\bdm
[e_i(x),e_j(x)]= 2 \sum_{k=1}^7 \alpha_{ijk}(x)\, e_k(x) \quad \forall i,j=1,\ldots, 7,
\edm
and hence $\delta(x):=\alpha_{123}(x)$ is the Reeb commutator function of the
almost  $3$-contact metric structure.  One further checks that all $N_{\varphi_i}$ are
skew-symmetric but non-trivial. Hence, the structure is not
hypernormal, but each $(\xi_i, \Phi_i,\eta_i)$ admits a characteristic connection.
We now define a connection $\nabla$ on $TS^7$ by $\nabla e_i(x)=0 \ \forall i$;
observe that this implies that all tensor fields with constant coefficients like the
$\Phi_i$'s are parallel as well. In particular, by its uniqueness, $\nabla$
has to coincide with the
characteristic connection of all three almost contact structures. This connection  is
trivially flat and metric and just the one claimed to exist by the Cartan-Schouten
result. By Theorem \ref{theo_canonical}, it coincides with the canonical connection
of $(S^7,\xi_i, \Phi_i,\eta_i,g)$ and has vanishing Reeb Killing function $\beta$.
Alltogether, we conclude that $(S^7,\xi_i, \Phi_i,\eta_i,g)$ is a non-hypernormal
parallel canonical
almost $3$-contact metric manifold, as claimed. Just as an additional piece of information,
let us observe that it is proved in \cite{Agricola&F10b} that $\nabla$ does not have
parallel torsion (in fact, $T(e_i,e_j,e_k)=-g([e_i,e_j], e_k)$) and it is a
characteristic $G_2$-connection of Fernandez-Gray class
$\mathfrak{X}_1\oplus \mathfrak{X}_3\oplus\mathfrak{X}_4$.
\end{ex}
For general $\beta$, the difference $\nabla -\nabla^i$ is computed in the next theorem.
%
\subsection{Properties of the canonical connection}
%
By Theorem \ref{theo_canonical-implies-char}, the three almost contact metric structures
$(\varphi_i,\xi_i,\eta_i, g)$ of a canonical  almost $3$-contact metric manifold
admit characteristic connections $\nabla^i, \ i=1,2,3$. As a first result, we compare them to
the canonical connection.
\begin{theo}\label{theo_canonical--char-torsion}
Let  $(M,\varphi_i,\xi_i,\eta_i, g)$ be a canonical  almost $3$-contact metric manifold,
$\nabla$ its canonical connection, $\beta$ its Reeb Killing function, and
$\nabla^i$ the characteristic connections of the three  almost contact metric structures
$(\varphi_i,\xi_i,\eta_i, g)$. The connections $\nabla$ and $\nabla^i$ are related by
\begin{equation}\label{connections}
\nabla=\nabla^i-\frac{\beta}{2}(\eta_j\wedge\Phi_j+\eta_k\wedge\Phi_k)
\end{equation}
for every even permutation $(i,j,k)$ of $(1,2,3)$.
\end{theo}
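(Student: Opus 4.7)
The statement should be read as an identity of metric connections with totally skew torsion: writing each of $\nabla, \nabla^i$ as a modification of $\nabla^g$ via \eqref{nabla}, the claim is equivalent to the identity of torsions
\[
T_i\ =\ T\,+\,\beta\,\Theta_i,\qquad \Theta_i\,:=\,\eta_j\wedge\Phi_j+\eta_k\wedge\Phi_k.
\]
My plan is to establish this through the uniqueness clause of the Friedrich--Ivanov Theorem \ref{theo-contact}. Define the metric connection $\widetilde\nabla$ with skew torsion $T+\beta\Theta_i$ by
\[
\widetilde\nabla_X Y\ :=\ \nabla_X Y+\tfrac{\beta}{2}\,\Theta_i(X,Y,\cdot)^\sharp.
\]
Since the almost contact metric structure $(\varphi_i,\xi_i,\eta_i,g)$ already admits a characteristic connection (Theorem \ref{theo_canonical-implies-char}) which is unique with the properties of being metric, of skew torsion, and parallelizing all the structure tensors, it is enough to prove that
\[
\widetilde\nabla\xi_i\ =\ 0\qquad\text{and}\qquad\widetilde\nabla\varphi_i\ =\ 0,
\]
for then $\widetilde\nabla=\nabla^i$, which is exactly the claimed formula.

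Both verifications reduce to evaluating $\Theta_i$ on a vertical and on $\varphi_i$-transformed arguments, and comparing with the explicit $\nabla$-derivatives listed in Remark \ref{derivatives}:
\[
\nabla_X\xi_i=\beta(\eta_k(X)\xi_j-\eta_j(X)\xi_k),\qquad \nabla_X\varphi_i=\beta(\eta_k(X)\varphi_j-\eta_j(X)\varphi_k).
\]
For $\widetilde\nabla\xi_i$, using the quaternionic relations \eqref{3-sasaki} (in particular $\varphi_j\xi_i=-\xi_k$, $\varphi_k\xi_i=\xi_j$, $\eta_j\circ\varphi_i=-\eta_k$, $\eta_k\circ\varphi_i=\eta_j$), I expand
\[
\Theta_i(X,\xi_i,Z)\ =\ 2\,\bigl(\eta_j(X)\eta_k(Z)-\eta_k(X)\eta_j(Z)\bigr),
\]
so that $\tfrac{\beta}{2}\Theta_i(X,\xi_i,\cdot)^\sharp=\beta(\eta_j(X)\xi_k-\eta_k(X)\xi_j)=-\nabla_X\xi_i$, whence $\widetilde\nabla_X\xi_i=0$.

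For $\widetilde\nabla\varphi_i$, I use that $\widetilde\nabla_X\varphi_i=0$ is equivalent, after lowering, to
\[
\Theta_i(X,\varphi_iY,Z)+\Theta_i(X,Y,\varphi_iZ)\ =\ 2\eta_k(X)\Phi_j(Y,Z)-2\eta_j(X)\Phi_k(Y,Z).
\]
The proof of this identity will be the main obstacle, as it is a purely tensorial bookkeeping: one expands each of the four wedge products using the identities \eqref{2-form1}--\eqref{2-form4} (which express $\Phi_j\circ\varphi_i$ and $\Phi_k\circ\varphi_i$ in terms of $\Phi_k$, $\Phi_j$ and vertical correction terms) and observes that the terms of the form $\eta_i(Y)\eta_j(X)\eta_j(Z)$ and $\eta_i(Y)\eta_k(X)\eta_k(Z)$ cancel inside each sum, while the ``mixed'' horizontal terms proportional to $\eta_j(Y)\Phi_k,\eta_k(Y)\Phi_j,\eta_j(Z)\Phi_k,\eta_k(Z)\Phi_j$ cancel across the two sums coming from $\eta_j\wedge\Phi_j$ and $\eta_k\wedge\Phi_k$, leaving exactly the two remaining $\eta_k(X)\Phi_j(Y,Z)$ and $-\eta_j(X)\Phi_k(Y,Z)$ summands, each with coefficient $2$. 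Once this cancellation is verified, the uniqueness argument gives $\widetilde\nabla=\nabla^i$ and hence \eqref{connections}.
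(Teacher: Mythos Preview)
Your proposal is correct and takes a genuinely different route from the paper. The paper establishes $T-T_i=-\beta\,\Theta_i$ by a direct case-by-case comparison on horizontal and vertical arguments: it invokes the explicit Friedrich--Ivanov formula \eqref{Ti} for $T_i$, the formulas \eqref{TNS}--\eqref{T2}, \eqref{T_canonical} for $T$, and then matches the two sides on each of the types $(X,Y,Z)$, $(X,Y,\xi_i)$, $(X,Y,\xi_j)$, $(X,\xi_r,\xi_s)$, $(\xi_i,\xi_j,\xi_k)$ separately, using along the way computations of $d\Phi_i(\varphi_iX,\varphi_iY,\xi_k)$ and the Nijenhuis components \eqref{NXYxi}. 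Your approach instead exploits the uniqueness clause of Theorem~\ref{theo-contact}: you perturb $\nabla$ by $\tfrac{\beta}{2}\Theta_i$ and verify, using only the derivative formulas of Remark~\ref{derivatives} and the quaternionic identities \eqref{3-sasaki}, \eqref{2-form3}--\eqref{2-form4}, that the perturbed connection parallelizes $\xi_i$ and $\varphi_i$. This is more conceptual and noticeably shorter; the two pointwise-algebraic identities you isolate are exactly the right ones, and the cross-cancellations you describe do occur (in particular the terms $\eta_k(Y)\Phi_j(X,Z)$, $\eta_j(Y)\Phi_k(X,Z)$, $\eta_k(Z)\Phi_j(X,Y)$, $\eta_j(Z)\Phi_k(X,Y)$ cancel in pairs between the $\eta_j\wedge\Phi_j$ and $\eta_k\wedge\Phi_k$ contributions). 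The trade-off is that the paper's computation is self-contained and produces intermediate expressions such as $T_i(X,Y,\xi_j)$ that are of some independent use, whereas your argument relies on the existence of $\nabla^i$ (Theorem~\ref{theo_canonical-implies-char}) and on the canonical connection's defining equation, but bypasses all the case analysis.
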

\begin{proof}
By  Theorem \ref{theo-contact}, the torsion $T_i$ of $\nabla^i$ is given by
\begin{equation}
\label{Ti}T_i=\eta_i\wedge d\eta_i+N_{\varphi_i}+d^{\varphi_i}\Phi_i
-\eta_i\wedge (\xi_i\lrcorner N_{\varphi_i}).
\end{equation}
We show that the torsions $T$ and $T_i$ of $\nabla$ and $\nabla^i$ are related by
\begin{equation}\label{T-T_i}
T-T_i=-\beta(\eta_j\wedge\Phi_j+\eta_k\wedge\Phi_k),
\end{equation}
where $(i,j,k)$ is an even permutation of $(1,2,3)$. We will proceed case by case,
computing the difference $T-T_i$ on horizontal and vertical vector fields. First of all,
we deduce from \eqref{Ti} and the general expressions \eqref{TNS}, \eqref{T1} for the
torsion of any $\varphi$-compatible connection
\begin{align*}
T(X,Y,Z)&=T_i(X,Y,Z)=N_{\varphi_i}(X,Y,Z)-d\Phi_i(\varphi_iX,\varphi_iY,\varphi_iZ),\\
T(X,Y,\xi_i)&=T_i(X,Y,\xi_i)=d\eta_i(X,Y)
\end{align*}
for every $X,Y,Z\in\Gamma(\H)$. Now, fixing an even permutation $(i,j,k)$ of $(1,2,3)$
and using ${\mathcal L}_{\xi_k}g=0$, for every $X,Y\in\Gamma(\H)$ we have
\begin{align*}
d\Phi_i(X,Y,\xi_k)
&=\xi_k(\Phi_i(X,Y))-\Phi_i([X,Y],\xi_k)
-\Phi_i([Y,\xi_k],X)-\Phi_i([\xi_k,X],Y)\\
&=-\xi_k(g(\varphi_i X,Y))+g([X,Y],\xi_j)+g([\xi_k,Y],\varphi_iX)
-g([\xi_k,X],\varphi_iY)\\
&=-g([\xi_k,\varphi_iX],Y)-g([\xi_k,X],\varphi_iY)+\eta_j([X,Y])\\
&=-g(({\mathcal L}_{\xi_k}\varphi_i)X,Y)-d\eta_j(X,Y).
\end{align*}
One can verify that $g(({\mathcal L}_{\xi_k}\varphi_i)\varphi_iX,\varphi_iY)
=-g(({\mathcal L}_{\xi_k}\varphi_i)X,Y)$ so that
\begin{equation}\label{dPhi}
d\Phi_i(\varphi_iX,\varphi_iY,\xi_k)=g(({\mathcal L}_{\xi_k}\varphi_i)X,Y)
-d\eta_j(\varphi_iX,\varphi_iY).
\end{equation}
Using the expression \eqref{Ti} for the torsion $T_i$, and applying equations \eqref{NXYxi}
and \eqref{dPhi}, we have
\begin{align*}
T_i(X,Y,\xi_j)&=N_{\varphi_i}(X,Y,\xi_j)-d\Phi_i(\varphi_iX,\varphi_iY,\xi_k)\\
&=d\eta_j(X,Y)-d\eta_k(\varphi_iX,Y)-d\eta_k(X,\varphi_iY)-g(({\mathcal L}_{\xi_k}\varphi_i)X,Y).
\end{align*}
On the other hand, the torsion $T$ satisfies $T(X,Y,\xi_j)=d\eta_j(X,Y)$, and we have
\[
T(X,Y,\xi_j)-T_i(X,Y,\xi_j)=A_{ik}(X,Y)=-\beta\Phi_j(X,Y),
\]
where we used the fact that $\beta$ is a Reeb Killing function.
Analogously, one can check that $T(X,Y,\xi_k)-T_i(X,Y,\xi_k)=-\beta\Phi_k(X,Y)$,
coherently with \eqref{T-T_i}.

Now, using again \eqref{Ti}, we have
$$T_i(X,\xi_i,\xi_j)=d\eta_i(\xi_j,X)=-g([\xi_j,X],\xi_i).$$
On the other hand, by \eqref{T2}, $T(X,\xi_i,\xi_j)=-g([\xi_i,\xi_j],X)$. Hence,
\[
T(X,\xi_i,\xi_j)-T_i(X,\xi_i,\xi_j)=-({\mathcal L}_{\xi_j}g)(\xi_i,X)=0.
\]
In the same way one shows that $T(X,\xi_i,\xi_k)-T_i(X,\xi_i,\xi_k)=0$, and these relations
are in accordance with \eqref{T-T_i}, since for example
\[
-\beta(\eta_j\wedge\Phi_j+\eta_k\wedge\Phi_k)(X,\xi_i,\xi_j)=-\beta\Phi_j(X,\xi_i)=0.
\]
We shall compute now the difference $T-T_i$ on vector fields $X,\xi_j,\xi_k$, with
$X\in\Gamma(\H)$. First, from $\eta_r([X,\xi_r])=0$, we have
\begin{align}
d\Phi_i(X,\xi_j,\xi_k)\nonumber&=X(\Phi_i(\xi_j,\xi_k))-\Phi_i([X,\xi_j],\xi_k)
-\Phi_i([\xi_j,\xi_k],X)-\Phi_i([\xi_k,X],\xi_j)\nonumber\\
&=-X(g(\xi_j,\xi_j))+\eta_j([X,\xi_j])-g([\xi_j,\xi_k],\varphi_iX)-\eta_k([\xi_k,X])\nonumber\\
&=-g([\xi_j,\xi_k],\varphi_iX).\label{dPhi_i_2}
\end{align}
From \eqref{Ti} we have
\[
T_i(X,\xi_j,\xi_k)=N_{\varphi_i}(X,\xi_j,\xi_k)+d\Phi_i(\varphi_iX,\xi_k,\xi_j)
=-g([\xi_j,\xi_k],X),
\]
where we used \eqref{dPhi_i_2} and the fact that $N_{\varphi_i}(\xi_j,\xi_k)=0$
(see \eqref{tableN}).
Therefore, by \eqref{T2}, $T_i(X,\xi_j,\xi_k)=T(X,\xi_j,\xi_k)$, again coherently with
\eqref{T-T_i}. Finally, by \eqref{T_canonical} and \eqref{Ti}, we have
\[
T(\xi_i,\xi_j,\xi_k)-T_i(\xi_i,\xi_j,\xi_k)=2\beta-2\delta-d\eta_i(\xi_j,\xi_k)
=2\beta-2\delta+2\delta=2\beta,
\]
and one can easily check that
\[
-\beta(\eta_j\wedge\Phi_j+\eta_k\wedge\Phi_k)(\xi_i,\xi_j,\xi_k)
=-\beta(\Phi_j(\xi_k,\xi_i)+\Phi_k(\xi_i,\xi_j))=2\beta,
\]
which completes the proof of \eqref{T-T_i}. Therefore, by \eqref{nabla} and \eqref{T-T_i},
we get \eqref{connections}.
\end{proof}
\begin{rem}
Under the hypotheses Theorem \ref{theo_canonical--char-torsion}, equation \eqref{T-T_i}
implies that the torsion $T$ of the canonical connection and the torsions
$T_1$, $T_2$, $T_3$ of the three characteristic connections satisfy
\[
3T\ =\ T_1+T_2+T_3-2\beta(\eta_1\wedge\Phi_1+\eta_2\wedge\Phi_2+\eta_3\wedge\Phi_3).
\]
\end{rem}
We shall show now that $3$-$(\alpha,\delta)$-Sasaki manifolds are the only canonical
horizontal $3$-$\alpha$-contact metric manifolds with integrable distribution $\mathcal V$.
By a result of B.~Cappelletti-Montano, we have the following
\begin{prop}[{\cite[Prop.\,3.2]{C-M}}]\label{Vintegrable}
Let $(M,\varphi_i,\xi_i,\eta_i, g)$ be an almost $3$-contact metric manifold such that
the distribution $\mathcal V$ is integrable and each $\xi_i$ is a Killing vector field.
Then, the following properties hold:
\begin{enumerate}[\normalfont i)]
\item $[\xi_i,\xi_j]=2\delta\xi_k$ for every even permutation $(i,j,k)$ of $(1,2,3)$,
and some constant $\delta$;
\item each $\xi_i$  is an infinitesimal automorphism of the horizontal distribution $\H$,
i\,.e. $[\xi_i, X]\in\Gamma(\H)$ for every $X\in\Gamma(\H)$;
\item the distribution $\V$ has totally geodesic leaves.
\end{enumerate}
\end{prop}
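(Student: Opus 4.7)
My plan is to prove the three claims in the order (ii), (i), (iii), since the integrability of $\mathcal V$ together with (ii) will feed naturally into the constancy argument in (i), and (iii) will then follow from a short Killing-equation manipulation. Throughout, the main tool is the identity $(\mathcal L_{\xi_r}g)(Y,Z)=0$ for $r=1,2,3$ and all $Y,Z\in\mathfrak X(M)$, combined with the integrability of $\mathcal V$, which by definition means $[\xi_i,\xi_j]\in\Gamma(\mathcal V)$.

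For (ii), I would take $X\in\Gamma(\H)$ and compute $(\mathcal L_{\xi_r}g)(\xi_s,X)$: expanding gives $\xi_r(g(\xi_s,X))-g([\xi_r,\xi_s],X)-g(\xi_s,[\xi_r,X])=-\eta_s([\xi_r,X])$, where the first term vanishes because $\xi_s\perp X$, and the middle term vanishes since $[\xi_r,\xi_s]$ is vertical and $X$ horizontal. Killingness then forces $\eta_s([\xi_r,X])=0$ for every $s$, i.e.\ $[\xi_r,X]\in\Gamma(\H)$.

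For (i), the existence of a smooth function $\delta$ with $[\xi_i,\xi_j]=2\delta\xi_k$ is immediate: by Corollary \ref{cor.Reebcf-Killing} there is a Reeb commutator function $\delta\in C^\infty(M)$, and integrability of $\mathcal V$ upgrades the identity $\eta_k([\xi_i,\xi_j])=2\delta\epsilon_{ijk}$ to $[\xi_i,\xi_j]=2\delta\xi_k$. The real work is showing that $\delta$ is constant. For the vertical derivatives I would apply the Jacobi identity to $\xi_1,\xi_2,\xi_3$: the cyclic sum collapses to $2\xi_1(\delta)\xi_1+2\xi_2(\delta)\xi_2+2\xi_3(\delta)\xi_3=0$, yielding $\xi_i(\delta)=0$. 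For the horizontal derivative I would use the Jacobi identity for $X\in\Gamma(\H)$ and $\xi_i,\xi_j$: the bracket $[X,2\delta\xi_k]=2X(\delta)\xi_k+2\delta[X,\xi_k]$ has vertical part $2X(\delta)\xi_k$ by (ii), while the remaining two brackets $[\xi_i,[\xi_j,X]]$ and $[\xi_j,[\xi_i,X]]$ lie in $\Gamma(\H)$, again by (ii). Projecting the Jacobi identity onto $\mathcal V$ gives $X(\delta)=0$, hence $\delta$ is constant; I anticipate this step is the main (though small) obstacle since it is the only place where (ii) is used in an essential way.

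For (iii), I would show that $\nabla^g_{\xi_i}\xi_j\in\Gamma(\mathcal V)$ for all $i,j$. The case $i=j$ is classical: since $\xi_i$ is Killing, $g(\nabla^g_{\xi_i}\xi_i,X)=-\tfrac12 X(g(\xi_i,\xi_i))=0$ for any $X\in\Gamma(\H)$. For $i\neq j$, the two Killing identities $g(\nabla^g_{\xi_j}\xi_i,X)=-g(\nabla^g_X\xi_i,\xi_j)$ and $g(\nabla^g_{\xi_i}\xi_j,X)=-g(\nabla^g_X\xi_j,\xi_i)$, combined with $X(g(\xi_i,\xi_j))=0$, yield $g(\nabla^g_{\xi_i}\xi_j,X)=-g(\nabla^g_{\xi_j}\xi_i,X)$. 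On the other hand, the vertical-valued identity $[\xi_i,\xi_j]=2\delta\xi_k$ from (i) gives $g(\nabla^g_{\xi_i}\xi_j,X)-g(\nabla^g_{\xi_j}\xi_i,X)=0$. Adding the two relations forces $g(\nabla^g_{\xi_i}\xi_j,X)=0$ for every horizontal $X$, i.e.\ $\nabla^g_{\xi_i}\xi_j\in\Gamma(\mathcal V)$, which is exactly the condition for the foliation by integral leaves of $\mathcal V$ to be totally geodesic.
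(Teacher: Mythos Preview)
Your proof is correct. Note, however, that the paper does not supply its own proof of this proposition: it is quoted verbatim from \cite[Prop.\,3.2]{C-M} and used as input for the subsequent theorem, so there is no argument in the paper to compare against. Your approach is the natural elementary one: part (ii) follows by evaluating $(\mathcal L_{\xi_r}g)(\xi_s,X)$ and using integrability of $\mathcal V$; part (i) combines Corollary \ref{cor.Reebcf-Killing} with the Jacobi identity (first on the triple $\xi_1,\xi_2,\xi_3$ for the vertical directions, then on $X,\xi_i,\xi_j$ together with (ii) for the horizontal directions) to force $\delta$ to be constant; and part (iii) is the standard skew-symmetry argument for Killing fields combined with the verticality of $[\xi_i,\xi_j]$. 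Each step checks out.
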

Observe that under the hypothesis that $\mathcal V$ is integrable, condition i) is
equivalent to the existence of a \emph{constant}
Reeb commutator function, since the projection of the commutator to $\H$ vanishes.
\begin{theo}
Let $(M,\varphi_i,\xi_i,\eta_i, g)$ be a canonical almost $3$-contact metric manifold
with canonical connection $\nabla$ and Reeb Killing function $\beta$.
Assume that the following conditions hold:
\begin{enumerate}[\normalfont i)]
\item the distribution $\mathcal V$ is integrable;
\item $d\eta_i(X,Y)=2\alpha \Phi_i(X,Y)$ for every $X,Y\in\Gamma(\H)$ and $i=1,2,3$,
and for some real constant $\alpha\neq 0$.
\end{enumerate}
Then the structure admits a constant Reeb commutator $\delta$ and
$(M,\varphi_i,\xi_i,\eta_i,g)$  is a $3$-$(\alpha,\delta)$-Sasaki manifold.
\end{theo}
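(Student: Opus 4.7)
The plan is to verify the defining equation $d\eta_i=2\alpha\Phi_i+2(\alpha-\delta)\eta_j\wedge\eta_k$ directly, by evaluating both sides on each of the three possible types of pairs of vector fields (horizontal--horizontal, horizontal--vertical, vertical--vertical), for a suitable constant $\delta$ to be identified.

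First I would invoke Proposition \ref{Vintegrable}. Since $(M,\varphi_i,\xi_i,\eta_i, g)$ is canonical, each Reeb vector field $\xi_i$ is Killing; combined with hypothesis i) that $\mathcal V$ is integrable, Proposition \ref{Vintegrable} supplies a \emph{constant} $\delta$ with $[\xi_i,\xi_j]=2\delta\xi_k$ for every even permutation $(i,j,k)$, so $\delta$ is the constant Reeb commutator function claimed in the statement. Crucially, the same proposition tells us each $\xi_r$ is an infinitesimal automorphism of $\mathcal H$, i.\,e.~$[\xi_r,X]\in\Gamma(\mathcal H)$ whenever $X\in\Gamma(\mathcal H)$.

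With this setup in hand, I would compute $d\eta_i$ case by case using $d\eta_i(U,V)=U(\eta_i(V))-V(\eta_i(U))-\eta_i([U,V])$. On horizontal pairs, hypothesis ii) directly gives $d\eta_i(X,Y)=2\alpha\Phi_i(X,Y)$, while the right-hand side reduces to $2\alpha\Phi_i(X,Y)$ since $\eta_j\wedge\eta_k$ vanishes on $\mathcal H$. On mixed pairs $(X,\xi_r)$ with $X\in\Gamma(\mathcal H)$, the first two terms of $d\eta_i(X,\xi_r)$ vanish and the commutator term $-\eta_i([X,\xi_r])$ is zero because $[X,\xi_r]$ is horizontal; the right-hand side is also zero since both $\Phi_i$ and $\eta_j\wedge\eta_k$ vanish on such pairs. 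On vertical pairs, using $\varphi_i\xi_j=\xi_k$ and $\varphi_i\xi_k=-\xi_j$, one finds $\Phi_i(\xi_j,\xi_k)=-1$ with all other $\Phi_i(\xi_r,\xi_s)=0$; meanwhile $d\eta_i(\xi_j,\xi_k)=-\eta_i([\xi_j,\xi_k])=-2\delta$ and the other vertical evaluations vanish. The algebraic identity $2\alpha\cdot(-1)+2(\alpha-\delta)\cdot 1=-2\delta$ closes the matching on this last type.

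There is really no hidden obstacle: hypothesis i) is precisely what is needed to make $[\xi_r,X]$ stay in $\mathcal H$ (killing the mixed terms) and to force $\delta$ to be a genuine constant via Proposition \ref{Vintegrable}, while hypothesis ii) handles the horizontal part. The most delicate step is simply the bookkeeping on vertical pairs, where one must keep track of the sign coming from $\Phi_i(\xi_j,\xi_k)=-1$ so that the coefficient $2(\alpha-\delta)$ of $\eta_j\wedge\eta_k$ emerges correctly. Since $\alpha\ne 0$ by hypothesis, Definition \ref{df.3ad-Sasaki} then immediately identifies $(M,\varphi_i,\xi_i,\eta_i,g)$ as a $3$-$(\alpha,\delta)$-Sasaki manifold, completing the argument. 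Notice that neither the Reeb Killing function $\beta$ nor the canonical connection $\nabla$ itself enter the proof explicitly: only the Killing property of the Reeb vector fields, embedded in canonicity, is used.
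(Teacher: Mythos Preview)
Your proof is correct and follows essentially the same approach as the paper: invoke Proposition~\ref{Vintegrable} to obtain the constant $\delta$ and the infinitesimal automorphism property, then verify $d\eta_i=2\alpha\Phi_i+2(\alpha-\delta)\eta_j\wedge\eta_k$ by splitting into horizontal, mixed, and vertical pairs. Your case-by-case verification is merely a more explicit unfolding of what the paper summarizes in three lines, and your closing remark that neither $\beta$ nor $\nabla$ enter the argument is a correct and useful observation.
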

\begin{proof}
Since the distribution $\mathcal V$ is integrable and the Reeb vector fields are Killing,
from Proposition \ref{Vintegrable}, each $\xi_i$  is an infinitesimal automorphism of the
horizontal distribution $\H$, and thus
\[d\eta_r(X,\xi_s)=0,\qquad \forall X\in\Gamma(\H),\; r,s=1,2,3.\]
Furthermore, $[\xi_i,\xi_j]=2\delta \xi_k$ for every even permutation $(i,j,k)$ of $(1,2,3)$
and some constant $\delta$. Therefore,
\[d\eta_r(\xi_s,\xi_t)=-2\delta\epsilon_{rst}.\]
Taking into account condition ii) we deduce that the differential of each $1$-form $\eta_i$
is given by
\[
d\eta_i=2\alpha\Phi_i+2(\alpha-\delta)\eta_j\wedge\eta_k
\]
where $(i,j,k)$ is an even permutation of $(1,2,3)$.
\end{proof}
%
\subsection{The cone of a canonical almost $3$-contact metric manifold}\label{sec.cone}
%
In \cite{Ag-H} the authors studied cones of $G$ manifolds endowed with a characteristic
connection. Given  a Riemannian manifold $(M,g)$  equipped with a metric connection
$\nabla$ with skew-symmetric torsion $T$, the cone
$(\bar M, \bar g)=(M\times\mathbb{R}^+,a^2r^2g+dr^2)$, $a>0$, can be endowed with an
appendant connection $\bar\nabla:=\nabla^{\bar g}+\frac12\bar T$, where $\bar T$ is the
skew-symmetric torsion of $\bar\nabla$, defined by
\[
\bar T(X,Y)=T(X,Y) \mbox{ for } X,Y\perp \partial_r,\quad\partial_r\lrcorner\bar T=0.
\]
The positive real number $a$ will be called the \emph{cone constant}.

Now, let  $(M,\varphi_i,\xi_i,\eta_i, g)$ be an almost $3$-contact metric manifold. On the
cone $(\bar M, \bar g)$ one can consider  three almost hermitian structures defined by
\begin{align}\label{J}
J_1(ar\partial_r)&=\xi_1, &J_1(\xi_1)&=-ar\partial_r,\; &J_1(V)
&=-\varphi_1(V) \mbox{ for } V\perp\xi_1,\partial_r,\nonumber\\
J_2(ar\partial_r)&=\xi_2, &J_2(\xi_2)&=-ar\partial_r,\; &J_2(V)
&=-\varphi_2(V) \mbox{ for } V\perp\xi_2,\partial_r,\\
J_3(ar\partial_r)&=-\xi_3, &J_3(\xi_3)&=ar\partial_r,\; &J_3(V)
&=-\varphi_3(V) \mbox{ for } V\perp\xi_3,\partial_r,\nonumber
\end{align}
These structures satisfy $J_1J_2=J_3=-J_2J_1$, and hence $(\bar M, \bar g, J_1,J_2,J_3)$
is an almost hyperhermitian manifold. We will use the following result.
\begin{theo}[\cite{Ag-H}]\label{theo_cone}
Let  $(M,\varphi_i,\xi_i,\eta_i, g)$ be an almost $3$-contact metric manifold such that
each structure $(\varphi_i,\xi_i,\eta_i, g)$ admits a  characteristic connection
$\nabla^{i}$ with skew torsion $T_i$. Let $\nabla$ be a metric connection with totally
skew-symmetric torsion on $M$. Then the appendant connection $\bar\nabla$ satisfies
$\bar\nabla J_1=\bar\nabla J_2=\bar\nabla J_3=0$ if and only if there exists some positive
constant $a$ (the cone constant) such that the three tensors
$S_i:=T_i-2a\eta_i\wedge\Phi_i$ coincide with the torsion $T$ of $\nabla$.
Furthermore, if $M$ is hypernormal, then $\bar M$ is an HKT manifold.
\end{theo}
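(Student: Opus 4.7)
The plan is to reduce the parallelism condition $\bar\nabla J_i=0$ to a system of pointwise conditions on the torsion $T$ by evaluating the identity $(\bar\nabla_V J_i)W=0$ on the natural adapted frame of the cone, namely $\{\partial_r,\xi_1,\xi_2,\xi_3,X\}$ with $X\in\Gamma(\H)$. The main tool is the warped-product formula for $\nabla^{\bar g}$ on $(\bar M,\bar g)=(M\times\mathbb R^+,a^2r^2g+dr^2)$: one finds the standard expressions
\begin{equation*}
\nabla^{\bar g}_{\partial_r}\partial_r=0,\qquad
\nabla^{\bar g}_{\partial_r}X=\nabla^{\bar g}_X\partial_r=\tfrac{1}{r}X,\qquad
\nabla^{\bar g}_X Y=\nabla^g_X Y-a^2r\,g(X,Y)\,\partial_r,
\end{equation*}
for $X,Y$ tangent to $M$. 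Combined with $\bar\nabla=\nabla^{\bar g}+\tfrac12\bar T$ and $\partial_r\lrcorner\bar T=0$, these formulas allow the evaluation of $\bar\nabla J_i$ on any pair of adapted vectors.

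The computation splits naturally into three cases. Case (a): One of the arguments is $\partial_r$. Using the definition \eqref{J} together with $J_i(ar\partial_r)=\pm\xi_i$, a direct calculation shows that the identity $(\bar\nabla_V J_i)(\partial_r)=0$ is equivalent to $\nabla^{\bar g}_V\xi_i=\pm J_i(\tfrac{1}{ar}\nabla^{\bar g}_V(ar\partial_r))$, which, after substituting the warped-product formulas, reduces to a condition on $\nabla^g\xi_i$ on horizontal and on vertical arguments. Case (b): Both arguments horizontal (i.e., in $\mathcal H$). Here $J_i$ agrees with $-\varphi_i$, and $(\bar\nabla_V J_i)W=0$ becomes a condition expressible entirely on $M$, namely
\begin{equation*}
(\nabla^g_V\varphi_i)W+a^2r\,\big(g(V,\varphi_iW)\partial_r\mp \eta_i(V)\varphi_iW+\cdots\big)+\tfrac12\big[T_V,\varphi_i\big]W=0,
\end{equation*}
where extra $\partial_r$-terms arise because $J_i$ exchanges $\partial_r$ and $\xi_i$. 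Case (c): One argument in $\mathcal V$ and one in $\mathcal H$ (or both in $\mathcal V$); here one systematically replaces $\xi_i$ by $-ar\partial_r$ under $J_i$ and obtains conditions mixing $\nabla^g_{\xi_r}\varphi_i$ with $T(\xi_r,\cdot,\cdot)$. The key manipulation throughout is to rewrite $\nabla^g\varphi_i=\nabla^i\varphi_i-\tfrac12[T_i,\varphi_i]=-\tfrac12[T_i,\varphi_i]$ using that $\nabla^i\varphi_i=\nabla^i\xi_i=0$; all Levi-Civita terms can thus be expressed via the characteristic torsion $T_i$.

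Assembling the results of the three cases, one finds that $\bar\nabla J_i=0$ collapses, after cancellation of the $\partial_r$-terms generated by the conformal factor $a^2r$, to the single algebraic identity
\begin{equation*}
T(V,W)\;=\;T_i(V,W)-2a(\eta_i\wedge\Phi_i)(V,W)\quad\text{on }TM,
\end{equation*}
together with the requirement that the cone constant $a$ be strictly positive (forced by matching the coefficient of the warped-product contribution, which is $a>0$). Since the left-hand side $T$ is independent of $i$, the three conditions $\bar\nabla J_1=\bar\nabla J_2=\bar\nabla J_3=0$ are compatible precisely when the three tensors $S_i=T_i-2a\,\eta_i\wedge\Phi_i$ all coincide, and the common value is then the required $T$. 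Conversely, if $T$ is defined this way, reversing the computation gives $\bar\nabla J_i=0$ for $i=1,2,3$.

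For the final assertion, assume $M$ is hypernormal; then each $(\varphi_i,\xi_i,\eta_i,g)$ admits $\nabla^i$ by Theorem~\ref{theo-contact}, and the construction above yields a hyperhermitian structure $(\bar g,J_1,J_2,J_3)$ on $\bar M$ with metric connection $\bar\nabla$ satisfying $\bar\nabla J_i=0$. To upgrade this to an HKT structure it remains to observe that each $J_i$ is integrable on $\bar M$: the Nijenhuis tensor $[J_i,J_i]$ on $\bar M$ can be expressed, by a computation analogous to that of Lemma~\ref{lemma_N}, in terms of $N_{\varphi_i}$ and the forms $\eta_i,\Phi_i$ (this is in fact the mechanism by which the three almost hermitian structures $J_i$ on $M\times\mathbb R$ were used in the proof of Lemma \ref{lemma_N}), and this expression vanishes identically when $N_{\varphi_i}=0$. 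Integrability of $J_1,J_2,J_3$ together with the existence of $\bar\nabla$ gives, by definition, the HKT property. The main obstacle is the sheer bookkeeping of Case (b) and Case (c): one must carefully track the mixing between $\partial_r$-terms, $\xi_i$-terms and horizontal terms that arises because $J_i$ is not block-diagonal with respect to the splitting $TM\oplus\langle\partial_r\rangle$, and verify that all terms not contained in the formula $T=T_i-2a\,\eta_i\wedge\Phi_i$ cancel exactly.
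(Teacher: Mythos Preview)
This theorem is not proved in the present paper; it is quoted from \cite{Ag-H} and used as a black-box input to Corollary~\ref{cor_canonical2}. There is therefore no argument here to compare your proposal against.

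That said, your outline is the natural strategy and is structurally sound: the warped-product formulas for $\nabla^{\bar g}$ are correct, the device $\nabla^g\varphi_i=-\tfrac12[T_i(\cdot),\varphi_i]$ (coming from $\nabla^i\varphi_i=0$) is the right way to trade Levi-Civita data for characteristic-torsion data, and the case split according to whether the arguments lie in $\langle\partial_r\rangle$, $\mathcal V$ or $\mathcal H$ is standard for cone computations. Two minor points. First, the positivity of $a$ is not ``forced by matching coefficients'': $a>0$ is part of the definition of the cone metric $a^2r^2g+dr^2$, and the computation simply identifies $T$ with $T_i-2a\,\eta_i\wedge\Phi_i$ for whichever cone one has fixed; the existential quantifier in the statement refers to the choice of cone. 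Second, for the HKT assertion your appeal to Lemma~\ref{lemma_N} is only heuristic: that lemma concerns the product $M\times\mathbb R$ with a different (though closely related) almost complex structure, whereas here the $J_i$ are defined on the warped cone by \eqref{J}. Since the Nijenhuis tensor is metric-independent the same mechanism does go through, but the computation has to be redone for the cone's $J_i$ rather than invoked verbatim. As you yourself acknowledge, the actual content of the proof is the bookkeeping in Cases~(b) and~(c), which remains to be written out.
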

Let us point out that in the preceding result, the non-existence of a
characteristic connection for almost $3$-contact metric manifolds was circumvented
by requiring the property that the three difference tensors $S_i$ should
coincide---one then views them as the torsion of a connection and lifts it to the cone.

\begin{cor}\label{cor_canonical2}
Let  $(M,\varphi_i,\xi_i,\eta_i, g)$ be a canonical  almost $3$-contact metric manifold,
$\nabla$ its canonical connection. Assume that  its Reeb Killing function $\beta$ is constant
and negative.
 If $\nabla'$ is the metric connection on $M$ with skew torsion $T'$ given by
\bdm
T':=T+\beta(\eta_1\wedge\Phi_1+\eta_2\wedge\Phi_2+\eta_3\wedge\Phi_3),
\edm
the appendant connection $\bar{\nabla'}$ on the cone $(\bar M,\bar g)$, with cone
constant  $a=-\frac{\beta}{2}$ is a hermitian connection, i.\,e.~it parallelizes
the almost hermitian structures $J_i$, $i=1,2,3$, defined by \eqref{J}.

If, furthermore, $(M,\varphi_i,\xi_i,\eta_i, g)$, is a $3$-$(\alpha,\delta)$-Sasaki manifold,
then the cone $(\bar M,\bar g)$ is an HKT manifold.
\end{cor}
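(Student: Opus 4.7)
The strategy is to deduce the corollary directly from Theorem \ref{theo_cone}, whose hypotheses are essentially all supplied by earlier results of this part of the paper. The proof splits naturally into setting up the prerequisites, performing one short calculation that matches the three tensors $S_i$ of Theorem \ref{theo_cone}, and then reading off the two claims.

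First I would invoke Theorem \ref{theo_canonical-implies-char}: since $(M,\varphi_i,\xi_i,\eta_i,g)$ is canonical, each of the three almost contact metric structures $(\varphi_i,\xi_i,\eta_i,g)$ admits its own characteristic connection $\nabla^{i}$ with skew torsion $T_i$. This is precisely the input that Theorem \ref{theo_cone} requires. Next I would recall the explicit relation between $T$ and each $T_i$ provided by Theorem \ref{theo_canonical--char-torsion}, namely
\[
T - T_i \ =\ -\beta\,(\eta_j\wedge\Phi_j + \eta_k\wedge\Phi_k), \qquad (i,j,k) \text{ even permutation of } (1,2,3).
\]

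The core step is then a one-line calculation. Set the cone constant $a := -\beta/2$; since $\beta$ is a \emph{negative constant}, $a$ is a well-defined positive real number, as demanded by Theorem \ref{theo_cone}. The crucial hypothesis of that theorem is that the three difference tensors
\[
S_i \ :=\ T_i - 2a\,\eta_i\wedge\Phi_i \ =\ T_i + \beta\,\eta_i\wedge\Phi_i
\]
coincide for $i=1,2,3$ and equal the torsion of the candidate connection on $M$. Substituting the formula above,
\[
S_i \ =\ T + \beta(\eta_j\wedge\Phi_j + \eta_k\wedge\Phi_k) + \beta\,\eta_i\wedge\Phi_i
\ =\ T + \beta\sum_{\ell=1}^{3}\eta_\ell\wedge\Phi_\ell \ =\ T',
\]
which is manifestly independent of $i$. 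Thus the three tensors $S_i$ all agree with the torsion of the connection $\nabla'$ introduced in the statement. Theorem \ref{theo_cone} then immediately yields $\bar{\nabla'}J_1=\bar{\nabla'}J_2=\bar{\nabla'}J_3=0$, proving the first claim.

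For the second claim, note that by Corollary \ref{cor.3-AD-Sasaki-implies-canonical} every $3$-$(\alpha,\delta)$-Sasaki manifold is a canonical almost $3$-contact metric manifold with \emph{constant} Reeb Killing function $\beta=2(\delta-2\alpha)$, so the assumption $\beta<0$ reduces to the algebraic condition $\delta<2\alpha$, and the previous paragraph applies. Moreover, by the generalized Kashiwada Theorem \ref{thm.general-Kashiwada}, such a manifold is hypernormal, which is precisely the extra hypothesis in the second part of Theorem \ref{theo_cone}. Hence $(\bar M,\bar g)$ is an HKT manifold, completing the proof. There is no genuine obstacle here: the whole argument is bookkeeping built on Theorems \ref{theo_canonical-implies-char}, \ref{theo_canonical--char-torsion}, \ref{thm.general-Kashiwada} and \ref{theo_cone}; the only point requiring care is the sign identification $2a=-\beta$, which is exactly what forces $\beta$ to be negative in order to obtain a positive cone constant.
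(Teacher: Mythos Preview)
Your proof is correct and follows essentially the same route as the paper's own argument: invoke Theorem \ref{theo_canonical-implies-char} for the existence of the characteristic connections $\nabla^i$, use the torsion relation from Theorem \ref{theo_canonical--char-torsion} to verify that the three tensors $S_i=T_i+\beta\,\eta_i\wedge\Phi_i$ all coincide with $T'$, and then apply Theorem \ref{theo_cone} with $a=-\beta/2$, using hypernormality (Theorem \ref{thm.general-Kashiwada}) for the HKT conclusion. The only additions you make over the paper are the explicit reference to Corollary \ref{cor.3-AD-Sasaki-implies-canonical} and the translation of $\beta<0$ into $\delta<2\alpha$, both harmless elaborations.
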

\begin{proof}
From Theorem \ref{theo_canonical-implies-char}, we know that each
almost contact metric structure $(\varphi_i,\xi_i,\eta_i, g)$ admits a
characteristic connection $\nabla^i$; Theorem \ref{theo_canonical--char-torsion} shows that the connections $\nabla$ and $\nabla_i$ are related by \eqref{connections}.
Now, taking $S_i:=T_i+\beta\eta_i\wedge\Phi_i$, by \eqref{T-T_i} we get
\[
S_i=T+\beta(\eta_1\wedge\Phi_1+\eta_2\wedge\Phi_2+\eta_3\wedge\Phi_3),
\]
so that the three tensors $S_i$, $i=1,2,3$, coincide. We can thus apply Theorem
\ref{theo_cone}. Consider the cone $(\bar M,\bar g)$ corresponding to the cone constant
$a:=-\frac{\beta}{2}$. If $\nabla'$ is the metric connection on $M$ with totally
skew-symmetric torsion $T':=S_1=S_2=S_3$, the appendant connection $\bar {\nabla'}$
parallelizes the almost hermitian structures $J_i$, $i=1,2,3$.

Theorem \ref{thm.general-Kashiwada} states that any $3$-$(\alpha,\delta)$-Sasaki manifold is
hypernormal, hence the last claim follows from the corresponding statement in
Theorem \ref{theo_cone}.
\end{proof}
\begin{rem}
Recall  that for a $3$-$(\alpha,\delta)$-Sasaki manifold, the Reeb Killing function $\beta$
is  automatically constant and may be computed from $\alpha$ and $\delta$ through
$\beta= 2\delta- 4\alpha$. Thus, the condition $\beta<0$ in Corollary \ref{cor_canonical2}
may be restated as $2\alpha>\delta$ in this situation.
\end{rem}
\begin{rem}
For example, we know that any  $3$-Sasakian manifold is a $3$-$(\alpha,\delta)$-Sasaki manifold
with $\beta=-2<0$. Comparing the expression for $T'$ above with the results
of \cite[Section 3.5]{Ag-H}, one sees that $\bar {\nabla'}$ will then coincide with the
Levi-Civita connection of the natural hyper-K\"ahler structure on the cone $\bar M$.
Similarly, the quaternionic Heisenberg group is a  $3$-$(\alpha,\delta)$-Sasaki manifold
with $\beta=-2\lambda, \ \lambda $ a positive non-zero parameter.
In \cite[Thm 11]{Ag-F-S} it was shown (by applying Theorem \ref{theo_cone}) that the
cone of the $7$-dimensional quaternionic Heisenberg group is a HKT manifold.
Hence, Corollary \ref{cor_canonical2} generalizes both results.
\end{rem}
%
%
\subsection{The canonical connection of a $3$-$(\alpha,\delta)$-Sasaki manifold}
%
We now look in detail at the canonical connection of a  $3$-$(\alpha,\delta)$-Sasaki manifold.
Recall that such a manifold is always a canonical almost $3$-contact metric manifold
(Corollary \ref{cor.3-AD-Sasaki-implies-canonical}), and hence the existence (and uniqueness)
of the  canonical connection is garanteed.
\begin{rem}[$\nabla$ as a qc connection]\label{rem.can-conn-and-qc-Einstein}
By Proposition \ref{prop.3-AD-Sasaki-is-qc-Einstein}, we  know  that
every $3$-$(\alpha,\delta)$-Sasaki manifold $(M,\varphi_i,\xi_i,\eta_i,g)$
admits an underlying quaternionic contact structure which is qc-Einstein with $S=2\alpha\delta$,
with almost complex structures $I_i:=\varphi_i|_{\H}$ and $1$-forms
$\tilde\eta_i :=-\frac{1}{\alpha}\eta_i$. In general, the condition for a metric connection
$\nabla$ to preserve the qc structure reduces to the requirement that $\nabla$ preserves
the splitting $TM=\H\oplus \V$ and has the additional properties
\bdm
\nabla ( I_1\otimes I_1+ I_2\otimes I_2 +I_3\otimes I_3)=0, \quad
\nabla (\tilde\xi_1\otimes I_1 +\tilde\xi_2\otimes I_2 + \tilde\xi_3\otimes I_3)=0.
\edm
The equations of Remark \ref{derivatives} imply that the canonical connection of a
$3$-$(\alpha,\delta)$-Sasaki manifold is indeed a qc connection (this was already observed for
the quaternionic Heisenberg group in \cite{Ag-F-S}). The most commonly used such
connection is the well-known Biquard connection.
\end{rem}
\begin{rem}\label{rem.horizontal-Phi}
The relations ($(i,j,k)$ an even permutation of $(1,2,3)$)
\[
\xi_i\lrcorner\,\Phi_i=0,\qquad \xi_j\lrcorner\,\Phi_i
=-\eta_k,\qquad \xi_k\lrcorner\,\Phi_i=\eta_j
\]
holding for any  $3$-$(\alpha,\delta)$-Sasaki manifold imply that we can split
the $2$-forms $\Phi_i$ in their vertical and horizontal part,
\bdm
\Phi_1\ =\ -\eta_{23} + \Phi_1^\H,\quad
\Phi_2\ =\ \eta_{13} + \Phi_2^\H,\quad
\Phi_3\ =\ -\eta_{12} + \Phi_3^\H, \quad \Phi_i^\H\in\Lambda^2(\H) \text{ for } i=1,2,3,
\edm
which we can alternatively summarize as $\Phi_i = -\eta_{jk} + \Phi_i^\H$ for even permutations.
Furthermore, the defining condition of a $3$-$(\alpha,\delta)$-Sasaki manifold may be
reformulated as
\bdm
d\eta_i \ =\ 2\alpha\, \Phi^\H_i - 2\delta\, \eta_{jk}
\edm
for even permutations. All in all, this distinction between horizontal and vertical
contributions allows to identify the horizontal, vertical, and mixed parts of the
torsion and its derivative more clearly.
\end{rem}
\begin{theo}\label{thm.canonical-3AD-Sasaki}
Let $(M,\varphi_i,\xi_i,\eta_i,g)$ be a $3$-$(\alpha,\delta)$-Sasaki manifold.
The torsion of its canonical connection $\nabla$ is given by
\bdm
T\ =\ \sum_{i=1}^3\eta_i\wedge d\eta_i+8(\delta-\alpha)\,\eta_{123}\ =\
2\alpha \sum_{i=1}^3\eta_i\wedge \Phi^\H_i+2(\delta-4 \alpha)\,\eta_{123}
\edm
and satisfies $\nabla T=0$ as well as
\begin{eqnarray*}
dT & = & 4\alpha^2\sum_{i=1}^3\Phi_i\wedge\Phi_i + 8\alpha(\delta-\alpha)
\cyclic{i,j,k}\Phi_i\wedge\eta_{jk}\\
& = &  4\alpha^2\sum_{i=1}^3\Phi^\H_i\wedge\Phi^\H_i +
8\alpha(\delta-2 \alpha) \cyclic{i,j,k}\Phi^\H_i\wedge\eta_{jk}.
\end{eqnarray*}
Here, the symbol $\cyclic{i,j,k}$ means the sum over all even permutations of
$(1,2,3)$.
\end{theo}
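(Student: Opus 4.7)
The strategy is direct: first determine $T$ on each piece of the decomposition $TM=\H\oplus\V$ using the general expressions \eqref{TNS}--\eqref{T2} for the torsion of a $\varphi$-compatible connection, then derive $\nabla T=0$ from the $\mathrm{SO}(3)$-equivariance encoded in Remark \ref{derivatives}, and finally compute $dT$ by differentiating the first announced form of $T$. Corollary \ref{cor.3-AD-Sasaki-implies-canonical} guarantees that the canonical connection exists and that the Reeb Killing function is the constant $\beta=2(\delta-2\alpha)$.

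I would split the computation of $T$ into four cases according to how many arguments lie in $\V$. On three horizontal inputs, \eqref{TNS} reduces to $T(X,Y,Z)=N_{\varphi_i}(X,Y,Z)-d\Phi_i(\varphi_iX,\varphi_iY,\varphi_iZ)$; by \eqref{differential_Phi} the form $d\Phi_i$ vanishes on $\H^{\otimes 3}$, and then Remark \ref{rem.dPhi-N=0} forces $N_{\varphi_i}(X,Y,Z)=0$, so $T$ has no purely horizontal part. On two horizontal plus one Reeb, \eqref{T1} and the defining equation \eqref{differential_eta} give $T(X,Y,\xi_i)=d\eta_i(X,Y)=2\alpha\Phi_i^\H(X,Y)$, since the vertical piece of $\Phi_i$ is killed by horizontal arguments. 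On one horizontal plus two Reeb, \eqref{T2} together with $[\xi_i,\xi_j]=2\delta\xi_k$ from \eqref{levi-civita-xi-xi} yields $T(X,\xi_i,\xi_j)=0$. Finally, \eqref{T_canonical} gives the parameter function $\gamma=T(\xi_1,\xi_2,\xi_3)=2(\beta-\delta)=2(\delta-4\alpha)$. Reassembling these pieces yields $T=2\alpha\sum_i\eta_i\wedge\Phi_i^\H+2(\delta-4\alpha)\,\eta_{123}$; substituting $\Phi_i^\H=\Phi_i+\eta_{jk}$ (Remark \ref{rem.horizontal-Phi}), using the identity $\sum_i\eta_i\wedge\eta_{jk}=3\,\eta_{123}$ (sum over even permutations), and re-expressing $2\alpha\sum_i\eta_i\wedge\Phi_i$ via \eqref{differential_eta} converts this into the equivalent form $T=\sum_i\eta_i\wedge d\eta_i+8(\delta-\alpha)\eta_{123}$.

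For parallelism, I would give a conceptual argument: by Remark \ref{derivatives}, $\nabla$ acts on the triples $(\eta_1,\eta_2,\eta_3)$ and $(\Phi_1,\Phi_2,\Phi_3)$ by the \emph{same} $\so(3)$-valued $1$-form $\omega(X)$, so the diagonal pairing $\sum_i\eta_i\wedge\Phi_i$ is $\mathrm{SO}(3)$-invariant and hence $\nabla$-parallel; combined with $\nabla\eta_{123}=0$ (already recorded in the corollary following Remark \ref{derivatives}), this gives $\nabla T=0$. A direct verification is equally short: expanding $\nabla_X(\sum_i\eta_i\wedge\Phi_i)$ via Remark \ref{derivatives} produces six terms indexed by ordered pairs of distinct indices in $\{1,2,3\}$ which cancel in pairs.

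For $dT$, differentiating $T=\sum\eta_i\wedge d\eta_i+8(\delta-\alpha)\eta_{123}$ gives $dT=\sum_i d\eta_i\wedge d\eta_i+8(\delta-\alpha)\,d\eta_{123}$. Substituting \eqref{differential_eta} and discarding $\eta_{jk}\wedge\eta_{jk}=0$ yields $d\eta_i\wedge d\eta_i=4\alpha^2\Phi_i\wedge\Phi_i+8\alpha(\alpha-\delta)\,\Phi_i\wedge\eta_{jk}$, while a Leibniz expansion combined with the same substitution (and the same vanishing) produces $d\eta_{123}=2\alpha\cyclic{i,j,k}\Phi_i\wedge\eta_{jk}$. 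Adding the two contributions yields the first announced form of $dT$; the second, purely horizontal decomposition follows by substituting $\Phi_i=-\eta_{jk}+\Phi_i^\H$ throughout and invoking $\eta_{jk}\wedge\eta_{jk}=0$ one last time. The main obstacle will be purely combinatorial bookkeeping: the coefficient of $\eta_{123}$ in $T$ and of $\cyclic{i,j,k}\Phi_i^\H\wedge\eta_{jk}$ in $dT$ each collect contributions from two distinct sources with opposing signs, and verifying coherence between the two equivalent presentations (of $T$ and of $dT$) requires careful attention to the even-permutation conventions.
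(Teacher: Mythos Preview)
Your proof is correct, and for the computation of $T$ and for $\nabla T=0$ it is actually more direct than the paper's. The paper obtains $T$ by a detour through the characteristic connection of a single structure $(\varphi_i,\xi_i,\eta_i,g)$: it computes $T_i=\eta_i\wedge d\eta_i+d^{\varphi_i}\Phi_i$ explicitly (the structure is hypernormal, so $N_{\varphi_i}=0$), and then uses the relation $T=T_i-\beta(\eta_j\wedge\Phi_j+\eta_k\wedge\Phi_k)$ from Theorem \ref{theo_canonical--char-torsion} to recover $T$. Your approach bypasses $T_i$ entirely by reading off $T$ directly from the general formulas \eqref{TNS}--\eqref{T2} and \eqref{T_canonical} on each block of $\H\oplus\V$; this is shorter and uses fewer intermediate results. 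For $\nabla T=0$, the paper checks $(\nabla_U T)(\cdot,\cdot,\cdot)$ case by case using $\nabla_U\xi_i\in\langle\xi_j,\xi_k\rangle$ and \eqref{canonical}; your $\mathrm{SO}(3)$-equivariance argument---that $(\eta_i)$ and $(\Phi_i)$ transform under $\nabla$ by the same $\so(3)$-valued $1$-form, so the diagonal pairing $\sum_i\eta_i\wedge\Phi_i$ is invariant---is conceptually cleaner and explains \emph{why} the cancellation happens rather than merely exhibiting it. The computation of $dT$ is essentially identical in both approaches.
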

\begin{proof}
%
Each almost contact metric structure $(\varphi_i,\xi_i,\eta_i,g)$ admits a characteristic
connection $\nabla^i$ (Theorem \ref{theo_canonical-implies-char}) with torsion
\[
T_i=\eta_i\wedge d\eta_i+d^{\varphi_i}\Phi_i,
\]
since the structure is hypernormal. Applying \eqref{differential_Phi}, since
$\eta_k\circ\varphi_i=\eta_j$ and $\eta_j\circ\varphi_i=-\eta_k$, and using also
equations \eqref{2-form1} and \eqref{2-form2}, we have
\begin{eqnarray*}
d^{\varphi_i}\Phi_i(X,Y,Z)&=&
2(\alpha-\delta)\{(\eta_k\wedge\Phi_j)(\varphi_iX,\varphi_iY,\varphi_iZ)
-(\eta_{j}\wedge\Phi_k)(\varphi_iX,\varphi_iY,\varphi_iZ)\}\\
&=&2(\alpha-\delta)\{-\eta_j(X)(\Phi_j+\eta_{ki})(Y,Z)\\
&&{}-\eta_j(Y)(\Phi_j+\eta_{ki})(Z,X)-\eta_j(Z)(\Phi_j+\eta_{ki})(X,Y)\}\\
&&{}-2(\alpha-\delta)\{\eta_k(X)(\Phi_k+\eta_{ij})(Y,Z)\\
&&{}+\eta_k(Y)(\Phi_k+\eta_{ij})(Z,X)+\eta_k(Z)(\Phi_k+\eta_{ij})(X,Y)\}\\
&=&2(\alpha-\delta)(-\eta_j\wedge\Phi_j-\eta_{jki})(X,Y,Z)\\
&&{}-2(\alpha-\delta)(\eta_k\wedge\Phi_k+\eta_{kij})(X,Y,Z)\\
&=&2(\delta-\alpha)(\eta_j\wedge\Phi_j+\eta_k\wedge\Phi_k+2\,\eta_{123})(X,Y,Z).
\end{eqnarray*}
Therefore, the torsion $T_i$ is given by
\bdm
T_i=\eta_i\wedge d\eta_i+2(\delta-\alpha)(\eta_j\wedge\Phi_j+\eta_k\wedge\Phi_k+2\,\eta_{123}),
\edm
and by \eqref{T-T_i}, the torsion of the canonical connection is
\begin{eqnarray*}
T&=&T_i-2(\delta-2\alpha)(\eta_j\wedge\Phi_j+\eta_k\wedge\Phi_k)\\
&=&\eta_i\wedge d\eta_i+2\alpha(\eta_j\wedge\Phi_j+\eta_k\wedge\Phi_k)
+4(\delta-\alpha)\eta_{123}\\
&=&\eta_i\wedge d\eta_i+\eta_j\wedge \{d\eta_j+2(\delta-\alpha)\eta_{ki}\}
+\eta_k\wedge \{d\eta_k+2(\delta-\alpha)\eta_{ij}\}+4(\delta-\alpha)\eta_{123}\\
&=&\sum_{i=1}^3\eta_i\wedge d\eta_i+8(\delta-\alpha)\eta_{123}\, .
\end{eqnarray*}
The alternative expression in terms of $\Phi_i^\H$ follows by substituting their definitions
from Remark \ref{rem.horizontal-Phi}.

One can verify that both expressions for $T$ are coherent with
equations \eqref{TNS}, \eqref{T1}, \eqref{T2} and \eqref{T_canonical}. In particular,
\begin{equation}\label{T_3S1}
T(X,Y,Z)=T(X,\xi_i,\xi_j)=0,\end{equation}
\begin{equation}\label{T_3S2}
T(X,Y,\xi_i)=2\alpha\Phi_i(X,Y),\quad T(\xi_i,\xi_j,\xi_k)=2(\beta-\delta)=2(\delta-4\alpha),
\end{equation}
for every $X,Y,Z\in\Gamma(\H)$, and where $(i,j,k)$ is an even permutation of $(1,2,3)$.
 Now, since the canonical connection $\nabla$ preserves the splitting
$TM=\H\oplus\V$, from \eqref{T_3S1} we obtain
\[
(\nabla_U T)(X,Y,Z)=(\nabla_UT)(X,\xi_i,\xi_j)=0,
\]
for every $U\in{\frak X}(M)$ and $X,Y,Z\in\Gamma(\H)$. By Remark \ref{derivatives}, the
covariant derivatives $\nabla\xi_i$ are given by
\begin{equation}\label{canonical_nabla_xi_3S}
 \nabla_U\xi_i=\beta(\eta_k(U)\xi_j-\eta_j(U)\xi_k)
 \end{equation}
for every $U\in{\frak X}(M)$ and where $\beta=2(\delta-2\alpha)$. In
particular, $\nabla_U\xi_i\in\langle\xi_j,\xi_k\rangle$  and thus, using also the second
identity in \eqref{T_3S2}, we get
\[
(\nabla_UT)(\xi_i,\xi_j,\xi_k)=0.
\]
Furthermore, using the first identity in \eqref{T_3S2}, and \eqref{canonical_nabla_xi_3S},
we have
\begin{align*}
&(\nabla_U T)(X,Y,\xi_i)\\&=2\alpha\nabla_U(\Phi_i(X,Y))-2\alpha\Phi_i(\nabla_UX,Y)
-2\alpha\Phi_i(X,\nabla_UY)-T(X,Y,\nabla_U\xi_i)\\
&=2\alpha(\nabla_U\Phi_i)(X,Y)+\beta\eta_k(U)T(X,Y,\xi_j)-\beta\eta_j(U)T(X,Y,\xi_k)\\
&=2\alpha g(X,(\nabla_U\varphi_i)Y)+2\beta\alpha\eta_k(U)\Phi_j(X,Y)
-2\beta\alpha\eta_j(U)\Phi_k(X,Y)\\
&=2\alpha\{g(X,(\nabla_U\varphi_i)Y)+\beta g(X,\eta_k(U)\varphi_jY-\eta_j(U)\varphi_kY)\}
\end{align*}
which vanishes because of \eqref{canonical}. This completes the proof that $\nabla T=0$.
Finally,  differentiating the expression for $T$, we obtain
\begin{align*}
dT&=\sum_{i=1}^3d\eta_i\wedge d\eta_i+8(\delta-\alpha)\cyclic{i,j,k}d\eta_i\wedge\eta_{jk}\\
&=\cyclic{i,j,k}\big(2\alpha\Phi_i+2(\alpha-\delta)\eta_{jk}\big)
\wedge\big(2\alpha\Phi_i+2(\alpha-\delta)\eta_{jk}\big)
+16\alpha(\delta-\alpha)\cyclic{i,j,k}\Phi_i\wedge\eta_{jk}\\
&=4\alpha^2\sum_{i=1}^3\Phi_i\wedge\Phi_i-8\alpha(\alpha-\delta)
\cyclic{i,j,k}\Phi_i\wedge\eta_{jk}.
\end{align*}
This completes the proof.
\end{proof}
\begin{rem}[$\varphi$-compatible connections of  $3$-$(\alpha,\delta)$-Sasaki manifolds]
An immediate computation based on Proposition \ref{prop_Killing} and the preceding
Theorem \ref{thm.canonical-3AD-Sasaki} shows that for a $\varphi$-compatible
connection $\nabla^\gamma$ with parameter function $\gamma$ of a $3$-$(\alpha,\delta)$-Sasaki
manifold,  the general expression for the torsion $T_\gamma$ is
\bdm
T_\gamma\ =\ \sum_{i=1}^3\eta_i\wedge d\eta_i+(8\delta-4\alpha+\gamma)\,\eta_{123}\ =\
2\alpha \sum_{i=1}^3\eta_i\wedge \Phi^\H_i+\gamma\,\eta_{123}.
\edm
The canonical connection corresponds to the choice $\gamma=2(\delta-4\alpha)$.
\end{rem}
The following lemma is purely computational, hence we omit the proof.
The formulas are, however, quite useful, for example in the next section.
\begin{lem}\label{lem.Phi-H}
\begin{enumerate}[\normalfont 1)]\item[]
\item
$d\Phi^\H_i= 2\delta (\Phi^\H_j\wedge \eta_k - \Phi^\H_k\wedge\eta_j)$ \ and \
$\alpha\, d\Phi^\H_i= \delta \, d(\eta_{jk})$ for even
permutations,
\item
The form $\Psi^\H := \sum_{i=1}^3\Phi^\H_i\wedge\Phi^\H_i$ satisfies
$ d\Psi^\H =0$,
\item
$ d \left[\sum_{i=1}^3\eta_i\wedge\Phi^\H_i \right]= 2\alpha\Psi^\H +
2\delta\cyclic{i,j,k}\Phi^\H_i\wedge\eta_{jk} $,
\item
$ d \eta_{123} = 2\alpha \cyclic{i,j,k}\Phi^\H_i\wedge\eta_{jk}$.
\end{enumerate}
\end{lem}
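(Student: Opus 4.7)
The plan is to derive all four identities from just two ingredients: the horizontal/vertical decomposition $\Phi_i=\Phi_i^\H-\eta_{jk}$ from Remark \ref{rem.horizontal-Phi}, and its consequence for the defining equation of a $3$-$(\alpha,\delta)$-Sasaki manifold, namely
\begin{equation*}
d\eta_i \;=\; 2\alpha\,\Phi_i^\H \,-\, 2\delta\,\eta_{jk}
\end{equation*}
(valid for every even permutation $(i,j,k)$ of $(1,2,3)$). From this, a direct Leibniz computation gives
\begin{equation*}
d(\eta_{jk})\;=\;d\eta_j\wedge\eta_k-\eta_j\wedge d\eta_k\;=\;2\alpha\bigl(\Phi_j^\H\wedge\eta_k-\Phi_k^\H\wedge\eta_j\bigr),
\end{equation*}
where the $\delta$-contributions disappear because of the repeated factors $\eta_{ki}\wedge\eta_k$ and $\eta_j\wedge\eta_{ij}$. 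On the other hand, substituting $\Phi_i^\H=\Phi_i+\eta_{jk}$ into \eqref{differential_Phi} and expanding $\Phi_j,\Phi_k$ again into horizontal and vertical parts yields, after collecting terms, $d\Phi_i^\H=2\delta(\Phi_j^\H\wedge\eta_k-\Phi_k^\H\wedge\eta_j)$. Comparing the two formulas immediately proves both statements of (1).

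For (2), Leibniz plus the commutativity of two-forms gives $d\Psi^\H=2\sum_{i=1}^3 d\Phi_i^\H\wedge\Phi_i^\H$. Substituting the formula just obtained produces triple wedges of the shape $\Phi_i^\H\wedge\Phi_j^\H\wedge\eta_k$ (after transporting $\Phi_i^\H$ past $\eta_k$). Since $\Phi_i^\H\wedge\Phi_j^\H=\Phi_j^\H\wedge\Phi_i^\H$ for two-forms, the contribution indexed by $(i,j,k)$ cancels pairwise against the one indexed by $(j,k,i)$ (same $\eta$-factor, opposite sign), and the total vanishes.

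Identities (3) and (4) follow from the same substitute-and-simplify strategy. For (4) one expands $d\eta_{123}=d\eta_1\wedge\eta_{23}-\eta_1\wedge d\eta_2\wedge\eta_3+\eta_{12}\wedge d\eta_3$; after plugging in $d\eta_i=2\alpha\Phi_i^\H-2\delta\eta_{jk}$, every $\delta$-term contains a repeated one-form factor and vanishes, leaving exactly $2\alpha\cyclic{i,j,k}\Phi_i^\H\wedge\eta_{jk}$. For (3) one writes
\begin{equation*}
d\Bigl[\sum_{i=1}^3\eta_i\wedge\Phi_i^\H\Bigr]\;=\;\sum_{i=1}^3 d\eta_i\wedge\Phi_i^\H\;-\;\sum_{i=1}^3\eta_i\wedge d\Phi_i^\H;
\end{equation*}
the first sum yields $2\alpha\Psi^\H-2\delta\cyclic{i,j,k}\Phi_i^\H\wedge\eta_{jk}$, while the second, using (1) and the rewriting $\eta_i\wedge\Phi_j^\H\wedge\eta_k=\Phi_j^\H\wedge\eta_{ik}$, contributes $+4\delta\cyclic{i,j,k}\Phi_i^\H\wedge\eta_{jk}$. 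Adding them produces the stated expression. The only real obstacle throughout is the bookkeeping of signs from the cyclic permutations and from commuting one- and two-forms past each other; no new geometric input is required beyond Remark \ref{rem.horizontal-Phi} and \eqref{differential_Phi}.
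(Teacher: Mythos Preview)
Your proof is correct. The paper actually omits the proof of this lemma entirely, calling it ``purely computational''; your direct Leibniz-rule computation from the decomposition $\Phi_i=\Phi_i^\H-\eta_{jk}$ and the reformulated defining relation $d\eta_i=2\alpha\Phi_i^\H-2\delta\eta_{jk}$ is precisely the intended argument, and your sign bookkeeping checks out in every step.
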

Since any $3$-$\alpha$-Sasakian manifold is $3$-$(\alpha,\delta)$-Sasaki with
$\delta=\alpha$, we have the following
\begin{cor}
Any $3$-$\alpha$-Sasakian manifold $(M,\varphi_i,\xi_i,\eta_i,g)$  admits a
canonical connection $\nabla$ with torsion
\begin{equation*}\label{T_3S}
T=\eta_1\wedge d\eta_1+\eta_2\wedge d\eta_2+\eta_3\wedge d\eta_3
\end{equation*}
which satisfies $\nabla T=0$.
\end{cor}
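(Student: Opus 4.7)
The plan is extremely short: this corollary is a direct specialization of the previously established Theorem \ref{thm.canonical-3AD-Sasaki} to the case $\delta = \alpha$, so essentially no new work is required.

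First I would observe that a $3$-$\alpha$-Sasakian manifold is, by Definition \ref{df.3ad-Sasaki} and the remark immediately following it, a $3$-$(\alpha,\delta)$-Sasaki manifold with $\delta=\alpha$. In particular, by Corollary \ref{cor.3-AD-Sasaki-implies-canonical}, it is a canonical almost $3$-contact metric manifold, so the canonical connection $\nabla$ exists and is unique by Theorem \ref{theo_canonical}.

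Next, I would simply invoke Theorem \ref{thm.canonical-3AD-Sasaki}, which gives for the torsion of $\nabla$ on any $3$-$(\alpha,\delta)$-Sasaki manifold the expression
\begin{equation*}
T\ =\ \sum_{i=1}^3 \eta_i\wedge d\eta_i + 8(\delta-\alpha)\,\eta_{123}.
\end{equation*}
Setting $\delta=\alpha$ kills the second summand and yields exactly the formula claimed in the corollary. The parallelism $\nabla T = 0$ is already part of the statement of Theorem \ref{thm.canonical-3AD-Sasaki}, so no separate argument is needed.

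There is no real obstacle here; the only thing to double-check is that the conventions match (namely, that the Reeb Killing function for $\delta=\alpha$ is $\beta = 2(\delta - 2\alpha) = -2\alpha$, consistent with the $3$-Sasaki special case $\alpha=1$ giving $\beta=-2$), and that the parameter function $\gamma = 2(\beta-\delta) = -6\alpha$ is indeed compatible with the vanishing of the $\eta_{123}$ coefficient after assembling all vertical contributions as done in the proof of Theorem \ref{thm.canonical-3AD-Sasaki}. Both checks are immediate, so the corollary follows in one line.
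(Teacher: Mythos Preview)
Your proposal is correct and matches the paper's approach exactly: the corollary is stated immediately after Theorem \ref{thm.canonical-3AD-Sasaki} with the single justifying sentence that any $3$-$\alpha$-Sasakian manifold is $3$-$(\alpha,\delta)$-Sasaki with $\delta=\alpha$, which makes the $8(\delta-\alpha)\,\eta_{123}$ term vanish. Your additional consistency checks on $\beta$ and $\gamma$ are fine but unnecessary.
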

\begin{rem}[Canonical connection of a $3$-Sasakian manifold]
The canonical connection defined in the above Corollary coincides with the linear connection
considered in \cite{Ag-Fr} for $7$-dimensional $3$-Sasakian manifolds. In this case the
canonical connection $\nabla$ coincides with the characteristic connection of the
canonical (cocalibrated) $G_2$-structure of the $3$-Sasakian manifold. This fact will be
generalized in Section \ref{sec.G2-and-3AD-S}.
\end{rem}
\begin{rem}[Connections on $S^7$]
The  $7$-sphere carries a dazzling array of interesting metric connections with skew
torsion---we saw one of them in Example \ref{ex.S7}, and of course
the natural $3$-Sasaki structure on $S^7$ is covered by the previous Corollary and Remark. The
$7$-sphere can be endowed with several natural metrics: the round metric, the family of
Berger metrics, or the naturally reductive metric stemming from the realisation as the
homogeneous space $\Spin(7)/G_2$. A thorough
investigation of metric connections invariant under Lie groups was carried
out in \cite{Draper&G&P16} ($G=\Spin(6)$) and \cite{Chrysikos16} ($G=\Spin(7)$).
A comparison of their results with ours shows that none of these connections is
$\varphi$-compatible for the underlying $3$-Sasaki structure, because they do not preserve
the distributions $\V$ and $\H$
(for details, see \cite[5.13--5.19]{Draper&G&P16} and \cite[Ex. 4.8]{Chrysikos16}).
\end{rem}
For parallel $3$-$(\alpha,\delta)$-Sasaki manifolds, corresponding to $\delta=2\alpha$, we can state the following
\begin{cor}
The canonical connection of a parallel $3$-$(\alpha,\delta)$-Sasaki manifold $(M,\varphi_i,\xi_i,\eta_i,g)$ has torsion
\begin{equation*}
T=\sum_{i=1}^3\eta_i\wedge d\eta_i+8\alpha\,\eta_{123}
\end{equation*}
and satisfies $\nabla T=0$.
\end{cor}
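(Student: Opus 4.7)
The plan is to invoke Theorem \ref{thm.canonical-3AD-Sasaki} and simply substitute the defining condition $\delta = 2\alpha$ of a parallel $3$-$(\alpha,\delta)$-Sasaki manifold into the general torsion formula given there.

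More precisely, I would first recall that, by Definition (see the paragraph following Corollary \ref{cor.3-AD-Sasaki-implies-canonical}), a parallel $3$-$(\alpha,\delta)$-Sasaki manifold is a $3$-$(\alpha,\delta)$-Sasaki manifold with $\delta=2\alpha$; equivalently, its Reeb Killing function $\beta = 2(\delta-2\alpha)$ vanishes. In particular, it is a $3$-$(\alpha,\delta)$-Sasaki manifold, hence Corollary \ref{cor.3-AD-Sasaki-implies-canonical} guarantees that it is a canonical almost $3$-contact metric manifold, so the canonical connection $\nabla$ exists and is unique by Theorem \ref{theo_canonical}, and Theorem \ref{thm.canonical-3AD-Sasaki} applies.

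Next, I would apply the torsion formula from Theorem \ref{thm.canonical-3AD-Sasaki}, namely
\[
T\ =\ \sum_{i=1}^3\eta_i\wedge d\eta_i+8(\delta-\alpha)\,\eta_{123},
\]
and observe that setting $\delta = 2\alpha$ yields $\delta - \alpha = \alpha$, giving the desired expression
\[
T\ =\ \sum_{i=1}^3\eta_i\wedge d\eta_i+8\alpha\,\eta_{123}.
\]
The assertion $\nabla T = 0$ is then immediate, since Theorem \ref{thm.canonical-3AD-Sasaki} already states that the canonical torsion of any $3$-$(\alpha,\delta)$-Sasaki manifold is $\nabla$-parallel, and the parallel case is a special instance.

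There is essentially no obstacle: the corollary is a direct specialization of the main theorem of the subsection, and its only content is recording the cleaner form the torsion takes in the parallel regime $\delta=2\alpha$. One could, if desired, further rewrite $T$ using the horizontal/vertical splitting from Remark \ref{rem.horizontal-Phi}, getting $T = 2\alpha\sum_{i=1}^3 \eta_i\wedge \Phi_i^\H - 6\alpha\,\eta_{123}$, but this is optional and not part of the stated claim.
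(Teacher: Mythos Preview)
Your proposal is correct and matches the paper's approach exactly: the corollary is stated without proof there, as an immediate specialization of Theorem~\ref{thm.canonical-3AD-Sasaki} upon setting $\delta=2\alpha$. One small slip in your optional final remark: substituting $\delta=2\alpha$ into $2(\delta-4\alpha)$ gives $-4\alpha$, not $-6\alpha$, so the horizontal/vertical form reads $T = 2\alpha\sum_{i=1}^3 \eta_i\wedge \Phi_i^\H - 4\alpha\,\eta_{123}$.
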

\begin{rem}
Notice that any $3$-$\alpha$-Sasakian structure admits  $\H$-homothetic deformations which are
$3$-$(\alpha',\delta')$-Sasaki with $\delta'=2\alpha'$. Indeed, if
$(M,\varphi_i,\xi_i,\eta_i,g)$ is a $3$-$\alpha$-Sasakian manifold, the $\H$-deformed
structure \eqref{deformation} is  $3$-$(\alpha',\delta')$-Sasaki  with
$\alpha'=\alpha\frac{c}{a},\ \delta'=\frac{\alpha}{c}$
by Proposition \ref{deformation-coefficients}.
Therefore, these coefficients satisfy $\delta'=2\alpha'$ if and only if $a=2c^2$. On the
other hand, we know that $c^2=a+b$. Hence, we can conclude that all the deformed structures
\[
\eta'_i=c\eta_i,\qquad \xi'_i=\frac{1}{c}\xi_i,\qquad\varphi'_i
=\varphi_i,\qquad g'=2c^2g-c^2\sum_{i=1}^3\eta_i\otimes\eta_i
\]
are $3$-$(\alpha',\delta')$-Sasaki with $\alpha'=\frac{\alpha}{2c}$ and
$\delta'=\frac{\alpha}{c}=2\alpha'$, each one admitting a canonical connection which
parallelizes the structure tensor fields.
\end{rem}
We showed that the quaternionic Heisenberg group $(N_p,\varphi_i,\xi_i,\eta_i,g_\lambda)$ is
a degenerate $3$-$(\alpha,\delta)$-Sasaki manifold  ($\delta=0$) with $2\alpha=\lambda$.
Therefore,
\begin{cor}
The quaternionic Heisenberg group $(N_p,\varphi_i,\xi_i,\eta_i,g_\lambda)$ admits a
canonical connection $\nabla$ with  torsion $T$  given by
\begin{equation*}\label{T_H}
T=\eta_1\wedge d\eta_1+\eta_2\wedge d\eta_2+\eta_3\wedge d\eta_3-
4\lambda\eta_{123}
\end{equation*}
which satisfies $\nabla T=0$.
\end{cor}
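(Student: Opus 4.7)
The plan is to simply specialize Theorem \ref{thm.canonical-3AD-Sasaki} to the parameters identifying the quaternionic Heisenberg group as a $3$-$(\alpha,\delta)$-Sasaki manifold. The key observation, already established in Example \ref{ex.qHgroup}, is that $(N_p,\varphi_i,\xi_i,\eta_i,g_\lambda)$ is a degenerate $3$-$(\alpha,\delta)$-Sasaki manifold with $\delta=0$ and $2\alpha=\lambda$. In particular, by Corollary \ref{cor.3-AD-Sasaki-implies-canonical} it is a canonical almost $3$-contact metric manifold, so by Theorem \ref{theo_canonical} the canonical connection $\nabla$ exists and is unique.

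First I would invoke the general torsion formula from Theorem \ref{thm.canonical-3AD-Sasaki}, namely
\[
T\ =\ \sum_{i=1}^3\eta_i\wedge d\eta_i+8(\delta-\alpha)\,\eta_{123}.
\]
Substituting $\delta=0$ and $\alpha=\lambda/2$, the coefficient becomes $8(\delta-\alpha)=-4\lambda$, which yields exactly the claimed expression
\[
T\ =\ \sum_{i=1}^3\eta_i\wedge d\eta_i-4\lambda\,\eta_{123}.
\]

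Next, the parallelism $\nabla T=0$ is immediate from the corresponding assertion of Theorem \ref{thm.canonical-3AD-Sasaki}, which holds for any $3$-$(\alpha,\delta)$-Sasaki manifold without restriction on $\alpha,\delta$. There is no real obstacle here: every nontrivial step has already been carried out in the general theorem, and the corollary is purely a matter of plugging in the numerical values dictated by Example \ref{ex.qHgroup}. If desired, one could additionally cross-check with the explicit formulas for $d\eta_i$ given in Example \ref{ex.qHgroup} to see that the resulting expression matches the torsion exhibited in \cite{Ag-F-S}, but this is just a consistency verification rather than an essential step of the proof.
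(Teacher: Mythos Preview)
Your proposal is correct and matches the paper's approach exactly: the paper states this corollary immediately after recalling that $(N_p,\varphi_i,\xi_i,\eta_i,g_\lambda)$ is $3$-$(\alpha,\delta)$-Sasaki with $\delta=0$ and $2\alpha=\lambda$, and simply reads off the torsion and $\nabla T=0$ from Theorem~\ref{thm.canonical-3AD-Sasaki} by substituting these values.
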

\begin{rem}
The canonical connection of the quaternionic Heisenberg group determined in the above
corollary coincides with the canonical connection defined in \cite{Ag-F-S}. In
\cite{Ag-F-S} the authors prove that this connection $\nabla$ parallelizes its torsion
and curvature tensors, and the holonomy algebra of $\nabla$ is isomorphic to
$\frak{su}(2)$. In the $7$-dimensional case this connection $\nabla$ is also the
characteristic connection of a cocalibrated $G_2$-structure.
\end{rem}
\begin{rem}
In \cite{C-M}, Cappelletti-Montano investigated hypernormal almost $3$-contact metric manifolds
admitting metric connections with certain invariance properties, but only under the
assumption of having skew-symmetric torsion on $\H$. He calls these manifolds \emph{almost
$3$-contact metric manifolds with torsion}. One easily checks with the properties of
$3$-$(\alpha,\delta)$-Sasaki manifolds we compiled that these are always
almost $3$-contact metric manifolds with torsion by \cite[Thm 4.3]{C-M}.
\end{rem}
Finally, we formulate the result for the Ricci curvature $\Ric$ of the canonical connection.
The computation is rather lengthy, but standard, hence we omit it. Observe that
the property of being symmetric follows for $\Ric$ from $\nabla T=0$.
\begin{theo}[$\nabla$-Ricci curvature for $3$-$(\alpha,\delta)$-Sasaki manifolds]
\label{theo.nabla-Ricci}
Let $(M,\varphi_i,\xi_i,\eta_i,g)$ be a $3$-$(\alpha,\delta)$-Sasaki manifold of dimension
$4n+3$. The
Ricci tensor of the canonical connection $\nabla$ is for all $X,Y\in\frak{X}(M)$ given by
\begin{eqnarray*}
\Ric & =& 4\alpha \{\delta(n+2)-3\alpha \}\, g
+4\alpha \{\delta(2-n)-5\alpha\} \sum_{i=1}^3\eta_i\otimes \eta_i \\
&=& 4\alpha \{\delta(n+2)-3\alpha \}\,\Id_{\H} + 16\alpha(\delta-2\alpha)\, \Id_{\V}.
\end{eqnarray*}
In particular, the manifold is $\nabla$-Einstein if and only if $\delta (2-n) = 5\alpha$, and it is never $\nabla$-Ricci flat.
\end{theo}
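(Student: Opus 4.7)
The starting point is the universal comparison formula between the Ricci tensors of the Levi-Civita connection $\nabla^g$ and a metric connection $\nabla$ with totally skew-symmetric torsion $T$ (see \cite{FrIv,Ag}):
\begin{equation*}
\Ric^\nabla(X,Y)\;=\;\Ric^g(X,Y)\,-\,\tfrac14\sigma_T(X,Y)\,-\,\tfrac12(\delta^g T)(X,Y),\qquad \sigma_T(X,Y):=\sum_{k} g(T(X,e_k),T(Y,e_k)).
\end{equation*}
The term $\Ric^g$ has already been computed in Proposition \ref{prop.3-AD-Sasaki-is-qc-Einstein}. The codifferential term vanishes: plugging $\nabla^g=\nabla-\tfrac12 T$ into $\nabla T=0$ (Theorem \ref{thm.canonical-3AD-Sasaki}) gives $(\nabla^g_X T)(Y,Z,W)=\tfrac12[T(T(X,Y),Z,W)+T(Y,T(X,Z),W)+T(Y,Z,T(X,W))]$, and contracting in the first and second slot, the surviving two terms cancel by the skew-symmetry of $T$ in its three arguments. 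Equivalently, $\Ric^\nabla$ is automatically symmetric when $\nabla T=0$, while $\delta^g T$ is a $2$-form; since $\sigma_T$ is symmetric, the skew part of the formula forces $\delta^g T\equiv 0$. The identity reduces to $\Ric^\nabla=\Ric^g-\tfrac14\sigma_T$.

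The heart of the proof is therefore the computation of $\sigma_T$, which I would perform in an orthonormal frame $\{f_1,\ldots,f_{4n},\xi_1,\xi_2,\xi_3\}$ adapted to $TM=\H\oplus\V$. From \eqref{T_3S1}--\eqref{T_3S2} and Theorem \ref{thm.canonical-3AD-Sasaki}, the only non-trivial components of $T$ are $T(\xi_r,X,Y)=2\alpha\Phi_r^\H(X,Y)$ for $X,Y\in\Gamma(\H)$ and $T(\xi_r,\xi_s,\xi_t)=2(\delta-4\alpha)\epsilon_{rst}$. Reading these off as vectors yields $T(X,\xi_r)=2\alpha\varphi_r X\in\H$, $T(X,Y)=2\alpha\sum_r g(X,\varphi_r Y)\xi_r\in\V$ (for $X,Y$ horizontal), and $T(\xi_r,\xi_t)=2(\delta-4\alpha)\sum_u\epsilon_{rtu}\xi_u\in\V$. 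Splitting $\sigma_T$ by frame cases and using the quaternionic identities $\varphi_i\varphi_j=\pm\varphi_k$ on $\H$, $\varphi_i^2=-\Id_\H$, together with the trace formula $\sum_l g(\varphi_q f_l,\varphi_r f_l)=4n\,\delta_{qr}$ and the contraction $\sum_{t,u}\epsilon_{rtu}\epsilon_{stu}=2\delta_{rs}$, one arrives at
\begin{equation*}
\sigma_T(X,Y)=24\alpha^2\,g(X,Y)\text{ on }\H,\qquad \sigma_T(\xi_r,\xi_s)=\bigl[16n\alpha^2+8(\delta-4\alpha)^2\bigr]\delta_{rs},\qquad \sigma_T|_{\H\times\V}=0.
\end{equation*}

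Substituting into $\Ric^\nabla=\Ric^g-\tfrac14\sigma_T$ and using Proposition \ref{prop.3-AD-Sasaki-is-qc-Einstein}, the horizontal coefficient becomes $2\alpha(2\delta(n+2)-3\alpha)-6\alpha^2=4\alpha[\delta(n+2)-3\alpha]$, while the vertical coefficient simplifies via the identity $2\alpha(2\delta(n+2)-3\alpha)+2(\alpha-\delta)((2n+3)\alpha-\delta)=4n\alpha^2+2\delta^2$ established in Proposition \ref{prop.3-AD-Sasaki-is-qc-Einstein} to $(4n\alpha^2+2\delta^2)-\tfrac14[16n\alpha^2+8(\delta-4\alpha)^2]=16\alpha(\delta-2\alpha)$. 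Rewriting this vertical coefficient as the horizontal one plus a term along $\sum_i\eta_i\otimes\eta_i$ identifies the correction as $4\alpha[\delta(2-n)-5\alpha]$, confirming the claimed formula. The $\nabla$-Einstein characterisation $\delta(2-n)=5\alpha$ is immediate (vanishing of the vertical correction), and simultaneous vanishing of both coefficients would force $n=-\tfrac12$, excluded for a Riemannian manifold, so $\nabla$ is never Ricci-flat.

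The principal bookkeeping obstacle is the horizontal-horizontal evaluation of $\sigma_T$: the contributions from vertical and from horizontal frame indices must be decomposed separately, and the resulting $24\alpha^2=12\alpha^2+12\alpha^2$ relies on $\sum_r\varphi_r^2=-3\,\Id_\H$ combined with the $\varphi_r$-isometry property. Once this term is in hand, the vertical computation is a short application of the $\epsilon$-contraction, and everything assembles cleanly.
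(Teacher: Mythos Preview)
Your proposal is correct. The paper itself omits the proof, stating only that ``the computation is rather lengthy, but standard, hence we omit it,'' so there is no detailed argument to compare against. Your approach---using the comparison formula $\Ric^\nabla=\Ric^g-\tfrac14\sigma_T-\tfrac12\delta^g T$, eliminating the codifferential term via $\nabla T=0$, importing $\Ric^g$ from Proposition~\ref{prop.3-AD-Sasaki-is-qc-Einstein}, and computing $\sigma_T$ in an adapted frame from the explicit torsion of Theorem~\ref{thm.canonical-3AD-Sasaki}---is precisely one of the standard routes the authors presumably had in mind, and all intermediate values ($24\alpha^2$ on $\H$, $16n\alpha^2+8(\delta-4\alpha)^2$ on $\V$, the final coefficients, and the non-Ricci-flat argument via $n=-\tfrac12$) check out.
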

For details on the notion of $\nabla$-Einstein manifolds, we refer to \cite{Agricola&Fe14}.
Together with the expression for the Riemannian Ricci curvature stated in
Proposition \ref{prop.3-AD-Sasaki-is-qc-Einstein}, we can conclude after a short calculation:
\begin{cor}
A $3$-$(\alpha,\delta)$-Sasaki manifold $(M,\varphi_i,\xi_i,\eta_i,g)$ is both Riemannian Einstein and $\nabla$-Einstein if and only if $\dim M=7$ and $\delta=5\alpha$.
\end{cor}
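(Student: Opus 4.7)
The plan is to intersect the two characterizations already established just above the corollary. By Proposition \ref{prop.3-AD-Sasaki-is-qc-Einstein}, a $3$-$(\alpha,\delta)$-Sasaki manifold of dimension $4n+3$ is Riemannian Einstein precisely when either $\delta = \alpha$ or $\delta = (2n+3)\alpha$, while Theorem \ref{theo.nabla-Ricci} gives $\nabla$-Einstein if and only if $\delta(2-n) = 5\alpha$. Since $\alpha \neq 0$ by the very definition of a $3$-$(\alpha,\delta)$-Sasaki manifold, each of the two branches reduces to an elementary algebraic constraint on the non-negative integer $n$.

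First I would dispose of the branch $\delta = \alpha$ (the $3$-$\alpha$-Sasaki case). Substituting into $\delta(2-n) = 5\alpha$ and cancelling $\alpha$ yields $2 - n = 5$, hence $n = -3$, which is incompatible with $n \geq 0$. Thus the $3$-$\alpha$-Sasaki branch produces no simultaneously Einstein and $\nabla$-Einstein examples.

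Next I would handle the branch $\delta = (2n+3)\alpha$. Substituting into the $\nabla$-Einstein condition and cancelling $\alpha$ gives $(2n+3)(2-n) = 5$, which expands to $2n^2 - n - 1 = 0$ and factors as $(2n+1)(n-1) = 0$. The only non-negative integer root is $n = 1$, forcing $\dim M = 7$ and $\delta = 5\alpha$. A direct verification then confirms the converse: for $n=1$ and $\delta = 5\alpha$ one has $\delta = (2n+3)\alpha$ and $\delta(2-n) = 5\alpha$, so both Einstein conditions hold simultaneously. No technical obstacle is expected; the proof is a purely algebraic intersection of two conditions in the $(\alpha,\delta,n)$-parameter space, the substantive work having been carried out in Proposition \ref{prop.3-AD-Sasaki-is-qc-Einstein} and Theorem \ref{theo.nabla-Ricci}.
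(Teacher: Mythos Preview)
Your proposal is correct and is precisely the approach the paper takes: it states that the corollary follows ``after a short calculation'' from Proposition \ref{prop.3-AD-Sasaki-is-qc-Einstein} and Theorem \ref{theo.nabla-Ricci}, and you have simply spelled out that calculation explicitly. The two-branch analysis and the resulting quadratic $2n^2-n-1=0$ are exactly what the paper's omitted computation amounts to.
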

\begin{ex}[$\H$-Deformations of $3$-Sasaki manifolds II]\label{ex.3-S-homo-II}
Consider now a $7$-dimensional $3$-Sasaki manifold $(M^7,\varphi_i,\xi_i,\eta_i,g)$,
i.\,e.~$\alpha_0=\delta_0=1$, and the particular one-parameter family of $\H$-homothetic
deformations given by
$a>0$  arbitrary, $b=1-a$, $c=1$,  hence  $\alpha=\frac{1}{a}$  and $\delta=1$ for the
resulting $3$-$(\alpha,\delta)$-Sasaki manifold
(Proposition \ref{deformation-coefficients}). From Corollary
\ref{cor.3-AD-Sasaki-implies-canonical}, we conclude that its Reeb Killing function
is $\beta=2(1-2/a)$. This is a well-known family of deformations, see for example
\cite{FKMS, Friedrich07}. We conclude at once that $a$ assumes the following particular
values:
\bdm
\begin{tabular}{|c|c|}\hline
$a$ & properties\\ \hline\hline
1 &  Einstein and  $3$-Sasakian \\ \hline
2 & parallel $3$-$(\alpha,\delta)$-Sasakian  \\ \hline
5 & Einstein and $\nabla$-Einstein $3$-$(\alpha,\delta)$-Sasakian \\ \hline
\end{tabular}
\edm
\end{ex}
%

\subsection{The canonical $G_2$-structure of a $7$-dimensional
$3$-$(\alpha,\delta)$-Sasaki manifold}\label{sec.G2-and-3AD-S}
%
In this Section, we restrict our attention to the situation
that $(M^7,\varphi_i,\xi_i,\eta_i,g)$ is a (simply connected)
$7$-dimensional $3$-$(\alpha,\delta)$-Sasaki manifold.
In the adapted frame $e_1=\xi_1,\ e_2=\xi_2,\ e_3=\xi_3$, $e_4$ any vector field
orthonormal to $\V$, $e_5=\varphi_1 e_4, \ e_6=\varphi_2 e_4, \ e_7=\varphi_3 e_4$ with
dual $1$-forms $\eta_i, \ i=1,\ldots,7,$ the horizontal
fundamental forms are given by
\bdm
\Phi^\H_1\, =\, - \eta_{45}-\eta_{67},\quad
\Phi^\H_2\, =\, -\eta_{46}+\eta_{57},\quad
\Phi^\H_3\, =\, -\eta_{47}-\eta_{56}.
\edm
We shall prove that as in the $7$-dimensional $3$-Sasaki case \cite{Ag-Fr}, its canonical
connection is, in fact, a $G_2$-connection:
\begin{theo}
Let $(M^7,\varphi_i,\xi_i,\eta_i,g)$ be a $7$-dimensional $3$-$(\alpha,\delta)$-Sasaki
manifold. The $3$-form
\bdm
\omega\ :=\ \sum_{i=1}^3 \eta_i\wedge \Phi^\H_i + \eta_{123} \ =\
-\eta_{145}-\eta_{167}- \eta_{246}+\eta_{257}-\eta_{347}-\eta_{356}+\eta_{123}
\edm
defines a cocalibrated $G_2$-structure, i.\,e.~it is of Fernandez-Gray type
$W_1\oplus W_3$, that we shall call the \emph{canonical $G_2$-structure}.
Its characteristic connection $\nabla$ coincides with the canonical connection.
\end{theo}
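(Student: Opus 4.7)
The plan is to split the proof into two essentially independent parts: (i) showing that $\omega$ defines a cocalibrated $G_2$-structure of Fernandez-Gray class $W_1\oplus W_3$, and (ii) identifying $\nabla$ with its characteristic connection. The fact that $\omega$ is a positive $3$-form of $G_2$-type is immediate: written in the adapted orthonormal frame, it lies in the $\mathrm{GL}(7,\R)$-orbit of the standard Cayley $3$-form, so it induces the given metric $g$ and standard orientation. What remains to be verified is that the structure is cocalibrated, i.e.~$d\star\omega=0$.

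For the cocalibration, I would first compute $\star\omega$ explicitly. A direct verification in the adapted frame, combined with the identity $\Phi_i^\H\wedge\Phi_j^\H=2\delta_{ij}\,\eta_{4567}$ (which holds because the $\Phi_i^\H$ are self-dual Kähler forms of the horizontal hyperhermitian structure on the $4$-dimensional space $\H$), yields the invariant decomposition
\[
\star\omega\ =\ \tfrac{1}{6}\,\Psi^\H\ +\ \sum_{(i,j,k)}\eta_{jk}\wedge\Phi_i^\H,
\]
where the sum is over even permutations. Differentiating term by term using Lemma \ref{lem.Phi-H}, one has $d\Psi^\H=0$ immediately. For the remaining summands, $\eta_{jk}\wedge d\Phi_i^\H=0$ because $d\Phi_i^\H=2\delta(\Phi_j^\H\wedge\eta_k-\Phi_k^\H\wedge\eta_j)$ is wiped out by the factor $\eta_{jk}$, and $d\eta_{jk}\wedge\Phi_i^\H$ expands into terms of the form $\Phi_r^\H\wedge\Phi_i^\H\wedge\eta_s$ with $r\ne i$, which vanish by the orthogonality $\Phi_i^\H\wedge\Phi_j^\H=0$ for $i\ne j$. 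Thus $d\star\omega=0$, and the class is $W_1\oplus W_3$.

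For part (ii), the key observation is that the $3$-form
\[
\omega\ =\ \sum_{i=1}^3 \eta_i\wedge\Phi_i^\H+\eta_{123}\ =\ \sum_{i=1}^3\eta_i\wedge\Phi_i+4\,\eta_{123}
\]
is invariant under the $SO(3)$-action that rotates the triple $(\xi_1,\xi_2,\xi_3)$ (and correspondingly $\eta_i$, $\varphi_i$, $\Phi_i$): indeed $\sum_i\eta_i\wedge\Phi_i$ and $\eta_{123}$ are both $SO(3)$-invariants, as follows from $R^TR=I$. By Remark \ref{derivatives}, the derivatives $\nabla\eta_i$, $\nabla\varphi_i$, $\nabla\xi_i$ with respect to the canonical connection implement precisely the infinitesimal $\mathfrak{so}(3)$-rotation in the index $i$. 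Applied to any $SO(3)$-invariant tensor built from $\eta_i$ and $\Phi_i$, this infinitesimal action gives zero; a short cyclic-sum verification shows $\nabla\omega=0$. Consequently $\nabla$ is a metric connection with skew torsion preserving the $G_2$-structure, and by the uniqueness of the characteristic connection of a cocalibrated $G_2$-manifold (see e.g.\,\cite{FrIv}), $\nabla$ coincides with it.

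The main obstacle is the Hodge-dual computation leading to $d\star\omega=0$: while conceptually routine, it requires carefully exploiting the horizontal/vertical decomposition from Remark \ref{rem.horizontal-Phi} and the $4$-dimensional identity $\Phi_i^\H\wedge\Phi_j^\H=2\delta_{ij}\,\eta_{4567}$ to see that the cross-terms cancel. In contrast, the identification of $\nabla$ as the characteristic connection is essentially formal once the $SO(3)$-invariance of $\omega$ is recognized, since the canonical connection's defining equations \eqref{canonical} are tailor-made to be the infinitesimal generators of this $SO(3)$ action.
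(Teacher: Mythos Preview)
Your argument is correct. Part (i), showing $d*\omega=0$, is essentially the same as the paper's proof: both identify $*\omega=\frac{1}{6}\Psi^\H+\sum_{(i,j,k)}\Phi_i^\H\wedge\eta_{jk}$ and verify closedness using Lemma \ref{lem.Phi-H}. You supply the extra detail that each summand $\eta_{jk}\wedge\Phi_i^\H$ is individually closed via the $4$-dimensional orthogonality $\Phi_i^\H\wedge\Phi_j^\H=0$ for $i\ne j$, whereas the paper simply notes that $\sum_{(i,j,k)}\Phi_i^\H\wedge\eta_{jk}=\frac{1}{2\alpha}d\eta_{123}$ is exact by part 4) of the lemma.

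Part (ii) is where you take a genuinely different route. The paper proceeds by computing the characteristic torsion from the Friedrich--Ivanov formula $T=-*d\omega+\frac{1}{6}\langle d\omega,*\omega\rangle\,\omega$: it evaluates $*d\omega_1$, $*d\omega_2$ and the scalar $\langle d\omega,*\omega\rangle=24\alpha+12\delta$, obtains $T=2\alpha\,\omega_1+2(\delta-4\alpha)\,\omega_2$, and matches this against the canonical torsion from Theorem \ref{thm.canonical-3AD-Sasaki}. Your approach bypasses this computation entirely: you observe that $\omega=\sum_i\eta_i\wedge\Phi_i+4\,\eta_{123}$ is built from $SO(3)$-invariants, that the canonical connection acts on $(\eta_i,\Phi_i)$ by the infinitesimal $\mathfrak{so}(3)$-rotation (Remark \ref{derivatives}), hence $\nabla\omega=0$, and then invoke uniqueness of the characteristic connection. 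This is cleaner and more conceptual; the paper's computation has the compensating advantage of exhibiting the characteristic torsion explicitly in $G_2$-language, which feeds directly into the subsequent spinorial analysis.
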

\begin{proof}
As this is not a paper on $G_2$-manifolds, we shall be brief, the necessary details on
$G_2$-manifolds and their characteristic connection may for example be  found in
\cite{FrIv,Ag}. For commodity, let us split $\omega$ as
\bdm
\omega\, =\, \omega_1+\omega_2 \ \text{ with }\
\omega_1\, =\, \sum_{i=1}^3 \eta_i\wedge \Phi^\H_i, \quad
\omega_2\, =\, \eta_{123}.
\edm
One checks by an explicit calculation that their Hodge duals are given by
\bdm
* \omega_1\, =\, -\eta_{2367}-\eta_{2345}-\eta_{1357}+\eta_{1346}-\eta_{1256}-\eta_{1247}
\, =\, \cyclic{i,j,k}\Phi^\H_i\wedge\eta_{jk},\quad
* \omega_2\, =\, \eta_{4567}\, =\, \frac{1}{6} \Psi^\H .
\edm
By Lemma \ref{lem.Phi-H}, $* \omega_1$ and $* \omega_2$  are closed, hence
$d * \omega =0$, which is the condition for being a cocalibrated $G_2$-structure. It is
proved in \cite{FrIv} that any
cocalibrated $G_2$-manifold admits a unique characteristic connection $\nabla$ with torsion
\bdm
T\ =\  -*d\omega+\frac{1}{6}\langle d\omega,*\omega\rangle\omega.
\edm
 Again, $d \omega_i$  ($i=1,2$) may be read off directly from Lemma \ref{lem.Phi-H},
from which we
conclude that $*d\omega_2=2\alpha\,\omega_1$ and $*d\omega_1= 2\delta\,\omega_1
+12\alpha\,\omega_2$.
One checks that $\langle d\omega,*\omega\rangle=24\,\alpha+12\,\delta$, and thus
\bdm
T = 2\alpha\omega_1 + 2(\delta-4\alpha)\omega_2 ,
\edm
in full agreement with Theorem \ref{thm.canonical-3AD-Sasaki}.
\end{proof}
This result allows us to use the full machinery of $G_2$-geometry for further investigations of
the connection $\nabla$. In particular, since $G_2$ is the stabilizer of a generic spinor inside
$\Spin(7)$, there exists a $\nabla$-parallel spinor field $\psi_0$. For a systematic investigation
of $G_2$-manifolds via spinors, we refer to \cite{Agricola&C&F&H14}.
\begin{df}[Canonical spinor field]
Let $\Sigma$ be the real spin bundle of $M^7$. The $G_2$-form $\omega$ acts via Clifford
multiplication on $\Sigma$ as a symmetric endomorphism field with eigenvalue $-7$
(multiplicity one) and  eigenvalue $+1$  (multiplicity seven). Consequently, it defines
(assuming $M^7$ simply connected) a unique \emph{canonical spinor field} $\psi_0$ such that
\bdm
\nabla \psi_0\, =\, 0,\quad \omega\cdot \psi_0 = -7\psi_0, \quad |\psi_0|=1.
\edm
Furthermore, a cocalibrated $G_2$-manifold with characteristic torsion $T$ satisfies
$T\cdot\psi_0= -\frac{1}{6}\langle d\omega,*\omega\rangle \, \psi_0$, which in our situation
means
\bdm
T\cdot\psi_0\ =\ -(4\alpha+2\delta)\, \psi_0.
\edm
\end{df}
\begin{df}
Recall that a spinor field $\psi$ is called a \emph{generalized Killing spinor}
if there exists a symmetric endomorphism field $S$ such that
$\nabla^g_X\psi=S(X)\cdot \psi$; being symmetric, we may assume that $S$ is given in diagonal
form. We call the eigenvalues of $S$ the \emph{generalized Killing numbers} of $\psi$.
If they coincide and are non-zero (i.\,e.~$S$ is a non-trivial multiple of the identity),
we have a (classical) Riemannian Killing spinor. The following results on
$7$-dimensional compact simply connected spin manifolds
are well-known (see \cite{FK90, BFGK,Ag,CS06, ABBK13}):
\begin{enumerate}[\normalfont 1)]
\item $M^7$ admits (at least) one generalized Killing spinor if it is a cocalibrated
$G_2$-manifold;
\item $M^7$ admits exactly one resp.~two resp.~three Killing spinor(s) if it is
a nearly parallel $G_2$-manifold resp.~Einstein-$\alpha$-Sasaki manifold resp.~$3$-$\alpha$-Sasaki manifold.
\end{enumerate}
\end{df}
Thus, our $7$-dimensional $3$-$(\alpha,\delta)$-Sasaki manifold should have at least one
generalized Killing spinor. In fact, we will show that it has four of them: the canonical spinor
$\psi_0$ and the three Clifford products $\xi_i\cdot\psi_0$, which are linearly independent
due to general properties of the spin representation.
\begin{theo}[Existence of generalized Killing spinors]
Let $(M^7,\varphi_i,\xi_i,\eta_i,g)$ be a simply connected
$7$-dimensional $3$-$(\alpha,\delta)$-Sasaki manifold, $\psi_0$ its canonical spinor.
\begin{enumerate}[\normalfont 1)]
\item
The canonical spinor field $\psi_0$
is a generalized Killing spinor:
\bdm
\nabla^g_X\psi_0 \ =\ -\frac{3\alpha}{2} X\cdot\psi_0 \ \text{ for }X\in \H,\quad
\nabla^g_Y\psi_0 \ =\ \frac{2\alpha - \delta}{2} Y\cdot\psi_0 \ \text{ for }Y\in \V.
\edm
The two generalized Killing numbers coincide if and only if $\delta=5\alpha$;
in this case, the $G_2$-form $\omega$ defines a nearly parallel $G_2$-structure
(Gray-Fernandez type $W_1$).
\item The Clifford products $\psi_i :=\xi_i\cdot \psi_0$, $i=1,2,3,$ are
generalized Killing spinors:
\bdm
\nabla^g_{\xi_i} \psi_i = \frac{2\alpha-\delta}{2}\,\xi_i\cdot \psi_i,\quad
\nabla^g_{\xi_j} \psi_i =  \frac{3\delta-2\alpha}{2}\,\xi_j\cdot \psi_i \ (i\neq j),\quad
\nabla^g_X \psi_i = \frac{\alpha}{2} \,X\cdot \psi_i\ \text{ for } X\in \H.
\edm
Any two of the generalized Killing numbers coincide if and only if $\alpha=\delta$, i.\,e.~if
$M^7$ is $3$-$\alpha$-Sasakian.
\end{enumerate}
\end{theo}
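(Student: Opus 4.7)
The plan is to exploit the fact that the canonical spinor $\psi_0$ is parallel for the canonical connection $\nabla$, so that the relation between metric connections with skew torsion on spinors, $\nabla_X=\nabla^g_X+\tfrac{1}{4}(X\lrcorner T)\cdot$, immediately gives $\nabla^g_X\psi_0=-\tfrac{1}{4}(X\lrcorner T)\cdot\psi_0$ for every vector field $X$. The heart of the argument is then the general $G_2$-identity $(X\lrcorner\omega)\cdot\psi_0=3\, X\cdot\psi_0$, valid for any defining $G_2$-form $\omega$ and its unit spinor satisfying $\omega\cdot\psi_0=-7\psi_0$. This identity follows from the Clifford relation $X\cdot\omega+\omega\cdot X=-2\,X\lrcorner\omega$ together with the observation that $X\cdot\psi_0$ is orthogonal to $\psi_0$ (as $2$-forms act skew-symmetrically, and odd vectors too), hence lies in the $(+1)$-eigenspace of $\omega$.

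For Part 1, I split $T=2\alpha\,\omega_1+2(\delta-4\alpha)\,\omega_2$ with $\omega_1=\sum_i\eta_i\wedge\Phi^\H_i$ and $\omega_2=\eta_{123}$ from Theorem \ref{thm.canonical-3AD-Sasaki}. For horizontal $X$ one has $X\lrcorner\omega_2=0$ and $X\lrcorner\omega_1=X\lrcorner\omega$, so the general $G_2$-identity directly yields $\nabla^g_X\psi_0=-\tfrac{1}{4}\cdot 2\alpha\cdot 3\,X\cdot\psi_0=-\tfrac{3\alpha}{2}X\cdot\psi_0$. For $X=\xi_i$ one computes $\xi_i\lrcorner T=2\alpha\,\Phi^\H_i+2(\delta-4\alpha)\,\eta_{jk}$ with $(i,j,k)$ an even permutation, and the identity above applied to $\xi_i\lrcorner\omega=\Phi^\H_i+\eta_{jk}$ gives $(\Phi^\H_i+\eta_{jk})\cdot\psi_0=3\,\xi_i\cdot\psi_0$. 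To split this I use the $G_2$-decomposition $\Lambda^2=\Lambda^2_7\oplus\Lambda^2_{14}$: the $14$-dimensional component equals $\mathfrak{g}_2$ and annihilates $\psi_0$, while $\Lambda^2_7\cong TM$ via $X\mapsto X\lrcorner\omega$. A short computation of the projections (conveniently verified via the octonion model $\Sigma=\mathbb{O}$, $\psi_0=1$, where the $G_2$-form encodes the imaginary octonion multiplication, or equivalently by computing $*(\eta_{jk}\wedge\omega)$) establishes
\[
\Phi^\H_i\cdot\psi_0=2\,\xi_i\cdot\psi_0,\qquad \eta_{jk}\cdot\psi_0=\xi_i\cdot\psi_0.
\]
Inserting these, $(\xi_i\lrcorner T)\cdot\psi_0=(4\alpha+2\delta-8\alpha)\,\xi_i\cdot\psi_0=2(\delta-2\alpha)\,\xi_i\cdot\psi_0$, hence $\nabla^g_{\xi_i}\psi_0=\tfrac{2\alpha-\delta}{2}\,\xi_i\cdot\psi_0$. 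The two generalized Killing numbers agree iff $-\tfrac{3\alpha}{2}=\tfrac{2\alpha-\delta}{2}$, i.e.\ $\delta=5\alpha$; this matches the condition from Proposition \ref{prop.3-AD-Sasaki-is-qc-Einstein} and Theorem \ref{theo.nabla-Ricci} for the simultaneous Einstein/$\nabla$-Einstein case, in which $\psi_0$ becomes a genuine Riemannian Killing spinor and $\omega$ becomes a nearly parallel $G_2$-form.

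For Part 2, I apply the Leibniz rule $\nabla^g_X\psi_i=(\nabla^g_X\xi_i)\cdot\psi_0+\xi_i\cdot\nabla^g_X\psi_0$, with the Levi-Civita derivatives of the Reeb fields supplied by equations \eqref{levi-civita-xi}--\eqref{levi-civita-xi-xi} of Corollary \ref{corollary_nabla_xi}. Treating the three cases $X\in\H$, $X=\xi_i$, and $X=\xi_j$ with $j\ne i$ separately, I reduce everything to the two Clifford identities
\[
\varphi_iX\cdot\psi_0=X\cdot\xi_i\cdot\psi_0\ \text{for }X\in\H,\qquad \xi_i\cdot\xi_j\cdot\psi_0=\xi_k\cdot\psi_0\ \text{for $(i,j,k)$ even,}
\]
both of which are equivalent to (or direct consequences of) the identities in Part 1 combined with $\xi_1\xi_2\xi_3\cdot\psi_0=-\psi_0$; the latter is fixed by $\omega_2\cdot\psi_0\in\{\pm\psi_0\}$ (from $(\xi_1\xi_2\xi_3)^2=1$ in the Clifford algebra) and the consistency $\omega_1\cdot\psi_0+\omega_2\cdot\psi_0=-7\psi_0$ with the values of $\Phi^\H_i\cdot\psi_0$ above. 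The announced formulas for $\nabla^g_{\xi_i}\psi_i$, $\nabla^g_{\xi_j}\psi_i$ and $\nabla^g_X\psi_i$ then follow by straightforward Clifford manipulation using $\xi_i^2=-1$ and the anticommutation of orthogonal vectors.

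The main obstacle is not any of the individual Levi-Civita or torsion computations, which are either already available in the paper or reduce to the general $G_2$-identity $(X\lrcorner\omega)\cdot\psi_0=3X\cdot\psi_0$. It is rather pinning down, in a coordinate-free way, the two special Clifford relations $\Phi^\H_i\cdot\psi_0=2\xi_i\cdot\psi_0$ and $\eta_{jk}\cdot\psi_0=\xi_i\cdot\psi_0$, including the correct sign of $\omega_2\cdot\psi_0=-\psi_0$. This sign is forced by the choice made in the definition of $\omega$ together with $\omega\cdot\psi_0=-7\psi_0$, but verifying it cleanly in an arbitrary adapted frame requires either explicit use of the $\Lambda^2_7\oplus\Lambda^2_{14}$ decomposition (showing that the projection of $\eta_{jk}$ onto $\Lambda^2_7$ equals $\tfrac{1}{3}\xi_i\lrcorner\omega$) or a short computation in the octonion model of the spin representation. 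Once these two identities are in place, the remaining algebra is purely mechanical and the generalized Killing numbers read off directly.
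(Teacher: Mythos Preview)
Your proof is correct, and for Part~2 it matches the paper's argument exactly: the Leibniz rule together with the Levi-Civita derivatives of the Reeb vector fields from Corollary~\ref{corollary_nabla_xi} and the two Clifford identities $\varphi_i X\cdot\psi_0=X\cdot\xi_i\cdot\psi_0$ and $\xi_i\cdot\xi_j\cdot\psi_0=\xi_k\cdot\psi_0$ is precisely what the paper does (it declares the latter identities as the outcome of a computer-assisted computation in the spin representation).

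For Part~1, however, your route genuinely differs. The paper does not compute $\nabla^g\psi_0$ directly; instead it invokes Friedrich's classification of cocalibrated $G_2$-manifolds with parallel torsion \cite{Friedrich07}, identifies the parameters ($a=2\alpha$, $b=2(\delta-5\alpha)$, holonomy case $\mathfrak{su}(2)\oplus\mathfrak{su}_c(2)$), and reads off the generalized Killing equations from that reference. Your approach is more self-contained: you use $\nabla\psi_0=0$ and the spinorial lift $\nabla_X=\nabla^g_X+\tfrac{1}{4}(X\lrcorner T)\cdot$, then evaluate $(X\lrcorner T)\cdot\psi_0$ via the general $G_2$-identity $(X\lrcorner\omega)\cdot\psi_0=3X\cdot\psi_0$ together with the split $\Phi^\H_i\cdot\psi_0=2\xi_i\cdot\psi_0$, $\eta_{jk}\cdot\psi_0=\xi_i\cdot\psi_0$. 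This buys you an argument that is independent of the external classification and uses only the explicit torsion from Theorem~\ref{thm.canonical-3AD-Sasaki}; the cost is that you must justify those two Clifford identities yourself (which you correctly flag as the main obstacle). Note that the second of these, $\eta_{jk}\cdot\psi_0=\xi_j\cdot\xi_k\cdot\psi_0=\xi_i\cdot\psi_0$, is literally one of the identities the paper uses in Part~2, so in effect you are reusing that same algebraic input earlier.
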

\begin{proof}
By Theorem \ref{thm.canonical-3AD-Sasaki}, we know that $(M^7,\omega)$ is not only
cocalibrated, but that it also has  parallel torsion. We are thus in the context
of cocalibrated
$G_2$-manifolds with parallel torsion, studied by Friedrich in \cite{Friedrich07}. The shape
of the torsion identifies the right subcase to consider (Section 10,
$\mathfrak{hol}(\nabla)=\mathfrak{su}(2)\oplus\mathfrak{su}_c(2)$, the correct identification
of parameters is, up to an irrelevant change of numeration, $a=2\alpha$ and
$b=2(\delta-5\alpha)$).
Thus, the results of \cite[p.646]{Friedrich07} yield the stated equations for $\psi_0$.
Let us write this equation  in a more uniform way by introducing the two generalized Killing
numbers $\mu_\H,\mu_\V\in\R$
\bdm
\nabla^g_X\psi_0 = \mu_\H \, X\cdot\psi_0 \ \text{ for }X\in \H,\quad
\nabla^g_Y\psi_0 \ =\ \mu_\V\, Y\cdot\psi_0 \ \text{ for }Y\in \V.
\edm
To compute the Levi-Civita derivatives of $\psi_i=\xi_i\cdot\psi_0$, we begin with
\bdm
\nabla^g_X (\xi_i\cdot \psi_0) = \nabla^g_X (\xi_i)\cdot \psi_0 + \xi_i \cdot \nabla^g_X \psi_0
\quad \forall \ X\in \mathfrak{X}(M).
\edm
By Corollary \ref{corollary_nabla_xi}, we know that $\nabla^g_X (\xi_i)=-\alpha\varphi_i^\H (X)$
for $X\in\H$, $\nabla^g_{\xi_i} (\xi_i)=0$, and $\nabla^g_{\xi_j} (\xi_i)=-\delta\, \xi_k$
for even permutations. Hence, it makes sense to distinguish these three cases;
in particular, the first part of the claim follows immediatly.
For the other two cases, a computer-assisted computation in the spin representation is
needed: one checks that
\bdm
\varphi^\H_i(X)\cdot\psi_0\ = \  X\cdot\xi_i\cdot\psi_0 \ \text{ for }X\in \H,\quad
\xi_i\cdot\xi_j\cdot\psi_0\ =\ \xi_k\cdot\psi_0 \ \text{ for even permutations}.
\edm
The remaining claims now follow from a short calculation\footnote{Observe that
the theorem corrects a small computation error for the generalized Killing number in case
 $X\in\H$ from \cite[Cor.9]{Ag-F-S}; this was due to a wrong sign in the expression for
$\varphi^\H_i(X)\cdot\psi_0$.}.
\end{proof}
%
\section{Appendix: Examples on Lie groups}
%
In this section we denote by $G$ a $(4n+3)$-dimensional Lie group with Lie algebra $\frak g$ spanned by
vector fields $\xi_1,\xi_2,\xi_3, \tau_r,\tau_{n+r},\tau_{2n+r},\tau_{3n+r}$, $r=1,\ldots,n$.
We also consider the left invariant almost $3$-contact metric structure $(\varphi_i,\xi_i,\eta_i,g)$, where $g$ is the Riemannian metric
with respect to which the basis is orthonormal, $\eta_i$ is the dual $1$-form of
$\xi_i$, and $\varphi_i$ is given by
\begin{equation}\label{lie_group}
\varphi_i=\eta_j\otimes\xi_k-\eta_k\otimes\xi_j+\sum_{r=1}^n[\theta_r\otimes\tau_{in+r}
-\theta_{in+r}\otimes\tau_{r}
+\theta_{jn+r}\otimes\tau_{kn+r}-\theta_{kn+r}\otimes\tau_{jn+r}]
\end{equation}
where $\theta_l$, $l=1,\dots,4n$, is the dual $1$-form of $\tau_l$, and $(i,j,k)$ is an
even permutation of $(1,2,3)$. The fundamental $2$-forms of the structure are given by
\begin{equation}\label{Phi_H_examples}
\Phi_i={}-\eta_j\wedge\eta_k-\sum_{r=1}^n[\theta_r\wedge\theta_{in+r}
+\theta_{jn+r}\wedge\theta_{kn+r}].
\end{equation}
All our examples will be on nilpotent Lie groups, except the following one:
\begin{exs}\label{example_3delta}
Let $\frak g$ be the Lie algebra with nonvanishing commutators given by
\[[\xi_i,\xi_j]=2\delta\xi_k\]
where $\delta\in\R$, $\delta\ne 0$, and $(i,j,k)$ is any even permutation  of $(1,2,3)$.
It is isomorphic to $\so(3)\oplus \R^{4n} $.
The differential of each $1$-form $\eta_i$ is given by
\begin{equation*}\label{deta_3delta}
d\eta_i=-2\delta\eta_j\wedge\eta_k,
\end{equation*}
Since the $1$-forms $\theta_l$ are closed, from \eqref{Phi_H_examples} we have
\[d\Phi_i=-d\eta_j\wedge\eta_k+\eta_j\wedge d\eta_k=2\delta\,\eta_k\wedge\eta_i\wedge\eta_k-2\delta\,\eta_j\wedge\eta_i\wedge\eta_j=0.\]
Then the corresponding Lie group $(G,\varphi_i,\xi_i,\eta_i,g)$ is a $3$-$\delta$-cosymplectic manifold.
\end{exs}
\begin{exs}
Here we will construct an example of a hypernormal canonical non-parallel 
almost $3$-contact metric manifold $(M,\varphi_i,\xi_i,\eta_i, g)$ that is not
$3$-$(\alpha,\delta)$-Sasaki (see Figure \ref{fig.classes}).
Let $\frak g$ be the Lie algebra with nonvanishing commutators given by
\begin{equation*}\label{differentials_example}
[\tau_r,\tau_{n+r}]=\xi_1,\qquad [\tau_r,\tau_{2n+r}]=\xi_2,\qquad [\tau_r,\tau_{3n+r}]=\xi_3.
\end{equation*}
Therefore,
\[d\eta_i=-\sum_{r=1}^n\theta_r\wedge\theta_{in+r},\qquad d\theta_l=0,\]
for every $i=1,2,3$ and $l=1,\ldots,4n$. First, let us check
that the left invariant almost $3$-contact
metric structure $(\varphi_i,\xi_i,\eta_i,g)$ defined on the Lie group $G$ is not
$3$-$(\alpha,\delta)$-Sasaki, nor in fact $3$-$\delta$-cosymplectic. Indeed, we have
\[
d\eta_i=\Phi_i+\eta_j\wedge\eta_k+\sum_{r=1}^n\theta_{jn+r}\wedge\theta_{kn+r}.
\]
The differential of the fundamental $2$-forms are given by
\[d\Phi_i=-d\eta_j\wedge\eta_k+\eta_j\wedge d\eta_k.\]
Therefore, for every $X,Y,Z\in\mathcal H$ we have $d\Phi_i(X,Y,Z)=0$ and
$N_{\varphi_i}(X,Y,Z)=0$. Since each $\xi_i$ is a Killing vector field, in order to prove
that the structure is canonical, we show that it admits a Reeb Killing function.
Notice that \eqref{lie_group} implies
\[\theta_r\circ\varphi_i=-\theta_{in+r},\qquad
\theta_{in+r}\circ\varphi_i=\theta_r,\qquad\theta_{in+r}\circ\varphi_j=\theta_{kn+r}
=-\theta_{jn+r}\circ\varphi_i,\]
for every even permutation $(i,j,k)$ of $(1,2,3)$.
Now, since ${\mathcal L}_{\xi_j}\varphi_i=0$, for every $X,Y\in\mathcal H$, we have
\begin{align*}
&A_{ij}(X,Y)=d\eta_j(X,\varphi_iY)+d\eta_j(\varphi_iX,Y)\\
&=-\sum_{r=1}^n(\theta_r\wedge\theta_{jn+r})(X,\varphi_iY)-\sum_{r=1}^n(\theta_r\wedge\theta_{jn+r})(\varphi_iX,Y)\\
&=\sum_{r=1}^n[\theta_r(X)\theta_{kn+r}(Y)-\theta_{jn+r}(X)\theta_{in+r}(Y)+\theta_{in+r}(X)\theta_{jn+r}(Y)-\theta_{kn+r}(X)\theta_r(Y)]\\
&=\sum_{r=1}^n (\theta_r\wedge\theta_{kn+r})(X,Y)+\sum_{r=1}^n(\theta_{in+r}\wedge\theta_{jn+r})(X,Y)\\
&=-\Phi_k(X,Y).
\end{align*}
Analogously, one shows that $A_{ji}(X,Y)=\Phi_k(X,Y)$. Furthermore,
\begin{align*}
&A_i(X,Y)=-\sum_{r=1}^n(\theta_r\wedge\theta_{in+r})(X,\varphi_iY)-\sum_{r=1}^n(\theta_r\wedge\theta_{in+r})(\varphi_iX,Y)\\
&=-\sum_{r=1}^n[\theta_r(X)\theta_r(Y)+\theta_{in+r}(X)\theta_{in+r}(Y)-\theta_{in+r}(X)\theta_{in+r}(Y)-\theta_r(X)\theta_r(Y)] =0.
\end{align*}
Therefore, the structure admits constant Reeb Killing function $\beta=-1$, and it is
canonical. Observe that this allows us to conclude that the structure is hypernormal:
Indeed, we already showed that $N_{\varphi_i}(X,Y,Z)=0$ for every $X,Y,Z\in\Gamma({\mathcal H})$. 
Since the structure is canonical, by Theorem \ref{theo_canonical-implies-char}, each tensor 
field $N_{\varphi_i}$ is skew-symmetric on $TM$. Furthermore, each $\xi_i$ lies in the center 
of the Lie algebra. Then, taking into account equations \eqref{tableNij} one easily 
verifies that $N_{\varphi_i}=0$.

Using \eqref{TNS}-\eqref{T2} and \eqref{T_canonical}, one can check that the
torsion $T$ of the canonical connection is given by
\[T=\sum_{i=1}^3\eta_i\wedge d\eta_i-2\,\eta_1\wedge\eta_2\wedge\eta_3.\]
By Theorem \ref{theo_canonical--char-torsion}, the characteristic connection of the
structure $(\varphi_i,\xi_i,\eta_i,g)$ has torsion
\begin{align*}
&T_i=T-\eta_j\wedge\Phi_j-\eta_k\wedge\Phi_k\\
&=T-\eta_j\wedge (d\eta_j-\eta_k\wedge\eta_i-\sum_{r=1}^n\theta_{kn+r}\wedge\theta_{in+r}) -\eta_k\wedge (d\eta_k-\eta_i\wedge\eta_j-\sum_{r=1}^n\theta_{in+r}\wedge\theta_{jn+r})\\
&=\eta_i\wedge d\eta_i+\sum_{r=1}^n(\eta_j\wedge\theta_{kn+r}-\eta_k\wedge\theta_{jn+r})\wedge\theta_{in+r}.
\end{align*}
\end{exs}
We end with two examples of almost $3$-contact metric manifolds $(M,\varphi_i,\xi_i,\eta_i, g)$ 
that admit $\varphi_i$-compatible connections despite not being canonical (and, furthermore,
not hypernormal).
\begin{exs}
Let $\frak g$ be the Lie algebra with non-vanishing commutators
\[[\tau_r,\tau_{n+r}]=[\tau_{2n+r},\tau_{3n+r}]
=\xi_1. \]
Assuming the corresponding Lie group $G$ to be connected and simply connected, it is the product $H^{2n}_{\R}\times \R^2$, where $H^{2n}_{\R}$ is
the real Heisenberg group of dimension $4n+1$. The left invariant structure
$(\varphi_i,\xi_i,\eta_i,g)$ satisfies
\begin{equation*}\label{deta_H2n}
d\eta_1=-\sum_{r=1}^n[\theta_r\wedge\theta_{n+r}+\theta_{2n+r}\wedge\theta_{3n+r}]=\Phi_1+\eta_2\wedge\eta_3,\qquad d\eta_2=0,\qquad d\eta_3=0.
\end{equation*}
Being also $d\theta_l=0$, we have
\[d\Phi_1=0,\qquad d\Phi_2=\eta_3\wedge d\eta_1=\eta_3\wedge \Phi_1,\qquad d\Phi_3
=-d\eta_1\wedge\eta_2=-\Phi_1\wedge\eta_2.\]
Then, for every $i=1,2,3$, and for every $X,Y,Z\in\mathcal{H}$, we have $d\Phi_i(X,Y,Z)=0$
and  $N_{\varphi_i}(X,Y,Z)=0$. Each $\xi_i$ is a Killing vector field. Proposition
\ref{prop_Killing} implies that the manifold admits $\varphi_i$-compatible connections
for every $i=1,2,3$. Nevertheless, the structure is not canonical. Indeed, one can
easily verify that, for every $X,Y\in\mathcal H$,
\[A_{i2}(X,Y)=A_{i3}(X,Y)=0,\quad i=1,2,3,\]
\[A_1(X,Y)=0,\quad A_{21}(X,Y)=2\Phi_3(X,Y),\quad A_{31}(X,Y)=-2\Phi_2(X,Y).\]
One can also notice that this structure is not hypernormal. Indeed, using \eqref{tableN},
we see that $N_{\varphi_1}=0$, but
\[N_{\varphi_2}(X,Y,\xi_1)=N_{\varphi_3}(X,Y,\xi_1)=2\, \Phi_1(X,Y)\quad \forall  X,Y \in \H.
\]
\end{exs}
\begin{exs}
Let $\frak g$ be the Lie algebra with non-vanishing commutators
\[[\tau_r,\tau_{n+r}]=[\tau_{2n+r},\tau_{3n+r}]=\xi_1, \qquad[\tau_r,\tau_{2n+r}]
=[\tau_{3n+r},\tau_{n+r}]=\xi_2.\]
The corresponding connected simply connected Lie group $G$ is the product $H_\mathbb{C}^{2n}\times \R$, where
$H_\mathbb{C}^{2n}$ is the complex Heisenberg group of real dimension $4n+2$. The left
invariant  structure $(\varphi_i,\xi_i,\eta_i,g)$ satisfies
\begin{align*}\label{deta_H2nC}
d\eta_1&=-\sum_{r=1}^p[\theta_r\wedge\theta_{n+r}+\theta_{2n+r}\wedge\theta_{3n+r}]=\Phi_1+\eta_2\wedge\eta_3,\\
d\eta_2&=-\sum_{r=1}^p[\theta_r\wedge\theta_{2n+r}+\theta_{3n+r}\wedge\theta_{n+r}]=\Phi_2+\eta_3\wedge\eta_1,
\end{align*}
and $d\eta_3=0$.
Since $d\theta_l=0$, we have
\[d\Phi_1=-d\eta_2\wedge\eta_3=-\Phi_2\wedge\eta_3,\qquad d\Phi_2=\eta_3\wedge d\eta_1=\eta_3\wedge \Phi_1,\]
\[d\Phi_3=-d\eta_1\wedge\eta_2+\eta_1\wedge d\eta_2=-\Phi_1\wedge\eta_2+\eta_1\wedge\Phi_2.\]
Again, for every $i=1,2,3$, and for every $X,Y,Z\in\mathcal H$, we have $d\Phi_i(X,Y,Z)=0$
and  $N_{\varphi_i}(X,Y,Z)=0$. Each $\xi_i$ is a Killing vector field. From Proposition
\ref{prop_Killing} the manifold admits $\varphi_i$-compatible connections for every
$i=1,2,3$. Nevertheless, the structure is not canonical. Indeed, for every
$X,Y\in\mathcal H$,
\[A_{13}(X,Y)=A_{23}(X,Y)=0,\quad  A_{31}(X,Y)=-2\Phi_2(X,Y),\quad A_{32}(X,Y)=2\Phi_1(X,Y).\]
The structure is not hypernormal. Indeed, using \eqref{tableN}, we see that
$N_{\varphi_3}=0$, but
\[N_{\varphi_1}(X,Y,\xi_3)=N_{\varphi_2}(X,Y,\xi_3)=-2\, \Phi_3(X,Y)\quad \forall X,Y\in\H.
\]
\end{exs}
%
%

{\small

}

\noindent
Ilka Agricola, Fachbereich Mathematik und Informatik, Philipps-Universit\"at Marburg,
Campus Lahnberge, 35032 Marburg, Germany. \texttt{agricola@mathematik.uni-marburg.de}

\medskip\noindent
Giulia Dileo, Dipartimento di Matematica, Universit\`a degli Studi di Bari Aldo Moro,
Via E. Orabona 4, 70125 Bari, Italy.
\texttt{giulia.dileo@uniba.it}

\end{document}